\renewcommand{\l}{\left}
\renewcommand{\r}{\right}
\newcommand{\ind}[1]{\mathds{1}_{\l\{#1\r\}}}
\newcommand{\eqdef}{\overset{\text{def}}{=}}
\newcommand{\floor}[1]{{\left\lfloor #1 \right\rfloor}}
\newcommand{\ceil}[1]{{\left\lceil #1 \right\rceil}}
\newcommand{\usim}[2]{\underset{#1 \to #2}{\sim}}
\newcommand{\ulim}[2]{\underset{#1 \to #2}{\longrightarrow}}
\newcommand{\ulimd}[2]{\overset{\textrm{(d)}}{\ulim{#1}{#2}}}
\newcommand{\ulimas}[2]{\overset{\textrm{\as}}{\ulim{#1}{#2}}}
\newcommand{\pfrac}[2]{{\left(\frac{#1}{#2}\right)}}
\newcommand\la{\longrightarrow}		
\newcommand{\figref}[1]{figure \ref{#1}}
\newcommand{\as}{\text{a.s.}\xspace}
\newcommand{\whp}{\text{w.h.p.}\xspace}
\newcommand{\wrt}{\text{w.r.t.}\xspace}
\newcommand{\iid}{\text{i.i.d.}\xspace}
\newcommand{\fpp}{\text{fpp}}
\newcommand{\dgr}{\mathrm{d}_{\mathrm{gr}}}
\newcommand{\dfpp}{\mathrm{d}_{\mathrm{fpp}}}
\newcommand{\Htr}{\HH^\text{tr}}
\newcommand{\Hull}{B^\bullet}
\newcommand{\UIPQ}{\textsc{uipq}\xspace}
\newcommand{\UIPM}{\textsc{uipm}\xspace}
\newcommand{\LHPQ}{\textsc{lhpq}\xspace}
\DeclareMathOperator{\Epaiss}{Thickness}
\newcommand{\BGW}{Bienaymé-Galton-Watson\xspace}
\newcommand{\resp}{resp.\xspace}
\newcommand{\st}{{s.t. }}
\newcommand\wt{\widetilde}
\newcommand\wh{\widehat}
\newcommand\bc{\textbf{c}}
\newcommand\be{\textbf{e}}
\newcommand\ur{\textrm{r}}
\newcommand\E{\mathbb{E}}
\newcommand\F{\mathbb{F}}
\newcommand\N{\mathbb{N}}
\renewcommand\P{\mathbb{P}}		
\newcommand\Q{\mathbb{Q}}
\newcommand\R{\mathbb{R}}
\newcommand\Z{\mathbb{Z}}
\newcommand\CC{\mathcal{C}}
\newcommand\EE{\mathcal{E}}
\newcommand\FF{\mathcal{F}}
\newcommand\GG{\mathcal{G}}
\newcommand\HH{\mathcal{H}}
\newcommand\LL{\mathcal{L}}
\newcommand\PP{\mathcal{P}}
\newcommand\RR{\mathcal{R}}
\renewcommand\SS{\mathcal{S}}		
\newcommand\TT{\mathcal{T}}
\newcommand\XX{\mathcal{X}}
\newcommand{\va}{\alpha}
\newcommand{\vb}{\beta}
\newcommand{\vg}{\gamma}
\newcommand{\vd}{\delta}
\newcommand{\ve}{\varepsilon}
\newcommand{\vt}{\theta}
\newcommand{\vk}{\kappa}
\newcommand{\vs}{\sigma}
\newcommand{\vp}{\varphi}
\newcommand{\vo}{\omega}
\newcommand{\vD}{\Delta}
\newcommand{\vG}{\Gamma}
\newcommand{\vT}{\Theta}
\newtheorem{theorem}{Theorem}
\newtheorem{lemma}[theorem]{Lemma}
\newtheorem{prop}[theorem]{Proposition}
\newtheorem{proposition}[theorem]{Proposition}
\newtheorem{corollary}[theorem]{Corollary}
\newcommand{\propref}{Proposition \ref}
\newcommand{\thref}{Theorem \ref}
\newcommand{\corref}{Corollary \ref}
\newcommand{\lemref}{Lemma \ref}
\newcommand{\secref}{Section \ref}
\renewcommand{\figref}{Figure \ref}
\newenvironment{romenum}{
\begin{enumerate}[label=(\roman*)]
}{
\end{enumerate}
}
\newcommand{\van}{\va_{(n)}}
\newcommand{\bmap}{{\text{m}_\infty}}
\newcommand{\Lhm}{\Ti{\LL}}   
\newcommand{\lmg}{left-most geodesic\xspace}
\newcommand{\lmgs}{left-most geodesics\xspace}
\newcommand{\Lmgs}{Left-most geodesics\xspace}
\renewcommand{\ur}{u}
\newcommand{\DP}{\operatorname{DP}}
\newcommand{\Ti}[1]{{\TT\l( #1 \r)}} 
\newcommand{\Qtr}{\Q^\text{tr}}
\newcommand{\CVS}{CVS correspondence\xspace}
\newcommand{\Tuttebij}{Tutte's bijection\xspace}
\author{\textsc{Thomas Lehéricy}\footnote{Université Paris-Saclay}}
\title{First-passage percolation in random planar maps\\
and Tutte's bijection}
\begin{document}

\maketitle

\begin{abstract}
We consider large random planar maps and study the first-passage percolation distance 
obtained by assigning independent identically distributed lengths to the edges. We consider the cases
of quadrangulations and of general planar maps. In both cases, 
the first-passage percolation distance is shown to behave in large scales
like a constant times the usual graph distance. We apply our method to 
the metric properties of the classical Tutte bijection between 
quadrangulations with $n$ faces and general planar maps with 
$n$ edges. We prove that the respective graph distances on the quadrangulation and
on the associated general planar map are in
large scales equivalent when $n\to \infty$. 
\end{abstract}

\tableofcontents

\section{Introduction}

A planar map is a finite planar graph embedded in the sphere and considered up to orientation-preserving homeomorphisms. 
In this work, we only consider rooted planar maps, meaning that we distinguish an oriented edge called the root edge, whose
origin is the root vertex.
There exist many different models of random maps depending on the conditions one imposes on the degrees of faces, the existence or non-existence of multiple edges and loops, etc. In the following, we always allow loops and multiple edges.
A particular case that will play a central role in this article is the case of quadrangulations, where all faces have degree $4$. 
 For any map $M$, we denote its vertex set by $V(M)$ and the graph distance on the map $M$ by $\dgr^M$. The root vertex is
 usually denoted by $\rho$.

Several recent developments (see in particular \cite{ abraham2016rescaled,berry2013scaling,bettinelli2014scaling,leGall2013uniqueness,miermont2013brownian}) have established that, for a wide range of models of random maps, the vertex set viewed as a metric space for the (suitably rescaled) graph distance, converges in distribution when the size of the map tends to infinity towards a random compact metric space called the Brownian map. This convergence holds in the sense of the Gromov-Hausdorff convergence for compact metric spaces. The convergence to the Brownian map gives a unified approach to asymptotic properties of different models of random planar maps.

A natural question is to ask what can be said when the graph distance is replaced by other choices of distances on the vertex set $V(M)$. A simple way to get other distances
is to assign positive weights (or lengths) to the edges: The distance between two vertices will then be the minimal total weight of a path connecting these two vertices.
When the weights of the different edges are chosen to be independent and identically distributed given the planar map $M$ in consideration, this leads to
the so-called first-passage percolation distance, which we denote here by $\dfpp^M$. Of course, when weights are all equal to $1$ we
recover the graph distance. The recent paper \cite{fpp} has investigated the asymptotic properties of the first-passage percolation distance 
in large triangulations. Roughly speaking, the main results of \cite{fpp} show that, in large scales, the first-passage percolation distance
behaves like a constant times the graph distance. The relevant constant is found via a subadditivity argument and cannot be computed in general. Interestingly, this
behavior for random planar maps is quite different from the one observed in deterministic lattices such as $\Z^d$, where the 
first-passage percolation distance is not believed to be asymptotically proportional to the graph distance (nor to the
Euclidean distance). 

One of the main goals of the present work is to show that results similar to those of \cite{fpp} hold both for quadrangulations 
and for general planar maps. Recall that the diameter of a typical quadrangulation with $n$ faces, or of a typical planar map
with $n$ edges is known to be of order $n^{1/4}$. 

\begin{theorem}
\label{Th1}
For every integer $n\geq 1$, let $Q_n$
be uniformly distributed over the set of all rooted quadrangulations with $n$ faces, and let $M_n$
be uniformly distributed over the set of all rooted planar maps with $n$ edges. Define the 
first-passage percolation distances $\dfpp^{Q_n}$ and $\dfpp^{M_n}$ by assigning independent and identically distributed
weights to the edges of $Q_n$ and $M_n$. Assume that the common distribution of the weights is supported 
on a compact subset of $(0,\infty)$. 
Then there exist two positive  constants $\bc$ and $\bc'$ such that 
\begin{equation*}
n^{-1/4} \sup_{x,y \in V(Q_n)} \l| \dfpp^{Q_n}(x,y) - \bc\,\dgr^{Q_n}(x,y) \r| \ulim{n}{\infty} 0
\end{equation*}
and 
\begin{equation*}
n^{-1/4} \sup_{x,y \in V(M_n)} \l| \dfpp^{M_n}(x,y) - \bc' \dgr^{M_n}(x,y) \r| \ulim{n}{\infty} 0
\end{equation*}
where both convergences hold in probability. 
\end{theorem}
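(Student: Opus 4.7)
My plan is to extend the Curien–Le Gall approach of \cite{fpp} from triangulations to quadrangulations, and then use Tutte's bijection to transfer the statement to general planar maps. The proof decomposes into three ingredients: identification of the constant $\bc$ via a subadditive argument on the infinite-volume limit (the \UIPQ), a concentration bound for $\dfpp$ along geodesics of $\dgr$, and a uniformity step that upgrades pointwise to uniform control over $V(Q_n)$.

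\textbf{Step 1: identify $\bc$ on the \UIPQ.} Let $Q_\infty$ be the \UIPQ, which is the local limit of $Q_n$. For $r\geq 1$ set
\[
T_r \eqdef \dfpp^{Q_\infty}\bigl(\rho,\partial B_r(\rho)\bigr),
\]
where $B_r(\rho)$ is the hull at graph distance $r$ from the root. Using the peeling of $Q_\infty$ along a geodesic ray and the associated re-rooting invariance, one obtains an approximate subadditive inequality of the form $\E[T_{r+s}]\leq \E[T_r]+\E[T_s]+o(r+s)$, which yields the existence of $\bc=\lim_{r\to\infty}\E[T_r]/r \in (0,\infty)$ (positivity comes from the lower bound on the weights, finiteness from the upper bound). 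One then upgrades the convergence to convergence in probability, hence almost surely along subsequences.

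\textbf{Step 2: concentration of $\dfpp$ around $\bc\, \dgr$ in $Q_\infty$.} For a vertex $y$ on a left-most geodesic from $\rho$, the layer decomposition of $Q_\infty$ shows that $\dfpp^{Q_\infty}(\rho,y)$ is a sum of weakly dependent contributions from each hull layer; a martingale/Efron–Stein estimate then gives
\[
\P\bigl(\,|\dfpp^{Q_\infty}(\rho,y)-\bc\,\dgr^{Q_\infty}(\rho,y)|\geq \ve\, r\,\bigr)\longrightarrow 0
\]
for $\dgr^{Q_\infty}(\rho,y)=r\to\infty$, with quantitative, ideally stretched-exponential, decay. Extension to arbitrary pairs $(x,y)$ uses the re-rooting invariance of $Q_\infty$ (mass-transport / unimodularity).

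\textbf{Step 3: uniform control over pairs in $Q_n$.} The diameter of $Q_n$ is $O(n^{1/4})$, so for pairs with $\dgr^{Q_n}(x,y)\leq n^{1/4-\delta}$ the trivial estimate $|\dfpp-\bc\dgr|\leq C\,\dgr$ is already $o(n^{1/4})$. For larger separations, I would split $V(Q_n)$ into $O(n^\eps)$ clusters at scale $n^{1/4-\eps/2}$ using e.g.~a net of representative vertices, compare each pair to an equivalent pair rooted at a representative, and apply the $\UIPQ$ estimate after invoking absolute continuity between $B_r(Q_n)$ and $B_r(Q_\infty)$ for $r\ll n^{1/4}$. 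A union bound over the polynomially many representatives together with stretched-exponential concentration from Step 2 gives the required $o(n^{1/4})$ control uniformly in $(x,y)$.

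\textbf{Step 4: from quadrangulations to general planar maps.} Under Tutte's bijection, a uniform rooted planar map $M_n$ with $n$ edges corresponds to a uniform rooted quadrangulation $Q_n$ with $n$ faces, and $V(M_n)$ sits naturally inside $V(Q_n)$ as one of the two colour classes, with every $M_n$-edge being a diagonal of a face of $Q_n$. One first proves the metric equivalence $\dgr^{M_n}\sim \alpha\,\dgr^{Q_n}|_{V(M_n)}$ uniformly (the second main claim announced in the paper), by adapting Steps 1–3 with the ``weight'' on a quadrangulation edge equal to $\tfrac12$ times its contribution to an $M_n$-path. Given this, an edge-weighted analogue of the same correspondence lets one reinterpret the FPP on $M_n$ as an auxiliary FPP on $Q_n$ and conclude with a constant $\bc'=\alpha\,\bc_{\text{aux}}$.

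\textbf{Main obstacle.} The delicate part is Step 3: the naive union bound over $\binom{n}{2}$ pairs would require exponentially sharp concentration, which the subadditive/peeling method does not directly give. One must therefore exploit the invariance of the \UIPQ very carefully to reduce to a polynomial number of ``typical'' configurations, and control rare events where a geodesic enters regions of atypical volume growth. A secondary obstacle is metric control under Tutte's bijection, which is not an isometry up to a scalar: establishing that $\dgr^{M_n}$ and $\dgr^{Q_n}$ are asymptotically proportional requires a subadditive argument of its own on the \UIPM/\UIPQ pair and careful handling of the ``shortcut'' edges of $M_n$ that do not correspond to graph-distance $2$ pairs in $Q_n$.
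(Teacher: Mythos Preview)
Your high-level architecture (subadditivity on an infinite model, transfer to the finite map, then uniformisation) matches the paper, but two of your four steps have real gaps that the paper resolves by a different route.

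\textbf{Step 1--2 vs.\ the paper.} The paper does \emph{not} run a subadditive argument directly on the \UIPQ via peeling, nor does it use any martingale/Efron--Stein concentration. Instead it introduces an auxiliary model, the lower half-plane quadrangulation \LHPQ, built from i.i.d.\ Galton--Watson trees via Krikun's skeleton decomposition. Subadditivity is trivial in the \LHPQ because disjoint horizontal slices are independent (Proposition~\ref{Prop_P18 - ergodic}), and this gives $\bc_p$ directly. One then compares the skeleton of a \UIPQ annulus to an i.i.d.\ forest by an explicit absolute-continuity bound (Proposition~\ref{Prop_P5}), and combines this with coalescence of left-most geodesics (Proposition~\ref{Prop_P17 coalescence primal}) to get the ``thin annulus'' estimate (Proposition~\ref{Prop_P19 thin annuli}). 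Summing over concentric annuli gives the root-to-boundary estimate in the \UIPQ (Proposition~\ref{Prop_P20 distances root - boundary of the hull, UIPQ}). No concentration inequality of the type you describe is ever needed or proved.

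\textbf{Step 3: the main gap.} You correctly identify that a union bound over $\binom{n}{2}$ pairs would need stretched-exponential tails you do not have. The paper sidesteps this entirely: it first proves the two-point statement $|\dfpp^{Q_n}(\rho_n,\partial_n)-\bc_p\dgr^{Q_n}(\rho_n,\partial_n)|=o(n^{1/4})$ for the root and a \emph{single} uniform vertex (Proposition~\ref{Prop_P21 distances root - uniform point, Qn}), via a coupling of $Q_n^\bullet$ with the \UIPQ through the CVS bijection (Propositions~\ref{Prop_TBP 8}--\ref{Prop_TBP 9}). By re-rooting invariance this extends to two independent uniform vertices. Then---and this is the key point you are missing---the Gromov--Hausdorff--Prokhorov convergence of $Q_n$ to the Brownian map implies that a \emph{fixed} number $N$ of i.i.d.\ uniform vertices form an $\ve n^{1/4}$-net with high probability. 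One applies the two-point estimate to the $\binom{N}{2}$ pairs of net points and concludes by the triangle inequality. No exponential tails, no absolute continuity at scale $n^{1/4}$, and only a bounded number of applications of the pointwise result.

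\textbf{Step 4.} Your plan to reinterpret the $M_n$-FPP as an auxiliary FPP on $Q_n$ is too coarse: $M_n$-edges are diagonals of $Q_n$-faces, so $M_n$-paths need not follow $Q_n$-edges at all, and the metric control is genuinely non-trivial in one direction. The paper repeats the whole machinery (\LHPQ, skeleton, annuli, coupling) for $\Ti{Q_\infty}$, but must first build substitutes for left-most geodesics in $M_\infty$ (the ``downward paths'' of Section~\ref{Section_Downward_paths}) and control their lengths (Lemma~\ref{Lemma_control_length_DP}). This yields $|\dfpp^{M_n}-\bc_T\dgr^{Q_n}|=o(n^{1/4})$; the constant $\bc_T=1$ in the case $\dfpp=\dgr$ is then read off \emph{a posteriori} from the already-known Brownian map scaling limit of $M_n$, which also gives $\bc'=\bc_T$ in general.
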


As an immediate consequence of the theorem, we get that the convergence to the Brownian map 
still holds for both models in consideration if the graph distance is replaced by the
first-passage percolation distance. More precisely, under the assumptions of Theorem \ref{Th1}
and with the same constants $\bc$ and $\bc'$, we have 
\begin{equation}
\label{Eq_cv_quad}
\l( V(Q_n), \pfrac{9}{8n}^{1/4} \dfpp^{Q_n} \r) \ulimd n \infty (\bmap, \bc\,D^*) ,
\end{equation}
and
\begin{equation}
\label{Eq_cv_map}
\l( V(M_n), \pfrac{9}{8n}^{1/4} \dfpp^{M_n} \r) \ulimd n \infty (\bmap, \bc' D^*) ,
\end{equation}
where $(\bmap, D^*)$ is the Brownian map, and both convergences hold in distribution in the Gromov-Hausdorff space. 
Indeed, this follows from Theorem \ref{Th1} and the known convergences for the graph distance 
which have been established in \cite{leGall2013uniqueness,miermont2013brownian} for $Q_n$
and in \cite{bettinelli2014scaling} for $M_n$. It is remarkable that the same constant $(9/8)^{1/4}$
appears in both \eqref{Eq_cv_quad} and \eqref{Eq_cv_map}. This will be better understood in the next theorem.

Another major goal of the present article is to have a better understanding of the metric properties of 
Tutte's bijection. Recall that Tutte's bijection, also called the {\em trivial bijection}, gives for every $n\geq 1$ a one-to-one correspondence 
between the set of all rooted quadrangulations with $n$ faces and the set of all rooted planar maps with $n$ edges.
The definition of this correspondence should be clear from Figure \ref{Fig_Tutte_definition}. The following theorem can be interpreted by
saying that Tutte's transformation acting on a large quadrangulation is nearly isometric with respect to the 
graph distances. 

\begin{theorem}
\label{Th2}
Let $Q_n$
be uniformly distributed over the set of all rooted quadrangulations with $n$ faces and let $M_n$ 
be the image of $Q_n$ under \Tuttebij, so that
$M_n$ is uniformly distributed over the set of all rooted planar maps with $n$ edges and $V(M_n)$ is identified to a subset of $V(Q_n)$. 
Then,
\begin{equation*}
n^{-1/4} \sup_{x,y \in V(M_n)} \l| \dgr^{Q_n}(x,y) - \dgr^{M_n}(x,y) \r| \ulim{n}{\infty} 0
\end{equation*}
where the convergence holds in probability. 
\end{theorem}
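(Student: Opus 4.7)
My plan is to combine the deterministic structure of Tutte's bijection with the first-passage percolation techniques of Theorem~\ref{Th1}.

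\textbf{Deterministic half of a sandwich.} Each edge $uv$ of $M_n$ lies in a quadrangle of $Q_n$ with $u$ and $v$ as opposite primal corners, so $\dgr^{Q_n}(u,v)\le 2$ via either dual corner. Lifting an $M_n$-geodesic edge-by-edge yields
\[
\dgr^{Q_n}(x,y) \le 2\,\dgr^{M_n}(x,y) \qquad (x,y\in V(M_n)),
\]
whence $|\dgr^{Q_n}-\dgr^{M_n}|\le\dgr^{M_n}=O(n^{1/4})$. The theorem requires sharpening this factor $2$ to $1+o(1)$ at scale $n^{1/4}$, in both directions.

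\textbf{From $Q_n$-geodesics to $M_n$-distances.} A $Q_n$-geodesic of length $2k$ between primal $x,y$ visits alternating primal and dual vertices $x=v_0,d_1,v_1,\ldots,v_k=y$; each dual $d_i$ is the center of a face $f_i$ of $M_n$ whose boundary contains both $v_{i-1}$ and $v_i$. Hence $\dgr^{M_n}(v_{i-1},v_i)\le\lfloor\deg(f_i)/2\rfloor$, and summing,
\[
\dgr^{M_n}(x,y) \le \frac{1}{2}\sum_{i=1}^k \deg(f_i).
\]
Since uniform planar maps have mean face degree $4$, the right-hand side should concentrate near $2k=\dgr^{Q_n}(x,y)$. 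I would establish this uniformly in $(x,y)$ by adapting the first-passage percolation techniques of Theorem~\ref{Th1} to a weighting of $Q_n$-edges that tracks the degree of the dual endpoint, together with uniform degree estimates along $Q_n$-geodesics, to obtain $\dgr^{M_n}(x,y)\le\dgr^{Q_n}(x,y)+o(n^{1/4})$.

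\textbf{From $M_n$-geodesics to $Q_n$-distances.} The naive lift saturates the factor-$2$ bound, so the opposite inequality requires dual-vertex shortcuts: whenever three consecutive vertices $u_{i-1},u_i,u_{i+1}$ of the $M_n$-geodesic lie on a common face $f$ of $M_n$ (i.e.\ the two incident edges at $u_i$ are cyclically adjacent around $u_i$), the segment can be routed as $u_{i-1}\to d_f\to u_{i+1}$ in $Q_n$, saving $2$ units against the naive lift. I would introduce a first-passage percolation on $M_n$ that tracks the availability of such configurations along geodesics, apply Theorem~\ref{Th1} to it, and verify that the effective density of shortcut-able triples yields exactly the reduction from $2\dgr^{M_n}(x,y)$ down to $\dgr^{M_n}(x,y)+o(n^{1/4})$.

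\textbf{Main obstacle.} The most delicate step is the third one: pinning down the effective density of shortcut-able triples precisely enough to recover the constant $1$ of the theorem (and not some other constant). The auxiliary FPPs invoked in both directions are not strictly i.i.d.\ with compact support, so a coupling with a genuine i.i.d.\ FPP in the spirit of \cite{fpp} is needed to invoke Theorem~\ref{Th1} as a black box. Upgrading the pointwise estimates to the uniform-in-$(x,y)$ bound is then routine, by chaining over a dense net in $V(M_n)$ using the $n^{1/4}$-diameter estimates for random planar maps.
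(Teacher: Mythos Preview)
Your proposal has a genuine gap, and you in fact identify it yourself as the ``main obstacle'': pinning down the constant $1$. Neither of your two directions can determine the constant. In the second step, the mean face degree in $M_n$ is $4$, but there is no reason a $Q_n$-geodesic should sample faces unbiasedly; geodesics can and typically do favor particular local configurations, so the sum $\frac{1}{2}\sum\deg(f_i)$ need not concentrate near $2k$. In the third step, the density of ``shortcut-able'' triples along an $M_n$-geodesic is again a quantity determined by the interaction between geodesics and local structure, and you give no argument for why it should equal exactly $1/2$. The auxiliary FPPs you propose are not i.i.d., and a coupling to a genuine i.i.d.\ FPP would at best yield \emph{some} constant, not the constant $1$.

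The paper's proof avoids this difficulty entirely by not attempting to compute the constant. It first runs the full FPP machinery (subadditivity in the \LHPQ, absolute continuity with the \UIPQ, coalescence of downward paths) to show that for any i.i.d.\ bounded weights on $M_n$ there is a constant $\bc_T$ with $\dfpp^{M_n}(x,y)=\bc_T\,\dgr^{Q_n}(x,y)+o(n^{1/4})$ uniformly. Specializing to weights identically $1$ gives $\dgr^{M_n}=\bc_T\,\dgr^{Q_n}+o(n^{1/4})$. The constant $\bc_T$ is then identified as $1$ by an external input: both $(V(Q_n),n^{-1/4}\dgr^{Q_n})$ and $(V(M_n),n^{-1/4}\dgr^{M_n})$ are already known to converge to the Brownian map with the \emph{same} normalizing constant $(9/8)^{1/4}$ (by \cite{leGall2013uniqueness,miermont2013brownian} and \cite{bettinelli2014scaling} respectively), which forces $\bc_T=1$. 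Your approach would need an independent route to this identification, and none is sketched.
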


We can combine Theorems \ref{Th1} and \ref{Th2} to get that, if $Q_n$ and $M_n$ are linked by Tutte's bijection, 
the convergences in distribution \eqref{Eq_cv_quad} and \eqref{Eq_cv_map} hold jointly, with the {\em same}
space $(\bmap,D^*)$ in the limit. This is reminiscent of Theorem 1.1 in \cite{bettinelli2014scaling}, which gives
a similar joint convergence, but in the case where $Q_n$ and $M_n$ are linked by a different bijection
(the Ambj\o rn-Budd bijection) which is more faithful to graph distances.

\begin{figure}
\begin{center}
\includegraphics[width=\textwidth]{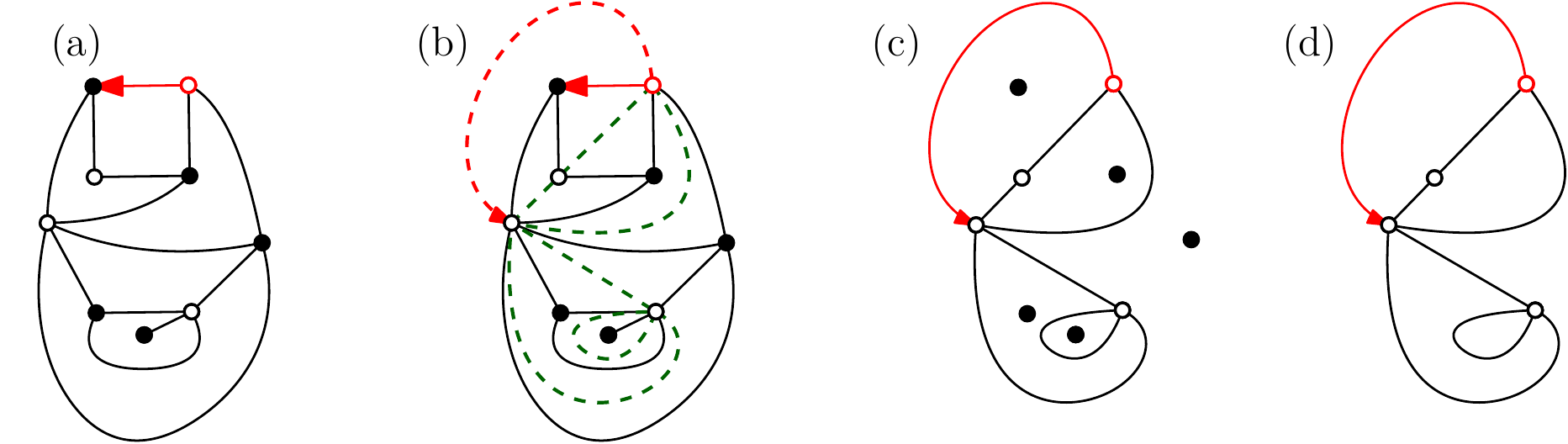}
\end{center}
\caption{Illustration of \Tuttebij. (a) On the left, a quadrangulation with $7$ faces. Color the tail of its root vertex in white, and every other vertex in black and white so that adjacent vertices have a different color. 
(b) In every face of the quadrangulation, add a diagonal between its white corners. 
(c) Erase the edges of the quadrangulation. 
(d) The black vertices now have degree zero and are also erased. We obtain a map with $7$ edges, which is rooted at the edge corresponding to
the diagonal drawn in the face to the right of the root edge of the quadrangulation, oriented so that the root vertex remains the same.}
\label{Fig_Tutte_definition}
\end{figure}

We can also give versions of the preceding results for the infinite random lattices that arise 
as local limits (in the Benjamini-Schramm sense) of large quadrangulations or general
planar maps. We write $Q_\infty$ for the \UIPQ or uniform infinite planar quadrangulation, and
$M_\infty$ for the \UIPM or uniform infinite planar map. As was observed by M\'enard and Nolin \cite{menard2014percolation},
the \UIPM can be obtained by applying (a generalized version of) Tutte's correspondence 
to the \UIPQ.

\begin{theorem}
\label{Th3}
Let $\dfpp^{Q_\infty}$ and $\dfpp^{M_\infty}$ be the first-passage percolation distances defined
on the vertex sets $V(Q_\infty)$ and $V(M_\infty)$, respectively, by assigning
edge weights satisfying the same assumptions as in Theorem \ref{Th1}. Write $\rho_{Q_\infty}$ and 
$\rho_{M_\infty}$ for the respective root vertices of $Q_\infty$ and $M_\infty$. Then, for every $\ve>0$,
$$\lim_{r \to \infty} \P\l( \sup_{x,y \in V(Q_\infty), \ \dgr^{Q_\infty}(\rho_{Q_\infty},x)\vee \dgr^{Q_\infty}(\rho_{Q_\infty},y)\leq r} |\dfpp^{Q_\infty}(x,y) - \bc\, \dgr^{Q_\infty}(x,y) | > \ve r \r) = 0 ,$$
and similarly,
$$\lim_{r \to \infty} \P\l( \sup_{x,y \in V(M_\infty), \ \dgr^{M_\infty}(\rho_{M_\infty},x)\vee \dgr^{M_\infty}(\rho_{M_\infty},y)\leq r} |\dfpp^{M_\infty}(x,y) - \bc' \dgr^{M_\infty}(x,y) | > \ve r \r) = 0 ,$$
with the same constants $\bc$ and $\bc'$ as in Theorem \ref{Th1}. Moreover, if 
the \UIPQ $Q_\infty$ and the \UIPM $M_\infty$ are linked by Tutte's correspondence, we have also
$$\lim_{r \to \infty} \P\l( \sup_{x,y \in V(M_\infty), \ \dgr^{M_\infty}(\rho_{M_\infty},x)\vee \dgr^{M_\infty}(\rho_{M_\infty},y)\leq r} |\dgr^{M_\infty}(x,y) - \dgr^{Q_\infty}(x,y) | > \ve r \r) = 0.$$
\end{theorem}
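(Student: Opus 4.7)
The plan is to transfer Theorems \ref{Th1} and \ref{Th2} to the infinite-volume setting via the local (Benjamini--Schramm) convergence $Q_n \to Q_\infty$ and $M_n \to M_\infty$, combined with a localization-of-geodesics argument.

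First I would set up a geodesic-confinement lemma. Since the weight distribution is supported on a compact subset $[c,C] \subset (0,\infty)$, any fpp-geodesic of fpp-length $L$ uses between $L/C$ and $L/c$ edges. For any pair $x,y \in V(Q_\infty)$ with $\dgr^{Q_\infty}(\rho,x) \vee \dgr^{Q_\infty}(\rho,y) \le r$, this gives $\dgr^{Q_\infty}(x,y) \le 2r$ and $\dfpp^{Q_\infty}(x,y) \le 2Cr$, so every graph or fpp-geodesic connecting such a pair stays within the ball $B_{Kr}^{Q_\infty}(\rho)$ with $K = 1 + 2C/c$. In particular, the event in the first assertion is measurable with respect to the weighted ball $B_{Kr}^{Q_\infty}(\rho)$. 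Using the known fact that the hull of radius $r$ in $Q_\infty$ has volume of order $r^4$, one can couple $Q_\infty$ with $Q_n$ for $n = n(r) = r^4\,\omega(r)$ with any $\omega(r) \to \infty$, so that $B_{Kr}^{Q_n}(\rho) = B_{Kr}^{Q_\infty}(\rho)$ as weighted graphs with probability tending to $1$ as $r \to \infty$.

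It then remains to apply Theorem \ref{Th1} in $Q_{n(r)}$. A naive application yields error $o(n(r)^{1/4}) = o(r\,\omega(r)^{1/4})$, which may exceed $\ve r$; scale matching must therefore be carried out carefully. To this end I would extract a rate from the in-probability convergence of Theorem \ref{Th1}: by a standard diagonal argument, there exists a deterministic sequence $\eta_n \to 0$ for which $\P(\sup_{x,y \in V(Q_n)} |\dfpp^{Q_n}(x,y) - \bc\,\dgr^{Q_n}(x,y)| > \eta_n\,n^{1/4}) \to 0$. Choosing $\omega(r)$ to diverge slowly enough that $\eta_{n(r)}\,\omega(r)^{1/4} \to 0$ (for instance $\omega(r) = \eta_{r^4}^{-1/2}$), the error becomes $o(r)$, and combined with the coupling of the previous paragraph this proves the first assertion of Theorem \ref{Th3}.

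The second assertion follows by the same method applied to $M_n$ and $M_\infty$. For the third assertion, I would invoke the M\'enard--Nolin observation that Tutte's bijection commutes with the local limit, so that $M_\infty = T(Q_\infty)$ and the identification $V(M_n) \subset V(Q_n)$ passes to the infinite setting; Theorem \ref{Th2} then transfers by the same localization-and-coupling argument. The principal obstacle is the scale matching in the third paragraph: the local coupling requires $n \gg r^4$, whereas the error provided by Theorem \ref{Th1} is merely $o(n^{1/4})$, which dominates $\ve r$ in that regime. The diagonal extraction above is the key technical device that reconciles these two requirements, made possible by the freedom to let $\omega(r)$ diverge as slowly as desired.
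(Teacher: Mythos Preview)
Your strategy is essentially the one the paper uses: confine the relevant graph and fpp geodesics to a ball of radius comparable to $r$, couple that ball with the corresponding ball in a finite map, and apply the finite-volume Theorems \ref{Th1} and \ref{Th2}. The one substantive difference is in the coupling step. You assume only a soft coupling at scale $n = r^4\omega(r)$ with $\omega(r)\to\infty$, which forces you to patch the resulting scale mismatch via a diagonal extraction of a rate $\eta_n\to 0$ from Theorem \ref{Th1}. The paper instead proves a sharper coupling (Proposition \ref{Prop_TBP 9}, via the Cori--Vauquelin--Schaeffer encoding and pruned trees): for every $\ve>0$ there is a \emph{fixed} $\chi>0$ such that, for all large $n$, one can couple $Q_n^\bullet$ and $Q_\infty$ with $\Hull_{Q_n^\bullet}(\chi n^{1/4}) = \Hull_{Q_\infty}(\chi n^{1/4})$ with probability at least $1-\ve$. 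Since $r$ and $n^{1/4}$ are then proportional, the error $\ve n^{1/4}$ from Theorem \ref{Th1} is automatically of order $\ve r$, and no diagonal argument is needed. Your route is valid (the extraction works once you pass to the monotone envelope $f(k)=\sup_{n\ge k}\eta_n$ and take, say, $\omega(r)=f(r^4)^{-2}$), but the paper's sharper coupling makes the transfer cleaner and avoids the scale-matching issue you flagged as the principal obstacle.
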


A consequence of the first assertions of the theorem is the fact that balls (centered at the root vertex) for the first-passage percolation distance are asymptotically the same as 
for the graph distance, both in $Q_\infty$ and in $M_\infty$.  More precisely, in $Q_\infty$ for definiteness, the (metric) ball of radius $r$ for the first-passage percolation distance will be contained in the graph distance ball of radius $(1+\ve)r/\bc$ and will contain the graph distance ball of radius $(1-\ve)r/\bc$, with high probability when $r$ is large. This is in sharp
contrast with the behavior expected for deterministic lattices.

In the same way as in \cite{fpp}, our proofs rely on a  ``skeleton decomposition'' which in the case of
quadrangulations appeared first in the work of Krikun \cite{Krikun2008local}, and has been used 
extensively in \cite{gall2017separating}. Recall that, in the \UIPQ $Q_\infty$, the hull of radius $r$ is defined as the complement of the infinite
connected component of the complement of the ball of radius $r$ centered at the root vertex (informally, the hull
is obtained by filling in the finite holes in the ball of radius $r$, see Section \ref{Sec_Preliminaries}
for a more precise definition). The skeleton decomposition provides a detailed description of the joint
distribution of layers of the \UIPQ, where, roughly speaking, a layer corresponds to the part of the map between the
boundary of the hull of radius $r$ and the boundary of the hull of radius $r+1$. This description allows us
to compare the \UIPQ near the boundary of a hull with another infinite model which we call the
\LHPQ for lower half-plane quadrangulation (Section \ref{Sec_LHPQ}). The point is then that a subadditive ergodic theorem can be applied to evaluate 
 first-passage percolation distances in the \LHPQ. Quite remarkably, this method carries over to 
 the study of the  first-passage percolation distance in the general planar maps
 that are obtained from quadrangulations by Tutte's corrrespondence, with the minor difference that we must restrict
 to hulls of even radius in the quadrangulation. 

Even though the idea of using the skeleton decomposition already appeared in \cite{fpp} in the setting
of triangulations, there are important differences between the present work and \cite{fpp}, and
our proofs are by no means straightforward extensions of those of \cite{fpp}. In particular, a very
important ingredient of our method involves bounds on graph distances along the boundary in the \LHPQ.
To derive these bounds we use a completely different approach from that developed in \cite{fpp}
for the model called the lower half-plane triangulation. Our approach, which relies
on certain ideas of \cite{curien2017geometric}, is simpler and avoids the heavy combinatorial analysis of \cite{fpp}. Similarly,
the application of the subadditive ergodic theorem gives information about the first-passage percolation distance
between points of the boundary of a hull of the \UIPQ and the root vertex (Proposition \ref{Prop_P20 distances root - boundary of the hull, UIPQ} below)
but a key step is then to derive information about the distance between a typical point and the root vertex in the {\rm finite} quadrangulation $Q_n$
(Proposition \ref{Prop_P21 distances root - uniform point, Qn}): For this purpose, the lack of certain explicit combinatorial expressions did not allow
us to use the same approach as in \cite{fpp}, and we had to develop a different method based on a coupling between $Q_n$
and the \UIPQ. Finally, the treatment of a general planar map $M_n$ given as the image of the quadrangulation $Q_n$ under Tutte's bijection also
required a number of new tools, in particular because the graph distance on $M_n$ is not easily controlled 
in terms of the graph distance on $Q_n$. 

The paper is organized as follows. Section \ref{Sec_Preliminaries} gives a number of preliminaries about planar maps and the skeleton 
decomposition. We introduce the notion of a quadrangulation of the cylinder, which already played 
an important role in \cite{gall2017separating}, and we define the so-called truncated hulls, which are variants of
the (standard) hulls considered in earlier work. Section \ref{Sec_LHPQ}
discusses the lower half-plane quadrangulation. In particular, we derive the important bounds
controlling distances along the boundary (Proposition \ref{Prop_P15}). Section \ref{Sec-tech}
gives two technical propositions. The first one (Proposition \ref{Prop_P5}) provides bounds for the
distribution of the skeleton of a (truncated) hull of the \UIPQ in terms of the skeleton associated 
with the \LHPQ. This is the key to transfer results obtained in the \LHPQ (by subadditive arguments)
to the \UIPQ. Section \ref{Sec-main-quad} derives our main results about first-passage percolation distances
in quadrangulations. We start by proving Proposition \ref{Prop_P19 thin annuli}, which estimates the
$\dfpp^{Q_\infty}$-distance between a vertex of the boundary of the (truncated) hull of radius $r$
and the boundary of the hull of radius $r-\floor{\eta r}$, for $\eta>0$ small. This key proposition  is then used to
get Proposition \ref{Prop_P20 distances root - boundary of the hull, UIPQ} concerning the distance between a 
point of the boundary of a hull and the root vertex. Then the hard work is to prove Proposition \ref{Prop_P21 distances root - uniform point, Qn}
controlling the distance between a uniformly distributed vertex of $Q_n$ and the root vertex. From this proposition, it
is not too hard to derive Theorem \ref{Th_T1 controle distances fpp et gr dans quad finies}, which gives 
the part of Theorem \ref{Th1} dealing with quadrangulations. Section \ref{technical-general} contains certain
technical results concerning graph distances in the general maps associated with quadrangulations via
Tutte's correspondence. We introduce the so-called downward paths, which are closely related to the
skeleton decomposition of the associated quadrangulation, and we derive important bounds on the 
length of these paths (Lemma \ref{Lemma_control_length_DP}). Finally, Section \ref{Sec_Main results for generic maps}
is devoted to the proof of the results concerning general maps. In particular, Theorem \ref{dist-fpp-gr}
shows that, if $M_n$ and $Q_n$ are linked by Tutte's bijection, the first-passage percolation distance in $M_n$
is asymptotically proportional to the graph distance in $Q_n$. In the case where weights are equal to $1$, the proportionality constant
has to be equal to $1$ (because of the known results about scaling limits of $Q_n$ and $M_n$), which gives Theorem \ref{Th2} and then
the part of Theorem \ref{Th1} concerning general maps. Several proofs in this section are very similar to the proofs of Section \ref{Sec-main-quad}. For this reason,
we have only sketched certain arguments, but we emphasize the places where new ingredients are required.

\section{Preliminaries}
\label{Sec_Preliminaries}

A (finite) \emph{planar map} is a planar graph embedded in the sphere and seen up to orientation-preserving homeomorphisms. We allow multiple edges and loops. 
 Since we consider only planar maps we often say \emph{map} instead of planar map. 
 If $M$ is a map, we denote the set of vertices, edges, and faces of $M$ by $V(M),E(M), F(M)$  respectively. 
We write $\dgr^M$ for the \emph{graph distance} on $V(M)$. Given $\iid$ random weights $(\vo_e)_{e \in E(M)}$ assigned to the edges of $M$, we also define the 
associated \emph{first-passage percolation distance} $\dfpp^M$ as follows. 
We define the weight of a path $\vg$ as the sum of the weights of its edges, and the first-passage percolation distance $\dfpp^M(x,y)$ between two vertices $x$ and $y$ of $M$ is the infimum of the weights of paths starting at $x$ and ending at $y$. 
 Note that, if $\vo_e = 1$ for every edge $e$, we recover the graph distance on $V(M)$.

A \emph{rooted map} is a map with a distinguished oriented edge called the \emph{root edge}. The tail of the root edge is called the \emph{root vertex} and is usually denoted by $\rho$. The
face lying to the right of the root edge is called the root face.
We say that a rooted map is \emph{pointed} if in addition it has a distinguished vertex $\partial$.


\paragraph{Models of quadrangulations.}

A \emph{quadrangulation} is a rooted map whose faces all have degree $4$. Quadrangulations are bipartite maps, so we may and will color the vertices of a quadrangulation in black and white so that two adjacent vertices have different colors and the root vertex is white.

A \emph{truncated quadrangulation} is a rooted map such that
\begin{itemize}
\item the root face has a simple boundary and an arbitrary even degree,
\item every edge incident to the root face is also incident to another face which has degree 3, and these triangular faces are distinct,
\item all the other faces 
have degree 4.
\end{itemize}
The root face is also called the external face, and faces other than the external face are called inner faces. The length of the external boundary (the boundary of the external face)
is called the perimeter of the truncated quadrangulation.

We will also consider infinite (rooted but not pointed) quadrangulations for which we assume --- except in the case of the \LHPQ discussed in
Section \ref{Sec_LHPQ} --- that they are embedded in the plane in such a way  that 
 all faces are bounded subsets of the plane, and furthermore every compact subset of the plane intersects only finitely many faces (and again infinite quadrangulations are viewed up to orientation preserving homeomorphisms). These properties hold a.s. for the \UIPQ.

\paragraph{Hulls and truncated hulls.}

Let $Q$ be a (finite or infinite) quadrangulation with root vertex $\rho$. For every integer $r\geq 1$, we denote the \emph{ball of radius $r$ in $Q$} by $B_Q(r)$. This ball is the map obtained by taking the union of all faces that are incident to a vertex at graph distance at most $r-1$ from $\rho$. Suppose in addition that $Q$ is finite and pointed, and let $R=\dgr^Q(\rho,\partial)$ be the graph distance between the root vertex and 
the distinguished vertex. Then for every integer $1 \leq r \leq R-2$, the \emph{standard hull of radius $r$ of $Q$}, denoted by $\Hull_{Q}(r)$, is the union of $B_{Q}(r)$ and of the connected components of its complement that do not contain $\partial$. If $Q$ is an infinite quadrangulation, then, for every $r\geq 1$,  the standard hull of radius $r$ of $Q$ is defined as the union of $B_{Q}(r)$ and the finite connected components of its complement, and is also denoted by $\Hull_{Q}(r)$. In both the finite and the infinite case, the 
standard hull $\Hull_{Q}(r)$ is a quadrangulation with a simple  boundary (meaning that all faces are quadrangles, except for one distinguished face, which
has a simple boundary). If $r>1$ is not an integer, we will agree that
$B_Q(r)=B_Q(\floor{r})$ and $\Hull_{Q}(r)=\Hull_{Q}(\floor{r})$.

We also need to define {\em truncated hulls}. To this end, consider first the case where $Q$ is finite and pointed. We
label the vertices of $Q$ with their graph distance to $\rho$, and we consider an integer $r$ such that $0<r<\dgr^Q(\rho,\partial)$.
Inside every face such that the labels of its incident corners are $r, r-1, r, r+1$, we draw a ``diagonal'' between the corners of label $r$. The added edges form a collection of cycles, from which we extract a ``maximal'' simple cycle $\partial_r Q$. 
 This cycle is maximal in the sense that the connected component of the complement of $\partial_r Q$ that contains the distinguished vertex contains no vertex with label less than or equal to $r$. See \cite[Section 2.2]{gall2017separating} for more details. 
 The exterior of $\partial_r Q$ is the connected component of the complement of $\partial_r Q$ that contains the marked vertex of $Q$. 
 If $Q$ is an infinite quadrangulation, the cycles $\partial_rQ$ can be defined in exactly the same way, now for every integer $r>0$ (the exterior of $\partial_rQ$
 is now the the unbounded connected component of the complement of $\partial_r Q$).

 In both the finite and the infinite case, the \emph{truncated hull} of radius $r$ of $Q$ is the map $\Htr_Q(r)$ made of $\partial_r Q$ and of the edges of $Q$ inside $\partial_r Q$, and is rooted
 at the ``same'' edge as $Q$. 
Then we may view $\Htr_Q(r)$ as a truncated quadrangulation (for which the external face corresponds to the exterior of $\partial_r Q$) provided we re-root 
$\Htr_Q(r)$ at an edge of its external boundary.

\paragraph{Quadrangulations of the cylinder.}

A \emph{quadrangulation of the cylinder of height $R>0$} is a rooted map $Q$ with two distinguished faces, called the \emph{top} and \emph{bottom} faces (the other faces are called \emph{inner faces}), such that
\begin{romenum}
\item the boundary of the top (\resp bottom) face, called the \emph{top boundary} (\resp the \emph{bottom boundary}) is a simple cycle, 
\item $Q$ is rooted at an oriented edge of its bottom boundary so that the bottom face lies on the right of the root edge (so the bottom face is the root face),
\item every edge of the top (\resp bottom) boundary is incident both to the top (\resp bottom) face and to a triangular face, these triangular faces are distinct, and all other inner faces have degree 4,
\item any vertex of the top boundary is at graph distance $R$ from the bottom boundary, and the inner triangular face incident to any edge of the top boundary is also incident to a vertex at graph distance $R-1$ from the bottom boundary.
\end{romenum}
Let $Q$ be a quadrangulation of the cylinder of height $R$. Label every vertex of $Q$ by its distance from the bottom boundary. For $0<r<R$ we can define $\partial_r Q$ and the truncated hull of radius $r$ in a way very similar to what we did for pointed quadrangulations ($\partial_r Q$ is the ``maximal'' cycle made of diagonals between corners labeled $r$ in faces 
whose corners are labeled $r-1,r,r+1,r$, and the exterior of $\partial_r Q$ now contains the top cycle). See \cite[Section 2.3]{gall2017separating} for details. 
Note that the truncated hull of radius $r$ of $Q$ is itself a quadrangulation of the cylinder of height $r$. 
By convention, we agree that $\partial_0 Q$ denotes the bottom boundary, and $\partial_R Q$ stands for the top boundary of $Q$. 
We will assume that quadrangulations of the cylinder are drawn in the plane in such a way that the top face is the unbounded face (see \figref{Fig_skeleton_decomp}). Then we will orient the cycles $\partial_r Q$ clockwise by convention.

\begin{figure}
\begin{center}
\includegraphics[width=0.5\textwidth]{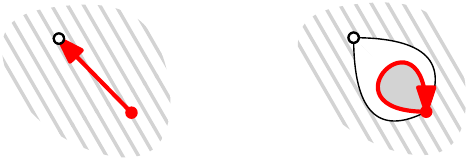}
\end{center}
\caption{We can split the root edge of a truncated hull, add a loop inside the newly created face, and see the map as a quadrangulation of the cylinder of bottom cycle the added loop.}
\label{Fig_cylinder_and_root}
\end{figure}

We may view the truncated hull of radius $r$ of a pointed quadrangulation $Q$ as a quadrangulation of the cylinder of height $r$, by splitting the root edge of $Q$ into a double edge and adding a loop inside the newly created face as in \figref{Fig_cylinder_and_root}. In this way, we get a quadrangulation of the cylinder of height $r$ whose bottom cycle is a loop.

\paragraph{\Lmgs.}

Let $Q$ be a quadrangulation of the cylinder of height $R$ and let $0<r\leq R$. 
We now explain a ``canonical'' choice of a geodesic between a vertex of $\partial_r Q$ and the bottom cycle $\partial_0 Q$. 
So let $v$ be a vertex on $\partial_r Q$, and let
$e$ be the edge of $\partial_r Q$ with tail $v$ (recall our convention for the orientation of $\partial_r Q$). Then list all edges incident to $v$ in clockwise order around $v$, starting from $e$. 
 The first step of the \emph{\lmg} from $v$ to $\partial_0 Q$ is the last edge in this enumeration that connects $v$ to $\partial_{r-1} Q$
 (Property (iv) above ensures that there is at least one such edge). We define the next steps of the geodesic by the obvious induction.

\paragraph{Skeleton decomposition.}

Let $Q$ be a quadrangulation of the cylinder of height $R$. Following ideas in \cite{Krikun2008local}, the article \cite{gall2017separating} gives a representation of $Q$ by a forest of planar trees of height at most $R$ and a collection of truncated quadrangulations indexed by the vertices of this forest. We refer to \cite{gall2017separating} for more details, and give a brief presentation.

We first add the edges of $\partial_r Q$ to $Q$ for every $0<r\leq R$ and recall that these edges are oriented clockwise in each cycle $\partial_r Q$. Let $0<r\leq R$, and
let $e$ be an edge of $\partial_r Q$. Then $e$ is incident to exactly one triangular face in $Q$ whose third vertex is at distance $r-1$ from the bottom boundary. We call this face the \emph{downward triangle with top edge $e$}. 
Furthermore, if $v$ is the aim of $e$, the downward triangle with top edge $e$ is also incident to the first edge of the \lmg from $v$ to the bottom boundary.

\begin{figure}
\begin{center}
\includegraphics[width=0.48\textwidth]{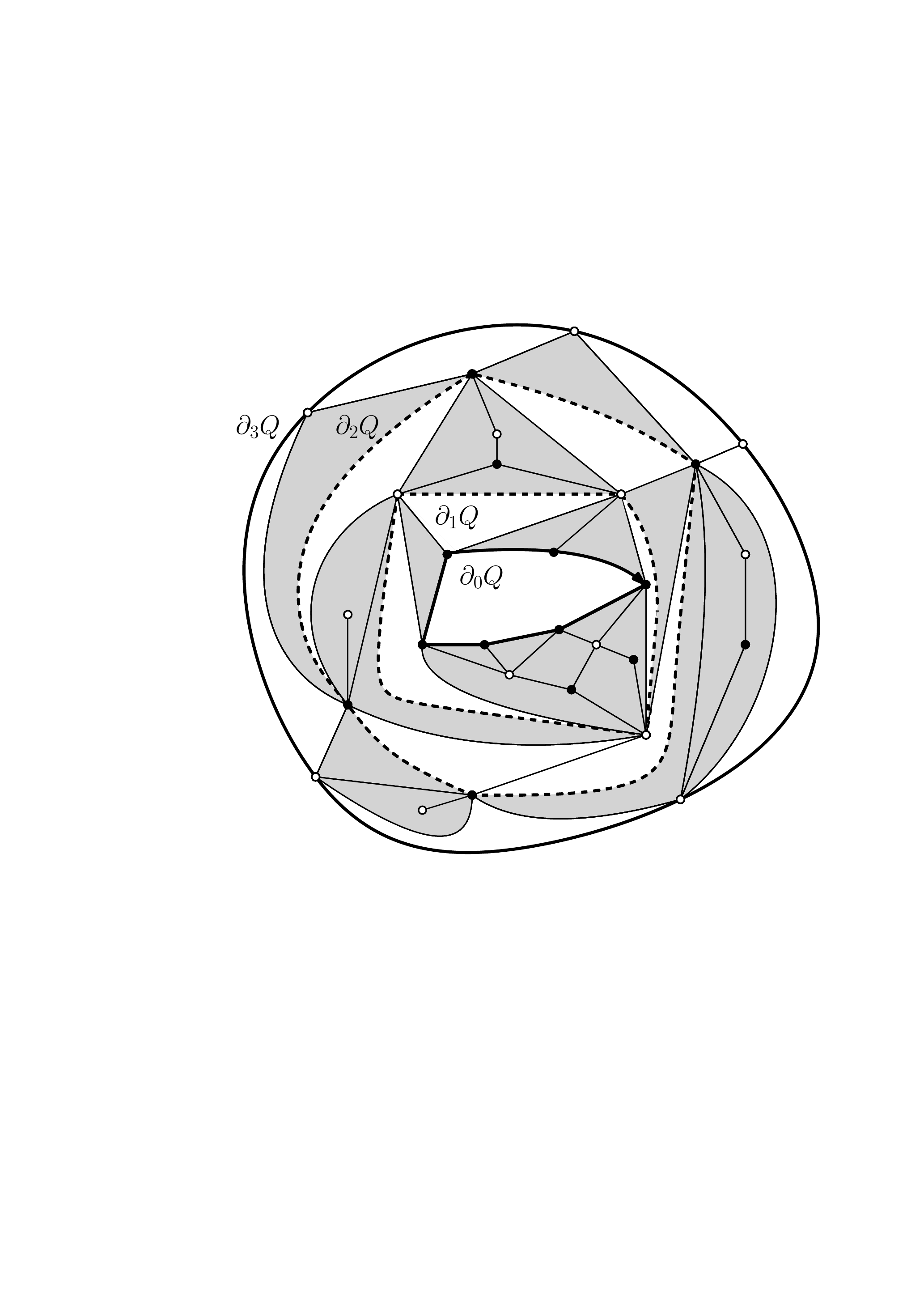}\hfill\includegraphics[width=0.48\textwidth]{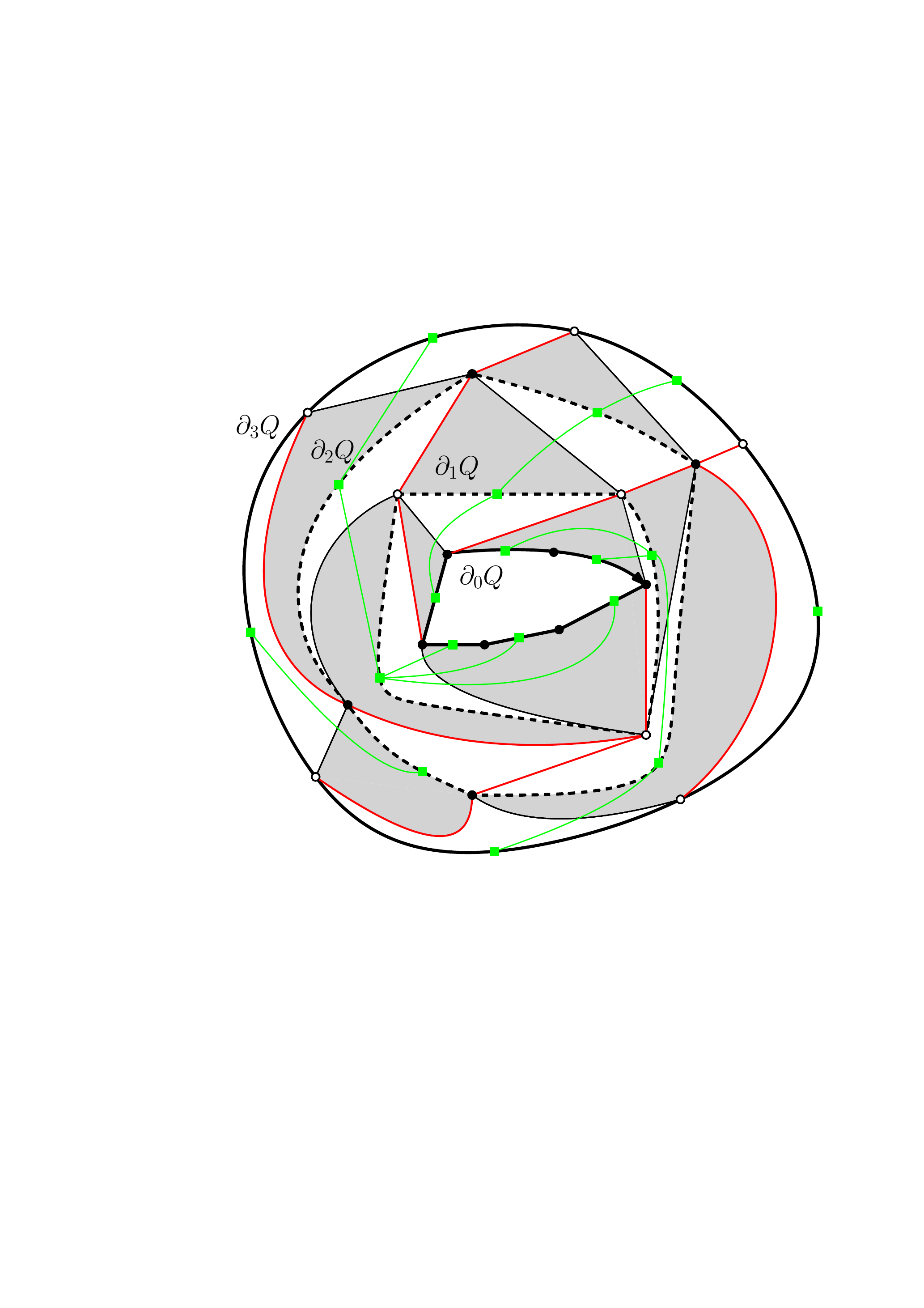}
\end{center}
\caption{Left, a quadrangulation of the cylinder of height $3$, with its cycles $\partial_1 Q$, $\partial_2 Q$ in dashed lines. We always draw the top face as the infinite face. Downward triangles are in white and the slots in grey. Right, we erased the content of the slots; in green, the genealogical relation on edges of $\partial_r Q$ for $0 \leq r \leq 3$, in red, the \lmgs to the bottom cycle follow the ``right'' side of downward triangles (assuming their top edge is ``up''), or equivalently the ``left'' side of slots.}
\label{Fig_skeleton_decomp}
\end{figure}
 
The downward triangles disconnect $Q$ into a collection of \emph{slots}, which are filled in by finite maps with a simple boundary. See \figref{Fig_skeleton_decomp}. Any slot is contained in the region between $\partial_r Q$ and $\partial_{r-1} Q$ for some $1\leq r \leq R$, and there is a unique vertex $v$ of $\partial_r Q$ that is incident to the slot. We then say that the slot is associated with the edge of $\partial_r Q$ whose tail is $v$. We equip the set of edges of $\cup_{r=0}^R \partial_r Q$ with the following genealogical relation: for every $0<r\leq R$, an edge $e \in \partial_r Q$ is the parent of all the edges of $\partial_{r-1} Q$ that are incident to the slot associated with $e$, provided that this slot exists. See the right side of \figref{Fig_skeleton_decomp}.

We now explain how we can define the truncated quadrangulation associated with a slot, or rather with an edge $e \in \partial_r Q$, $1 \leq r \leq R$. Suppose first that there is a slot associated with $e$. The part of $Q$ inside this slot, including its boundary, defines a planar map with a simple boundary and a distinguished vertex on this boundary. Adding one edge in the way explained in \figref{Fig_Construction_Tuile0} turns this map into a truncated quadrangulation $M$, which is rooted at the added edge as shown on \figref{Fig_Construction_Tuile0}. If there is no slot associated with $e$ we let $M$ be the unique truncated quadrangulation with perimeter one and one inner face (rooted at its boundary edge so that the external face lies on the right of the root edge).

\begin{figure}[h!]
\begin{center}
\includegraphics[width=\textwidth]{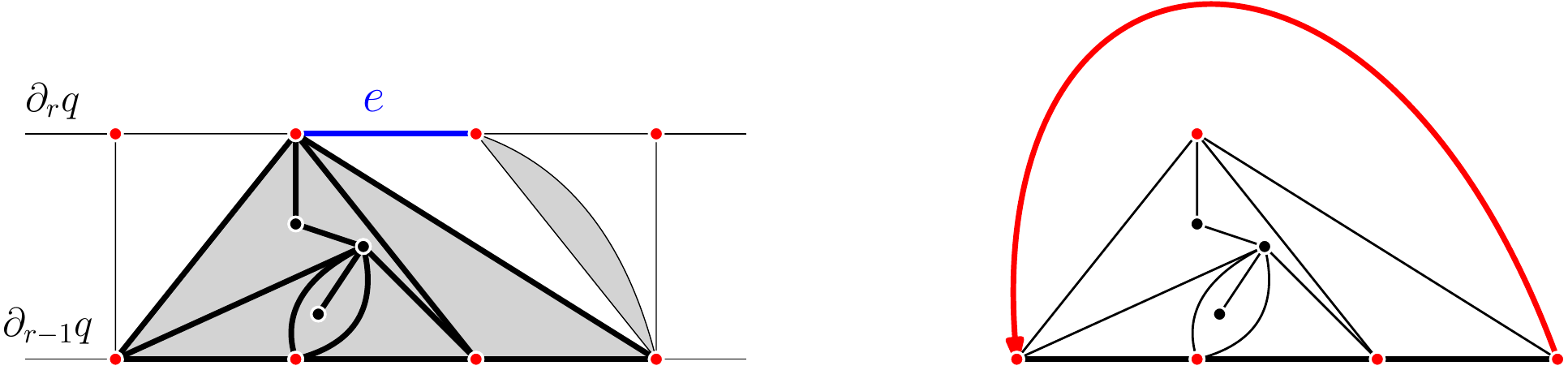}
\end{center}
\caption{Left (in grey and thick black lines), the map with simple boundary filling the slot of an edge $e$ (in blue) of $\partial_r Q$ with $3$ offspring. Right, the truncated quadrangulation we obtain by adding a root edge ``over the top vertex of the slot''.}
\label{Fig_Construction_Tuile0}
\end{figure}

Recalling the definition in \cite{gall2017separating}, we say that a forest $\FF$ with one marked vertex is \emph{$(R,p,q)$-admissible} if
\begin{romenum}
\item the forest consists of an ordered sequence of $q$ rooted plane trees,
\item these trees have height at most $R$,
\item exactly $p$ vertices of the forest are at height $R$,
\item the marked vertex is at height $R$ and belongs to the first tree.
\end{romenum}
Write $\F^0_{R,p,q}$ for the set of all $(R,p,q)$-admissible forests. We will also need the set of all forests with no marked vertex that satisfy properties (i) to (iii) above, and we denote this set by $\F_{R,p,q}$.

For $\FF \in \F^0_{R,p,q}$ or $\FF \in \F_{R,p,q}$ we let $\FF^*$ denote the set of all vertices of $\FF$ at height at most $R-1$. For every $e\in \FF^*$, $c_e$ is the 
offspring number of $e$ in $\FF$.

The construction described above and illustrated in \figref{Fig_skeleton_decomp} provides a bijection that, with every quadrangulation $Q$ of the cylinder of height $R$, with top perimeter $q$ and bottom perimeter $p$, associates an $(R,p,q)$-admissible forest $\FF$ and a collection $(S_e)_{e \in \FF^*}$ of truncated quadrangulations, such that $S_e$ has perimeter $c_e+1$ for every $e \in \FF^*$. The forest encodes the genealogical relation of edges of $\cup_{r=0}^R \partial_r Q$: each tree in $\FF$ corresponds to the descendants of an edge of the top boundary, the first tree is the one that contains the root edge, and the other trees are then listed by following the clockwise order on the top boundary. 
We call the forest $\FF$ the \emph{skeleton} of $Q$.

On the other hand, the union of all \lmgs forms a forest of trees made of edges of $Q$. This forest can be viewed as dual to the skeleton of $Q$, and the two forests do not cross, as suggested in \figref{Fig_skeleton_decomp}. This has the following important consequence. Let $v, w$ be two distinct vertices of the top boundary of $Q$, let $\FF'$ be the forest consisting of the trees of the skeleton that are rooted on the part of the top boundary between $v$ and $w$ (in clockwise order), and let $\FF''$ consist of the other trees in the skeleton. Then for every $0 \leq r < R$, the \lmgs from $v$ and $w$ coalesce before reaching $\partial_r Q$ or when hitting $\partial_r Q$ iff either $\FF'$ or $\FF''$ has height strictly smaller than $R-r$.

\paragraph{Law of the skeleton decomposition of the UIPQ.}

The following formulas are derived by singularity analysis from the generating series of truncated quadrangulations, computed in \cite{Krikun2008local}. See \cite[Section 2.5]{gall2017separating}.

For every $1 \leq p \leq n$, let $\Qtr_{n,p}$ be the set of all truncated quadrangulations with a boundary of length $p$ and $n$ inner faces. There exists a sequence $(\vk_p)_{p \geq 1}$ of positive reals such that for every $p \geq 1$,
\begin{equation*}
\#\Qtr_{n,p} \usim n \infty \vk_p n^{-5/2} 12^{-n} .
\end{equation*}
Furthermore,
\begin{equation}
\label{Eq_asymptotics_vkp}
\vk_p \usim p \infty \frac{64 \sqrt 3}{\pi \sqrt 2} \sqrt p 2^{-p} .
\end{equation}
For every $p \geq 1$, we define
\begin{align*}
h(p) &\eqdef \frac{1}{p} 2^p \vk_p , \\
Z(p) &\eqdef \sum_{n=p}^\infty \#\Qtr_{n,p} 12^{-n} , 
\end{align*}
and we define the Boltzmann probability measure $\vG_p$ on $\cup_{n=p}^\infty \Qtr_{n,p}$ by setting 
\begin{align*}
\vG_p(Q) \eqdef \frac{12^{-n}}{Z(p)} .
\end{align*}
for every $Q\in \Qtr_{n,p}$, $n\geq p$.
We also set, for every $p\geq 0$,
\begin{align*}
\vt(p) &\eqdef 6 \cdot 2^{p} Z(p+1) .
\end{align*}
Then \cite[Lemma 6]{gall2017separating}, $(\vt(p))_{p\geq 0}$ is a critical offspring distribution with generating function
\begin{equation*}
g_\vt(y) = 1-\frac{8}{\l(\sqrt{\frac{9-y}{1-y}}+2\r)^2 -1} .
\end{equation*}
Let $g^{(p)}_\theta=g_\theta\circ\cdots\circ g_\theta$ denote the $p$-th iterate of $g_\theta$. If $(Y_p)_{p\geq 0}$ is a \BGW process with offspring distribution $\vt$ started at $Y_0 = 1$, then for every $p\geq 0$,
\begin{equation*}
\E\l[ y^{Y_p} \r] = g_\vt^{(p)}(y) = 1-\frac{8}{\l(\sqrt{\frac{9-y}{1-y}}+2p\r)^2 -1} , \quad 0 \leq y < 1 .
\end{equation*}

We now consider the uniform infinite planar quadrangulation $Q_\infty$ to which we can apply the preceding definition to get the cycles $\partial_r Q_\infty$ for every $r\geq 0$ (when $r=0$ we split the root edge as explained in \figref{Fig_cylinder_and_root}). For $r\geq 1$, we distinguish the edge of $\partial_r Q_\infty$ that corresponds to the first tree of the skeleton of the truncated hull of radius $r$, viewed as a quadrangulation of the cylinder of height $r$. Then, for every $0\leq r <s$ we define the annulus $\CC(r,s)$ as the quadrangulation of the cylinder of height $s-r$ that corresponds to the part of $Q_\infty$ between $\partial_r Q_\infty$ and $\partial_s Q_\infty$, rooted at the distinguished edge of $\partial_r Q_\infty$.

Let $( \FF^0_{r,s}, (S_e)_{e \in {\FF^{0*}_{r,s}}} )$ be the skeleton decomposition of $\CC(r,s)$. 
Conditionally on the skeleton $\FF_{r,s}^0$, the truncated quadrangulations $S_e$, $e\in \FF_{r,s}^{0,*}$, are independent and the conditional distribution of $S_e$ is $\vG_{c_e+1}$, where we recall that $c_e$ is the number of offspring of $e$ in $\FF_{r,s}^0$. See \cite[Corollary 8]{gall2017separating}.

Let $H_r$ be the length of $\partial_r Q_\infty$.
\begin{prop}[\cite{gall2017separating}, Proposition 11]
\label{Prop_law_perimeter_UIPQ}
For every $r\geq 1$ and $p \geq 1$,
\begin{equation}
\P(H_r = p) = K_r \vk_p (2\pi_r)^p ,
\end{equation}
where
\begin{equation}
\label{Eq_formula_for_pi}
\pi_r = g_\vt^{(r)}(0) = 1-\frac{8}{(3+2r)^2-1} 
\end{equation}
is the probability that a \BGW process of offspring distribution $\vt$ started at $1$ becomes extinct before generation $r$, and
\begin{equation*}
K_r = \frac{32}{3\vk_1} \frac{3+2r}{((3+2r)^2-1)^2} .
\end{equation*}
Consequently, we can find positive constants $M_1, M_2$ and $\rho$ such that for every $a>0$ and $r\geq 1$,
\begin{align*}
\P(H_r \geq ar^2) &\leq M_1 e^{-\rho a} , \\
\P(H_r \leq ar^2) &\leq M_2 a^{3/2} .
\end{align*}
\end{prop}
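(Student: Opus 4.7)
The plan is to combine the skeleton decomposition of the truncated hull $\Htr_{Q_\infty}(r)$ with the \BGW interpretation of the offspring distribution $\vt$, following the approach of Krikun. After splitting the root edge and inserting a loop, $\Htr_{Q_\infty}(r)$ becomes a quadrangulation of the cylinder of height $r$ with bottom perimeter $1$ and top perimeter $H_r$, whose probability is proportional to $12^{-n}$ where $n$ is the number of inner faces (this follows from the construction of $Q_\infty$ as a local limit of uniform quadrangulations). By the skeleton decomposition, conditionally on the skeleton $\FF \in \F_{r,1,H_r}$ the slots are filled by independent Boltzmann truncated quadrangulations $(S_e)_{e\in \FF^*}$ of laws $\vG_{c_e+1}$, so that summing the weight over slot contents produces a factor $\prod_{e\in\FF^*} Z(c_e+1)$. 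Combined with the face-counting factor $12^{-|\FF^*|}$ arising from the downward triangles and rewritten via the identity $\vt(p) = 6\cdot 2^p Z(p+1)$, the forest weight rearranges into $\prod_{v\in\FF^*} \vt(c_v)$ times an explicit boundary factor.

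First I would establish the exact formula. The rewriting above identifies, up to the normalising combinatorial factors, the conditional law of the skeleton given $H_r = p$ as that of $p$ independent $\vt$-\BGW trees conditioned on having exactly one surviving individual at generation $r$. Since a single such tree is extinct before generation $r$ with probability $\pi_r = g_\vt^{(r)}(0)$, and iterating the explicit expression of $g_\vt$ gives $\pi_r = 1 - 8/((3+2r)^2 - 1)$, the combinatorial factor $\vk_p$ (coming from the top boundary via the asymptotic $\#\Qtr_{n,p}\sim \vk_p n^{-5/2} 12^{-n}$) combines with a survival-like factor $2\pi_r$ per tree, where the factor $2$ tracks the $2^p$ appearing in both $h(p)$ and $\vt(p)$. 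The normalisation $\sum_p \P(H_r = p) = 1$ then fixes the explicit form of $K_r$, which can equivalently be read off from the value of $(g_\vt^{(r)})'(0)$, itself proportional to $\P(Y_r=1)$.

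Second I would deduce the tail bounds from the explicit formula, using the asymptotics $\vk_p \sim \frac{64\sqrt 3}{\pi \sqrt 2}\sqrt p\, 2^{-p}$ and $K_r = O(r^{-3})$. Writing $\vk_p(2\pi_r)^p \lesssim \sqrt p\, \pi_r^p$ and bounding $\pi_r \leq \exp(-c/r^2)$ for some $c>0$ and large $r$, summing over $p\geq ar^2$ and changing variables $u = p/r^2$ gives
$$\P(H_r\geq ar^2) \lesssim K_r \sum_{p\geq ar^2} \sqrt p\, e^{-cp/r^2} \lesssim K_r\,r^3 \int_a^\infty \sqrt u\, e^{-cu}\, du \lesssim e^{-\rho a}$$
for any $\rho<c$, while for the lower tail the crude bound $\pi_r^p\leq 1$ yields
$$\P(H_r\leq ar^2) \lesssim K_r \sum_{p\leq ar^2} \sqrt p \lesssim r^{-3}(ar^2)^{3/2} = a^{3/2}.$$

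The main obstacle is the first step, namely the careful accounting of the combinatorial factors when converting Boltzmann weights on slots into \BGW weights on the skeleton. The appearance of the factor $2$ in $(2\pi_r)^p$ (rather than $\pi_r^p$) is the delicate point: it reflects the factor $2^p$ arising when one rewrites $\prod_e Z(c_e+1)$ via $\vt(p) = 6\cdot 2^p Z(p+1)$, which in turn pairs with the $2^{-p}$ in the asymptotic $\vk_p\sim C\sqrt p\, 2^{-p}$ to leave the clean product $\vk_p(2\pi_r)^p$. Once this identity is established, the normalisation and the tail estimates are routine calculations.
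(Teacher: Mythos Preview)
The present paper does not prove this proposition: it is quoted directly from \cite{gall2017separating} (their Proposition~11), including the tail estimates. There is thus no proof here to compare your attempt against.

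Regarding your sketch on its own: the derivation of the two tail bounds from the exact law is correct and is the standard calculation. The uniformity in $r\ge 1$ you need follows because the ratios $\vk_p 2^p/\sqrt p$, $K_r r^3$ and $r^2(1-\pi_r)$ are each bounded above and below by positive constants over the full range $p,r\ge 1$, not merely asymptotically; this makes your Riemann-sum estimates valid uniformly.

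Your outline of the exact formula has the right structure but is loose, and your probabilistic heuristic for the factor $(2\pi_r)^p$ is not quite right. A cleaner route, using material already in the paper, is to specialise the annulus law of Proposition~\ref{Prop_law_annulus_UIPQ_cond_bottom} to the case $r=0$, $s=r$: since $H_0=1$ deterministically after the root-edge splitting, one gets
\[
\P(H_r=q)=\frac{h(q)}{h(1)}\sum_{\FF\in\F_{r,1,q}}\prod_{v\in\FF^*}\vt(c_v)=\frac{h(q)}{h(1)}\,\P\bigl(Y_r=1\mid Y_0=q\bigr),
\]
and $\P(Y_r=1\mid Y_0=q)=q\,\pi_r^{\,q-1}(g_\vt^{(r)})'(0)$ by differentiating $(g_\vt^{(r)})^q$ at zero. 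Substituting $h(q)=q^{-1}2^q\vk_q$ and the explicit form of $g_\vt^{(r)}$ then yields the display directly, with $K_r$ determined without any separate normalisation step. The factor $2^q$ in $h(q)$ is what pairs with $\pi_r^{\,q-1}$ to produce $(2\pi_r)^q$; it is not a ``survival-like'' contribution per tree as you suggest.
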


From the skeleton $\FF_{r,s}^0$, we define a new forest $\FF_{r,s}$ by ``forgetting'' the marked vertex and applying a uniform random circular permutation to the trees of $\FF_{r,s}^0$. Then $\FF_{r,s}$ is a random element of $\cup_{p \geq 1, q \geq 1} \F_{s-r, p, q}$. 

\begin{prop}[\cite{gall2017separating}, Corollary 10]
\label{Prop_law_annulus_UIPQ_cond_bottom}
Let $p\geq 1$. The conditional distribution of $\FF_{r,s}$ knowing that $H_r = p$ is as follows: for every $q>0$, for every $\FF \in \F_{s-r,p,q}$, 
\begin{equation}
\P\l( \FF_{r,s} = \FF \ | \ H_r = p \r) = \frac{h(q)}{h(p)} \prod_{e \in \FF^*} \vt(c_e) .
\end{equation}
\end{prop}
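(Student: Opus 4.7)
The statement has the characteristic shape of a conditional Galton-Watson law: conditional on $H_r = p$, the forest $\FF_{r,s}$ is distributed as a $\vt$-Galton-Watson forest of $p$ trees truncated at height $s-r$ and reweighted by $h(H_s)/h(p)$. So the plan is to derive this by an explicit computation using the skeleton decomposition of $\CC(r,s)$ (whose conditional law given $\FF^0_{r,s}$ is already recalled right before the statement), then simplify using the key identities $\vt(p) = 6 \cdot 2^p Z(p+1)$ and $h(p) = 2^p \vk_p/p$.

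First I would compute the marginal law of the marked skeleton $\FF^0_{r,s}$. For a given $\FF^0 \in \F^0_{s-r,p,q}$, using that the slot fillings $(S_e)_{e \in \FF^{0*}}$ are independent with laws $\vG_{c_e+1}$, and that summing the unnormalized Boltzmann weight $12^{-n}$ of a truncated quadrangulation of perimeter $c_e+1$ over its possible values gives exactly $Z(c_e+1)$, one finds
\[
\P(\FF^0_{r,s} = \FF^0) = A_{r,s} \cdot \vk_q \cdot \prod_{e \in \FF^{0*}} Z(c_e+1) ,
\]
for an explicit prefactor $A_{r,s}$ independent of $\FF^0$. Here the factor $\vk_q$ comes from the asymptotics of the number of truncated quadrangulations with top perimeter $q$ in the UIPQ limit, and the product over $\FF^{0*}$ accounts for the slot-filling partition functions. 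Next, substituting $Z(c_e+1) = \vt(c_e) / (6 \cdot 2^{c_e})$ and collecting the powers of $2$ (the total exponent of $2$ is $\sum_{e \in \FF^{0*}} c_e = $ number of vertices at height $\geq 1$ in $\FF^0$, which rearranges in terms of $p$ and $q$), one obtains an expression of the form
\[
\P(\FF^0_{r,s} = \FF^0) = B_{r,s} \cdot \vk_q \cdot 2^{-(p\text{-dependent})} \prod_{e \in \FF^{0*}} \vt(c_e) .
\]

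Second, I would pass from $\FF^0_{r,s}$ to $\FF_{r,s}$. Forgetting the marked vertex and then applying a uniform circular permutation corresponds to projecting an $(s-r,p,q)$-admissible marked forest to an unmarked element of $\F_{s-r,p,q}$; conversely, given $\FF \in \F_{s-r,p,q}$, the number of preimages equals $p$ (choices of marked vertex among the $p$ vertices at height $s-r$) times $q/\gcd$-type factors absorbed by the uniform circular permutation. Summing $\P(\FF^0_{r,s} = \FF^0)$ over these preimages yields
\[
\P(\FF_{r,s} = \FF) = C_{r,s} \cdot \frac{\vk_q \cdot 2^q}{q} \cdot \frac{p}{\vk_p \cdot 2^p} \cdot \prod_{e \in \FF^*} \vt(c_e) \cdot \P(H_r = p) ,
\]
after rewriting the prefactors in terms of $h$ and dividing by $\P(H_r=p) = K_r \vk_p (2\pi_r)^p$ from Proposition \ref{Prop_law_perimeter_UIPQ} to obtain the conditional probability. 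By definition $h(p) = 2^p \vk_p/p$, so the ratio $\tfrac{\vk_q 2^q/q}{\vk_p 2^p/p} = \tfrac{h(q)}{h(p)}$ is exactly the factor in the statement, and the remaining constants cancel.

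The main obstacle, as usual in these skeleton computations, is the bookkeeping of combinatorial multiplicities: the powers of $2$ coming from $Z(c_e+1) = \vt(c_e)/(6 \cdot 2^{c_e})$, the powers of $6$ coming from the $\vt$-weights of non-top vertices, and the factors introduced by the marking of a top vertex and by the uniform circular permutation on trees. The cleanest way to handle this is to verify the formula $\sum_\FF \tfrac{h(q)}{h(p)} \prod_e \vt(c_e) = \P(H_s = q \mid H_r = p)$ (summing over $\FF \in \F_{s-r,p,q}$) against the explicit expression for $\P(H_r=p)\P(H_s=q)$ coming from Proposition \ref{Prop_law_perimeter_UIPQ} and the Markov property of $(H_r)_{r \geq 0}$; this consistency check pins down all numerical constants and confirms that no stray factor is hiding in the marked-to-unmarked passage.
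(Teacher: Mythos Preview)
The paper does not prove this proposition: it is quoted verbatim from \cite{gall2017separating}, Corollary 10, and no argument is supplied here. So there is no ``paper's own proof'' to compare against, only the cited reference.

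Your outline is in the right spirit and matches the approach in \cite{gall2017separating}: the skeleton decomposition turns the law of $\CC(r,s)$ into a product of slot partition functions $Z(c_e+1)$, which one rewrites as $\vt(c_e)/(6\cdot 2^{c_e})$, and the telescoping of the powers of $2$ together with the marked-to-unmarked passage produces the factor $h(q)/h(p)=\frac{p}{q}\cdot\frac{2^q\vk_q}{2^p\vk_p}$. However, several points in your writeup are loose or mistaken. First, the $\vk_q$ does not arise from ``asymptotics of truncated quadrangulations in the UIPQ limit''; it enters through the exact formula for $\P(H_s=q)$ (or equivalently through the local-limit construction of the UIPQ), not through any asymptotic. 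Second, your account of the marked-to-unmarked step is confused: there are no ``$q/\gcd$-type factors''. The computation is clean: for an ordered forest $\FF\in\F_{s-r,p,q}$, summing over the $q$ circular shifts and, for each shift, over marked vertices in the first tree, gives exactly $p$ preimages in $\F^0_{s-r,p,q}$, and the random shift contributes $1/q$. This yields the factor $p/q$ directly. Third, your final paragraph proposes to ``pin down all numerical constants'' by a consistency check against $\P(H_s=q\mid H_r=p)$; that is not a proof but a sanity check, and it presupposes the very identity you are trying to establish. To turn this into an actual argument you must track the constant $A_{r,s}$ explicitly from the law of the annulus in the UIPQ (as is done in \cite{gall2017separating}), not leave it symbolic and hope it cancels.
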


\begin{prop}
\label{Prop_law_annulus_UIPQ_cond_top}
Let $q \geq 1$. The conditional distribution of $\FF_{r,s}$ knowing that $H_s = q$ is as follows: for every $p>0$, for every $\FF \in \F_{s-r,p,q}$, 
\begin{equation}
\P\l( \FF_{r,s} = \FF \ | \ H_s = q \r) = \frac{\vp_r(p)}{\vp_s(q)} \prod_{e \in \FF^*} \vt(c_e) ,
\end{equation}
where
\begin{equation}
\label{Eq_phirp}
\vp_r(p) = \frac{64}{3}p \frac{3+2r}{((3+2r)^2-1)^2} \pi_r^{p-1} .
\end{equation}
\end{prop}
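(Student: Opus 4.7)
The plan is to apply Bayes' rule directly, using Proposition \ref{Prop_law_annulus_UIPQ_cond_bottom} to pass from conditioning on the bottom perimeter $H_r$ to conditioning on the top perimeter $H_s$. Any forest $\FF \in \F_{s-r,p,q}$ already determines both $H_r = p$ (as the number of vertices at height $s-r$) and $H_s = q$ (as the number of trees), so $\{\FF_{r,s} = \FF\} \subseteq \{H_r = p\} \cap \{H_s = q\}$, whence
\[
\P(\FF_{r,s} = \FF \mid H_s = q) = \P(\FF_{r,s} = \FF \mid H_r = p) \cdot \frac{\P(H_r = p)}{\P(H_s = q)}.
\]

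Substituting the formula from Proposition \ref{Prop_law_annulus_UIPQ_cond_bottom} for the conditional probability and Proposition \ref{Prop_law_perimeter_UIPQ} for the two marginals, and using $h(p) = 2^p \vk_p / p$ to cancel the occurrences of $\vk_p, \vk_q$ and the powers of $2$, one arrives at an expression of the form $\mathrm{cst}\cdot \frac{K_r\, p\, \pi_r^{p-1}}{K_s\, q\, \pi_s^{q-1}} \prod_{e \in \FF^*} \vt(c_e)$. It then remains to recognise this as $\frac{\vp_r(p)}{\vp_s(q)} \prod_{e \in \FF^*} \vt(c_e)$ by invoking the explicit formulas for $K_r$ and \eqref{Eq_phirp}; the algebraic identity $2\vk_1 K_r = \frac{64}{3}\cdot\frac{3+2r}{((3+2r)^2-1)^2}$ yields $\vp_r(p) = 2\vk_1 K_r\, p\, \pi_r^{p-1}$, and the $r$-independent factor $2\vk_1$ cancels in the ratio $\vp_r(p)/\vp_s(q)$.

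The whole argument is an instance of Bayes' rule plus algebraic identification, so the main work is bookkeeping rather than any conceptual difficulty. A useful consistency check on the normalization is the identity $\vp_s(q) = \E[\vp_r(T_q)]$, where $T_q = \sum_{i=1}^q Y^{(i)}_{s-r}$ denotes the total population at generation $s-r$ of $q$ independent \BGW trees with offspring distribution $\vt$ started at $1$; this identity follows at once from differentiating the chain-rule relation $g_\vt^{(s)} = g_\vt^{(s-r)} \circ g_\vt^{(r)}$ at $0$ and using $g_\vt^{(r)}(0) = \pi_r$, and it ensures that the formula stated in the proposition really defines a probability distribution on $\bigcup_{p\geq 1} \F_{s-r,p,q}$.
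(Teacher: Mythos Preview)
The paper does not actually prove this proposition; it simply refers to formulas (18) and (19) of \cite{gall2017separating}. Your Bayes' rule approach is the natural way to derive it from Propositions~\ref{Prop_law_perimeter_UIPQ} and~\ref{Prop_law_annulus_UIPQ_cond_bottom}, and the consistency check in your last paragraph (normalization via $\vp_s(q)=\E[\vp_r(T_q)]$, obtained by differentiating $g_\vt^{(s)}=g_\vt^{(s-r)}\circ g_\vt^{(r)}$ at $0$) is both correct and genuinely useful.

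There is, however, a gap you have glossed over. When you write ``one arrives at an expression of the form $\mathrm{cst}\cdot\frac{K_r\,p\,\pi_r^{p-1}}{K_s\,q\,\pi_s^{q-1}}\prod\vt(c_e)$'', you never verify that $\mathrm{cst}=1$. If one carries out the substitution explicitly with the formulas exactly as printed in this paper, one gets
\[
\frac{h(q)}{h(p)}\cdot\frac{\P(H_r=p)}{\P(H_s=q)}
=\frac{p\,2^q\vk_q}{q\,2^p\vk_p}\cdot\frac{K_r\vk_p(2\pi_r)^p}{K_s\vk_q(2\pi_s)^q}
=\frac{K_r\,p\,\pi_r^{p}}{K_s\,q\,\pi_s^{q}},
\]
so that $\mathrm{cst}=\pi_r/\pi_s\neq 1$. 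On the other hand, your normalization check shows that the formula in the proposition \emph{does} sum to~$1$, while a genuine conditional probability must also sum to~$1$; this forces $\mathrm{cst}=1$ and contradicts the explicit computation. The resolution is that the displayed formulas in this paper are not mutually consistent as written (most plausibly the exponent in Proposition~\ref{Prop_law_perimeter_UIPQ} should be $p-1$ rather than $p$; with that correction your Bayes computation yields the stated result exactly). For a self-contained argument you should therefore either go back to \cite{gall2017separating} for the correct exponents, or, more robustly, argue as follows: Bayes' rule shows that $\P(\FF_{r,s}=\FF\mid H_s=q)=c(r,s)\,\frac{\vp_r(p)}{\vp_s(q)}\prod_{e\in\FF^*}\vt(c_e)$ for some constant $c(r,s)$ independent of~$\FF$, and then your normalization identity $\vp_s(q)=\E[\vp_r(T_q)]$ forces $c(r,s)=1$.
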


We refer to formulas (18) and (19) in \cite{gall2017separating} for the last proposition.

\section{The lower half-plane quadrangulation}
\label{Sec_LHPQ}

\subsection{Definition of the model}

We construct an infinite quadrangulation with an infinite truncated boundary, which will be denoted by $\LL$ and called the \emph{lower half plane quadrangulation} or \index{LHPQ@\LHPQ}\LHPQ. Roughly speaking, $\LL$ is what we see near a uniformly chosen random edge of the boundary of a very large truncated hull of $Q_\infty$ (see \propref{Prop_LHPQ_is_local_limit_truncated_hulls} below for a more precise statement). Our construction relies on the skeleton decomposition.

It will be convenient to use a particular embedding of $\LL$ in the plane (see Figure~\ref{Fig_Skeleton_geodesics_LHPQ}).  
In the rest of this paper, we denote the set of non-negative integers by $\N$, and the set of non-positive integers by $-\N = \{0, -1, -2, ...\}$. Every point of $\Z \times -\N$ will be a vertex of $\LL$; the edges of the form $((i,0),(i+1,0))$ for $i\in\Z$ will be the edges of the boundary of $\LL$; and the upper half-plane will correspond to an ``external face'' of $\LL$. Furthermore, $\LL$ will be rooted at the edge $((0,0),(1,0))$.

\begin{figure}[h!]
\begin{center}
\includegraphics[width=\textwidth]{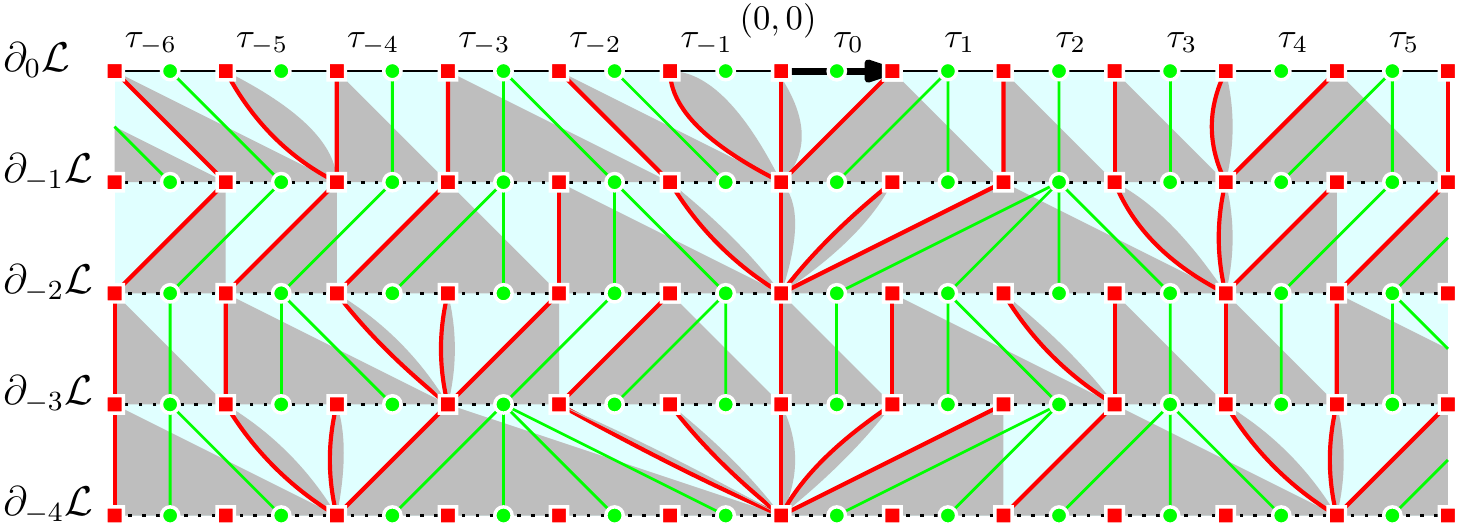}
\end{center}
\caption{Embedding of the \LHPQ and its skeleton. The vertices of the lattice $\Z\times -\N$ are the red dots, the vertices of the forest are the green dots, the trees are drawn in green. We drew the downward triangles in cyan, and the slots in grey. Notice that the slot associated with an edge having no offspring may be empty though it is represented here
as the ``inside'' of a double edge (this simply means that this double edge may be glued to form a single edge).
\\
In red, \lmgs in the \LHPQ, named so because they follow the left-most edge going downward. They follow the left side of slots, or the right side of downward triangles. The \lmgs form the dual tree of the skeleton.}
\label{Fig_Skeleton_geodesics_LHPQ}
\end{figure}

In order to construct $\LL$, we start from a forest $(\tau_i)_{i\in \Z}$ of \iid \BGW trees with offspring distribution $\theta$ (as usual these trees are random plane trees). This forest will be the \emph{skeleton}\index{skeleton} of the \LHPQ. A.s. each generation has an infinite number of individuals: we can thus embed vertices of the skeleton at generation $r \geq 0$ bijectively on $\{ (j+\frac{1}{2},-r), j\in \Z \}$, in a way that is consistent with the order on vertices at generation $r$ of the forest, so that vertices in trees with non-negative indices $(\tau_i)_{i\geq 0}$ fill the lower right quadrant $(1/2,0)+(\N \times -\N)$, and vertices in trees with negative indices $(\tau_i)_{i<0}$ fill the lower left quadrant $(-1/2,0)+ (-\N \times -\N)$.
In particular the root of $\tau_i$ will be $(i+1/2,0)$. See the green trees on Figure~\ref{Fig_Skeleton_geodesics_LHPQ}.
 In what follows, we identify vertices of the skeleton and the points where they are embedded. 

We denote by $\partial_{-r} \LL$ the infinite line $\Z \times \{-r\}$ viewed as a linear graph. The skeleton of $\LL$ induces a genealogical relation on the edges of $\cup_{r \geq 0} \partial_{-r} \LL$, if we identify each edge with its middle point.

To each edge $e$ of $\partial_{-r} \LL$, we associate a downward triangle with ``top boundary'' $e$ and ``bottom vertex'' the vertex $v$ of $\partial_{-r-1} \LL$, chosen as follows. If $e$ has at least one offspring, then $v$ is the right-most vertex of the edge which is the right-most offspring of $e$. If not, let $e'$ be the first edge of $\partial_r \LL$ on the left of $e$ having at least one offspring. Then the downward triangles of $e$ and $e'$ have the same bottom vertex.

Downward triangles delimit a collection of \emph{slots} (see Figure~\ref{Fig_Skeleton_geodesics_LHPQ}), and each slot is associated with an edge of $\cup_{r \geq 0} \partial_{-r} \LL$ as in the \UIPQ. By construction,  the edges of the lower boundary of a slot are exactly the offspring of the associated edge. The last step to get the \LHPQ is to fill in the slots, and we do so exactly as in the \UIPQ, see \figref{Fig_Construction_Tuile2}. We note that, for $r > 0$, edges of $\partial_{-r} \LL$ do not belong to $\LL$: these edges are removed in
order to get quadrangles by the gluing of two triangles.

\begin{figure}[h!]
\begin{center}
\includegraphics[width=0.9\textwidth]{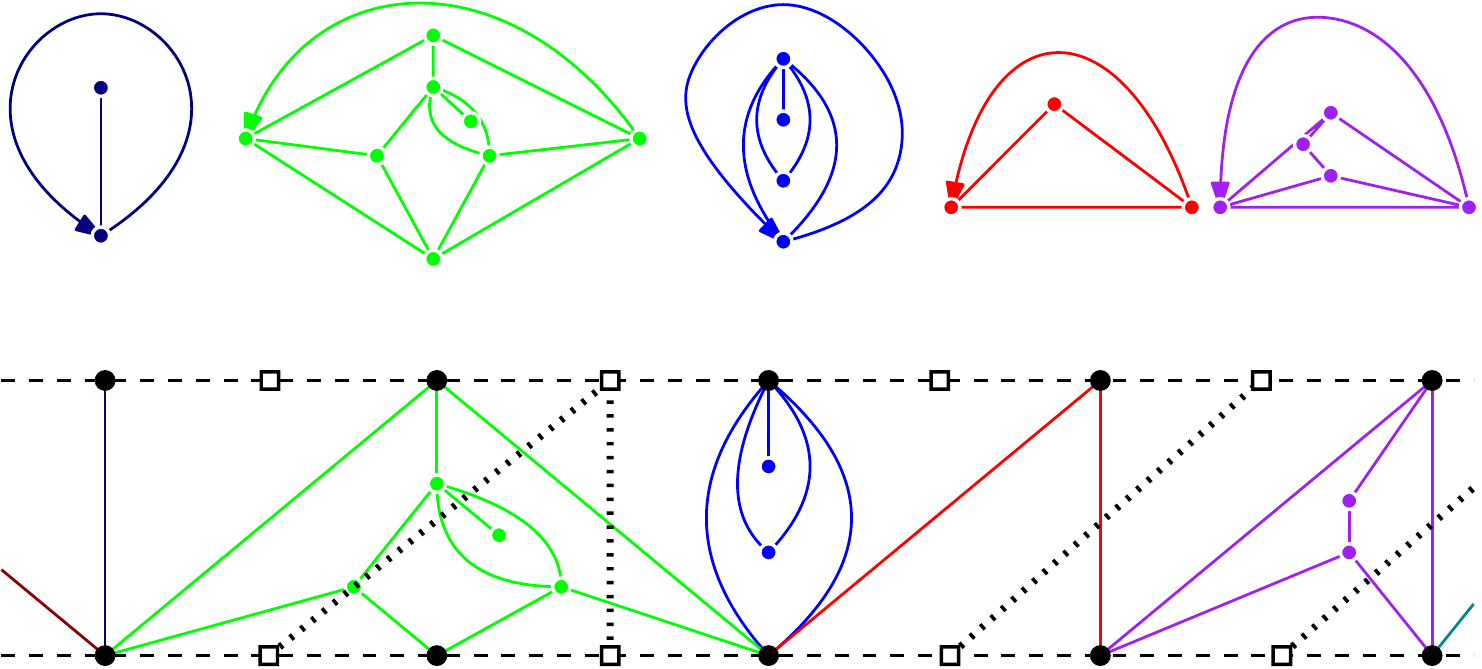}
\end{center}
\caption{Constructing a part of the \LHPQ from truncated quadrangulations. In the lower part, the dotted lines are trees of the skeleton and the dashed lines are $\partial_{-r-1} \LL$ and $\partial_{-r} \LL$ for some $r \geq 0$. 
The upper part of the figure shows 5 truncated quadrangulations that fill in successive slots as shown in the lower part. Note that any edge of a truncated quadrangulation that is glued to some edge of $\partial_{-r} \LL$ is removed in the \LHPQ.
}
\label{Fig_Construction_Tuile2}
\end{figure}

\Lmgs in $\LL$ are defined as in the case of quadrangulations of the cylinder, but are now infinite paths on $\LL$ that start at a vertex of
$\partial_{-r} \LL$ and then visit each line $\partial_{-r'} \LL$, $r'\geq r$, exactly once. \Lmgs form a forest whose vertex set is the lattice $\Z \times -\N$. This forest and the skeleton never intersect and can be seen as ``dual'' to each other. Furthermore, the two \lmgs started at $(i,0)$ and $(i',0)$ with $i<i'$ coalesce before height $j<0$ or exactly at height $j$ if and only if every $\tau_k$ with $i <k+\frac{1}{2}< i'$ becomes extinct before generation $-j$. See \figref{Fig_Skeleton_geodesics_LHPQ} for an illustration.

Note that the \lmg started from any point $(i,0)$ of the top boundary breaks the \LHPQ into two halves. Any path whose endpoints are on different sides of this geodesic must cross it through one of its vertices. Note finally that \lmgs never cross any tree of the skeleton. As an example, the \lmg started from $(0,0)$ is the vertical line $((0,-n))_{n\geq 0}$.

As another useful observation, we note that the graph distance between any point of $\partial_r \LL$ and $\partial_{r'} \LL$ (with $r'<r$) is exactly $r-r'$.

\subsection{The lower-half plane quadrangulation is the local limit of large hulls}

The following proposition explains why we consider the model of the \LHPQ. 

\begin{proposition}
\label{Prop_LHPQ_is_local_limit_truncated_hulls}
For every $r>0$, let $\HH_r$ be the truncated hull of radius $r$ of the \UIPQ, re-rooted at a uniformly chosen edge of its boundary. Then
\begin{equation*}
\HH_r \ulimd r \infty \LL
\end{equation*}
for the local topology on rooted planar maps.
\end{proposition}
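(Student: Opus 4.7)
The plan is to exploit the skeleton decomposition and compare the two models at three levels: the skeletons, the slot-fillings given the skeleton, and the maps themselves. After splitting its root edge as in \figref{Fig_cylinder_and_root}, the truncated hull $\HH_r$ becomes a quadrangulation of the cylinder of height $r$ with bottom perimeter $1$ and top perimeter $H_r$, and its re-rooting at a uniformly chosen boundary edge corresponds precisely to applying a uniform circular permutation to the $H_r$ trees of the skeleton $\FF_{0,r}^0$. On the other hand, the skeleton of $\LL$ is by construction a doubly infinite sequence of \iid BGW$(\vt)$ trees. In both models, conditionally on the skeleton, the slots are filled by independent truncated quadrangulations with the common Boltzmann law $\vG_{c_e+1}$ (\cite[Corollary 8]{gall2017separating} for the hull, and by construction for $\LL$), so matching the skeletons on a large enough window will automatically match the slot-fillings.

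For the skeleton convergence, I would fix $L,D\geq 0$ and study the joint law of the $2L+1$ trees of $\FF_{0,r}^0$ consecutive to the (new) root, truncated at depth $D$. By Proposition \ref{Prop_law_annulus_UIPQ_cond_top} applied with bottom boundary of perimeter $1$ (for which $\vp_0(1)=1$ and $\vp_0(p)=0$ for $p\geq 2$, since $\pi_0=0$), conditionally on $H_r=q$ the skeleton $\FF_{0,r}$ is distributed as the skeleton of $q$ \iid BGW$(\vt)$ trees conditioned on the event $\{Y_r=1\}$, where $Y_r$ is the total number of vertices at generation $r$ across all trees. Splitting this event according to whether the unique surviving descendant at generation $r$ sits inside the chosen window of $2L+1$ trees or among the remaining $q-2L-1$ trees, the conditional probability of a prescribed truncated forest $F=(F_1,\dots,F_{2L+1})$ with $k$ vertices at depth $D$ factors as
\[
\prod_{i=1}^{2L+1} \P_{\mathrm{BGW}}\!\l(\tau^{(\leq D)}=F_i\r)\ \cdot\ R(r,q,L,D,k),
\]
where $R$ is a ratio built from $\pi_r$, $\pi_{r-D}$ and $p_r(1)=(g_\vt^{(r)})'(0)$. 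Using $\pi_r=1-8/((3+2r)^2-1)$ and the explicit formula $p_r(1)=64(3+2r)/(3((3+2r)^2-1)^2)$ obtained by differentiating $g_\vt^{(r)}$ at $0$, one finds $\pi_r,\pi_{r-D}\to 1$ and $p_{r-D}(1)/p_r(1)\to 1$ as $r\to\infty$; combined with $H_r\to\infty$ in probability from \propref{Prop_law_perimeter_UIPQ}, this yields $R\to 1$. Hence the truncated $(2L+1,D)$-window of the re-rooted skeleton of $\HH_r$ converges in distribution to that of $\LL$.

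To lift this to a local convergence of the maps, observe that in both models a vertex at skeleton-height $d$ lies at graph distance at least $d$ from the boundary containing the root, so the ball of radius $k$ around the root is contained in the part of the map at skeleton-height at most $k$. Tail bounds on the perimeters of the Boltzmann slots $\vG_{c_e+1}$ then allow one to choose deterministic $L=L(k,\ve)$ and $D=D(k,\ve)$ such that, with probability at least $1-\ve$, the ball of radius $k$ in the re-rooted $\HH_r$ (for $r$ large) and in $\LL$ is entirely determined by the $(2L+1,D)$-window of the skeleton together with its slot-fillings. Combining this tightness with the skeleton convergence of the previous step and the identity of the Boltzmann slot laws in both models gives the desired local convergence. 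The main obstacle is the uniform control of the correction factor $R$ in the skeleton step, which requires carefully tracking the rare conditioning event $\{Y_r=1\}$ and its interaction with each individual tree through the explicit generating functions.
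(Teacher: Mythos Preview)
The paper omits the proof of this proposition entirely, referring to Proposition~7 of \cite{fpp} for the analogous statement in triangulations; your approach via the skeleton decomposition is the natural one and is almost certainly the same as the omitted argument.

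Your skeleton step is correct. The identification $\vp_0(1)=1$, $\vp_0(p)=0$ for $p\ge 2$ is right (since $\pi_0=0$), so Proposition~\ref{Prop_law_annulus_UIPQ_cond_top} indeed says that, conditionally on $H_r=q$, the cyclically re-rooted skeleton $\FF_{0,r}$ is distributed as $q$ \iid BGW$(\vt)$ trees conditioned on $\{Y_r=1\}$. Your two-term decomposition of the correction factor $R$ is exactly what one gets by splitting according to whether the unique survivor at generation $r$ descends from the window or from the $q-2L-1$ remaining trees, and the asymptotics $\pi_r,\pi_{r-D}\to 1$, $p_{r-D}(1)/p_r(1)\to 1$, $H_r\to\infty$ in probability give $R\to 1$.

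The only place that needs more work is the passage from skeleton convergence to local convergence of maps. Invoking ``tail bounds on the perimeters of the Boltzmann slots'' does not by itself control the horizontal extent of the ball of radius $k$: shortcuts through slots could in principle carry a short path far along the boundary, and this is not governed by slot perimeters alone. A cleaner route is to use the separating property of \lmgs (stated in Section~\ref{Sec_LHPQ}): choose $L$ so that, with probability at least $1-\ve$, there is a tree of height $\ge k$ among $\tau_1,\ldots,\tau_L$ and another among $\tau_{-L},\ldots,\tau_{-1}$; the associated \lmgs, together with the fact that the ball cannot descend below $\partial_{-k}\LL$, confine the ball to a region that is a deterministic function of the window. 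Since this confinement event is itself measurable with respect to the window, it transfers automatically to the re-rooted hull once the windows are coupled, and the argument closes.
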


We omit the proof as we will not need this result in the remaining part of the paper. See Proposition 7 in \cite{fpp}
for the analogous statement in the case of triangulations.

\subsection{Control of distances along the boundary}
\label{Sec_Control of distances along the boundary of the LHPQ}

\subsubsection{The main estimate}
\label{control-main-estimate}

The following proposition shows that $\dgr^\LL((0,0),(j,0))$ grows at least like $\sqrt j$. 

\begin{proposition}
\label{Prop_P15}
For every $\ve>0$, there exists an integer $K\geq 1$ such that for every $r\geq 1$, for every integer $A>0$, 
\begin{equation*}
\P \left( \min_{|i|\geq A+Kr^2} \min_{-A \leq i' \leq A} \dgr^\LL((i',0),(i,0)) \geq r \right) \geq 1-\epsilon.
\end{equation*}
\end{proposition}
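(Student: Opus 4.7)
The plan is in three steps: reduce the statement to a fixed starting vertex by a crossing argument involving left-most geodesics, convert it to a contiguity/counting statement about a graph-distance ball, and finally obtain a first-moment bound on the number of top-boundary vertices lying in this ball. Throughout, denote by $B_s$ the set of vertices $u\in V(\LL)$ with $\dgr^\LL((0,0),u)\leq s$.

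\emph{Step 1 (Reduction to $A=0$).} Suppose the event fails, so that there exist $i'\in[-A,A]$ and $|i|\geq A+Kr^2$ with $\dgr^\LL((i',0),(i,0))\leq r-1$; assume WLOG $i\geq A+Kr^2$, the opposite case being symmetric. The left-most geodesic $\vG_A$ from $(A,0)$ is an infinite path embedded in the lower half-plane which separates $(i',0)$ from $(i,0)$, so any shortest path $\vg$ between these vertices shares a vertex $v$ with $\vG_A$. Writing $h$ for the height of $v$, we have $\dgr^\LL((A,0),v)=h$ since $\vG_A$ is a geodesic. The subpath of $\vg$ from $(i',0)$ to $v$ has length at least $h$, because every vertex at height $-h$ is at graph distance exactly $h$ from $\partial_0\LL$ (recalled at the end of Section~\ref{Sec_LHPQ}). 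Therefore the subpath from $v$ to $(i,0)$ has length at most $(r-1)-h$, and the triangle inequality gives
\[
\dgr^\LL((A,0),(i,0))\ \leq\ h+\bigl((r-1)-h\bigr)\ =\ r-1.
\]
By translation invariance of $\LL$ along the top boundary, combined with the identity in distribution $\dgr^\LL((0,0),(j,0))\eqdistr\dgr^\LL((0,0),(-j,0))$ (which itself follows from translation invariance), it suffices to prove that for some $K=K(\ve)\geq 1$ and all $r\geq 1$,
\[
\P\bigl(\exists\,j\geq Kr^2:\ \dgr^\LL((0,0),(j,0))\leq r\bigr)\ \leq\ \ve/2.
\]

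\emph{Step 2 (Contiguity).} The same crossing argument, applied to each intermediate geodesic $\vG_c$ with $0<c<j$, shows that if $(j,0)\in B_r$ for some $j>0$, then $(c,0)\in B_{2r}$ for every integer $0\leq c\leq j$. Hence
\[
\max\bigl\{j\geq 0:(j,0)\in B_r\bigr\}\ \leq\ \#\bigl\{j\geq 0:(j,0)\in B_{2r}\bigr\}.
\]
So the event of Step 1 forces $|B_{2r}\cap\partial_0\LL|\geq Kr^2$.

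\emph{Step 3 (First-moment bound).} The heart of the argument is the estimate
\[
\E\bigl[|B_r\cap\partial_0\LL|\bigr]\ \leq\ C\,r^2\qquad\text{for every }r\geq 1,
\]
where $C>0$ is an absolute constant. Combined with Step 2 and Markov's inequality applied to radius $2r$, this gives $\P(|B_{2r}\cap\partial_0\LL|\geq Kr^2)\leq 4C/K\leq \ve/2$ for $K\geq 8C/\ve$, concluding the proof together with Step 1. To derive this bound I would use the skeleton decomposition of $\LL$ together with the local convergence $\HH_R\to\LL$ of Proposition~\ref{Prop_LHPQ_is_local_limit_truncated_hulls}: for a uniformly rerooted truncated hull $\HH_R$ with $R\gg r$,
\[
\E\bigl[|B_r^{\HH_R}\cap\partial\HH_R|\bigr]\ =\ \E\!\left[\frac{1}{H_R}\,\#\bigl\{(v,w)\in\partial\Htr_{Q_\infty}(R)^2:\ \dgr^{\HH_R}(v,w)\leq r\bigr\}\right],
\]
and the right-hand side is $O(r^2)$ thanks to the perimeter asymptotics $\E[H_r]=O(r^2)$ of Proposition~\ref{Prop_law_perimeter_UIPQ} and standard ball-growth estimates of \cite{curien2017geometric} transferred through the skeleton decomposition.

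\emph{Main obstacle.} Steps 1 and 2 are essentially topological: they depend only on the planar embedding of $\LL$ and on the geodesic structure of the skeleton decomposition, and they cleanly remove the dependence on $A$. The real work is the first-moment estimate in Step 3, which requires genuine quantitative control on the growth of balls in $\LL$; the naive approach via direct enumeration of short paths is unwieldy, and the simplification compared to \cite{fpp} comes precisely from routing this estimate through the skeleton decomposition together with the hull-perimeter bounds of the \UIPQ\ rather than relying on heavy combinatorial identities.
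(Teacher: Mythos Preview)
Your Steps 1 and 2 are correct and elegant: the crossing argument with left-most geodesics cleanly removes the dependence on $A$ and yields the contiguity of $B_r\cap\partial_0\LL$. The problem is Step~3. You assert $\E[|B_r\cap\partial_0\LL|]\leq Cr^2$ but do not prove it, and the sketched route has real gaps. The local convergence of Proposition~\ref{Prop_LHPQ_is_local_limit_truncated_hulls} is stated but explicitly not proved in the paper; even granting it, passing expectations through the limit would require a uniform integrability argument you do not supply. More seriously, the link you draw between the perimeter bound $\E[H_r]=O(r^2)$ and your displayed expectation is not justified: $H_r$ is the length of the boundary of the hull of radius $r$ centered at the root of $Q_\infty$, whereas your expression counts pairs of vertices on the boundary of a hull of \emph{large} radius $R$ that are within graph distance $r$ of each other. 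These are different quantities, and the ``standard ball-growth estimates'' you invoke are left unspecified. In effect, the first-moment bound you need is essentially equivalent in difficulty to the proposition itself.

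The paper attacks the hard step differently. Instead of a first-moment bound, it decomposes the half-slice $\HH\LL_{-h}^0$ (with $h$ a constant multiple of $r$) into \emph{blocks} bounded by left-most geodesics, and introduces the \emph{thickness} of a block: the minimal length of a path joining its left and right boundaries. The key input is Proposition~\ref{Prop_f is linear}, which says the $\ve$-quantile of the thickness of a height-$h$ block grows linearly in $h$; this is obtained from a multi-scale functional inequality (Proposition~\ref{Prop_functional inequality on f}) combined with Lemma~\ref{Lemma_minoration de la taille des generations du bloc} on generation sizes. One then takes $h=\lceil r/C_{\ve}\rceil$ so that the first block of $\HH\LL_{-h}^0$ has thickness $\geq r$ with high probability, while its top-boundary width is geometric of order $h^2\asymp r^2$. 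This block acts as a barrier: any path from the left to the right of it has length at least $r$. Translation and symmetry conclude. This barrier argument yields the high-probability statement directly, without ever needing an expectation bound on $|B_r\cap\partial_0\LL|$.
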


In order to prove this proposition, we adapt the proof of \cite[theorem 5]{curien2017geometric}. This result does not apply directly to our settings, but to another model also constructed from a Bienaym\'e Galton-Watson forest.

Let us first define slices, half-slices, and blocks of the \LHPQ.

\paragraph{Slice.}\index{slice}

Let $-\infty<j'<j \leq 0$. Consider all vertices and edges of $\LL$ contained in $\R \times [j',j]$ and add all edges of the form $((i,j),(i+1,j))$ and $((i,j'),(i+1,j'))$ for $i\in\Z$. The resulting map is called the slice $\LL_{j'}^j$. By convention it is rooted at $((0,j),(1,j))$. 
 The skeleton of $\LL_{j'}^j$ is the planar forest $(\tau^{(j,j')}_n)_{n\in \Z}$ corresponding to the part of the skeleton of $\LL$ between generation $-j$ and $-j'$ (these trees are numbered as
 previously, so that $\tau^{(j,j')}_n$ is rooted at the vertex $(n+\frac{1}{2},j)$).

\paragraph{Half-slice.}\index{half-slice}

The half-slice $\HH\LL_{j'}^j$ is the part of the slice $\LL_{j'}^j$ that is contained in $\R_+ \times [j',j]$. Its skeleton consists of the trees of the skeleton of $\LL_ {j'}^j$ with nonnegative indices.

\paragraph{Blocks.}\index{block}

Cut the half-slice $\HH\LL_{j'}^j$ along the \lmgs that follow the right boundary of trees of maximal height in its skeleton. 
 We obtain a sequence of finite maps, which we will call blocks as in \cite{curien2017geometric}.

Let us give a precise definition of these blocks (see also \figref{Fig_Half slice}). 
 Let $(\xi_n)_{n>0}$ be the sequence of all indices (in increasing order) of trees that reach height $j-j'$ in the skeleton of $\HH\LL_{j'}^j$, and add the convention that $\xi_0 = -1$. Let $n>0$ be an integer. The $n$-th block $\HH\LL_{j'}^j(n)$ is the part of $\HH\LL_{j'}^j$ contained between the \lmgs started at $(\xi_{n-1}+1,j)$ and at $(\xi_{n}+1,j)$ respectively. The left boundary of a block is the \lmg on its left, its right boundary is the \lmg on its right.

\begin{figure}[h!]
\begin{center}
\includegraphics[width=0.9\textwidth]{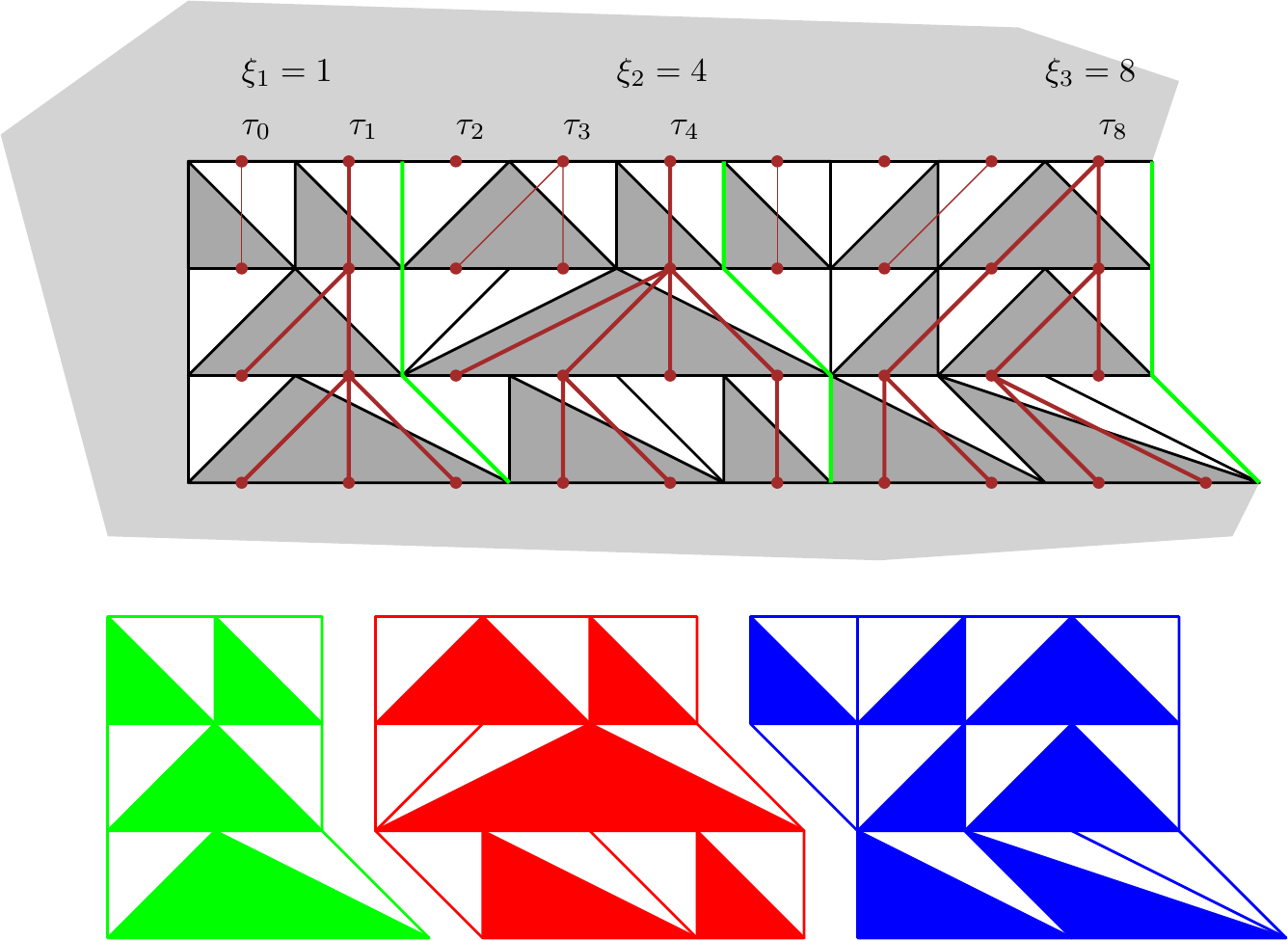}
\end{center}
\caption{Block decomposition of the half-slice $\HH\LL_{-3}^0$. The first block is pictured in green, the second one in red, the third one in blue. In the upper figure, we represented the skeleton in brown (with trees of height $3$ in thick lines), slots in dark gray, and the \lmgs following the right boundary of trees of height $3$ in green. In this example, $\xi_1=1$, $\xi_2=4$, $\xi_3=8$.}
\label{Fig_Half slice}
\end{figure}

The skeleton of $\LL$ is made of \iid trees, thus all the blocks $\HH\LL_{j'}^j(n)$, viewed as planar maps with a boundary, are independent and share the same law, which only depends on $j-j'$.

The \emph{thickness}\index{thickness} of the $n$-th block $\Epaiss(\HH\LL_{j'}^j(n))$ (called diameter in \cite{curien2017geometric}) is the minimal graph distance in this block between a point of its left boundary and a point of its right boundary. Note that the thickness of a block cannot be $0$, since the presence of a tree of maximal height implies that the \lmgs corresponding to the left and right boundary of the block do not coalesce (by a previous remark).

Furthermore, the thickness of $\HH\LL_{j'}^j(n)$ is always smaller than $j-j'+1$. Indeed, the \lmg started at $(\xi_{n-1}+1,j)$ (i.e. the left boundary) and the \lmg started at $(\xi_{n},j)$ coalesce before height $j'$ (i.e. after at most $j-j'$ steps), since no tree rooted between $\xi_{n-1}+1$ and $\xi_n$ reaches height $j-j'$. 
In this way, we get a path of length at most $j-j'$ that connects the left boundary of the block to the vertex $(\xi_{n},j)$, and we just have to add  the edge $((\xi_{n},j),(\xi_{n}+1,j))$ to get the desired bound.

It will be useful to note the following simple fact: any path that stays in the half-slice $\HH\LL_{j'}^j$ with one endpoint on the left side of the left boundary of some $\HH\LL_{j'}^j(n)$, and its other endpoint on the right side of its right boundary, has a length which is at least the thickness of the block.

Let us outline the key idea of the proof of \propref{Prop_P15}. A path of length $r$ between two vertices of the boundary of $\LL$ cannot exit the slice $\LL_{-h}^0$ for any $h \geq r$.
We apply this observation with $h=\lceil C r\rceil$ with some constant $C\geq1$. If we fix $K>0$ large enough, then with high probability we will find a block of $\LL_{-h}^0$ with top boundary included in $[A, A+Kr^2]\times \{0\}$, and any path in $\LL_{-h}^0$ that goes from the left side  to the right side of $[A, A+Kr^2]\times\{0\}$ must cross this block. All we need to conclude is the fact that we can choose $h$ such this block has thickness at least $r$ with high probability.

The latter fact is derived from the following result, which is adapted from \cite[theorem 5]{curien2017geometric}. For every 
integer $h>0$, let $\GG(h)$ be a random variable with the law of a block of height $h$. Fix $\ve \in (0,1)$. 
The $\ve$-quantile $f_\ve(h)$ of the thickness of $\GG(h)$ is the largest integer $n$ such that
\begin{equation}
\label{Eq_minoration_quantile_thickness}
\P(\Epaiss(\GG(h)) \geq n) \geq 1-\ve .
\end{equation}
Note that $1\leq f_\ve(h) \leq h+1$ by previous observations.

\begin{proposition}
\label{Prop_f is linear}
For every $\ve \in (0,1)$, there exists $C_\ve \in (0,1)$ such that for every $h\geq 1$, $f_\ve(h) \geq C_\ve h$.
\end{proposition}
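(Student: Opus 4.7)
The proof adapts the strategy of \cite{curien2017geometric}, Theorem 5, via a recursive decomposition of blocks.

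\textbf{Setup and decomposition.} Consider a random block $\GG(2h)$ of height $2h$ and restrict attention to its top half, the region at depths $0$ to $-h$. Using the skeleton decomposition of $\LL$ from Section \ref{Sec_LHPQ}, the top half decomposes into a random number $N$ of sub-blocks $B_1, \ldots, B_N$ of height $h$, separated by \lmgs of trees of height $\geq h$ in the top-half skeleton of $\GG(2h)$. Because the BGW trees in the skeleton of $\LL$ are iid and the slots are independently filled by Boltzmann truncated quadrangulations, one verifies (modulo a small discrepancy in the conditioning of the outer boundary trees, controllable via monotone coupling) that conditionally on the skeleton of the top half, the sub-blocks $B_i$ are independent copies of $\GG(h)$. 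Using \eqref{Eq_formula_for_pi} combined with the asymptotic $\pi_{2h} - \pi_h \sim 3/(2h^2)$ and the Geometric law (mean $\sim 2h^2$) of the top width of $\GG(2h)$, one computes $\E[N] \to 4$ as $h \to \infty$.

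\textbf{Deterministic inequality.} The key deterministic observation is that in $\LL$, graph distance to $\partial_0 \LL$ equals depth exactly (end of Section \ref{Sec_LHPQ}). Hence any path crossing $\GG(2h)$ whose endpoints have depths $d_x, d_y$ on the outer \lmgs and which reaches depth $d_{\max}$ has length at least $2d_{\max} - d_x - d_y$. For the optimal path realizing the thickness, a case analysis on $(d_x, d_y)$ combined with the crossing-path principle (``any path crossing a block has length at least its thickness'', stated just before Proposition \ref{Prop_LHPQ_is_local_limit_truncated_hulls}) yields
\[
\Epaiss(\GG(2h)) \;\geq\; \min\!\l(2h,\ \sum_{i=1}^N \Epaiss(B_i)\r).
\]
The easy case is $d_x = d_y = 0$, in which any path of length $< 2h$ is confined to the top half and must cross each $B_i$; the delicate case of deeper endpoints is handled by the observation that intermediate \lmgs inside $\GG(2h)$ extend to depth at least $h$, so any ``detour'' going below them must itself pay an extra depth cost comparable to $h$.

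\textbf{Bootstrap to linear growth and main obstacle.} From this inequality and the approximate iid structure of the $\Epaiss(B_i)$, one analyses the small-thickness probability $p_h(C) = \P(\Epaiss(\GG(h)) \leq Ch)$. Exploiting the conditional independence of the $\Epaiss(B_i)$ together with the concentration of $N$, one shows that the event $\{\sum_{i=1}^N \Epaiss(B_i) \leq 2Ch\}$ forces \emph{many} of the sub-block thicknesses to be simultaneously small, giving a contraction $p_{2h}(C) \leq \Phi(p_h(C)) + o(1)$ with $\Phi$ having a higher-order zero at $0$. Iteration yields $p_h(C) \to 0$ as $h \to \infty$, hence $f_\ve(h) \geq C_\ve h$ for all $h \geq h_0(\ve)$, and the case $h < h_0(\ve)$ follows from the trivial bound $\Epaiss \geq 1$ at the cost of shrinking $C_\ve$. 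The main obstacle is precisely this quantile bootstrap: a naive propagation $\P(T_{2h} < c\cdot 2h) \leq \P(N\leq 1) + \E[N]\,\P(T_h < ch)$ fails to contract since $\E[N] > 1$. The remedy requires genuine concentration of $\sum_{i=1}^N \Epaiss(B_i)$, which in turn rests on the careful identification of sub-blocks with iid copies of $\GG(h)$; a secondary technical point is the deep-endpoint case in the deterministic inequality, which exploits the fact that the depth of intermediate \lmgs in $\GG(2h)$ is at least $h$.
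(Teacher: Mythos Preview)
Your approach follows a different route from the paper, but it has a genuine gap in the bootstrap step that I do not see how to close.

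Even granting the deterministic inequality $\Epaiss(\GG(2h)) \geq \min(2h,\sum_{i=1}^N \Epaiss(B_i))$ (whose deep-endpoint case is only sketched), the recursion you obtain for $p_h(C)=\P(\Epaiss(\GG(h))\leq Ch)$ cannot be made to contract. The reason is precisely that $N$ is of bounded order: $\E[N]\to 4$ and, crucially, $\P(N=1)$ is bounded away from $0$ uniformly in $h$. On $\{N=1\}$ your inequality only yields $\Epaiss(\GG(2h))\geq \Epaiss(B_1)$, and hence
\[
\P\bigl(\Epaiss(\GG(2h))\leq 2Ch,\ N=1\bigr)\ \leq\ \P\bigl(\Epaiss(B_1)\leq 2Ch\bigr)\ \approx\ p_h(2C),
\]
which involves the tail at the \emph{larger} threshold $2C$, not $C$. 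No concentration argument on $\sum_i \Epaiss(B_i)$ can help here, since on $\{N=1\}$ there is a single summand. Iterating along the dyadic scale thus produces a recursion of the form $p_{2h}(C)\lesssim \P(N=1)\,p_h(2C)+\cdots$, and one cannot drive $p_h(C)$ to $0$ for any fixed $C\in(0,1)$: starting from any $C_0\in(1/2,1)$ the term $p_h(2C_0)$ is identically $1$ (because $\Epaiss\leq h+1$), so the right-hand side is bounded below by $\P(N=1)>0$. Your remark that ``naive propagation fails since $\E[N]>1$'' identifies the symptom but not the cure; the obstruction is structural, not just a matter of sharpening the union bound.

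The paper's proof avoids this by working at a \emph{mesoscopic} scale $m\ll h$ rather than $m=h$. Slicing $\GG(h)$ into layers of height $3m$ and using Lemma~\ref{Lemma_minoration de la taille des generations du bloc} (each generation has $\gtrsim h^2$ vertices), one finds of order $(h/m)^2$ sub-blocks of height $3m$ per layer. This polynomially large count is what supplies the amplification: any path confined to a layer must cross all of them, giving length $\gtrsim (h/m)^2 f_\ve(3m)$, while a path leaving the layer already pays $\geq m$. One obtains the functional inequality
\[
f_\ve(h)\ \geq\ C\,\max_{1\leq m\leq Ch}\min\Bigl(m,\ (h/m)^2 f_\ve(3m)\Bigr),
\]
and choosing $m\asymp h$ (specifically $m=\lfloor Ch/3\rfloor$) then closes an induction showing $f_\ve(h)\geq (C^2/6)h$. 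The key difference with your scheme is that the paper never relies on a recursion at a \emph{single} doubling scale with a bounded number of sub-blocks; it exploits the full width $\sim h^2$ of the block to manufacture many independent crossings.
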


We postpone the proof of Proposition \ref{Prop_f is linear} to the next section and complete the proof of \propref{Prop_P15}.

\begin{proof}[Proof of \propref{Prop_P15}]
Let $r\geq 1$, and set $h=\ceil{r/C_{\ve/4}}$, where $C_{\ve/4}$ is given by \propref{Prop_f is linear}, and consider the first block of the half-slice $\LL_{-h}^0$, that is,
 $\HH\LL_{-h}^0(1)$ with the previous notation.  
By \propref{Prop_f is linear}, we have 
\begin{align*}
\P(\Epaiss(\HH\LL_{-h}^0(1)) \geq r) &\geq \P(\Epaiss(\HH\LL_{-h}^0(1)) \geq C_{\ve/4} h) \\
& \geq \P(\Epaiss(\HH\LL_{-h}^0(1)) \geq f_{\ve/4}(h)) \\
& \geq 1-\ve/4 .
\end{align*}
Let $\EE_1$ denote the event $\{ \Epaiss(\HH\LL_{-h}^0(1)) \geq r \}$.

The right-most point of the top boundary of $\HH\LL_{-h}^0(1)$ is $(\xi_{1}+1, 0)$, where $\xi_1$ follows a geometric law with parameter $1-\pi_{h} \geq c/r^2$ for some constant $c>0$ independent of $r$ (cf. \propref{Prop_law_perimeter_UIPQ}). We can take $K>0$ large enough so that $\xi_{1}+1 \leq Kr^2$ holds with probability larger than $1-\ve/4$. 
Let us call $\EE_2$ the event where $\xi_1+1 \leq Kr^2$.

On the event $\EE_1 \cap \EE_2$ of probability at least $1-\ve/2$ the block $\HH\LL_{-h}^0(1)$ has thickness at least $r$ and its top boundary is contained in $[0, Kr^2]\times \{0\}$. Then any two points $(i,0)$ and $(i',0)$ with $i\leq 0$ and $i' \geq Kr^2$ are at $\dgr^\LL$-distance at least $r$, since any path of length smaller than $r$ linking them necessarily crosses the block $\HH\LL_{-h}^0(1)$.

From an obvious argument of translation invariance, we obtain that for any integer $A>0$, with probability larger than $1-\ve/2$, any point in $(\-\infty, A] \times \{0\}$ and any point in $[A+Kr^2, +\infty)\times \{0\}$ are at $\dgr^\LL$-distance at least $r$. Similarly, with probability larger than $1-\ve/2$ the two half-lines $(\-\infty, -A-Kr^2] \times \{0\}$ and $[-A, +\infty) \times \{0\}$ are also at $\dgr^\LL$-distance larger than $r$. The statement of the proposition follows.
\end{proof}

\subsubsection{Proof of \propref{Prop_f is linear}}

For $h \geq 6$ and $m \in \{ 1,2,\ldots, \floor{\frac{h}{6}} \}$, we set
\begin{equation*}
J_m = \l\{ 0,-m,-2m, ..., -\l(\floor{\frac{h}{m}}-3\r) m, -h+3m \r\} .
\end{equation*}
We write $\GG(h)$ for a block of height $h$, and $\GG_{j'}^j(h)$ for the slice of $\GG(h)$ contained between heights $j'$ and $j$, for $-h \leq j' < j \leq 0$. We recall that $\ve \in (0,1)$ is fixed.

\begin{lemma}
\label{Lemma_claim on thickness of sublayers}
There exists $C\in(0,1/6)$ \st for all $h$ large enough and $0<m \leq C h$, the following property holds with probability at least $1-\ve$: for every $j\in J_m$, the length of any path connecting the left boundary of $\GG(h)$ to its right boundary and staying in $\GG_{j-3m}^j(h)$ is at least $C \pfrac{h}{m}^2 f_\ve(m)$. 
\end{lemma}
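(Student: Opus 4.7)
The plan is to decompose, for each $j \in J_m$, the sub-slice $\GG_{j-3m}^j(h)$ along the block decomposition at scale $3m$ of the ambient half-slice $\HH\LL_{j-3m}^j$, and to lower bound the length of any left-to-right crossing path by a sum of thicknesses of sub-blocks that lie inside $\GG(h)$. The first step will be to control the random width $W_j$ of $\GG(h)$ at each intermediate height $j$: since the two bounding \lmgs of $\GG(h)$ correspond, by duality, to ancestral lines in the leftmost skeleton tree that survives to generation $h$, standard estimates on Galton-Watson trees with offspring distribution $\vt$, in the spirit of \propref{Prop_law_perimeter_UIPQ}, should yield constants $0<c_1<c_2$ (depending on $\ve$) such that $c_1 h^2 \leq W_j \leq c_2 h^2$ uniformly in $j \in [-h+1,0]$ with probability at least $1-\ve/4$.

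Next, for $j \in J_m$ fixed, I would exploit the decomposition of the half-slice $\HH\LL_{j-3m}^j$ into \iid sub-blocks $B_k^{(j)}$ of height $3m$, each distributed as $\GG(3m)$, whose horizontal widths are essentially geometric with parameter $1-\pi_{3m}$ and so have mean of order $m^2$ with exponential tails. Let $N_j$ denote the number of these sub-blocks whose horizontal footprint at heights in $[j-3m,j]$ sits strictly inside that of $\GG(h)$. A law-of-large-numbers estimate combined with the width control will yield $N_j \geq c_3 \pfrac{h}{m}^2$ on an event of probability $1-\exp\bigl(-c_4 \pfrac{h}{m}^2\bigr)$. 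Each sub-block independently satisfies $\P(\Epaiss(B_k^{(j)}) \geq f_\ve(m)) \geq \P(\Epaiss(B_k^{(j)}) \geq f_\ve(3m)) \geq 1-\ve$, where the monotonicity $f_\ve(3m) \geq f_\ve(m)$ will be established by a coupling between the skeletons of $\GG(m)$ and $\GG(3m)$. Hoeffding's inequality then produces at least $(1-2\ve) N_j$ ``thick'' sub-blocks, up to an event of complement probability at most $\exp\bigl(-c_5 \pfrac{h}{m}^2\bigr)$.

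The geometric heart of the proof is a crossing argument: any path joining the left and right boundaries of $\GG(h)$ while remaining in $\GG_{j-3m}^j(h)$ cannot leave through the top or bottom of the sub-slice, hence it must enter each fully contained sub-block $B_k^{(j)}$ through one of its delimiting \lmgs and exit through the other, contributing at least the thickness of $B_k^{(j)}$ to its total length. Summing over the good sub-blocks gives a total length of at least $C\pfrac{h}{m}^2 f_\ve(m)$, and a union bound over the $O(h/m)$ values of $j \in J_m$ absorbs the exponentially small failure probabilities as long as $m \leq Ch$ with $C$ small enough and $h$ large. The main obstacle I foresee is the uniform-in-$j$ control of $W_j$: the widths are strongly correlated through the skeleton, so obtaining a process-level estimate (for instance via a spine decomposition, or a $Q$-process conditioning on the leftmost surviving tree of height $h$) is substantially more delicate than a pointwise one; the monotonicity $f_\ve(3m)\geq f_\ve(m)$ is a secondary technicality of similar flavour.
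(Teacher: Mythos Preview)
Your outline is essentially the paper's own argument: decompose each sub-slice $\GG_{j-3m}^j(h)$ via the block decomposition of the half-slice $\HH\LL_{j-3m}^j$ at scale $3m$, count the thick sub-blocks lying inside $\GG(h)$, use the crossing observation, and take a union bound over $j\in J_m$. The obstacle you single out as the main difficulty---uniform-in-$j$ control of the width of $\GG(h)$ at every height---is in fact already available as \lemref{Lemma_minoration de la taille des generations du bloc} (imported from \cite[Lemma~2]{curien2017geometric}), which gives $\P(\inf_{0\leq k\leq h} X_k(h) > C_1 h^2)\geq 1-\ve/2$ directly; no new spine or $Q$-process argument is required. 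Note that only this \emph{lower} bound on the width is used, so your upper bound $W_j\leq c_2 h^2$ is superfluous. Given the width bound, the number of sub-blocks of $\HH\LL_{j-3m}^j$ with top boundary in $[0,\ceil{C_1 h^2}]\times\{j\}$ and thickness at least $f_\ve(3m)$ is stochastically bounded below by a binomial with parameters $(\ceil{C_1 h^2},(1-\ve)(1-\pi_{3m}))$, so a single large-deviation estimate replaces your two-step Hoeffding. Finally, the monotonicity $f_\ve(3m)\geq f_\ve(m)$ you propose to prove by coupling is not needed: the argument naturally produces the lower bound $C(h/m)^2 f_\ve(3m)$, and indeed the functional inequality in \propref{Prop_functional inequality on f} is stated with $f_\ve(3m)$---the $f_\ve(m)$ in the lemma statement is a harmless slip.
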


As a technical ingredient of the proof of Lemma \ref{Lemma_claim on thickness of sublayers}, we need a uniform lower bound on the size of the block at every generation. For $0\leq k\leq h$, let $X_k(h)$ denote the number of vertices of the skeleton of $\GG(h)$ at generation $k$.

\begin{lemma}
\label{Lemma_minoration de la taille des generations du bloc}
There exists a constant $C_1>0$ which does not depend of $h$, such that 
\begin{equation}
\label{Eq_bound proba EE1}
\P\l(\inf_{0 \leq k \leq h} X_k(h) > C_1 h^2 \r) \geq 1-\ve/2 .
\end{equation}
\end{lemma}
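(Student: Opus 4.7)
My plan is to express $X_k(h)$ as a sum over the trees of an i.i.d.\ \BGW forest and then exploit the branching structure. Recall that the first block $\HH\LL_{-h}^0(1)$ consists of the trees $\tau_0, \tau_1, \ldots, \tau_{\xi_1}$ of the i.i.d.\ $\vt$-\BGW forest underlying the skeleton of $\LL$, with $\xi_1 = \min\{i \geq 0 : \mathrm{ht}(\tau_i) \geq h\}$. In particular $T_1 := \xi_1 + 1$ is geometric with parameter $1-\pi_h = 8/((3+2h)^2-1) \sim 2/h^2$, and $X_k(h) = \sum_{i=0}^{\xi_1} Z_k(\tau_i)$, where $Z_k(\tau)$ denotes the number of vertices at generation $k$ in $\tau$.

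\textbf{Generating function and mean.} Using conditional independence of $\{\mathrm{ht}(\tau_j) < h\}$ for $j < \xi_1$ and $\{\mathrm{ht}(\tau_{\xi_1}) \geq h\}$, summing over the possible values of $T_1$ gives the explicit probability generating function
$$\E\l[y^{X_k(h)}\r] = \frac{g_\vt^{(k)}(y) - g_\vt^{(k)}(y\pi_{h-k})}{1 - g_\vt^{(k)}(y\pi_{h-k})}.$$
Using $g_\vt^{(k)}(\pi_{h-k}) = \pi_h$ together with $(g_\vt^{(k)})'(1) = 1$, differentiation at $y = 1$ yields the clean identity $\E[X_k(h)] = 1/(1-\pi_h) = (h+1)(h+2)/2$, independent of $k \in \{0, \ldots, h\}$; so the expectation is of the right order $\sim h^2/2$ uniformly in $k$.

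\textbf{From the mean to a uniform lower bound.} To upgrade the mean estimate to a high-probability lower bound, I would proceed in two steps. First, on the event $\{T_1 \geq c h^2\}$ --- which holds with probability close to $1$ by geometric tail bounds for $T_1$ --- the block contains at least $c h^2$ trees, of which $T_1 - 1$ are (independently) conditioned on $\{\mathrm{ht} < h\}$ and one is conditioned on $\{\mathrm{ht} \geq h\}$. Second, I would exploit the fact that $\vt$ lies in the domain of attraction of the one-sided stable law of index $3/2$ (since $g_\vt(y) - y \sim (1-y)^{3/2}/\sqrt{2}$ as $y \to 1$), so that a sum of $n \sim h^2$ independent $\vt$-\BGW populations at generation $k$ has fluctuations of order $n^{2/3} \sim h^{4/3}$, which is $o(h^2)$. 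A Paley--Zygmund / Chebyshev bound at each fixed $k$ combined with a union bound over $k \in \{0,1,\ldots,h\}$ should then yield $\inf_k X_k(h) \geq C_1 h^2$ with probability $\geq 1 - \ve/2$.

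\textbf{Main obstacle.} The hardest part is making the uniform-in-$k$ lower bound work near $k = h$: there the unmarked trees (conditioned on $\mathrm{ht} < h$) die out before reaching generation $h$, and essentially all the mass of $X_k(h)$ comes from the marked tree $\tau_{\xi_1}$ alone. For this regime I would invoke the Yaglom-type limit theorem, readable off the explicit form of $g_\vt^{(h)}$ in \eqref{Eq_formula_for_pi}: conditional on $\mathrm{ht}(\tau) \geq h$, the rescaled population $Z_h(\tau)/h^2$ converges in distribution to a strictly positive random variable, so $Z_h(\tau_{\xi_1}) \geq c h^2$ with high probability. Piecing together the Yaglom control near $k = h$, the concentration of the sum of unmarked trees at intermediate $k$, and the geometric tail bound on $T_1$ at $k = 0$, into a single uniform estimate --- all while handling the heavy tails induced by the infinite-variance offspring $\vt$ --- is the chief technical difficulty.
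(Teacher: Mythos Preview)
The paper does not prove this lemma; it simply refers to \cite[Lemma 2]{curien2017geometric}. So there is no in-paper argument to compare against.

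Your generating-function identity and the resulting mean $\E[X_k(h)]=(h+1)(h+2)/2$, independent of $k$, are correct and worth recording. The gap is in the passage from this pointwise mean to a uniform-in-$k$ lower bound via ``Paley--Zygmund / Chebyshev at each fixed $k$ plus a union bound over $k\in\{0,\ldots,h\}$''. Both inequalities require a finite second moment, and here $\vt$ has infinite variance (its generating function behaves like $y+c(1-y)^{3/2}$ near $y=1$), so $\E[X_k(h)^2]=\infty$ for every $k\ge 1$; neither inequality even applies. Your remark that fluctuations are of order $h^{4/3}$ is a stable-law scaling heuristic, not a tail bound that sums over $h+1$ values of $k$. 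Yaglom's theorem at $k=h$ gives, for suitably small $c$, only $\P(Z_h(\tau_{\xi_1})\ge ch^2\mid \mathrm{ht}\ge h)\ge 1-\ve'$ --- a fixed high probability, not $1-O(1/h)$ --- so it cannot close a union bound either. You yourself flag the heavy tails as ``the chief technical difficulty''; as written, the sketch is a plan rather than a proof.

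Two routes that avoid the union bound altogether. (i)~Show a functional convergence of $(h^{-2}X_{\lfloor th\rfloor}(h))_{0\le t\le 1}$ to a strictly positive c\`adl\`ag process; the uniform lower bound then follows immediately from positivity and continuity of the limit at the endpoints. (ii)~Use an exponential Markov (Chernoff) bound on the \emph{lower} tail of $X_k(h)$, which needs no moments and can be fed directly by your explicit formula for $\E[y^{X_k(h)}]$; this requires uniform-in-$k$ asymptotics of $g_\vt^{(k)}$ near $1$, which are available from the closed form of $g_\vt^{(k)}$ but which you have not carried out. The argument in the cited reference proceeds along these lines; I would read it.
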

See \cite[Lemma 2]{curien2017geometric} for a proof of \lemref{Lemma_minoration de la taille des generations du bloc}.

\begin{proof}[Proof of \lemref{Lemma_claim on thickness of sublayers}]

The idea is to choose $C'$ small enough so that with high probability, one can find at least $C'(h/m)^2$ blocks of thickness at least $f_\ve(m)$ inside the slice $\GG_{j-3m}^{j}(h)$, for every $j \in J_m$. Any path connecting the left and right boundaries of this slice will then have length at least $C'(h/m)^2f_\ve(m)$.

We argue in the half-slice $\HH\LL_{-h}^0$. Note that the first block of this half-slice has the same law as $\GG(h)$. 

Let $C_1$ be chosen as in \lemref{Lemma_minoration de la taille des generations du bloc} so that \eqref{Eq_bound proba EE1} holds. Consider the half-slice $\HH\LL_{j-3m}^{j}$ for $j \in J_m$, and let $k$ be a positive integer. The number of blocks of this slice whose top boundary lies in $[0,\ceil{C_1 h^2}]\times \{j\}$ is distributed as the number of trees with height at least $3m$ in a forest of $\ceil{C_1 h^2}$ independent \BGW trees with offspring distribution $\vt$. Each block has a probability greater than $1-\ve$ of having thickness at least $f_\ve(3m)$. The number $N_j$ of blocks with thickness at least $f_\ve(3m)$ and top boundary in $[0,\ceil{C_1 h^2}] \times \{ j \}$ is then bounded below in distribution by a binomial variable with parameters $(\ceil{C_1 h^2}, (1-\ve)(1-\pi_{3m}))$, where $\pi_s$ is defined in \propref{Prop_law_perimeter_UIPQ}. By standard large deviation estimates for the binomial distribution and the bound $1-\pi_s \geq c/s^2$, we can find $C,C', C''>0$ such that for all large enough $h$, for every $m\leq Ch$, for every $j \in J_m$
\begin{equation*}
\P\l( N_j  < C' \pfrac h m ^2 \r) < \exp\l(-C'' \pfrac h m ^2 \r) .
\end{equation*}
Summing over $j \in J_m$ and taking $C$ even smaller if necessary, we get
\begin{equation}
\label{Eq_number_blocks_in_halfslice}
\P\l( \forall j\in J_m \ : \ N_j  \geq C \pfrac h m ^2 \r) \geq 1-\ve/2 .
\end{equation}

On the event of probability at least $1-\ve/2$ considered in \lemref{Lemma_minoration de la taille des generations du bloc}, the first $\ceil{C_1 r^2}$ vertices of the skeleton of $\HH\LL_{-r}^0$ at generation $j$ (i.e. the first $\ceil{C_1 r^2}$ vertices of the skeleton of $\HH\LL_{j-3m}^j$) belong to the first block $\HH\LL_{-r}^0(1)$, thus the $N_j$ blocks of thickness at least $f_\ve(3m)$ considered above are contained in $\HH\LL_{-r}^0(1)$. 
The property of \lemref{Lemma_claim on thickness of sublayers} then holds on the intersection of the event in
\lemref{Lemma_minoration de la taille des generations du bloc}
with the event in \eqref{Eq_number_blocks_in_halfslice}.
\end{proof}

\propref{Prop_f is linear} will be proved via the following functional inequality on $f_\ve$:
\begin{proposition}
\label{Prop_functional inequality on f}
Let $C \in (0,1/6)$ be as in \lemref{Lemma_claim on thickness of sublayers}. Then for all $h$ large enough, 
\begin{equation}
f_\ve(h) \geq C \max_{1\leq m \leq Ch} \min \left( m, \pfrac{h}{m}^2 f_\ve(3m) \right) .
\end{equation}
\end{proposition}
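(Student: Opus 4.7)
The plan is to fix an integer $m$ with $1 \leq m \leq Ch$ and show that, with probability at least $1-\ve$, the thickness of the block $\GG(h)$ is bounded below by $\min\bigl(m,\, C(h/m)^2 f_\ve(3m)\bigr)$. By the definition of the quantile $f_\ve(h)$, and since we may take $C\in(0,1)$, this would give
\[
f_\ve(h) \;\geq\; \min\l(m,\, C (h/m)^2 f_\ve(3m)\r) \;\geq\; C \min\l(m,\, (h/m)^2 f_\ve(3m)\r),
\]
and taking the maximum over $m\in\{1,\dots,\floor{Ch}\}$ would yield the claim.

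To lower bound the thickness, I would analyze an arbitrary path $\vg$ in $\GG(h)$ connecting its left and right boundaries and split on its vertical span. By the remark at the end of Section~\ref{Sec_LHPQ}, every vertex on $\partial_{-r}\LL$ is at graph distance exactly $r$ from $\partial_0\LL$; combined with the triangle inequality this forces the height coordinate to change by at most one along any edge of $\LL$. Therefore, if $[a,b]$ is the range of heights visited by $\vg$, the length of $\vg$ is at least $b-a$, which already exceeds $m$ in the case $b-a \geq m$.

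If instead $b-a < m$, the spacing of $J_m$ produces some $j \in J_m$ with $b \leq j \leq a+3m$: the target interval has length $3m-(b-a) > 2m$, while consecutive elements of $J_m$ are $m$ apart with the endpoints $0$ and $-h+3m$ included (the boundary cases $b=0$ and $a=-h$ being handled by those endpoints, using $m \leq Ch < h/6$). The path $\vg$ is then entirely contained in the sublayer $\GG_{j-3m}^{j}(h)$, and \lemref{Lemma_claim on thickness of sublayers} provides length $\geq C(h/m)^2 f_\ve(3m)$ on an event of probability at least $1-\ve$.

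Combining both cases on this same event, every path between the left and right boundaries of $\GG(h)$ has length at least $\min\bigl(m, C(h/m)^2 f_\ve(3m)\bigr)$, which delivers the required lower bound on $\Epaiss(\GG(h))$ and hence on $f_\ve(h)$. The one step that deserves care is the covering argument of the second case: one must verify that a path whose height range has size less than $m$ really does fit inside a \emph{single} sublayer, and this is precisely why $J_m$ is designed with spacing $m$ and sublayers of height $3m$, leaving enough overlap so that \lemref{Lemma_claim on thickness of sublayers} can be applied as a black box.
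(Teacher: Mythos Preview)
The proposal is correct and follows essentially the same approach as the paper: a dichotomy on whether the minimizing path has large vertical extent (giving length $\geq m$ for free) or is trapped in a single sublayer $\GG_{j-3m}^j(h)$ (where \lemref{Lemma_claim on thickness of sublayers} applies). The only cosmetic difference is that the paper anchors the dichotomy at the starting point $x$ on the left boundary, placing $x$ in the middle third $\GG_{j-2m}^{j-m}(h)$ of a suitable sublayer, whereas you work directly with the full range $[a,b]$ of heights visited by the path; the two formulations are equivalent.
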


\begin{proof}

The idea is to cut the block into slices of height $3m$, and to consider separately the cases where the shortest path crossing the block from left to right stays inside such a slice or not. See \figref{Fig_Disjonction prop thickness} for an illustration.

\begin{figure}[h!]
\begin{center}
\includegraphics[width=1.0\textwidth]{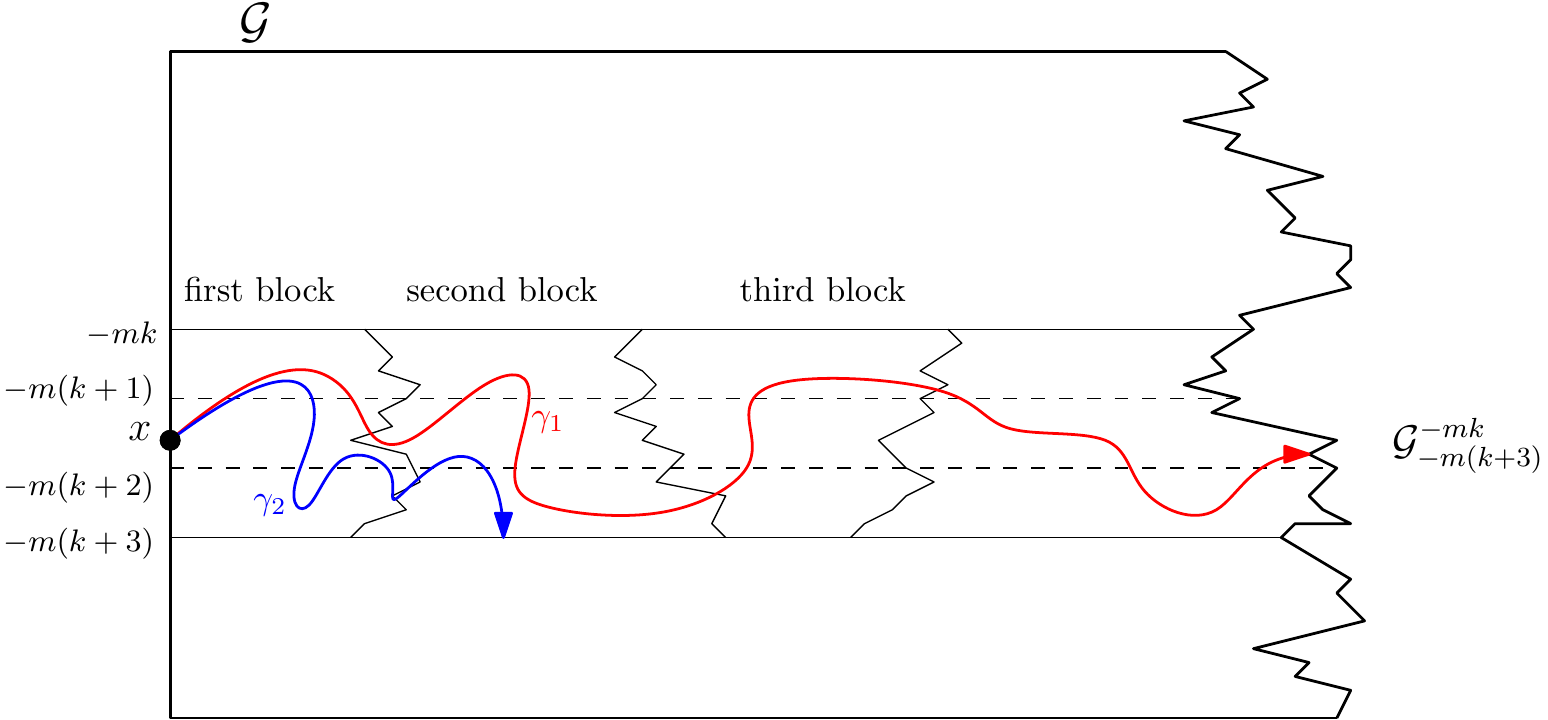}
\end{center}
\caption{Paths started from a point $x$ on the left boundary can either stay in a slice of height $3m$ around $x$ (and have length at least $C \pfrac{h}{m}^2 f_\ve(m)$ by \lemref{Lemma_claim on thickness of sublayers} \whp), or leave it and have length at least $m$ \as.}
\label{Fig_Disjonction prop thickness}
\end{figure}

 Let $m$ be an integer with $1\leq m\leq Ch$. Consider a path $\vg$ in $\GG(h)$ that achieves the thickness of this block, and let $x$ be its starting point on the left boundary. 
We assume for simplicity that $x$ is at distance at least $m$ from the top and bottom boundaries (the case where $x$ is at distance smaller than $m$ from the top or bottom boundaries is treated similarly). By our choice of $J_m$, there is always an index $j \in J_m$ such that $x$ is in the slice $\GG_{j-2m}^{j-m}(h)$. Then either $\vg$ leaves the slice $\GG_{j-3m}^{j}(h)$, which takes at least $m$ steps; or $\vg$ stays in $\GG_{j-3m}^{j}(h)$, but then by  \lemref{Lemma_claim on thickness of sublayers} its length is at least $C \pfrac{h}{m}^2 f_\ve(m)$ with probability at least $1-\ve$. 

We conclude that the thickness of $\GG(h)$ is at least $C \min \left( m, \pfrac{h}{m}^2 f_\ve(m) \right)$ with probability at least $1-\ve$. 
Hence $f_\ve(m)\geq C \min \left( m, \pfrac{h}{m}^2 f_\ve(m) \right)$. 
 Since this holds for all $m$ with $1\leq m\leq Ch$, this gives the result of \propref{Prop_functional inequality on f}.
\end{proof}

\begin{proof}[Proof of \propref{Prop_f is linear}]
First note that by taking $C$ smaller if necessary we may assume that the bound of Proposition \ref{Prop_functional inequality on f} holds for every $h\geq \floor{6/C^2}$. We then prove by induction that $f_\ve(h) \geq \frac{C^2}{6} h$ for every $h\geq 1$. If $h \leq \floor{6/C^2}$ this bound is trivial. So let $h_0 \geq \floor{6/C^2}$ and assume that $f_\ve(h) \geq \frac{C^2}{6} h$ for every $1\leq h \leq h_0$. 
Take $h=h_0+1$ and $m = \floor{C h/3}$. One verifies that $3m \leq Ch <h$, hence $3m\leq h_0$, so by our assumption $f_\ve(3m) \geq C^2 m/2$.

We note as well that $\frac{Ch}{3} > \frac{C}{3}\frac{6}{C^2} = \frac{2}{C} > 12$, so that $\floor{\frac{Ch}{3}} \geq \frac{Ch}{6}$, hence $m\geq Ch/6$. 

By \propref{Prop_functional inequality on f},
$$
f_\ve(h) \geq C \min\left( m , \pfrac{h}{m}^2 f_\ve(3m) \right) 
\geq C \min\left( \frac{Ch}{6} , \pfrac{3}{C}^2 \frac	{C^2}{2} \frac{Ch}{6} \right) 
$$
This completes the proof. \end{proof}

\subsection{Subadditivity}
\label{Sec_Subadditivity in the LHPQ}

It will be convenient to consider the map $\wt \LL$ which is derived from the \LHPQ $\LL$ be removing all ``horizontal edges'' $((j,0),(j+1,0))$ for $j\in\Z$. 
For every integer $j<0$, we also denote by $\wt\LL_{-\infty}^j$ the submap of $\wt \LL$ (or of $\LL$) contained in the half-plane below ordinate $j$.
If $-\infty < j < j' \leq 0$, $\wt \LL_j^{j'}$ is the submap of $\wt \LL$ contained in $\R\times [j,j']$. We equip the vertex sets of these graphs with the
first-passage percolation distance induced by \iid weights to the edges (the common distribution of these weights is supported on $[1,\vk]$). 
Recall our notation $\rho=(0,0)$ for the root vertex of $\LL$ (or of $\wt\LL$), and $\partial_j\LL$ for the line at vertical coordinate $j\leq 0$ (viewed here
as a collection of vertices).

\begin{proposition}
\label{Prop_P18 - ergodic}
There exists a constant $\bc_p \in [1, \vk]$ such that
\begin{align*}
r^{-1} \dfpp^{\wt \LL}(\rho, \partial_{-r}\LL) \overset{\as}{\ulim{r}{\infty}} \bc_p .
\end{align*}
\end{proposition}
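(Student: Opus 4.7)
The plan is to apply Kingman's subadditive ergodic theorem, exploiting a vertical Markov self-similarity of $\wt\LL$.

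First I would check the a priori bounds $r \le T_r \le \vk\,r$ almost surely, where $T_r := \dfpp^{\wt\LL}(\rho,\partial_{-r}\LL)$. The lower bound follows from the identity $\dgr^\LL(\rho,\partial_{-r}\LL)=r$ recorded at the end of Section~\ref{Sec_LHPQ}, combined with the unit lower bound on the weights. For the upper bound, the \lmg{} from $\rho=(0,0)$ in $\LL$ is the vertical path $((0,-k))_{0\le k\le r}$; its $r$ edges all lie in $\wt\LL$, since only the horizontal edges of $\partial_0\LL$ were removed, and each has weight at most $\vk$.

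Next, the key structural input is a vertical self-similarity: because the skeleton of $\LL$ is an i.i.d.\ forest of critical \BGW{} trees with offspring distribution $\vt$ and the slots are filled by i.i.d.\ Boltzmann truncated quadrangulations, the branching property implies that, conditionally on the part of $\wt\LL$ strictly above $\partial_{-r}\LL$, the submap $\wt\LL^{(r)}$ spanned by vertices of level $\le -r$, re-labeled by the canonical order-preserving bijection of its top vertices with $\Z\times\{0\}$ and shifted upward by $r$, has the same distribution as $\wt\LL$. Equivalently, the successive horizontal slabs of $\wt\LL$ (together with their edge weights) are i.i.d. Let $v_r \in \partial_{-r}\LL$ denote the endpoint of an optimal path from $\rho$ realising $T_r$, chosen by some deterministic tie-breaking rule, and define the transformation $\Sigma$ of the underlying probability space by removing the top slab of $\wt\LL$ and re-embedding the remainder so that $v_1$ becomes the new root vertex. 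The horizontal translation invariance of $\wt\LL$ together with the above self-similarity imply that $\Sigma$ is measure-preserving. Moreover, concatenating the optimal path from $\rho$ to $v_m$ (of total weight $T_m(\omega)$) with the optimal path from $v_m$ to $\partial_{-(m+n)}\LL$ in the shifted \LHPQ{} (of total weight $T_n(\Sigma^m\omega)$) produces a path from $\rho$ to $\partial_{-(m+n)}\LL$ of total weight $T_m(\omega)+T_n(\Sigma^m\omega)$. This yields the subadditivity
\[
T_{m+n}(\omega) \,\le\, T_m(\omega) + T_n(\Sigma^m\omega) \qquad (m,n\ge 1),
\]
and Kingman's subadditive ergodic theorem then gives $T_n/n \to \bc_p$ almost surely and in $L^1$, for some $\Sigma$-invariant random variable $\bc_p$, which lies in $[1,\vk]$ by the a priori bounds.

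Finally, one has to verify that $\bc_p$ is almost surely constant, that is, that $\Sigma$ is ergodic. This follows from the fact that $\Sigma$ combines the purely vertical slab shift -- which is ergodic because the slabs are i.i.d. -- with a horizontal translation governed by $v_1$: any $\Sigma$-invariant random variable must be invariant under both the vertical slab shift and the horizontal translation action on $\wt\LL$, the latter being ergodic thanks to the i.i.d.\ indexing of the \BGW{} trees, so it is deterministic. The main technical obstacle is the careful verification that the random shift $\Sigma$ is indeed measure-preserving: one must argue simultaneously for the vertical Markov property at $\partial_{-1}\LL$ and for the random horizontal re-centering at $v_1$, making essential use of horizontal translation invariance.
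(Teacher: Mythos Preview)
Your overall plan is the same as the paper's --- subadditivity for the distances $T_r=\dfpp^{\wt\LL}(\rho,\partial_{-r}\LL)$, using the vertical Markov property and horizontal translation invariance of $\wt\LL$ --- but your implementation via a single measure-preserving transformation $\Sigma$ has a genuine gap.

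The problem is that $\Sigma^m$ is not what you say it is. You define $\Sigma$ by removing the top slab and re-rooting at $v_1$, the landing point of an optimal path of depth $1$. Iterating, $\Sigma^m\omega$ is the submap below $\partial_{-m}\LL$ re-rooted at a ``greedy'' point $w_m$ obtained by successively optimizing one layer at a time from the previous landing point. In general $w_m\neq v_m$, where $v_m$ is the landing point of an optimal path from $\rho$ of depth $m$. Hence $T_n(\Sigma^m\omega)$ is the optimal distance from $w_m$ to $\partial_{-(m+n)}\LL$, not from $v_m$, and your concatenation ``optimal path from $\rho$ to $v_m$, then optimal path from $v_m$ downward'' does \emph{not} have total weight $T_m(\omega)+T_n(\Sigma^m\omega)$. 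The inequality $T_{m+n}\le T_m+T_n\circ\Sigma^m$ is therefore unjustified. (What you do get is $T_{m+n}\le \sum_{k=0}^{m-1}T_1\circ\Sigma^k + T_n\circ\Sigma^m$, which is not the subadditive form.) Your ergodicity sketch has a related weakness: invariance under the single random composite shift $\Sigma$ does not obviously decouple into invariance under the vertical slab shift and the horizontal translation separately.

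The paper avoids this by not constructing a single transformation at all. It sets $x_j$ to be the (leftmost) vertex of $\partial_j\LL$ realizing $\dfpp^{\wt\LL_j^0}(\rho,\partial_j\LL)$, notes that $x_j$ is measurable with respect to the slab $\wt\LL_j^0$, and observes that by independence of layers and horizontal translation invariance the variable $\dfpp^{\wt\LL_{j'}^j}(x_j,\partial_{j'}\LL)$ is independent of $\dfpp^{\wt\LL_j^0}(\rho,\partial_j\LL)$ and has the same law as $\dfpp^{\wt\LL_{j'-j}^0}(\rho,\partial_{j'-j}\LL)$. This is exactly the input for Liggett's version of the subadditive ergodic theorem \cite[Theorem 1.10]{liggett1985improved}, which does not require the increments to arise from iterating one map; the resulting i.i.d.\ structure of the increments $(X_{nk,(n+1)k})_{n\ge0}$ then forces the limit to be a deterministic constant. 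If you want to keep your formulation, the cleanest fix is to drop $\Sigma$ and cite Liggett's theorem directly.
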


\begin{proof}
We derive this proposition from the subadditive ergodic theorem.
Let $-\infty<j' < j < 0$, and let $x_j$ be the left-most vertex of $\partial_j \LL$ such that $\dfpp^{\wt\LL}(\rho, \partial_j \LL) = \dfpp^{\wt\LL}(\rho, x_j)$. Then,
\begin{equation*}
\dfpp^{\wt\LL_{j'}^0}(\rho, \partial_{j'} \LL) \leq \dfpp^{\wt\LL_{j}^0}(\rho, \partial_j \LL) + \dfpp^{\wt\LL_{j'}^j}(x_j, \partial_{j'} \LL) .
\end{equation*}
Note that $x_j$ is a function of $\wt\LL_j^0$ and of the weights on edges of $\wt\LL_j^0$. Thanks to the independence of layers of the map, $\dfpp^{\wt\LL_{j'}^j}(x_j, \partial_{j'} \LL)$ is independent of $\dfpp^{\wt\LL_j^0}(\rho, \partial_j \LL)$ and has the same distribution as $\dfpp^{\wt\LL_{j'-j}^0}(\rho, \partial_{j'-j} \LL)$. We then apply Liggett's version of Kingman's subadditive ergodic theorem \cite[theorem 1.10]{liggett1985improved} to conclude that
\begin{equation*}
r^{-1} \dfpp^{\wt\LL_{-r}^0}(\rho, \partial_{-r}\LL) \overset{\as}{\ulim{r}{\infty}} \bc_p
\end{equation*}
for some constant $\bc_p$. The fact that $\bc_p \in [1,\vk]$ is immediate since weights belong to $[1,\vk]$ and the graph distance from $\rho$ to $\partial_{-r} \LL$
(in $\wt\LL_{-r}^0$) is equal to $r$. The lemma follows by noting that $\dfpp^{\wt \LL}(\rho, \partial_j\LL) = \dfpp^{\wt\LL_{j}^0}(\rho, \partial_j\LL)$.

\end{proof}

\section{Technical tools}
\label{Sec-tech}

\subsection{Density between the LHPQ and truncated hulls of the UIPQ}
\label{Sec_Density between the LHPQ and truncated hulls of the UIPQ}

\propref{Prop_LHPQ_is_local_limit_truncated_hulls} suggests that the neighborhood of a vertex chosen uniformly on the boundary of a large hull in the \UIPQ looks like the \LHPQ. We will need a quantitative version of this property; this is provided by \propref{Prop_P5}, whose proof does not depend on \propref{Prop_LHPQ_is_local_limit_truncated_hulls}.

Let $a \in (0,1)$, let $(\tau_i)_{i \in \Z}$ be an \iid \BGW forest with offspring distribution $\vt$, and for every integer $r \geq 1$, let $N_r^{(a)}$ be a random variable distributed uniformly over $\{ \floor{ar^2}+1, ..., \floor{r^2/a} \}$, and independent of $(\tau_i)_{i \in \Z}$. We denote the tree $\tau_i$ truncated at height $r$ by $[\tau_i]_r$ (we only keep vertices at generation at most $r$). 
For every $0\leq r < s$, let $\FF_{r,s}$ be the forest defined from the skeleton of the annulus $\CC(r,s)$ in the \UIPQ as explained at the end of \secref{Sec_Preliminaries}.

\begin{proposition}
\label{Prop_P5}
For every $a\in(0,1)$, we can find $C_a>0$ such that for every large enough integer $r$, for every choice of the  integers $s>r$ and $\floor{ar^2}+1 \leq p,q \leq \floor{r^2/a}$, for every forest $\FF \in \F_{s-r,p,q}$,
\begin{equation}
\label{Eq_Prop6_temp}
\P\l( \FF_{r,s} = \FF \r) \leq C_a \P\l( ([\tau_1]_{s-r}, ... , [\tau_{N^{(a)}_r}]_{s-r}) = \FF \r) .
\end{equation}
\end{proposition}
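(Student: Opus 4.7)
The plan is to directly compute both sides from the explicit formulas available in the paper and show that their ratio is bounded uniformly in $r,s,p,q,\FF$ (under the stated range conditions).

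First I would write down $\P(\FF_{r,s}=\FF)$ for $\FF\in \F_{s-r,p,q}$ using Proposition~\ref{Prop_law_annulus_UIPQ_cond_top} together with Proposition~\ref{Prop_law_perimeter_UIPQ}:
\begin{equation*}
\P(\FF_{r,s}=\FF) \;=\; \P(H_s=q)\cdot\P(\FF_{r,s}=\FF\mid H_s=q)
\;=\; K_s\,\vk_q\,(2\pi_s)^q \cdot \frac{\vp_r(p)}{\vp_s(q)}\prod_{e\in\FF^*}\vt(c_e).
\end{equation*}
For the right-hand side, the event $([\tau_1]_{s-r},\ldots,[\tau_{N_r^{(a)}}]_{s-r})=\FF$ forces $N_r^{(a)}=q$ and forces each of the first $q$ trees of the \BGW forest, truncated at height $s-r$, to match the corresponding tree of $\FF$. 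Since $N_r^{(a)}$ is independent of the forest, uniform on an interval of length roughly $(1/a-a)r^2$, and since a \BGW tree truncated at height $s-r$ reproduces a prescribed shape with probability $\prod\vt(c_e)$ over its internal vertices, we get
\begin{equation*}
\P\bigl(([\tau_1]_{s-r},\ldots,[\tau_{N_r^{(a)}}]_{s-r})=\FF\bigr)
\;=\;\frac{1}{\lfloor r^2/a\rfloor-\lfloor ar^2\rfloor}\prod_{e\in\FF^*}\vt(c_e),
\end{equation*}
provided $q$ lies in the admissible range (which is assumed).

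Now the $\FF$-dependence cancels in the ratio, and proving the proposition reduces to the \textbf{explicit} uniform bound
\begin{equation*}
K_s\,\vk_q\,(2\pi_s)^q \cdot \frac{\vp_r(p)}{\vp_s(q)}\cdot\bigl(\lfloor r^2/a\rfloor-\lfloor ar^2\rfloor\bigr)\;\leq\;C_a.
\end{equation*}
Here the key observation is that the $s$-dependence drops out almost entirely: the factor $(3+2s)/((3+2s)^2-1)^2$ appears both in $K_s$ and in $\vp_s(q)$, so those cancel, and the remaining $s$-dependence through $\pi_s^q/\pi_s^{q-1}=\pi_s\leq 1$ is harmless. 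Plugging in the asymptotics $\vk_q\sim \mathrm{const}\cdot\sqrt{q}\,2^{-q}$ and $(3+2r)/((3+2r)^2-1)^2\sim \mathrm{const}/r^3$ from \eqref{Eq_asymptotics_vkp} and~\eqref{Eq_phirp}, the left-hand side is bounded (for $r$ large) by
\begin{equation*}
\mathrm{const}\cdot \frac{p}{\sqrt{q}\,r}\cdot\pi_r^{p-1}.
\end{equation*}
Using $ar^2\leq p,q\leq r^2/a$, the first factor is at most $a^{-3/2}$, and since $\log\pi_r=-2/r^2+O(1/r^4)$, one has $\pi_r^{p-1}\leq \exp(-2(p-1)/r^2+O(1))\leq e^{O_a(1)}$. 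This yields the desired constant $C_a$.

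The step that requires the most care is not conceptual but bookkeeping: checking that the asymptotic constants can indeed be promoted to uniform multiplicative bounds valid for all $r$ large enough (and all $s>r$ and all $p,q$ in the stated range), and verifying that the cancellation between $K_s$ and $\vp_s(q)$ is exact rather than merely asymptotic so that the bound on $s$ is genuinely uniform. Everything else is routine algebra.
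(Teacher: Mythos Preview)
Your proof is correct and follows essentially the same strategy as the paper---compute both sides explicitly, observe that the $\FF$-dependent factor $\prod_{e\in\FF^*}\vt(c_e)$ cancels in the ratio, and bound the remaining explicit constants---but you use a different conditioning. The paper conditions on the \emph{bottom} perimeter $H_r=p$ via Proposition~\ref{Prop_law_annulus_UIPQ_cond_bottom}, which gives
\[
\P(\FF_{r,s}=\FF\mid H_r=p)=\frac{h(q)}{h(p)}\prod_{e\in\FF^*}\vt(c_e)=\frac{p}{q}\frac{2^q\vk_q}{2^p\vk_p}\prod_{e\in\FF^*}\vt(c_e),
\]
and then simply multiplies by $\P(H_r=p)\leq C'/r^2$. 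This is cleaner: no $s$-dependence ever enters, so there is no cancellation to check, and the factor $\frac{p}{q}\frac{2^q\vk_q}{2^p\vk_p}$ is bounded directly from the asymptotic $\vk_p\sim\mathrm{const}\cdot\sqrt p\,2^{-p}$. Your route via Proposition~\ref{Prop_law_annulus_UIPQ_cond_top} works equally well but costs you the extra step of verifying the exact cancellation of $(3+2s)/((3+2s)^2-1)^2$ between $K_s$ and $\vp_s(q)$ and then controlling $\pi_r^{p-1}$ (which, incidentally, is simply $\leq 1$, so no $O_a(1)$ estimate is needed there).
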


\begin{proof}
By \propref{Prop_law_annulus_UIPQ_cond_bottom}, for $\FF \in \F_{s-r,p,q}$, 
\begin{equation}
\P(\FF_{s,r} = \FF \ | \ H_r = p) = \frac{p}{q} \frac{2^q \vk_q}{2^p \vk_p} \prod_{v \in \FF^*} \vt(c_v) .
\label{Eq_P5 squelette}
\end{equation}
where we recall that $\FF^*$ is the set of vertices at generation at most $s-r-1$ in the forest $\FF$.

Let us consider the right-hand side of \eqref{Eq_Prop6_temp}. Using the asymptotics \eqref{Eq_asymptotics_vkp}, we find $C>0$ that only depends on $a$ such that for every large enough $r$, for every $\floor{ar^2} < p,q < \floor{r^2/a}$, we have $\frac{p}{q} \frac{2^q \vk_q}{2^p \vk_p} \leq C$. On the other hand, \propref{Prop_law_perimeter_UIPQ} and \eqref{Eq_asymptotics_vkp} allow us to find $C'>0$ such that $\P(H_r = p) \leq C'/r^2 $ for every $r\geq 1$ and $p\geq 1$. From \eqref{Eq_P5 squelette}, we now get
\begin{equation}
\P\l( \FF_{r,s} = \FF \r) \leq \frac{C C'}{r^2} \prod_{v \in \FF^*} \vt(c_v) .
\label{Eq_P5 perimetre}
\end{equation}
On the other hand,
\begin{equation}
\P\l( ([\tau_1]_{s-r}, ... , [\tau_{N^{(a)}_r}]_{s-r}) = \FF \r) = \P(N^{(a)}_r = q) \P(([\tau_1]_{s-r}, ... , [\tau_q]_{s-r}) = \FF) 
= \frac{1}{\floor{r^2/a}-\floor{ar^2}} \prod_{v \in \FF^*} \vt(c_v) .
\label{Eq_P5 arbres iid}
\end{equation}
The desired result follows by comparing \eqref{Eq_P5 perimetre} and \eqref{Eq_P5 arbres iid}.
\end{proof}

\subsection{Coalescence of \lmgs in the UIPQ}
\label{Sec_Coalescence of geodesics in the UIPQ}

\Lmgs in the \UIPQ coalesce quickly, in the following sense. Consider the set of all \lmgs started from the boundary of the hull of radius $r\geq 1$, and let $\vg \in (0,1)$. Then the number of vertices at distance $\floor{\vg r}$ from the root that belong to one of these geodesics is bounded in distribution when $r$ is large. The next proposition (which is inspired from
\cite[Proposition 17]{fpp})
 gives a precise 
version of this property, which
will be particularly useful in the proof of \propref{Prop_P19 thin annuli} below.

Recall that $\Htr_{Q_\infty}(r)$ is the truncated hull of radius $r$ of the \UIPQ ${Q_\infty}$ and that 
$\partial_rQ_\infty$ is the external boundary of this hull, which has length $H_r$. Pick a vertex $u_0^{(r)}$ on $\partial_rQ_\infty$ uniformly at random, and write $u^{(r)}_0, u^{(r)}_1, ... u^{(r)}_{H_r-1}$ for all vertices of the boundary listed in clockwise order starting from $u_0^{(r)}$. We extend the definition of $u^{(r)}_j$ to all $j\in \Z$ by periodicity, so that
$u^{(r)}_j=u^{(r)}_{j+H_r}$ for every $j$.

\begin{proposition}
\label{Prop_P17 coalescence primal}
Let $\vg\in(0,1/2)$ and $\vd>0$. For every integer $A>0$, let $\XX_{r,A}$ be the event where any \lmg to the root starting from a vertex of $\partial_r Q_\infty$ coalesces before time $\floor{\vg r}$ with one of the \lmgs started from $u^{(r)}_{\floor{kr^2/A}}$, $0\leq k \leq \floor{\frac{A H_r}{r^2}}$. Then we can choose $A$ large enough such that, for every sufficiently large $r$,
\begin{equation*}
\P(\XX_{r,A}) \geq 1-\vd.
\end{equation*}
\end{proposition}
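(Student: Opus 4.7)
The plan is to reduce $\XX_{r,A}$ to a combinatorial condition on the skeleton of the annulus $\CC(r', r)$ with $r' = r - \lfloor \vg r\rfloor$, and then control this condition using Proposition \ref{Prop_P5} together with elementary estimates on independent \BGW trees. Call a tree of $\FF_{r', r}$ \emph{tall} if it has a vertex at generation $\lfloor \vg r\rfloor$ (equivalently, it reaches $\partial_{r'} Q_\infty$). By the coalescence criterion recalled at the end of Section \ref{Sec_Preliminaries}, the \lmgs from $v, w \in \partial_r Q_\infty$ coalesce within their first $\lfloor \vg r\rfloor$ steps as soon as one of the two arcs of $\partial_r Q_\infty$ joining $v$ and $w$ carries no tall tree strictly between its endpoints. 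Consequently, if each arc $[u^{(r)}_{\lfloor kr^2/A\rfloor}, u^{(r)}_{\lfloor (k+1)r^2/A\rfloor}]$ contains at most one tall tree, then for any vertex $v$ in such an arc, either the sub-arc from the left marked endpoint to $v$ or the sub-arc from $v$ to the right marked endpoint has no tall tree strictly between its endpoints (the unique tall tree, if any, is either on one side of $v$ or is incident to $v$ itself and therefore not \emph{strictly} inside one of the two sub-arcs). Hence the \lmg from $v$ coalesces with one of its marked neighbors within $\lfloor \vg r\rfloor$ steps, and $\XX_{r,A}$ holds.

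Since $\vg < 1/2$ one has $r' \geq r/2$ and so $r'^2 \geq r^2/4$. Using Proposition \ref{Prop_law_perimeter_UIPQ}, one can choose $a \in (0,1)$ small enough that the event
\[
\AA := \l\{ \lfloor ar'^2\rfloor + 1 \leq H_r, H_{r'} \leq \lfloor r'^2/a\rfloor \r\}
\]
has probability at least $1 - \vd/2$ for all large $r$. On $\AA$, Proposition \ref{Prop_P5} applies to the annulus $\CC(r', r)$ and gives, for every event $E$ of forests,
\[
\P\l( \{\FF_{r', r} \in E\} \cap \AA \r) \leq C_a \, \P\l( ([\tau_1]_{\lfloor \vg r\rfloor}, \ldots, [\tau_{N^{(a)}_{r'}}]_{\lfloor \vg r\rfloor}) \in E \r),
\]
where $(\tau_i)_{i\geq 1}$ are i.i.d.\ \BGW trees with offspring distribution $\vt$ and $N^{(a)}_{r'}$ is uniform on $\{\lfloor ar'^2\rfloor + 1, \ldots, \lfloor r'^2/a\rfloor\}$. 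I would take $E$ to be the event that two tall trees lie in the same arc (of length at most $\lceil r^2/A\rceil$). In the i.i.d.\ setting, each $[\tau_i]_{\lfloor \vg r\rfloor}$ is tall with probability $1 - \pi_{\lfloor \vg r\rfloor} = O(r^{-2})$ by \eqref{Eq_formula_for_pi}, independently across $i$, so the expected number of pairs of tall trees at cyclic distance at most $\lceil r^2/A\rceil$ is bounded by $(r'^2/a) \cdot 2\lceil r^2/A\rceil \cdot O(r^{-4}) = O(1/(aA))$. Markov's inequality then yields $\P(E) = O(1/(aA))$ in the i.i.d.\ model, hence $\P(\{\FF_{r', r} \in E\} \cap \AA) = O(C_a/(aA))$ in the \UIPQ. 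Summing with $\P(\AA^c) \leq \vd/2$ and taking $A$ large enough concludes the proof.

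The main obstacle, as I see it, is the combinatorial verification in the first step, specifically the treatment of the vertex incident to the unique tall tree of an arc: one must use that this tall tree is incident to the vertex itself, so that it is not \emph{strictly} between the vertex and one of its two marked neighbors, and coalescence with that neighbor still follows. Everything else reduces to a straightforward first-moment estimate on the i.i.d.\ \BGW forest, once Proposition \ref{Prop_P5} allows the transfer from the \UIPQ.
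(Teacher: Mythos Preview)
Your proof is correct and follows essentially the same approach as the paper: reduce $\XX_{r,A}$ to the combinatorial event that no arc between consecutive marked points carries two tall trees, then control this via Proposition~\ref{Prop_P5} and a first-moment bound on the i.i.d.\ forest. Your first-moment computation (counting pairs of tall trees at distance $\leq \lceil r^2/A\rceil$) is a minor repackaging of the paper's per-interval estimate, and your explicit use of $r' = r - \lfloor \vg r\rfloor$ as the bottom of the annulus is in fact cleaner than the paper's presentation. Your parenthetical worry about a tall tree ``incident to $v$ itself'' is unnecessary: trees are rooted at edges, and every edge of the arc lies in exactly one of the two sub-arcs determined by $v$, so a single tall tree is automatically absent from one of them.
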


\begin{proof}

The vertices $u^{(r)}_{\floor{kr^2/A}}$, $0\leq k \leq \floor{\frac{A H_r}{r^2}}$ divide $\partial_r Q_\infty$ into a collection of ``intervals'' made of consecutive edges of the boundary. We call an interval bad if at least two trees of the skeleton of $\Htr_{Q_\infty}(r)$ rooted in this interval have height at least $\floor{\vg r}$  and good otherwise.

Now recall the observations made in Section \ref{Sec_Preliminaries} before discussing the law of the skeleton of the \UIPQ. It follows that, if an interval $S$ is good, then the \lmg started from any vertex of $S$ coalesces with one of the two \lmgs started from the endpoints of $S$. The proposition then reduces to proving that we can choose $A>0$ such that, for all $r$ large enough, the probability of having no bad interval is greater than $1-\vd$.

\begin{figure}[h!]
\begin{center}
\includegraphics[width=1.0\textwidth]{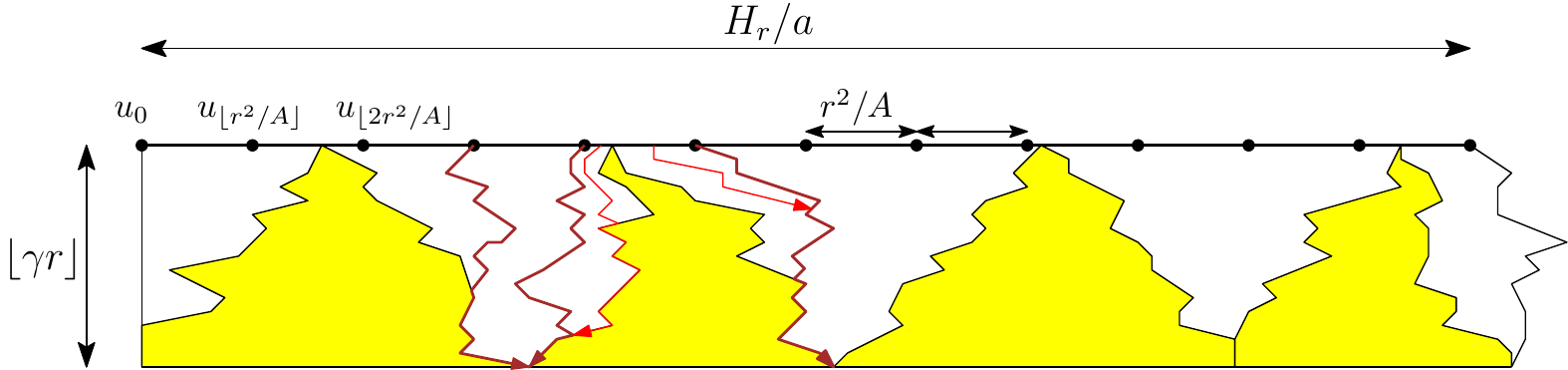}
\end{center}
\caption{As soon as there are no bad intervals, every \lmg started at a vertex $v$ of the top boundary coalesces before time $\vg r$ with the \lmg started at one of the endpoints of the interval that contains $v$. By choosing $A$ large enough, we ensure that with high probability there is no bad interval.}
\label{Fig_Prop_17, geodesics and poisson point process}
\end{figure}

From the explicit law of the perimeter of truncated hulls in \propref{Prop_law_perimeter_UIPQ}, we get that there exists $a\in(0,1)$ such that for all large enough $r$,
\begin{equation}
\label{Eq_coalescence_1}
\P\l( H_r \notin \l[ \floor{ar^2}+1 , \floor{r^2/a} \r] \hbox{ or }H_{\floor{\gamma r}} \notin \l[ \floor{ar^2}+1 , \floor{r^2/a} \r]  \r) < \vd/2.
\end{equation}

Consider first a forest made of $\floor{r^2/A}+1$ independent \BGW trees with offspring distribution $\vt$. Simple estimates show that the probability that at least two trees of the forest have height greater than or equal to $\floor{\vg r}$ is bounded by $C (A\vg)^{-2}$ (use \propref{Prop_law_perimeter_UIPQ}) independently of $r$. If we now consider $\floor{A/a}+1$ independent such forests, the probability that at least one of these forests satisfies the preceding property is bounded above by $C\l(\floor{A/a}+1\r)(A\vg)^{-2}$,
with a constant $C$ that does not depend on $r$ nor on $A$. 
By choosing $A$ large,
the latter quantity can be made smaller than $\vd/(2C_a)$, where $C_a$ is the constant in \propref{Prop_P5}. The proof is completed by using \propref{Prop_P5} and \eqref{Eq_coalescence_1}.
\end{proof}

\section{Main results for the first-passage percolation distance on quadrangulations}
\label{Sec-main-quad}

\subsection{Distance through a thin annulus}
\label{Sec_Distance through a thin annulus}

Recall the constant $\bc_p$ introduced in Proposition \ref{Prop_P18 - ergodic}. 

\begin{proposition}
\label{Prop_P19 thin annuli}
Let $\ve \in (0,1)$ and $\vd >0$. For every $\eta>0$ small enough, for all sufficiently large $n$, the property 
\begin{equation}
\label{Eq_control of distances through a thin annulus}
(1-\ve) \bc_p \eta n \leq  \dfpp^{Q_\infty}(v , \partial_{n-\floor{\eta n}} Q_\infty)  \leq (1+\ve) \bc_p \eta n 
\end{equation}
holds for every $v \in \partial_n Q_\infty$, with probability at least $1-\vd$. 
\end{proposition}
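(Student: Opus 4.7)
The plan is to combine three ingredients: (1) the subadditive estimate of Proposition \ref{Prop_P18 - ergodic} in the \LHPQ, (2) its transfer to the UIPQ annulus $\CC(n-\floor{\eta n},n)$ via the density bound of Proposition \ref{Prop_P5}, and (3) uniformity over all boundary vertices via the coalescence of left-most geodesics (Proposition \ref{Prop_P17 coalescence primal}).

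\textbf{Single-vertex estimate.}
Since $\partial_n Q_\infty$ separates the hull from the exterior and the added cycles $\partial_n Q_\infty$, $\partial_{n-\floor{\eta n}}Q_\infty$ do not belong to $Q_\infty$, the distance $\dfpp^{Q_\infty}(v,\partial_{n-\floor{\eta n}}Q_\infty)$ is realised by paths contained in $\CC(n-\floor{\eta n},n)$ that avoid those added boundary-cycle edges; this makes it the natural UIPQ analogue of $\dfpp^{\wt\LL}(\rho,\partial_{-\floor{\eta n}}\LL)$. By Proposition \ref{Prop_law_perimeter_UIPQ}, we fix $a\in(0,1)$ such that the event $\{H_n,H_{n-\floor{\eta n}}\in[\floor{an^2},\floor{n^2/a}]\}$ has probability at least $1-\vd/8$. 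Conditionally on the skeleton, slots are filled by independent Boltzmann truncated quadrangulations with the same law in the UIPQ annulus and in the corresponding \LHPQ slab, and edge weights are i.i.d.\ in both models; hence Proposition \ref{Prop_P5} applies to any event depending on the full weighted annulus (and on the distinguished boundary vertex obtained from the uniform rotation in the definition of $\FF_{r,s}$). Combining this transfer with Proposition \ref{Prop_P18 - ergodic} and the horizontal translation invariance of $\LL$ yields, for each fixed $v\in\partial_n Q_\infty$,
$$\P\l(\dfpp^{Q_\infty}(v,\partial_{n-\floor{\eta n}}Q_\infty)\notin\l[(1-\ve/4)\bc_p\eta n,(1+\ve/4)\bc_p\eta n\r]\r)\xrightarrow[n\to\infty]{}0.$$

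\textbf{Extension to all vertices.}
Pick $\vg\in(0,1/2)$ (to be fixed below) and apply Proposition \ref{Prop_P17 coalescence primal} with $r=n$: choose $A$ large enough that $\P(\XX_{n,A})\geq 1-\vd/8$ for all large $n$. On $\XX_{n,A}$, the left-most geodesic from any $v\in\partial_n Q_\infty$ coalesces within $\floor{\vg n}$ steps with the left-most geodesic from one of the $O(A)$ marked vertices $u_k^{(n)}$, and concatenating the two geodesic arcs (of graph-length $\leq \floor{\vg n}$ each) yields $\dfpp^{Q_\infty}(v,u_k^{(n)})\leq 2\vk\vg n$. A union bound over the finitely many marked vertices combined with the single-vertex estimate ensures that, with probability at least $1-\vd$, every marked $u_k^{(n)}$ satisfies
$$(1-\ve/4)\bc_p\eta n\leq \dfpp^{Q_\infty}(u_k^{(n)},\partial_{n-\floor{\eta n}}Q_\infty)\leq(1+\ve/4)\bc_p\eta n.$$
The triangle inequality then implies, for every $v\in\partial_n Q_\infty$,
$$\l|\dfpp^{Q_\infty}(v,\partial_{n-\floor{\eta n}}Q_\infty)-\bc_p\eta n\r|\leq 2\vk\vg n+(\ve/4)\bc_p\eta n,$$
and choosing $\vg$ so small that $2\vk\vg\leq (3\ve/4)\bc_p\eta$ (permissible once $\eta$ is fixed) concludes.

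\textbf{Main obstacle.}
The delicate point is the transfer step: Proposition \ref{Prop_P5} is a pointwise bound on the skeleton law, and it must be promoted to an estimate on events depending on the FPP distance from a specific (rather than only a uniform) boundary vertex. This rests on the equality of the conditional laws of slot fillings and edge weights in the two models, together with the uniform rotation used in the definition of $\FF_{r,s}$, which permits the identification in distribution of any fixed vertex of $\partial_n Q_\infty$ with the root $\rho$ of $\LL$; with the transfer in hand, the rest reduces to parameter bookkeeping.
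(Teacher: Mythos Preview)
Your strategy correctly identifies the three main ingredients, and the coalescence step is carried out essentially as in the paper. However, there is a genuine gap in the transfer step, and it is precisely the one you flag in your ``Main obstacle'' paragraph without actually resolving.

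Proposition~\ref{Prop_P5} compares the law of the skeleton of $\CC(n-\floor{\eta n},n)$ to that of a \emph{finite} forest of $N_n^{(a)}$ independent trees, not to the infinite forest underlying $\LL$. The subadditive limit of Proposition~\ref{Prop_P18 - ergodic}, on the other hand, concerns $\dfpp^{\wt\LL}(\rho,\partial_{-\floor{\eta n}}\LL)$, a function of the full infinite slab. The UIPQ annulus distance, meanwhile, is a function of a \emph{cylindrical} object with periodic boundary. None of these three can be directly identified: a path in the UIPQ annulus may wrap around the cylinder, a path in the infinite \LHPQ may wander arbitrarily far horizontally, and the finite-forest model of Proposition~\ref{Prop_P5} carries no periodic structure. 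So your sentence ``Proposition~\ref{Prop_P5} applies to any event depending on the full weighted annulus'' is literally true but does not by itself connect to Proposition~\ref{Prop_P18 - ergodic}.

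What is missing is a \emph{localisation} step: one must show that, with high probability, any $\dfpp$-shortest path from $u^{(n)}_0$ to $\partial_{n-\floor{\eta n}}Q_\infty$ stays inside a region $\GG_0^{(n)}$ bounded by two left-most geodesics started at horizontal distance $\Theta(n^2)$ on either side of $u^{(n)}_0$. This is where Proposition~\ref{Prop_P15} (the lower bound on distances along the boundary in $\LL$) enters: transferred via Proposition~\ref{Prop_P5}, it shows that for $\eta$ small, reaching the lateral boundary of $\GG_0^{(n)}$ costs more than the trivial upper bound on the distance to $\partial_{n-\floor{\eta n}}Q_\infty$ furnished by a left-most geodesic, so the shortest path cannot escape $\GG_0^{(n)}$. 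Once localised, the distance becomes the \emph{same} measurable function of a bounded piece of the skeleton in the UIPQ annulus and in the \LHPQ slab, and only then does Proposition~\ref{Prop_P5} legitimately transfer the \LHPQ estimate. Your proposal never invokes Proposition~\ref{Prop_P15} and omits this argument entirely.

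A secondary point: your opening claim that $\dfpp^{Q_\infty}(v,\partial_{n-\floor{\eta n}}Q_\infty)$ is realised by paths contained in the annulus is not correct for an individual $v$ (a $Q_\infty$-geodesic may exit through $\partial_n Q_\infty$ and re-enter at another boundary vertex). The paper handles this at the very end by working throughout with the intrinsic hull distance $\dfpp^{(n)}$, and then observing that while $\dfpp^{Q_\infty}\leq\dfpp^{(n)}$ pointwise, the two share the same \emph{minimum} over $v\in\partial_nQ_\infty$, which is enough for the uniform lower bound once the uniform upper bound is in place.
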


The proof of this result is technical but very similar to the proof of \cite[Proposition 19]{fpp}, to which we refer for additional details.
Let us start by an outline of the main ideas of the proof.
Recalling the absolute continuity relations stated in \propref{Prop_P5}, we observe that a sufficiently thin slice of the \UIPQ (of the form $\mathcal{C}(n-\floor{\eta n},n)$), seen from a uniformly chosen vertex of its outer boundary, looks like a slice of the \LHPQ. This in turn allows us to use \propref{Prop_P18 - ergodic}.

In order to implement the latter observation, we need to make sure that with high probability, distances from a point $v$ of the top boundary of the annulus $\CC(n-\floor{\eta n},n)$ to the bottom boundary are determined by a ``small'' neighborhood of $v$ in the annulus. This essentially follows from the control of distances along the boundary
discussed in Section \ref{Sec_Control of distances along the boundary of the LHPQ}.

Finally, we need \eqref{Eq_control of distances through a thin annulus} to hold simultaneously for all $v$ on the top boundary. \propref{Prop_P17 coalescence primal} ensures that with high probability, the \lmg started at a vertex $v$ of the top boundary coalesces quickly with one of the \lmgs started from a bounded number of points on the top boundary. Thanks to this
observation, it is enough to verify that \eqref{Eq_control of distances through a thin annulus} holds for a {\em bounded} number of vertices $v \in \partial_n Q_\infty$.

\begin{proof}[Proof of \propref{Prop_P19 thin annuli}]

In a way similar to Section \ref{Sec_Subadditivity in the LHPQ}, we let $\wt{\mathcal{H}}_{Q_\infty}(n)$ denote the map obtained from $\Htr_{Q_\infty}(n)$
by removing the edges of the external boundary. 
It is then convenient to write $\dgr^{(n)}$ for the graph distance on $\wt{\mathcal{H}}_{Q_\infty}(n)$, and similarly $\dfpp^{(n)}$
for the first-passage percolation distance on $\wt{\mathcal{H}}_{Q_\infty}(n)$ (in both cases we allow only paths made of edges of $\wt{\mathcal{H}}_{Q_\infty}(n)$).
Similarly as in the proof of \propref{Prop_P17 coalescence primal}, we pick a vertex $u^{(n)}_0$ uniformly at random on $\partial_n Q_\infty$, and denote the vertices of $\partial_n Q_\infty$ in clockwise order starting from $u^{(n)}_0$ by $(u^{(n)}_j)_{0 \leq j < H_n}$. We extend the definition of $u^{(n)}_j$ to $j \in \Z$ by periodicity. 
Let $\vd \in (0,1)$.

First, we use \propref{Prop_law_perimeter_UIPQ} to fix $a\in(0,1)$ small enough such that, for every $\eta\in(0,1/2)$, the top and bottom perimeters of the annulus $\CC(n-\floor{\eta n},n)$ are both within the range $[an^2, a^{-1}n^2]$ with probability at least $1-\vd/4$. In the remaining part of the proof, we implicitly argue on the event $\EE_\eta^{(n)}$
where the latter properties hold. We also set $N=\lceil 9a^{-2}\rceil$.

\propref{Prop_P5} allows us to bound the probability of any event concerning the forest encoding the skeleton of $\CC(n-\floor{\eta n},n)$ by a constant times the probability of the same event concerning an \iid forest of Bienaym\'e-Galton-Watson trees with offspring distribution $\vt$.
In particular, by taking $\eta$ small enough, one can ensure that the \lmgs started at $u^{(n)}_{-\floor{an^2/4}}$ and $u^{(n)}_{\floor{an^2/4}}$ do not coalesce before reaching $\partial_{n-\floor{\eta n}} Q_\infty$ except on a set of probability at most $\vd/(8N)$.  
On this event, the complement in the annulus $\CC(n-\floor{\eta n},n)$ of the union of the left-most geodesics started at $u^{(n)}_{-\floor{an^2/4}}$ and at $u^{(n)}_{\floor{an^2/4}}$ has two components, and we call $\GG^{(n)}_0$ the one containing the part of $\partial_n Q_\infty$ between $u^{(n)}_{-\floor{an^2/4}}$ and $u^{(n)}_{\floor{an^2/4}}$ in clockwise order. The lateral boundary $\partial^l \GG^{(n)}_0$ consists of the two \lmgs bounding $\GG^{(n)}_0$, and the bottom boundary $\partial^b \GG^{(n)}_0$ is defined in an obvious way.

Let us argue on the event where $\GG^{(n)}_0$ is well-defined.
Using \propref{Prop_P15} and \propref{Prop_P5}, and taking $\eta$ even smaller if necessary, we can ensure that the following holds except on a set of probability at most $\vd/(8N)$: any point $u^{(n)}_{k}$ with $|k| \leq an^2/16$ is at $\dgr^{(n)}$-distance at least $(4\vk+1)\eta n$ from $u^{(n)}_{-\floor{an^2/4}}$ and $u^{(n)}_{\floor{an^2/4}}$. By the triangle inequality, we thus obtain that on this event, the $\dgr^{(n)}$-distance between any point $u^{(n)}_{k}$ with $|k| \leq an^2/16$ and $\partial^l \GG^{(n)}_0$ is at least $4\vk \eta n$. Any path in the annulus 
$\CC(n-\floor{\eta n},n)$ with one endpoint in $\{u^{(n)}_{k} , |k| \leq an^2/16\}$ and the other one in $\partial_{n-\floor{\eta n}} Q_\infty$ that crosses $\partial ^l \GG^{(n)}_0$ will have length at least $4\eta \vk n$, and thus first-passage percolation weight at least $4\eta \vk n$. On the other hand, the \lmg started at any $u^{(n)}_{k} , |k| \leq an^2/16$, gives a path of length at most $\eta n$ between $u^{(n)}_{k}$ and $\partial_{n-\floor{\eta n}} Q_\infty$, that is thus of first-passage-percolation weight at most $\vk \eta n$. It follows that no $\dfpp^{(n)}$-shortest path between a vertex of the form $u^{(n)}_{k}$, $|k| \leq an^2/16$, and $\partial_{n-\floor{\eta n}} Q_\infty$ reaches $\partial ^l \GG^{(n)}_0$, except on an event of probability at most $\vd/(8N)$.

The previous considerations apply as well if we replace $u^{(n)}_0$ by $u^{(n)}_j$ for any $j$ (possibly depending on $n$). Let 
$\GG^{(n)}_j$ stand for the analog of $\GG^{(n)}_0$ when $u^{(n)}_0$ is replaced by $u^{(n)}_j$. We obtain that, except possibly on an event 
of probability at most $\vd/4$, for every
$j$ of the form $j=i\floor{ an^2/8 }$, $0\leq i\leq N-1$, the set $\GG^{(n)}_j$  is well-defined and for every
integer $k$ with $j-\floor{an^2/16} \leq k \leq j+\floor{an^2/16}$, any $\dfpp^{(n)}$-shortest path from $u^{(n)}_{k}$ to $\partial_{n-\floor{\eta n}} Q_\infty$ reaches the bottom boundary of $\GG^{(n)}_j$ before its lateral boundary. We write $\mathcal{D}^{(n)}_\eta$ for the event
of probability at least $1-\vd/4$ where the preceding properties hold.  On the intersection $\EE^{(n)}_\eta\cap \mathcal{D}^{(n)}_\eta$, 
for any choice of $j$ and $k$ as previously, the $\dfpp^{(n)}$-distance from $u^{(n)}_{k}$ to $\partial_{n-\floor{\eta n}} Q_\infty$ 
can be computed from 
the information given by $\GG^{(n)}_j$ and the weights on edges of $\GG^{(n)}_j$. From our choice of $N$,
we also see that the vertices $u^{(n)}_{k}$ with $j-\floor{an^2/16} \leq k \leq j+\floor{an^2/16}$ and $j$ of the form $j=i\floor{an^2/8}$, $0\leq i\leq N-1$, cover 
the whole boundary $\partial_nQ_\infty$ (provided  $\EE^{(n)}_\eta$ holds). 

At this stage, we use the absolute continuity relations in \propref{Prop_P5} in connection with \propref{Prop_P18 - ergodic}. Let $j$ and $k$ be as previously (possibly depending on $n$). 
On the event $\EE^{(n)}_\eta\cap \mathcal{D}^{(n)}_\eta$, the $\dfpp^{(n)}$-distance from $u^{(n)}_k$ to $\partial_{n-\floor{\eta n}} Q_\infty$ is determined as a function of the skeleton of $\GG^{(n)}_j$ (meaning the forest consisting of the trees of the skeleton of $\CC(n-\floor{\eta n}, n)$ rooted at edges between $u^{(n)}_{j-\floor{an^2/4}}$ and $u^{(n)}_{j+\floor{an^2/4}}$ in clockwise order) and the quadrangulations that fill in the slots --- and of course of the weights on edges. But the same function determines the first passage percolation distance in the \LHPQ (which is estimated by \propref{Prop_P18 - ergodic}) and one just has to compare the distributions of skeletons, for which one may use \propref{Prop_P5}. It follows that, on
the event $\EE^{(n)}_\eta\cap \mathcal{D}^{(n)}_\eta$, we have
\begin{equation}
\label{Eq_borne distances a travers anneau}
\dfpp^{(n)}(u^{(n)}_k, \partial_{n-\floor{\eta n}} Q_\infty) \ \in \ [(1-\ve/2)\bc_p \eta n, (1+\ve/2)\bc_p \eta n] .
\end{equation}
except possibly on an event of probability tending to $0$ as $n\to\infty$.

We now want to argue that \eqref{Eq_borne distances a travers anneau} holds simultaneously for all $k$ outside a set of small probability. To this end, we rely on the coalescence of geodesics (\propref{Prop_P17 coalescence primal}). 
 Let $A$ be chosen as in \propref{Prop_P17 coalescence primal}, replacing $\vg$ by $\bc_p \eta \ve/(4\vk)$ and $\vd$ by $\vd/4$. As in \propref{Prop_P17 coalescence primal}, consider  indices $k$ of the form $\floor{i n^2/A}$, $0 \leq i \leq \floor{A/a}$. Then, for $n$ large enough, \eqref{Eq_borne distances a travers anneau} holds simultaneously for 
 all these values of $k$, on the event $\EE^{(n)}_\eta\cap \mathcal{D}^{(n)}_\eta$, except possibly on event of probability less than $\vd/4$. 
Furthermore, thanks to \propref{Prop_P17 coalescence primal}, we know on the event $\EE^{(n)}_\eta\cap \mathcal{D}^{(n)}_\eta$ that, outside an event
of probability at most $\vd/4$, every vertex $v \in \partial_n Q_\infty$ is at $\dgr^{(n)}$-distance at most $\ve \bc_p \eta n / (2\vk)$ (thus at $\dfpp^{(n)}$-distance at most $\ve \bc_p \eta n/2$) from one of these vertices $u^{(n)}_k$. We conclude that we have $\dfpp^{(n)}(v, \partial_{n-\floor{\eta n}} Q_\infty) \in [(1-\ve)\bc_p \eta n, (1+\ve)\bc_p \eta n]$
for every vertex $v$ of $\partial_n Q_\infty$, outside an event of probability at most $\vd$. This is the desired result, except that we need to replace
$\dfpp^{(n)}$ by $\dfpp^{Q_\infty}$. This is however easy since on one hand $\dfpp^{Q_\infty}\leq \dfpp^{(n)}$ and on the other hand the minimal 
values of $\dfpp^{(n)}(v, \partial_{n-\floor{\eta n}} Q_\infty) $ and $\dfpp^{Q_\infty}(v, \partial_{n-\floor{\eta n}} Q_\infty)$ on $\partial_nQ_\infty$
are the same. This completes the proof.
\end{proof}

\subsection{Distance from the boundary of a hull to its center}
\label{Sec_Distance from the boundary of a hull to its center}

The next step is to show that the distance from the root vertex of the \UIPQ ${Q_\infty}$ to an 
arbitrary vertex of the boundary of a hull is close to a constant times the radius. Recall that $\rho$ is the root vertex of $Q_\infty$.

\begin{proposition}
\label{Prop_P20 distances root - boundary of the hull, UIPQ}
For every $\ve \in (0,1)$,
$$ \P\l( (\bc_p - \ve) n \leq \dfpp^{Q_\infty}(\rho, v) \leq (\bc_p + \ve) n,  \ \text{  for every } v \text{ in } \partial_n Q_\infty \r) \ulim{n}{\infty} 1  . $$
\end{proposition}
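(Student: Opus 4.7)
The natural strategy is to peel the \UIPQ from $\partial_n Q_\infty$ towards the root by a sequence of thin concentric annuli of width $\floor{\eta n}$ each, and then apply \propref{Prop_P19 thin annuli} to each of them. Fix $\ve\in(0,1)$ and a target failure probability $\vd>0$. Choose $\eta_1>0$ small (so that the ``last bit'' to the root is negligible) and then $\eta>0$ small compared to $\eta_1$. Set $n_0 = n$ and $n_{k+1} = n_k - \floor{\eta n}$, and let $K$ be the largest index for which $n_K > \eta_1 n$, so that $K\leq 1/\eta$ and $n_K\in(\eta_1 n, (\eta_1+\eta)n]$. Since $n_k\geq \eta_1 n$ for every $k\leq K$, the relative width $\floor{\eta n}/n_k$ of each annulus $\CC(n_{k+1},n_k)$ is at most $\eta/\eta_1$, which can be made arbitrarily small by our choice of $\eta$. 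Hence \propref{Prop_P19 thin annuli} applies to each such annulus with parameters $(\ve/2, \vd/K)$, and a union bound over the $K\lesssim 1/\eta$ annuli gives that, with probability at least $1-\vd$ and for $n$ large enough, the inequalities
\begin{equation*}
(1-\ve/2)\bc_p \floor{\eta n} \leq \dfpp^{Q_\infty}(v,\partial_{n_{k+1}} Q_\infty) \leq (1+\ve/2)\bc_p \floor{\eta n}
\end{equation*}
hold simultaneously for every $0\leq k \leq K-1$ and every $v\in \partial_{n_k} Q_\infty$.

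For the \emph{upper bound}, I would start from an arbitrary $v\in\partial_n Q_\infty$ and concatenate, for $k=0,1,\ldots,K-1$, a $\dfpp$-shortest path from the current point of $\partial_{n_k} Q_\infty$ to $\partial_{n_{k+1}} Q_\infty$. Each leg has weight at most $(1+\ve/2)\bc_p\floor{\eta n}$, so the total weight to reach some vertex $w_K\in\partial_{n_K}Q_\infty$ is at most $K(1+\ve/2)\bc_p\floor{\eta n}\leq (1+\ve/2)\bc_p n$. One then adds the trivial bound $\dfpp^{Q_\infty}(w_K,\rho) \leq \vk\, \dgr^{Q_\infty}(w_K,\rho) \leq \vk\, n_K \leq \vk(\eta_1+\eta) n$, and fixes $\eta_1,\eta$ small enough that the sum is at most $(\bc_p+\ve)n$.

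For the \emph{lower bound}, any path $\vg$ from $v\in\partial_n Q_\infty$ to $\rho$ must, for each $1\leq k\leq K$, first hit $\partial_{n_k} Q_\infty$ at some vertex $v_k$ (with $v_0=v$). The portion of $\vg$ between $v_k$ and $v_{k+1}$ has weight at least $\dfpp^{Q_\infty}(v_k,\partial_{n_{k+1}}Q_\infty) \geq (1-\ve/2)\bc_p\floor{\eta n}$ by the uniform lower bound coming from \propref{Prop_P19 thin annuli} at scale $n_k$. Summing over $k=0,\ldots,K-1$ yields total weight at least $K(1-\ve/2)\bc_p\floor{\eta n}\geq (1-\ve/2)\bc_p(1-\eta_1-\eta)n$, which again is at least $(\bc_p-\ve)n$ after a suitable choice of $\eta_1,\eta$.

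The delicate point I expect to have to handle carefully is the interaction between the two small parameters: \propref{Prop_P19 thin annuli} requires $\eta$ small (depending on the target failure probability), while the union bound over the $K\asymp 1/\eta$ annuli forces this failure probability to be of order $\eta\vd$. This is not actually circular, because \propref{Prop_P19 thin annuli} only asks that $\eta$ be small and that $n$ be sufficiently large (the latter being allowed to depend on $\eta$ and on the target probability); so one first fixes $\eta,\eta_1$ depending on $\ve$ and $\vd$, which fixes $K$, and only then lets $n\to\infty$. Once this is organised, there are no further conceptual difficulties; the harder ingredient has already been invested in the proof of \propref{Prop_P19 thin annuli}.
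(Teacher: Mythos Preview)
Your overall strategy---decompose into thin concentric annuli and apply \propref{Prop_P19 thin annuli} to each---is exactly right, but the resolution you offer for the ``delicate point'' does not work, and this is a genuine gap.

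The issue is precisely the one you flag: \propref{Prop_P19 thin annuli} provides, for each pair $(\ve,\vd)$, a threshold $\eta_0(\ve,\vd)$ such that the conclusion holds for all $\eta\leq\eta_0$ and all large $n$. You want the per-annulus failure probability to be $\vd/K$ with $K\asymp 1/\eta$, so you need (roughly) $\eta/\eta_1 \leq \eta_0(\ve/2,\vd\eta)$. Nothing in \propref{Prop_P19 thin annuli} tells you how $\eta_0$ behaves as its second argument tends to $0$; if for instance $\eta_0(\ve,\vd')\sim(\vd')^2$ as $\vd'\to 0$, the inequality has no solution. Your proposed fix (``first fix $\eta,\eta_1$ depending on $\ve,\vd$, then let $n\to\infty$'') begs the question: you cannot choose $\eta$ until you know the target probability $\vd/K$, and $K$ is determined by $\eta$. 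The circularity is real.

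The paper breaks it by \emph{not} insisting that every annulus be good. One first fixes $\vd$ depending only on $\ve$ (specifically $\vd = \ve^2/(5\vk\,|\ln(\ve/(5\vk))|)$), then obtains $\eta$ from \propref{Prop_P19 thin annuli} with this \emph{fixed} $\vd$, and sets up $q\asymp|\ln\ve|/\eta$ geometrically shrinking annuli $\CC(n_{k+1},n_k)$ with $n_{k+1}=n_k-\floor{\eta n_k}$. Each annulus is bad with probability at most $\vd$, so the expected number of bad annuli is at most $q\vd$; Markov's inequality then gives that, except on an event of probability at most $\ve$, the number of bad annuli is bounded by a constant times $\ve/\eta$. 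On that complement, the bad annuli have total width $O(\ve n)$ and can be handled trivially: for the upper bound one crosses them along a \lmg (cost $\leq\vk$ times the width), and for the lower bound one simply discards their contribution. The key point is that $\vd$ is fixed \emph{before} $\eta$, so \propref{Prop_P19 thin annuli} is invoked once, with a single target probability, and the union bound is replaced by a first-moment argument.
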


\begin{proof}
Fix $\ve \in (0,1)$, and take $\vd = \ve^2/(5\vk | \ln(\ve/(5\vk))| )$. For every $0<m<n$, we say that the annulus $\CC(m,n)$ is good if, for every $v \in \partial_n Q_\infty$,
\begin{equation}
(1-\ve/2) \bc_p \eta n \leq  \dfpp^{Q_\infty}(v , \partial_m Q_\infty) \leq (1+\ve/2) \bc_p \eta n ,
\end{equation}
and it is bad otherwise. 
\propref{Prop_P19 thin annuli} ensures that we can fix $\eta \in (0,1)$ small enough such that for all $n$ large enough, the annulus $\CC(n-\floor{\eta n},n)$ is good with probability at least $1-\vd$. 

Let $n_0 = n$, and define by induction $n_{k+1} = n_k - \floor{\eta n_k}$ for every $k \geq 0$. Let $q = \floor{\frac{\ln(\ve/(5\vk))}{\ln(1-\eta)}}$. Note that $n_q \geq \frac{\ve}{5\vk} n$ and $n_q \leq \frac{\ve}{4\vk}n$ for $n$ large. 
 Using Markov's inequality we get that for $n$ large enough,
\begin{equation*}
\P\l( \# \{ k \in \{0,1,…, q-1\} \ : \ \CC(n_{k+1},n_k) \text{ is bad} \} \ > \ \frac{\ve}{5\vk |\ln(1-\eta)|} \r) \leq \frac{5\vk |\ln(1-\eta)|}{\ve} q \vd \leq \ve 
\end{equation*}
by our choice of $\vd$ and $q$. Let $\mathcal{D}^\ve_n$ denote the event whose probability appears in the previous display. We will show that the property $(\bc_p-\ve)n \leq \dfpp^{Q_\infty}(\rho, v) \leq (\bc_p + \ve)n$ for every $v\in \partial_n Q_\infty$ holds on the complement of $\mathcal{D}^\ve_n$. Since $\P(\mathcal{D}^\ve_n)\leq \ve$
for $n$ large, this will complete the proof.

Suppose that $\mathcal{D}^\ve_n$ does not hold. Then the \fpp-distance between any vertex $v \in \partial_n Q_\infty$ and $\rho$ is larger than the cost one must pay to cross the good annuli, that is 
\begin{align*}
\dfpp^{Q_\infty}(v,\rho) &\geq \sum_{k=0}^{q-1} (1-\ve/2)\bc_p (n_k-n_{k+1}) - \sum_{k \ : \ \CC(n_{k+1},n_k) \text{ bad}} (1-\ve/2)\bc_p (n_k-n_{k+1}) \\
&\geq (1-\ve/2)\bc_p (n_0-n_{q}) - \frac{\ve}{5\vk|\ln(1-\eta)|}\bc_p \max_{0\leq k < q} (n_k-n_{k+1}) \\
&\geq \bc_p n \l[ 1 - \frac \ve 2 -\frac{\ve}{4} - \frac{\ve}{4|\ln(1-\eta)|} \eta \r] \\
&\geq \bc_p n \l( 1 - \ve \r) ,
\end{align*}
using the properties $\vk \geq 1$ and $\eta/|\ln(1-\eta)| < 1$ for $\eta \in (0,1)$. 
Conversely, we can build a path from $v$ to $\rho$ that crosses the good annuli in the $\dfpp$-shortest way, and the bad annuli or the hull $\Htr_{Q_\infty}(n_q)$ along \lmgs. Using the properties $\dgr \leq \dfpp \leq \vk \dgr$ and $1 \leq \bc_p\leq \vk$, we can bound its \fpp-weight by
\begin{align*}
&\sum_{k=0}^{q-1} (1+\ve/2)\bc_p (n_k-n_{k+1}) + \vk n_q + \sum_{k \ : \ \CC(n_{k+1},n_k) \text{ bad}} \vk (n_k-n_{k+1})   \\
&\leq (1+\ve/2)\bc_p (n_0-n_{q}) + \vk \frac{\ve}{4\vk}n + \frac{\ve}{5\vk|\ln(1-\eta)|} \vk\max_{0\leq k < q} (n_k-n_{k+1}) \\
&\leq \bc_p n \l[ 1 + \frac \ve 2 + \frac{\ve}{4} + \frac{\ve}{4|\ln(1-\eta)|} \eta \r] \\
&\leq \bc_p n \l( 1 + \ve \r) ,
\end{align*}
giving $\dfpp^{Q_\infty}(v,\rho) \leq \bc_p n \l( 1 + \ve \r)$. This completes the proof.
\end{proof}

\subsection{Distance between two uniform points in finite quadrangulations}

\label{Sec_Distance between two uniformly sampled points in finite quadrangulations}

In this section, we consider a uniformly distributed rooted and pointed quadrangulation with $n$ faces, which we denote by $Q^\bullet_n$. The associated (unpointed)
rooted quadrangulation is simply denoted by $Q_n$. We will write $\rho_n$ for the root vertex of $Q_n$ (or of $Q^\bullet_n$) and $\partial_n$ for the distinguished 
vertex of $Q^\bullet_n$. This notation will be in force throughout the remaining part of this work. We note that, conditionally on the unpointed map $Q_n$, $\partial_n$ is uniformly distributed over $V(Q_n)$.

Our next goal is to control the \fpp-distance between $\rho_n$ and $\partial_n$. 

\begin{proposition}
\label{Prop_P21 distances root - uniform point, Qn}
For every $\ve \in (0,1)$,
\begin{equation*}
\P\l( | \dfpp^{Q_n}(\rho_n, \partial_n) - \bc_p \dgr^{Q_n}(\rho_n, \partial_n) | > \ve n^{1/4} \r) \ulim n \infty 0 .
\end{equation*}
\end{proposition}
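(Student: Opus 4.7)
The plan is to reduce Proposition \ref{Prop_P21 distances root - uniform point, Qn} to Proposition \ref{Prop_P20 distances root - boundary of the hull, UIPQ} by coupling the truncated hulls of $Q_n^\bullet$ around $\rho_n$ with those of the \UIPQ, and then sandwiching the fpp-distance between $\rho_n$ and $\partial_n$ using a layer of thickness $\eta R_n$.

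Set $R_n=\dgr^{Q_n}(\rho_n,\partial_n)$. Since $\partial_n$ is a uniform vertex of $Q_n$ and the typical scale of graph distances in $Q_n$ is $n^{1/4}$, for every $\vd>0$ there exist $0<K<K'$ such that $\P(Kn^{1/4}\leq R_n\leq K'n^{1/4})\geq 1-\vd$ for $n$ large; I argue on this event. Fix small parameters $\eta,\ve>0$ to be adjusted at the end, and put $r_n=R_n-\floor{\eta R_n}$, so that $0<r_n<R_n$ and the boundary cycle $\partial_{r_n}Q_n^\bullet$ is well defined and separates $\rho_n$ from $\partial_n$.

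The heart of the proof is to transfer the uniform fpp-control of Proposition \ref{Prop_P20 distances root - boundary of the hull, UIPQ} to the finite model, yielding that with probability at least $1-\vd$,
\[
(\bc_p-\ve)\,r_n \ \leq\ \dfpp^{Q_n}(\rho_n,v)\ \leq\ (\bc_p+\ve)\,r_n\qquad\text{for every }v\in\partial_{r_n}Q_n^\bullet.
\]
To do this I would condition on $R_n$ and on the perimeter $|\partial_{r_n}Q_n^\bullet|$, and establish a uniform density bound between the resulting conditional law of the pair (skeleton of $\Htr_{Q_n^\bullet}(r_n)$, truncated quadrangulations filling its slots, edge weights) and the analogous law for $\Htr_{Q_\infty}(r_n)$ at matched perimeter. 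On the infinite side the joint law is given by Propositions \ref{Prop_law_annulus_UIPQ_cond_bottom}--\ref{Prop_law_annulus_UIPQ_cond_top}; on the finite side an analogous explicit formula is available from the enumeration of truncated quadrangulations with a marked vertex. Together with the asymptotics \eqref{Eq_asymptotics_vkp}, and the fact that on our event both perimeters are typically of order $n^{1/2}$, singularity analysis yields a Radon--Nikodym bound uniform in the relevant regime, which transports the UIPQ estimate --- applied along a finite grid of possible values of $r_n$ in $[Kn^{1/4},K'n^{1/4}]$ --- to the above display for $Q_n^\bullet$.

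Once the display is proved, conclude as follows. A $\dgr^{Q_n}$-geodesic from $\rho_n$ to $\partial_n$ must cross $\partial_{r_n}Q_n^\bullet$, and since every vertex of this cycle has label exactly $r_n$, the crossing must occur at step $r_n$, so the geodesic passes through a vertex $v^\star\in\partial_{r_n}Q_n^\bullet$ with $\dgr^{Q_n}(v^\star,\partial_n)=R_n-r_n=\floor{\eta R_n}$; using $\dfpp\leq\vk\,\dgr$ on the second half yields
\[
\dfpp^{Q_n}(\rho_n,\partial_n)\ \leq\ (\bc_p+\ve)\,r_n+\vk\,\floor{\eta R_n}.
\]
Conversely, any $\dfpp^{Q_n}$-geodesic from $\rho_n$ to $\partial_n$ also passes through some vertex $v$ of $\partial_{r_n}Q_n^\bullet$, and the bound $\dfpp\geq\dgr$ together with $\dgr^{Q_n}(v,\partial_n)\geq R_n-r_n$ gives
\[
\dfpp^{Q_n}(\rho_n,\partial_n)\ \geq\ (\bc_p-\ve)\,r_n+\floor{\eta R_n}.
\]
Using $\bc_p\in[1,\vk]$ and writing $\bc_p R_n=\bc_p r_n+\bc_p\floor{\eta R_n}$, both bounds combine into
\[
\bigl|\dfpp^{Q_n}(\rho_n,\partial_n)-\bc_p R_n\bigr|\ \leq\ \ve\,r_n+(\vk-1)\,\floor{\eta R_n}\ \leq\ \bigl(\ve+(\vk-1)\eta\bigr)K'\,n^{1/4},
\]
and choosing $\ve$ and $\eta$ small enough completes the proof.

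The main obstacle is the density comparison sketched in the third paragraph: because $r_n$ is already at the critical scale $n^{1/4}$, the partition function of the finite model still depends nontrivially on $n$ at this scale, so the absolute continuity between the hull of $Q_n^\bullet$ and the hull of $Q_\infty$ requires a delicate singularity analysis rather than the soft total-variation convergence available for $r\ll n^{1/4}$. Everything else is bookkeeping on top of Proposition \ref{Prop_P20 distances root - boundary of the hull, UIPQ}.
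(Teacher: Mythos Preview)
Your sandwich argument at the end is correct and is essentially what the paper also does once the coupling is in place. The genuine gap is exactly the step you flag yourself: the absolute-continuity comparison between the truncated hull of $Q_n^\bullet$ at radius $r_n\asymp n^{1/4}$ and the corresponding hull of $Q_\infty$. You assert that ``on the finite side an analogous explicit formula is available from the enumeration of truncated quadrangulations with a marked vertex'' and that singularity analysis then yields a uniform Radon--Nikodym bound. The paper explicitly records (in the introduction) that the authors were unable to carry this out: ``the lack of certain explicit combinatorial expressions did not allow us to use the same approach as in \cite{fpp}, and we had to develop a different method based on a coupling between $Q_n$ and the \UIPQ.'' So the assumption underlying your third paragraph is precisely what is missing for quadrangulations, and without it the proof is incomplete.

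The paper's workaround is to pass through the Cori--Vauquelin--Schaeffer bijection rather than through the skeleton decomposition. One shows (Proposition \ref{Prop_TBP 8}) that the hull $\Hull_{Q_n^\bullet}(\van)$ is a deterministic function of a \emph{pruned labeled tree} $\PP((T_n,\eta_n),\floor{\vb\sqrt n})$, where $\eta_n$ is the vertex of minimal label in $T_n$; the analogous statement holds for $Q_\infty$ via the uniform infinite labeled tree $(T_\infty,Z^\infty)$. At the level of pruned labeled trees there \emph{are} explicit density formulas (equations (19) and (21) of \cite{curienLeGall2014brownian}), and after a preliminary step replacing the extremal vertex $\eta_n$ by a uniform vertex $\xi_n$ one gets the clean bound \eqref{Eq_density for hulls in finite and infinite case}. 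The role of your ``grid of possible values of $r_n$'' is played by the family of events $H_{n,\vd}^{j,l}$ of Lemma \ref{Lemma_technical slicing}, which slice according to the location of the minimal label and the height of $\eta_n$ in $T_n$; on each slice $R_n$ is pinned to within $O(\vd^2 n^{1/4})$, so Proposition \ref{Prop_P20 distances root - boundary of the hull, UIPQ} can be applied at a fixed radius $\va_j n^{1/4}$ and then the residual distance to $\partial_n$ is $O(\vd^2 n^{1/4})$, exactly as in your sandwich.

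In short, your outline reduces the problem to the right hard step but does not solve it; the paper solves it by changing the combinatorial encoding from skeletons to CVS-labeled trees, where the required density estimates are already available in the literature.
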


We postpone the proof of this result to \secref{Sec_Proof_P21}, and first give some technical tools that are needed in this proof. 

The idea is to transfer the results that we established in the \UIPQ to the setting of finite quadrangulations. 
The core tool that we establish in this section compares the law of a neighborhood of the root in the \UIPQ and in a finite quadrangulation. 
In this direction, 
\propref{Prop_TBP 9} gives a result valid for neighborhoods of diameter smaller than a constant times the typical diameter of the quadrangulation.
This is closely related to \cite[Lemma 8 and Proposition 9]{curienLeGall2014brownian}, but we need sharper results.

Let us briefly introduce the objects we need. Our proofs in this section and the next one make use of the (now classical)
Cori-Vauquelin-Schaeffer bijection \cite[Section 5.4]{gall2012buziosf} between rooted and pointed quadrangulations with $n$ faces, and labeled rooted plane trees with $n$ edges. 
For more details, we refer to \cite[Section 4]{curienLeGall2014brownian}.

\paragraph{The Cori-Vauquelin-Schaeffer correspondence.}

 Consider a rooted plane tree $\tau$, with root vertex $\vs$, together with a labeling $Z=(Z_x)_{x \in V(\tau)}$ of its vertices by integers such that $Z_\vs=0$, and $|Z_x-Z_y|\leq 1$ if $x$ and $y$ are adjacent. We fix $\epsilon \in \{0,1\}$ and explain how to get a pointed and rooted quadrangulation $Q$ from $(\tau,Z,\epsilon)$. To this end,
 we suppose that $\tau$ is embedded in the plane as shown on \figref{Fig_Schaeffer}. Then, firstly, we add a vertex $\partial$, and extend the labeling to $\partial$ so that $Z_\partial = -1+\min_{x \in V(\tau)} Z_x$. The vertex set of the quadrangulation $Q$ is $V(\tau) \cup \{ \partial \}$, and $\partial$ is its distinguished vertex.
We also extend the labeling to corners of $\tau$, by declaring that the label of a corner is the label of the incident vertex of $\tau$. Secondly, we order the corners of $\tau$ in clockwise order around $\tau$, starting from the bottom corner of $\vs$. For every $n>Z_\partial+1$, we draw an edge of $Q$ from each corner labeled $n$ to the first next corner labeled $n-1$; and for each corner of index $Z_\partial+1$, we draw an edge from this corner to $\partial$. This defines the edges of $Q$. 
Finally, we root $Q$ at the edge drawn from the bottom corner of $\vs$ and use $\epsilon$ to determine its orientation: the root vertex is $\vs$ iff $\epsilon=1$. The construction should be clear from \figref{Fig_Schaeffer}. We mention the following important property relating labels on $\tau$ to distances from $\partial$ in $Q$: For every
vertex $v\in V(Q)$, $\dgr^Q(\partial,v)=Z_v-Z_\partial$.

\begin{figure}[h!]
\begin{center}
\includegraphics[width=0.7\textwidth]{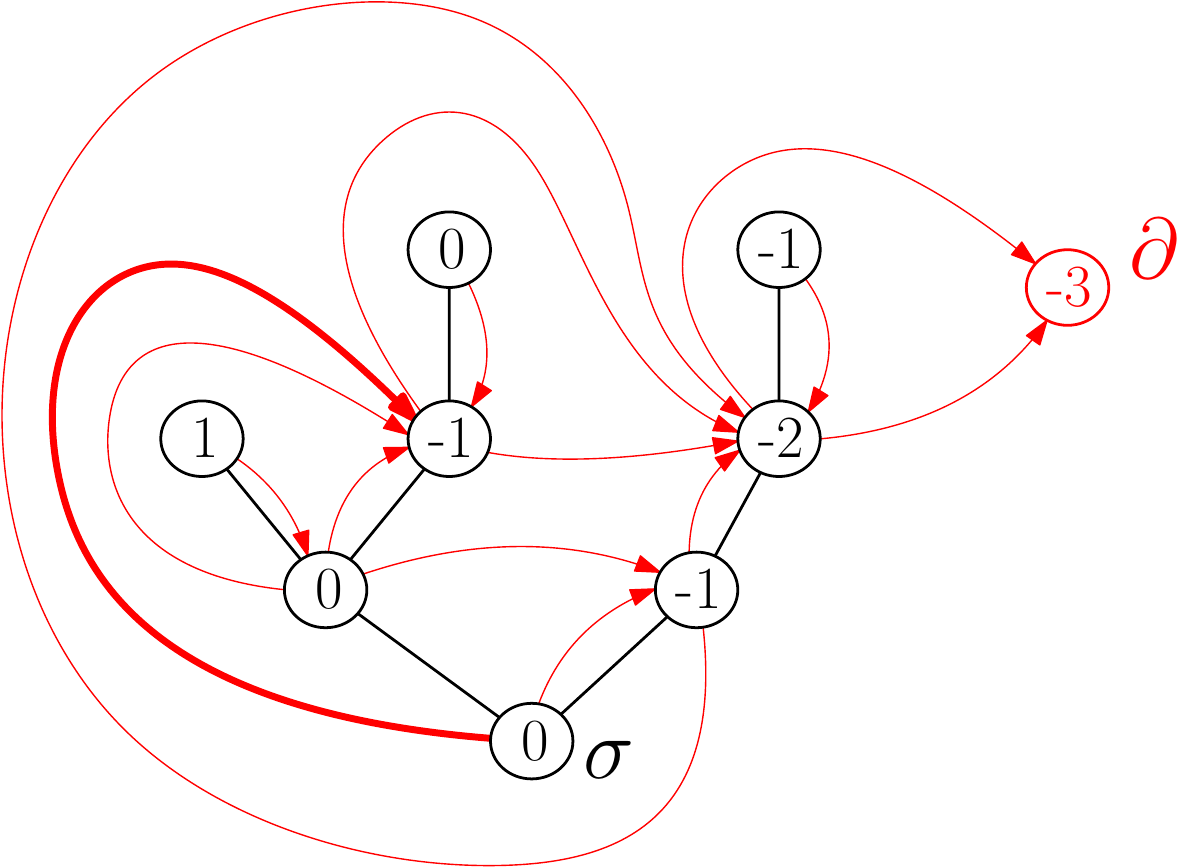}
\end{center}
\caption{The \CVS applied to a finite labeled tree (in black). The thick red edge is the root edge of the quadrangulation, and its orientation is determined by $\epsilon$ (here $\epsilon = 1$).}
\label{Fig_Schaeffer}
\end{figure}

The \CVS allows us to code uniform rooted and pointed quadrangulations by uniform rooted labeled plane trees. More precisely, let $T_n$ be a uniform rooted plane tree with $n$ edges. Given $T_n$, assign \iid weights on its edges, with uniform law over $\{ -1, 0, +1 \}$. For every $x \in V(T_n)$, define the label $Z^n_x$ as the sum of the weights of edges along the geodesic from the root to $x$ in $T_n$. Pick $\epsilon\in\{0,1\}$ uniformly at random. The \CVS applied to $(T_n, Z^n, \epsilon)$ then gives a uniform rooted and pointed quadrangulation with $n$ faces.

\paragraph{Pruned trees.}

 Let $\tau$ be a (finite) rooted plane tree. For every vertex $v$ of $\tau$ and for every $h>0$ such that $\dgr^{\tau}(\vs,v)\geq \floor{h}$, we denote the ancestor of $v$ at height $\floor h$ in $\tau$ by $[v]_h$.  We construct the \emph{pruned tree} $\PP((\tau,v),h)$ by removing all strict descendants of $[v]_h$ in $\tau$ (see \cite[Fig. 5]{curienLeGall2014brownian}) and we see $\PP((\tau,v),h)$ as a rooted plane tree pointed at $[v]_h$. 

Note that the vertex set of $\PP((\tau,v),h)$ is a subset of all vertices of $\tau$. It follows that, if $\tau$ is labeled by $Z$, we can construct a labeling of $\PP((\tau,v),h)$ by restricting $Z$ to $V(\PP((\tau,v),h))$. We will do so implicitely, keeping the same notation for the labeling on $\tau$ and on the pruned tree.

The case where $\tau$ is infinite is similar. In that case, we always assume that the tree has 
one end: there is only one infinite injective path started at its root, called the spine. We use the notation $[\infty]_h$ for the unique vertex of the spine at distance $h$ from the root, and remove its strict descendants in $\tau$ to get the pruned tree $\PP(\tau,h)$. Informally, $\infty$ plays the role of the distinguished vertex.

We now state the first result of this section. 
Let $(\tau, Z)$ be a finite rooted labeled plane tree with root vertex $\vs$. 
 Let $\epsilon \in \{0, 1\}$, and let $Q$ be the rooted and pointed quadrangulation constructed from $(\tau, Z, \epsilon)$ via the \CVS (we denote the root vertex of $Q$ by $\rho$). Let $\xi \in V(\tau)$, and let $h$ be an integer such that $0<h<\dgr^\tau(\vs,\xi)$. Set
\begin{equation*}
r = - \min_{0\leq i \leq h} Z_{[\xi]_i} \geq 0.
\end{equation*}

\begin{proposition}
\label{Prop_TBP 8}
Assume that $r \geq 4$. Then
$\Hull_Q(r-3)$ is a function of the pruned tree $\PP((\tau,\xi),h)$, its labeling, and $\epsilon$.
\end{proposition}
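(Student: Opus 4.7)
The proof hinges on two basic properties of the CVS correspondence: (a) $\dgr^Q(\partial, v) = Z_v - Z_\partial$ for every $v \in V(\tau)$; and (b) for every $v \in V(\tau)$, the ``simple descending geodesic'' from $v$ to $\partial$ in $Q$ visits, at step $k$, a corner of a vertex of label $Z_v - k$. Combining (a) with the triangle inequality through $\partial$, and noting that $Z_\rho \in \{0, -1\}$ depending on $\epsilon$, yields the pointwise lower bound $\dgr^Q(\rho, v) \geq |Z_v| - 1$ for every $v \in V(\tau)$. The strategy is to show that the vertex set of $\Hull_Q(r-3)$ is contained in $V(\PP((\tau, \xi), h))$; once this is established, the edges and faces of the hull are reconstructed from the CVS arcs drawn between corners of pruned-tree vertices, an information that only depends on the labeled pruned tree and on $\epsilon$ (the latter controlling the orientation of the root edge of $Q$, hence of $\Hull_Q(r-3)$).

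The vertex inclusion amounts to proving that every $v \in V(\tau)$ which is either (i) of label $Z_v \leq -(r-1)$, or (ii) a strict descendant of $[\xi]_h$ with $Z_v > -(r-1)$, lies in the same connected component of $B_Q(r-3)^c$ as $\partial$, hence outside the hull. Case (i) is immediate: the simple descending geodesic from $v$ visits only vertices of label $\leq -(r-1)$, hence at graph distance $\geq r-2 > r-3$ from $\rho$ by the distance bound, so the geodesic connects $v$ to $\partial$ inside $B_Q(r-3)^c$. Case (ii) exploits the planar structure of the CVS arcs. The corners of the strict subtree rooted at $[\xi]_h$ form a contiguous arc $I$ in the clockwise contour order of $\tau$, bounded on both sides by corners of $[\xi]_h$; moreover, the equality $\min_{0\leq i\leq h} Z_{[\xi]_i} = -r$ provides an ancestor $[\xi]_{i_0}$ on the spine of label $-r$, whose corners lie outside $I$. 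Starting from a corner of $v$ and following the successive CVS arcs, I will show that the sequence either (ii.a) reaches a corner of label $\leq -(r-1)$ still inside $I$, from which case (i) applies, or (ii.b) exits $I$ at a corner of label $\leq Z_{[\xi]_h}$ outside $I$; in the latter situation, chaining further CVS arcs (restricted to the pruned tree) reaches a corner of $[\xi]_{i_0}$ of label $-r$, again reducing to case (i). The hypothesis $r \geq 4$ is what guarantees a safety margin of at least two label units between the labels of the exit path and the labels $-(r-3)$ at the boundary of $B_Q(r-3)$.

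The main obstacle is sub-case (ii.b): one must check that, as the CVS arcs leave the contiguous block $I$, each intermediate corner is at a vertex of $B_Q(r-3)^c$, and that the path can indeed be prolonged to reach a corner of label $\leq -(r-1)$. This relies on the planar non-crossing property of CVS arcs together with a careful analysis of the clockwise contour order of $\tau$ around the spine and its low-label vertices, in the spirit of Lemma 8 and Proposition 9 of \cite{curienLeGall2014brownian}. Once this has been verified, the proposition follows: the vertex set of $\Hull_Q(r-3)$ sits inside $V(\PP((\tau, \xi), h))$, and the full map structure of the hull is obtained from the CVS arcs between corners of the labeled pruned tree and from the orientation bit $\epsilon$.
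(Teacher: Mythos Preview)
Your overall architecture matches the paper's: show that no vertex of $\Hull_Q(r-3)$ is a strict descendant of $[\xi]_h$, and then argue that the hull can be rebuilt from the labeled pruned tree and $\epsilon$. However, there is a genuine gap in your handling of case (ii), and your reconstruction step is also too loose.

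\textbf{The gap in case (ii).} You construct a path from $v$ (a strict descendant of $[\xi]_h$ with $Z_v>-(r-1)$) to $\partial$ by following CVS arcs; the intermediate vertices on this path, while still inside the contour block $I$, are strict descendants of $[\xi]_h$ whose labels range from $Z_v-1$ down. The label bound $\dgr^Q(\rho,w)\geq |Z_w|-1$ says nothing useful for an intermediate $w$ with label $0$ or $-1$, so you cannot conclude $w\in B_Q(r-3)^c$ from labels alone. Your ``safety margin'' remark is based on a confusion: labels bound $\dgr^Q(\rho,\cdot)$ from below, not from above, and vertices on $\partial B_Q(r-3)$ need not have any prescribed label. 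The paper closes this gap in one line via the \emph{cactus bound} \cite[Proposition~5.9~(ii)]{gall2012buziosf}: after reducing (without loss of generality) to $Z_{[\xi]_h}=-r$, every strict descendant $w$ of $[\xi]_h$ satisfies $\dgr^Q(\sigma,w)\geq Z_\sigma - Z_{[\xi]_h}=r$, hence $\dgr^Q(\rho,w)\geq r-1$. This single inequality shows simultaneously that $v$ and all intermediate vertices on your path (up to the first label $\leq -r$) lie in $B_Q(r-3)^c$. Your pointer to the ``planar non-crossing property'' and to \cite{curienLeGall2014brownian} does not substitute for this: the planarity argument you are gesturing at is essentially a proof of the cactus bound, and Lemma~8/Proposition~9 of \cite{curienLeGall2014brownian} concern balls rather than hulls and do not supply the needed distance lower bound.

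\textbf{The reconstruction step.} Saying that the hull ``is reconstructed from the CVS arcs drawn between corners of pruned-tree vertices'' skips a real issue: a CVS arc in $Q$ from a pruned-tree corner $c$ targets the next corner of label $Z_c-1$ in the contour of $\tau$, which a priori could be a corner of a deleted subtree, so this target is \emph{not} read off the pruned tree. The paper handles this cleanly by introducing $Q'$, the CVS image of the pruned tree itself, and proving $\Hull_Q(r-3)=\Hull_{Q'}(r-3)$. The key observation (again using the reduction $Z_{[\xi]_h}=-r$) is that any pruned-tree corner of label $\geq -r+1$ has the same CVS successor in $\tau$ and in the pruned tree; combined with the first step, this forces the two hulls to coincide.
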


\begin{proof}

The idea is to prove that, if $Q'$ is the quadrangulation obtained by applying the \CVS to the pruned tree, then $\Hull_Q(r-3) = \Hull_{Q'}(r-3)$ (considered as an equality between rooted quadrangulations with a boundary).

Let us state some useful observations. 
Without loss of generality, by taking $h$ smaller we can assume that $Z_{[\xi]_h} = \min_{0\leq i \leq h} Z_{[\xi]_i} = -r$. 
We note that $\dgr^Q(\vs, \rho) = 0$ or $1$ depending on $\epsilon$. If $v \in V(\tau)$,
\begin{equation*}
\dgr^Q(\vs,v) \geq |Z_\vs - Z_v| = |Z_v| ,
\end{equation*}
and by the triangle inequality $\dgr^Q(\rho,v) \geq |Z_v|-1$.

Our first step is to prove that vertices of $\Hull_Q(r-3)$ ``belong'' to the pruned tree, and that their labels are at least $-r+1$.

Let $v$ be a vertex of $\tau$ such that  $Z_v \leq -r$. 
 Starting from any corner of $v$, the construction of edges in the \CVS yields a path starting from $v$ that visits vertices with strictly decreasing labels. This path ultimately connects $v$ to $\partial$ by visiting only vertices with labels less than $-r$, thus at distance at least $r-1$ from $\rho$ in $Q$. 
 By construction, any vertex of $\Hull_Q(r-3)$ is such that any path from this vertex to $\partial$ visits a vertex $w$ with $\dgr^Q(\rho,w) \leq r-2$. It follows that $v$ does not belong to $\Hull_Q(r-3)$.

Let us now check that vertices in $V(\tau) \setminus V(\PP((\tau,\xi),h))$ do not belong to $\Hull_Q(r-3)$ either. Let $v$ be such a vertex
with $Z_v>-r$ (the case $Z_v\leq -r$ was already considered above). Then $[\xi]_h$ is an ancestor of $v$, and the cactus bound \cite[Proposition 5.9 (ii)]{gall2012buziosf} shows that
\begin{equation*}
\dgr^{Q}(\vs,v) \geq Z_\vs - Z_{[\xi]_h} = r ,
\end{equation*}
and thus $\dgr^{Q}(\rho,v)\geq r-1$. 

Let $c$ be any corner of $v$. Order the corners of $\tau$ in clockwise order starting at $c$, and let $c''$ be the first corner of $[\xi]_h$ that appears in this enumeration: every corner between $c$ and $c''$ is incident to a vertex of $V(\tau) \setminus V(\PP((\tau,\xi),h))$. Since labels change by at most $1$ in this enumeration, there will be a corner $c'$ with label $Z_v-1$ between $c$ and $c''$ (possibly $c' = c''$). This ensures that the edge drawn from $c$ in the \CVS ends at a vertex in $V(\tau) \setminus V(\PP((\tau,\xi),h))$ with label $Z_v-1$, or possibly at $[\xi]_h$. Therefore, we can inductively construct a path from $v$ that stays in $V(\tau) \setminus V(\PP((\tau,\xi),h))$ until it reaches a vertex $w$ of label $-r$, and we extend this path to a path from $v$ to $\partial$ as we did previously. Every vertex of this path is at distance at least $r-1$ from $\rho$, so it follows again that $v$ does not belong to $\Hull_Q(r-3)$.

Note that by the same reasoning, if $c$ is a corner of label at least $-r+1$ that belongs to the pruned tree, then the edge drawn from $c$ will reach a vertex of the pruned tree (possibly $[\xi]_h$), and thus will be present in $Q'$ as well as in $Q$.

As our second and last step, we now verify that $\Hull_Q(r-3) = \Hull_{Q'}(r-3)$. By the first step, any edge belonging to $\Hull_Q(r-3)$ is drawn from a corner $c$ of $\tau$ (not incident to $[\xi]_h$) with label at least $-r+1$ that ``belongs'' to the pruned tree, so it appears both in $Q$ and $Q'$. 
It follows that we have both
\begin{align*}
&\dgr^{Q'}(\rho,v) \leq \dgr^Q(\rho,v) \qquad \text{ if } v \in \Hull_Q(r-3), \\
&\dgr^Q(\rho,v) \leq \dgr^{Q'}(\rho,v) \qquad \text{ if } v \in \Hull_{Q'}(r-3) .
\end{align*}
Next, any edge of the boundary of $\Hull_Q(r-3)$ is incident to a face of $Q$ containing a vertex at $\dgr^Q$-distance (hence $\dgr^{Q'}$-distance) less than or equal to $r-4$ from $\rho$. This face must then also be contained in $\Hull_{Q'}(r-3)$. It easily follows that $\Hull_Q(r-3) \subset \Hull_{Q'}(r-3)$, and the converse is also true by a symmetric argument.
\end{proof}

The \CVS can be extended to infinite labeled trees. Precisely, we consider the set $\SS$ of all 
infinite rooted labeled trees with one end, such that the infimum of the labels on the spine is $-\infty$. With every 
$(\tau,Z)\in \SS$ and $\ve\in\{0,1\}$, one can associate an infinite planar quadrangulation $Q$, which is defined via a direct extension of the rules
of the \CVS (see  \cite[Proposition 2.5]{curien2012view}). Furthermore, the preceding proposition is immediately extended to
that case, with the same proof: for every integer $h>0$, if $r:=-\min\{Z_{[\infty]_j}:0\leq j\leq h\}\geq 4$, the hull
$\Hull_Q(r-3)$ only depends on the pruned tree $\PP(\tau,h)$, its labeling, and $\epsilon$.

The following proposition is closely related to \cite[Proposition 9]{curienLeGall2014brownian}, but deals with hulls instead of balls. 
Recall that $Q^\bullet_n$ is uniformly distributed over the set of rooted and pointed quadrangulations with $n$ faces. 

\begin{proposition}
\label{Prop_TBP 9}
For every $\ve>0$, there exists $\chi>0$ such that for every sufficiently large $n$, we can construct $Q^\bullet_n$ and ${Q_\infty}$ on the same probability space in such a way that the equality
\begin{equation*}
\Hull_{Q^\bullet_n}(\chi n^{1/4}) = \Hull_{{Q_\infty}}(\chi n^{1/4})
\end{equation*}
holds with probability at least $1-\ve$.
\end{proposition}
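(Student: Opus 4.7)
The plan is to use the \CVS to code both $Q^\bullet_n$ and $Q_\infty$ by labeled trees, then invoke Proposition~\ref{Prop_TBP 8} to reduce the hull equality to an equality of labeled pruned trees, and finally couple these pruned trees with high probability.

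Via the \CVS, $Q^\bullet_n$ is coded by a triple $(T_n, Z^n, \epsilon_n)$, where $T_n$ is uniform on rooted plane trees with $n$ edges, $Z^n$ is obtained by assigning iid uniform $\{-1,0,+1\}$ increments to the edges of $T_n$, and $\epsilon_n \in \{0,1\}$ is an independent uniform sign. The \UIPQ $Q_\infty$ is coded similarly by an infinite one-ended labeled tree $(T_\infty, Z^\infty, \epsilon_\infty)$: $T_\infty$ is Kesten's tree for the critical geometric offspring distribution, with spine $([\infty]_i)_{i \geq 0}$; $Z^\infty$ is the canonical labeling with iid uniform $\{-1,0,+1\}$ edge increments; and $\epsilon_\infty$ is an independent uniform sign (see \cite{curien2012view} for the extension of the \CVS to this setting). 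As noted just after the statement of Proposition~\ref{Prop_TBP 8}, that proposition extends to $Q_\infty$, with $[\infty]_h$ playing the role of $[\xi]_h$.

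Pick a uniformly distributed vertex $\xi_n$ of $T_n$ conditionally on $T_n$, and set $h = \floor{K \sqrt n}$ for a constant $K>0$ to be chosen. On the event
\begin{equation*}
\EE_n = \bigl\{ \dgr^{T_n}(\vs, \xi_n) > h \bigr\} \cap \bigl\{ -\min_{0 \leq i \leq h} Z^n_{[\xi_n]_i} \geq \chi n^{1/4} + 3 \bigr\},
\end{equation*}
Proposition~\ref{Prop_TBP 8} ensures that $\Hull_{Q^\bullet_n}(\chi n^{1/4})$ is a deterministic function $F$ of the labeled pruned tree $\PP((T_n, \xi_n), h)$ (equipped with the restriction of $Z^n$) and $\epsilon_n$; likewise $\Hull_{Q_\infty}(\chi n^{1/4})$ is the value of $F$ on the corresponding data for $Q_\infty$ on the analogous event. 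The core task is then to build a coupling such that
\begin{equation*}
\P\bigl( \PP((T_n, \xi_n), h) = \PP(T_\infty, h) \text{ as labeled trees, and } \epsilon_n = \epsilon_\infty \bigr) \geq 1 - \ve/2
\end{equation*}
for all sufficiently large $n$. This is a quantitative version of the classical local convergence of a uniform plane tree re-rooted at a uniform vertex to Kesten's tree, very much in the spirit of \cite[Lemma 8]{curienLeGall2014brownian}. It follows from an explicit comparison of the size-biased decomposition of $T_n$ along the ancestral line from $\vs$ to $\xi_n$ with the spine decomposition of $T_\infty$, exploiting that the tree structures and labelings are determined by iid data whose distributions can be compared term by term.

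To conclude, first choose $K$ small enough that $\P(\dgr^{T_n}(\vs,\xi_n) > K\sqrt n) \geq 1 - \ve/4$ for large $n$, which is possible since $n^{-1/2} \dgr^{T_n}(\vs,\xi_n)$ converges in distribution to a positive, absolutely continuous random variable (related to the normalized Brownian excursion). Then, under the coupling, $(Z^n_{[\xi_n]_i})_{0 \leq i \leq h}$ has the law of a simple random walk with iid uniform $\{-1,0,+1\}$ increments, and by Donsker's invariance principle its minimum over $[0,h]$ is of order $-\sqrt h = -\sqrt K \, n^{1/4}$; choosing $\chi$ small enough (so that $\chi^2$ is much smaller than $K$), this minimum is at most $-(\chi n^{1/4}+3)$ with probability at least $1-\ve/4$. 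Combining, $\P(\EE_n) \geq 1 - 3\ve/4$, and on the intersection with the coupling event the two hulls coincide, so the probability of equality is at least $1-\ve$. The principal obstacle is the quantitative coupling of the labeled pruned trees at scale $h = O(\sqrt n)$: while the qualitative local limit is classical, the total variation estimate at this growing height requires the explicit combinatorial comparison alluded to above.
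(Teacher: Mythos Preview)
Your proposal is correct and follows essentially the same approach as the paper's own proof: both code $Q^\bullet_n$ and $Q_\infty$ via the \CVS, invoke Proposition~\ref{Prop_TBP 8} (and its infinite-tree analog) to reduce the hull equality to an equality of labeled pruned trees, and then appeal to the coupling of pruned trees from \cite{curienLeGall2014brownian}. Your choice of pruning height $h=\floor{K\sqrt n}$ is the correct scale (labels along the ancestral line are a $\{-1,0,+1\}$-random walk, so the minimum over $h$ steps is of order $\sqrt h$); the paper's written pruning height $\chi n^{1/4}+3$ appears to be a slip, but the argument it outlines---and defers to \cite[Proposition 9]{curienLeGall2014brownian}---is the same as yours.
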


\begin{proof}
The proof is is very similar to that of \cite[Proposition 9]{curienLeGall2014brownian}, using our Proposition \ref{Prop_TBP 8} instead of \cite[Proposition 8]{curienLeGall2014brownian}.
Let us only outline the argument. We may assume that $Q^\bullet_n$ is obtained via the \CVS from a uniform labeled tree with $n$ edges $(T_n,Z^n)$, and
we consider a uniformly distributed vertex $\xi_n$ of $T_n$. From \cite[Theorem 2.8]{curien2012view}, we also get that $Q_\infty$ can be constructed as the image under the
extended \CVS of the so-called uniform infinite labeled tree  $(T_\infty,Z^\infty)$ (see \cite[Definition 2.6]{curien2012view} for the definition of the latter object).
Using Proposition \ref{Prop_TBP 8} (and its analog in the infinite case), the desired result follows once we know that we can couple the labeled trees $(T_n, Z^n)$ and $(T_\infty,Z^\infty)$ 
so that the (labeled) pruned trees $\PP((T_n,\xi_n),\chi n^{1/4}+3)$ and $\PP(T_\infty,\chi n^{1/4}+3)$ are equal with probability at least $1-\ve$. 
We refer to the proof of \cite[Proposition 9]{curienLeGall2014brownian} for additional details. \end{proof}

\subsection{Proof of \propref{Prop_P21 distances root - uniform point, Qn}}
\label{Sec_Proof_P21}

Recall the notation of \secref{Sec_Distance between two uniformly sampled points in finite quadrangulations}. 
In particular, $Q^\bullet_n$ is a uniformly distributed rooted and pointed quadrangulation with $n$ faces. We denote its root vertex by $\rho_n$, and its distinguished vertex by $\partial_n$. The triplet associated with $Q^\bullet_n$ via the \CVS is denoted by $(T_n,Z^n,\epsilon_n)$. We will also use the uniform infinite labeled tree  $(T_\infty,Z^\infty)$
(cf. \cite[Definition 2.6]{curien2012view}) from which one constructs the \UIPQ $Q_\infty$ via the extended \CVS.

\subsubsection{First step: Pruning finite trees and infinite trees}

For any rooted plane tree $\tau$, any vertex $v \in V(\tau)$, and $h \leq \dgr^\tau(\vs,v)$, we set
\begin{equation*}
\vT(\tau,v,h)= \# V(\tau) - \# V(\PP((\tau,v),h)) ,
\end{equation*}
which is the number of vertices that are removed from the tree when pruning it. 
Denote the first (in lexicographical order) vertex with minimal label in $T_n$ by $\eta_n$, and
consider the pointed tree $(T_n,\eta_n)$. 
Let $\vb >0$ and $b\in(0,1)$. We claim that we can find a constant $C$ that depends only on $b$ such that, for all large enough $n$, for every nonnegative function $F$ on the space of all rooted and pointed labeled plane trees,
\begin{equation*}
\E\l[ F\l(\PP((T_n,\eta_n),\floor{ \vb\sqrt{n} })\r)  \ \ind{\dgr^{T_n}(\rho_n,\eta_n)>\vb\sqrt{n} ,  \ \vT(T_n,\eta_n,\floor{ \vb\sqrt{n} }\geq b (n+1)}\r]
\leq C \  \E\l[ F\l(\PP(T_\infty,\floor{ \vb\sqrt{n} })\r)\r].
\end{equation*}
In the previous display, we slightly abuse notation by viewing both $\PP((T_n,\eta_n),\floor{ \vb\sqrt{n} }) $ and $\PP(T_\infty,\floor{ \vb\sqrt{n} })$
as {\em labeled} trees --- we obviously keep the labels of the original trees.

\begin{proof}[Proof of the claim]
 To simplify notation, we write $A_n$ for the event
\begin{equation*}
A_n := \Big\{\dgr^{T_n}(\rho_n,\eta_n)>\vb\sqrt{n} \ , \ 
\vT(T_n,\eta_n,\floor{\vb\sqrt{n}}) \geq b(n+1) \Big\}.
\end{equation*}
Let $\xi_n$ be uniformly distributed over $V(T_n)$. We note that on the event $A_n$, the conditional probability that $\xi_n$ is not in $V(\PP((T_n,\eta_n),\floor{\vb\sqrt{n}}))$ knowing $T_n$ is at least $b$, because on $A_n$,
\begin{equation*}
\# V(T_n) - \# V(\PP((T_n,\eta_n),\floor{\vb\sqrt{n}}))\geq b(n+1).
\end{equation*}
It follows that
\begin{equation}
\label{Eq_density for pruned trees 1}
\E\l[ F\l(\PP((T_n,\eta_n),\floor{\vb\sqrt{n}}\r)\mathds{1}_{A_n}\r] 
\leq \frac{1}{b} \ \E\l[F\l(\PP((T_n,\eta_n),\floor{\vb\sqrt{n}}\r)\mathds{1}_{A_n}\ind{\xi_n\notin V(\PP((T_n,\eta_n),\floor{\vb\sqrt{n}}))}\r]
\end{equation}
If $\xi_n\notin V(\PP((T_n,\eta_n),\floor{\vb\sqrt{n}}))$, we have $[\xi_n]_{\vb\sqrt{n}}=[\eta_n]_{\vb\sqrt{n}}$ and
\begin{equation*}
\PP\l((T_n,\xi_n),\floor{\vb\sqrt{n}}\r) =\PP\l((T_n,\eta_n),\floor{\vb\sqrt{n}}\r).
\end{equation*}
It follows that the expectation in the right-hand side of \eqref{Eq_density for pruned trees 1} is bounded above by
\begin{equation}
\label{Eq_pruned tree, upper bound of expectation}
\E\l[F\l(\PP((T_n,\xi_n),\floor{\vb\sqrt{n}}\r)\mathds{1}_{A'_n}\r]
\end{equation}
where
$
A'_n := \{\dgr^{T_n}(\rho_n,\xi_n)>\vb\sqrt{n}, \ \vT(T_n,\xi_n,\floor{\vb\sqrt{n}}\geq b(n+1)\}.
$

Next let $\tau$ be a rooted and pointed labeled plane tree such that $|\tau|<n$ (here $|\tau|$ denotes the number of edges of $\tau$) and the distinguished vertex is at generation $\floor{\vb\sqrt{n}}$ and has no strict descendants. Formulas (19) and (21) in \cite{curienLeGall2014brownian} show that, for $n$ large enough (independently of the choice of $\tau$),
\begin{equation*}
\frac{\P(\PP((T_n,\xi_n),\floor{\vb\sqrt{n}}))=\tau)}
{\P(\PP(T_\infty,\floor{\vb\sqrt{n}}=\tau)} \leq 2 \ \l(1-\frac{|\tau|}{n}\r)^{-1/2}.
\end{equation*}
For $n$ large enough, the condition $\vT(T_n,\xi_n,\floor{\vb\sqrt{n}}\geq b(n+1)$ ensures that $\PP((T_n,\xi_n),\floor{\vb\sqrt{n}}))$ has less than $(1-\frac{b}{2})n$ edges. It follows that the quantity in \eqref{Eq_pruned tree, upper bound of expectation} is bounded above by
\begin{equation*}
2 \ \pfrac{b}{2}^{-1/2} \ \E\l[ F\l(\PP(T_\infty,\floor{\vb\sqrt{n}}\r)\r].
\end{equation*}
This completes the proof of the claim.
 \end{proof}

\subsubsection{Second step: Hulls in finite quadrangulations and in the UIPQ}

Recall that, for every integer $r\geq 1$, the hull $\Hull_{Q^\bullet_n}(r)$ is well defined under the condition $\dgr^{Q^\bullet_n}(\rho_n,\partial_n)>r+1$.

Let $\va>0$, and set $\van=\floor{ \va \ n^{1/4}}-1$ to simplify notation. Recall the notation $A_n$ introduced in the first step above, and set
\begin{equation*}
E_n=A_n\cap \{Z^n_{[\eta_n]_{\vb\sqrt{n}}}<-\va \ n^{1/4}\} ,
\end{equation*}
We note that (on $A_n$) the condition $Z^n_{[\eta_n]_{\vb\sqrt{n}}}<-\floor{ \va \ n^{1/4}}$ implies a fortiori $Z^n_{\eta_n} \leq Z^n_{[\eta_n]_{\vb\sqrt{n}}}-1 \leq -\van-3$ and $\dgr^{Q_n}(\rho_n,\eta_n) \geq \van+2$ so that $\dgr^{Q_n}(\rho_n,\partial_n) \geq \van+3$. in particular, the hull $\Hull_{Q^\bullet_n}(\van)$ is well defined on the event $E_n$.

By \propref{Prop_TBP 8}, on the event $E_n$, the hull $\Hull_{Q^\bullet_n}(\van)$ is equal to a deterministic function of the pruned tree $\PP((T_n,\eta_n),\floor{\vb\sqrt{n}})$ (and labels of this tree and $\epsilon_n$). Furthermore on the event $\{Z^\infty_{[\infty]_{\vb\sqrt{n}} }< -\van\}$, the hull $\Hull_{Q_\infty}(\van)$ is equal to the same deterministic function of $\PP(T_\infty,\floor{\vb\sqrt{n}})$ (and its labels and $\epsilon_\infty$).  
As a consequence of this fact and the first step, we have also, for every nonnegative function $G$ on the space of rooted planar maps,
\begin{equation}
\label{Eq_density for hulls in finite and infinite case}
\E\l[G(\Hull_{Q^\bullet_n}(\van)) \ \mathds{1}_{E_n} \r]
\leq C \ \E \Big[G(\Hull_{Q_\infty}(\van)) \ \mathds{1}_{\{Z^\infty_{[\infty]_{\vb\sqrt{n}} }< -\va\,n^{1/4}\}} \Big].
\end{equation}

\subsubsection{Final step}

Let $\vd>0$ to be fixed later, and for every integers $j,l\geq 1$ set
\begin{align*}
&\va_j=j\vd^2,\quad \va'_j=(j+1)\vd^2,\quad \va''_j=(j+2)\vd^2,\\
&\vb_l=l \vd^5,\quad \vb'_l=(l+1)\vd^{5},\quad\vb''_l=(l+2)\vd^5.
\end{align*}

\begin{lemma}
\label{Lemma_technical slicing}
For every integers $j,l\geq 1$, set
\begin{align*}
H_{n,\vd}^{j,l}=& \ \{Z^n_{\eta_n}\in[-\va''_jn^{1/4},-\va'_j n^{1/4}), \ Z^n_{[\eta_n]_{\vb_l\sqrt{n}}}<-\va_j \ n^{1/4}\}\\
&\cap \{\vb'_l\sqrt{n}\leq \dgr^{T_n}(\rho_n,\eta_n)\leq \vb''_l\sqrt{n}\}\cap 
\{\vT(T_n,\eta_n,\floor{ \vb_l\sqrt{n}}>\vd^{11}n\}.
\end{align*}
Let $\ve >0$. For all $\vd\in(0,1)$ small enough, for all large enough $n$, the event
\begin{equation*}
H_{n,\vd}:= \bigcup_{j=\floor{\vd^{-1}}}^{\floor{\vd^{-3}}}\bigcup_{l=\floor{\vd^{-4}}}^{\floor{\vd^{-6}}} H_{n,\vd}^{j,l}
\end{equation*}
has probability at least $1-\ve$. 
\end{lemma}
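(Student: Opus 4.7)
The plan is to reduce the statement to a continuous analog for the Continuum Random Tree with Brownian labels (i.e., the Brownian snake), and to verify it there by elementary scaling arguments.

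First I recall that, by the classical scaling limit of random labeled trees (Chassaing--Schaeffer, Le Gall, Marckert--Mokkadem, \dots), the rescaled labeled tree $(T_n, Z^n)$, viewed through its (contour, label) pair, converges in distribution to $(\mathbf{e}, \mathbf{Z})$, where $\mathbf{e}$ is a normalized Brownian excursion and $\mathbf{Z}$ is the head of the Brownian snake driven by $\mathbf{e}$. Under this convergence, $\eta_n$ corresponds to the a.s.\ unique minimizer $t_\star$ of $\mathbf{Z}$, and one has joint convergence (modulo scaling constants that I absorb into the limit variables) of $n^{-1/4} Z^n_{\eta_n} \to \mathbf{Z}(t_\star)$, $n^{-1/2} \dgr^{T_n}(\rho_n, \eta_n) \to \mathbf{e}(t_\star)$, together with the rescaled label at the ancestor of $\eta_n$ at rescaled height $s$ converging to the value of $\mathbf{Z}$ at the corresponding ancestor of $t_\star$, and the rescaled pruning size $n^{-1} \vT(T_n, \eta_n, \floor{s \sqrt{n}})$ converging to the length $u(s) - l(s)$ of the excursion of $\mathbf{e}$ above level $s$ straddling $t_\star$.

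Next I would prove the continuum analog: with probability $\geq 1-\ve/2$ for all $\vd$ small enough, there exist integers $j \in [\floor{\vd^{-1}}, \floor{\vd^{-3}}]$ and $l \in [\floor{\vd^{-4}}, \floor{\vd^{-6}}]$ such that (a)~$\mathbf{Z}(t_\star) \in [-\va''_j, -\va'_j)$, (b)~$\mathbf{e}(t_\star) \in [\vb'_l, \vb''_l]$, (c)~the value of $\mathbf{Z}$ at the ancestor of $t_\star$ at height $\vb_l$ is less than $-\va_j$, and (d)~$u(\vb_l) - l(\vb_l) > \vd^{11}$. Conditions (a) and (b) uniquely determine $(j,l)$ and hold outside a set of small probability because $(\mathbf{Z}(t_\star), \mathbf{e}(t_\star))$ is a.s.\ valued in $(-\infty,0) \times (0,\infty)$ with a continuous joint distribution, so for $\vd$ small the pair lies in $[-\vd^{-1},-\vd] \times [\vd,\vd^{-1}]$ with probability arbitrarily close to $1$. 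For (c), I would use that, conditionally on $(\mathbf{e}, t_\star)$, the process giving $\mathbf{Z}$ along the spine from the root to $t_\star$ is a Brownian motion; its oscillation over a height interval of length $2\vd^5$ is $\sqrt{2\vd^5}$ times the oscillation of a standard Brownian motion on $[0,1]$, hence is smaller than $\vd^2$ with probability $\to 1$ as $\vd \to 0$, and since $\mathbf{Z}(t_\star) < -\va'_j = -\va_j-\vd^2$, this guarantees the ancestor's label stays below $-\va_j$. For (d), since $\mathbf{e}(t_\star) - \vb_l \in [\vd^5, 2\vd^5]$, Brownian-excursion scaling around $t_\star$ yields $u(\vb_l) - l(\vb_l)$ of order $\vd^{10}$ times a positive random factor with a non-degenerate distribution, whose probability of being smaller than $\vd$ tends to $0$ as $\vd \to 0$.

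The transfer back to the discrete setting is then done by Skorokhod representation and the continuous-mapping theorem: the four relevant rescaled functionals converge jointly to their continuum counterparts, and the endpoints of the intervals appearing in the definition of $H_{n,\vd}^{j,l}$ are continuity points of the limiting distribution by absolute continuity. Hence $\mathbb{P}(H_{n,\vd})$ is asymptotically bounded below by the continuum probability, which was shown to be at least $1 - \ve/2 > 1-\ve$. The main obstacle I anticipate is the quantitative matching of the polynomial exponents: the $\vd^{5/2}$ Brownian-snake fluctuation along a spine segment of rescaled length $\vd^5$ must be absorbed into the $\vd^2$ label gap, and the typical subtree mass $\vd^{10}$ must exceed the threshold $\vd^{11}$ despite the possibility of atypically short excursions. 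These constraints are precisely the combinations of exponents built into the choices $\va_j \sim j\vd^2$, $\vb_l \sim l\vd^5$, and mass threshold $\vd^{11}$, so the delicate work is quantitative rather than conceptual and amounts to explicit Brownian-snake estimates.
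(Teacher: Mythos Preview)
Your approach is essentially the same as the paper's: both pass to the Brownian snake via Skorokhod representation and verify the four conditions by exploiting the exponent inequalities $\vd^{5/2} \ll \vd^2$ (for the spine-label fluctuation) and $\vd^{10} \gg \vd^{11}$ (for the subtree mass).

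One point requires care. Your claim that ``conditionally on $(\mathbf{e}, t_\star)$, the label process along the spine to $t_\star$ is a Brownian motion'' is not correct as stated: $t_\star$ is the argmin of $\mathbf{Z}$ and hence is not $\mathbf{e}$-measurable, so conditioning on it alters the law of the spine labels (they are conditioned to attain the global minimum at the endpoint). The paper avoids this by proving a \emph{uniform} H\"older bound $|Y_a - Y_b| \leq C_\omega\, d_{\mathbf e}(a,b)^{1/2-\alpha}$ valid simultaneously for all pairs $a,b$ in the CRT; this applies in particular to the spine segment of length $\leq 2\vd^5$ ending at the random minimizer, giving the required oscillation bound $\vd^2/10$. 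The same remark applies to your treatment of~(d): rather than invoking local scaling of $\mathbf{e}$ at the random point $t_\star$, the paper uses the global H\"older continuity of the Brownian excursion to bound from below the length of the excursion of $\mathbf{e}$ above level $\mathbf{e}(t_\star) - \vd^5/\sqrt{2}$ straddling $t_\star$. With this adjustment your argument goes through and matches the paper's.
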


Let us postpone the proof of this lemma and complete that of \propref{Prop_P21 distances root - uniform point, Qn}.

Each set $H_{n,\vd}^{j,l}$ is contained in a set of the type $E_n$ (with $\va=\va_j$, $\vb=\vb_l$ and $b=\vd^{11}$). Using \eqref{Eq_density for hulls in finite and infinite case} and \propref{Prop_P20 distances root - boundary of the hull, UIPQ}, we get that, on the event $H_{n,\vd}^{j,l}$, except on a set  of probability tending to $0$ as $n\to\infty$, the \fpp-distance between any point of the boundary of $\Hull_{Q^\bullet_n}(\floor{\va_{j}n^{1/4}}-1)$ and the root vertex $\rho_n$ is close to $\mathbf{c_0} \va_{j}n^{1/4}$, up to an error bounded by $\ve n^{1/4}$. Note that \propref{Prop_P20 distances root - boundary of the hull, UIPQ} considers vertices of the boundary of the truncated hull, whereas here we
want to deal with the boundary of the standard hull of the same radius: This makes no difference since it is easily checked that 
any vertex of the boundary of the standard hull either belongs to the  boundary of the truncated hull or is adjacent to a vertex of the latter boundary. Moreover, when applying \eqref{Eq_density for hulls in finite and infinite case}, we should consider the ``intrinsic \fpp-distance'' on $\Hull_{Q^\bullet_n}(\floor{\va_{j}n^{1/4}}-1)$ in order to compare it with the similar intrinsic distance on the corresponding hull of $Q_\infty$. However, similarly as in the proof
of Proposition \ref{Prop_P19 thin annuli}, we can use the fact that the \fpp-distance is bounded above by the intrinsic \fpp-distance, and the minimal \fpp-distance from a point of the hull boundary is equal to the minimal intrinsic \fpp-distance. 

We also know that, still on the event $H_{n,\vd}^{j,l}$, the graph distance (in $Q^\bullet_n$) between $\partial_n$ and the boundary of the hull $\Hull_{Q^\bullet_n}(\floor{\va_{j}n^{1/4}})$ is bounded above by $2\vd^2 n^{1/4}$ (to see this, recall that labels $Z^n_a$ correspond to distances from $\partial_n$, up to a shift by $-Z^n_{\eta_n}+1$, and consider a
geodesic from $\rho_n$ to $\partial_n$). 

Recalling that the \fpp-weights are bounded above by $\vk$, we then obtain, on the event $H_{n,\vd}^{j,l}$ except on a set  of probability tending to $0$ as $n\to\infty$, that the \fpp-distance (in $Q_n$) between $\partial_n$  and $\rho_n$ is close to $\mathbf{c_0} \ \dgr^{Q_n}(\rho_n,\partial_n)$, up to an error bounded by $(2\vd^2+\ve)\vk n^{1/4}$.

We can apply the previous property to each set $H_{n,\vd}^{j,l}$, and we obtain that on the event $H_{n,\vd}$, except on a set of probability tending to $0$ when $n\to\infty$, we have
\begin{equation*}
|\dfpp^{Q_n}(\rho_n,\partial_n)-\mathbf{c_0} \ \dgr^{Q_n}(\rho_n,\partial_n)| \leq (2\vd^2+\ve)\vk n^{1/4}.
\end{equation*}
This completes the proof of \propref{Prop_P21 distances root - uniform point, Qn}.

\begin{proof}[Proof of \lemref{Lemma_technical slicing}]

 We need to introduce some notation. We write $u^n_0,u^n_1,\ldots,u^n_{2n}$ for the contour sequence of $T_n$: $u^n_0$ is the root of $T_n$ and, for $1 \leq j \leq 2n$, $u^n_j$ is either the first child of $u^n_{j-1}$ that does not appear in $u^n_0, ... u^n_{j-1}$, or the parent of $u^n_{j-1}$ if there is no such child.  
 We write $(C^n_k)_{0\leq k\leq 2n}$ for the contour function (so that $C^n_k=|u^n_k|$). The discrete snake associated with $T_n$ is denoted by $(W^n_k)_{0\leq k\leq 2n}$: $W^n_k=(W^n_k(j))_{0\leq j\leq C^n_k}$, where $W^n_k(j)$ is the label of the ancestor of $u^n_k$ at generation $j$. For simplicity, we write $Y^n_k=W^n_k(C^n_k)=Z^n_{u^n_k}$. By the results of Janson and Marckert \cite{janson2005convergence}, we have
\begin{equation}
\label{Eq_convergence of the brownian snake}
\l(\frac{1}{\sqrt{2n}} \  C^n_{\floor{ 2nt}}, \l(\frac{9}{8}\r)^{1/4} n^{-1/4}W^n_{\floor{ 2nt}}\l(\floor{\sqrt{2n} \ \cdot} \r)\r)_{0\leq t\leq 1}
\ulimd{n}{\infty} (\be_t,W_t)_{0\leq t\leq 1},
\end{equation}
where $\be$ is a normalized Brownian excursion, and $W$ is the Brownian snake driven by $\be$. The convergence in \eqref{Eq_convergence of the brownian snake} holds in the topology of uniform convergence. 
By the Skorokhod representation theorem, we may and will assume that the latter convergence holds \as We write $(\TT_\be,d_\be)$ for the tree coded by $\be$ and we also set $Y_t=\wh W_t$. The process $Y$ (or $W$) can be viewed as indexed by $\TT_\be$. Fix $\va\in(0,1/100)$. We observe that
\begin{equation*}
\sup_{a,b\in \TT_\be, a\not =b}  \ \frac{|Y_a-Y_b|}{d_\be(a,b)^{\frac{1}{2}-\va}} =: C_\omega <\infty,\qquad \as
\end{equation*}
Since conditionally given $\be$, $Y$ can be interpreted as Brownian motion indexed by $\TT_\be$, this follows from standard chaining arguments (using metric entropy bounds) and we omit the details.

It follows that
\begin{equation*}
\sup_{0\leq s\leq 1}\sup_{0\leq r\leq \vd^5\wedge\be_s} |\wh W_s - W_s(\be_s-r)| \leq C_\omega (\vd^5)^{\frac{1}{2}-\va}
\end{equation*}
and the right-hand side is smaller than $\vd^2/10$ except on a set of small probability when $\vd$ is small. On the other hand, it follows from the (\as) convergence \eqref{Eq_convergence of the brownian snake} that
\begin{equation*}
\sup_{0\leq k\leq 2n} \sup_{0\leq j\leq 2\vd^5\sqrt{n}\wedge C^n_k} \l(\frac{9}{8}\r)^{1/4} n^{-1/4} |W^n_k(C^n_k)-W^n_k(C^n_k-j)|
\ulimas n \infty \sup_{0\leq s\leq 1}\sup_{0\leq r\leq \sqrt{2}\vd^5\wedge\be_s} |\wh W_s - W_s(\be_s-r)|.
\end{equation*}
Hence, by the preceding observations, we can find $\vd_0>0$ such that, for every $\vd\in(0,\vd_0]$, for all $n$ large enough, 
\begin{equation}
\label{Eq_bound on sup of brownian snake}
\sup_{0\leq k\leq 2n} \sup_{0\leq j\leq 2\vd^5\sqrt{n}\wedge C^n_k} \l(\frac{9}{8}\r)^{1/4} n^{-1/4} |W^n_k(C^n_k)-W^n_k(C^n_k-j)|
<\frac{\vd^2}{10}
\end{equation}
except possibly on a set of probability bounded above by $\ve/4$.

Write $k_n$ for the first index such that $u^n_{k_n}=\eta_n$ (so with our notation $Y^n_{k_n}=Z^n_{\eta_n}$). We note that, by \eqref{Eq_convergence of the brownian snake}, we have $k_n/\sqrt{2n}\la t_*$ as $n\to\infty$, \as, where $t_*$ is the unique value such that $Y_{t_*}=\min\{Y_t:0\leq t\leq 1\}$.

Outside a set of small probability when $\vd$ is small (uniformly in $n$) we can find $j\in \{\floor{\vd^{-1}},\ldots,\floor{\vd^{-3}}\}$ and $l \in \{\floor{\vd^{-4}},\ldots,\floor{\vd^{-6}}\}$ such that
\begin{equation*}
-\va''_j n^{1/4} \leq Z^n_{\eta_n} < -\va'_j n^{1/4} \ ,\qquad \vb'_l\sqrt{n}\leq C^n_{k_n}=\dgr^{T_n}(\rho_n,\eta_n)< \vb''_l \sqrt{n}.
\end{equation*}
We must now justify the fact that we have also $Z^n_{[\eta_n]_{\vb_l\sqrt{n}}}<-\va_j \ n^{1/4}$ and $\vT(T_n,\eta_n,\floor{ \vb_l\sqrt{n}})>\vd^{11}n$. The first property follows from \eqref{Eq_bound on sup of brownian snake} since 
\begin{equation*}
|Z^n_{\eta_n}-Z^n_{[\eta_n]_{\vb_l\sqrt{n}}}|=|W^n_{k_n}(C^n_{k_n})- W^n_{k_n}(\floor{\vb_l\sqrt{n}})|,
\end{equation*}
 and $C^n_{k_n}-\floor{\vb_l\sqrt{n}} \leq 2\vd^5\sqrt{n}$. For the second property, we note that $\vT(T_n,\eta_n,\floor{ \vb_l\sqrt{n} })\geq \frac{1}{2}(k'_n-k_n)$ where $k'_n \eqdef \min\{j\geq k_n: C^n_j\leq \vb_l\sqrt{n}\}$, and using \eqref{Eq_convergence of the brownian snake}, we have
 \begin{equation*}
 \liminf_{n\to\infty} (2n)^{-1} ( k'_n-k_n) \geq \inf\{s>t_*:\be_s=(\be_{t_*} - \vd^5/\sqrt{2})^+\} - t_* .
 \end{equation*}
 For $\vd>0$ small enough, for $n$ large, the H\"older continuity properties of the Brownian excursion show that the right-hand side of the last display is greater than $\vd^{11}$ except on a set of probability bounded above by $\ve/4$. This completes the proof.
 \end{proof}

\subsection{Distances between two arbitrary points in a finite quadrangulation}
\label{Sec_Distances between any pair of points of finite quadrangulations (Theorem 1)}

The next statement gives the part of Theorem \ref{Th1} concerning quadrangulations.

\begin{theorem}
\label{Th_T1 controle distances fpp et gr dans quad finies}
For every $\ve>0$, we have
\begin{equation*}
\P\l( \sup_{x,y \in V(Q_n)} \l| \dfpp^{Q_n}(x,y) - \bc_p \dgr^{Q_n}(x,y) \r| > \ve n^{1/4} \r) \ulim{n}{\infty} 0\,.
\end{equation*}
\end{theorem}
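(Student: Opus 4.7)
The plan is to bootstrap Proposition \ref{Prop_P21 distances root - uniform point, Qn}, which only controls the \fpp-versus-graph distance for the specific pair $(\rho_n,\partial_n)$, into the desired uniform estimate over all pairs $(x,y)\in V(Q_n)^2$. I would proceed in two stages: first, a change-of-measure argument to extend Proposition \ref{Prop_P21 distances root - uniform point, Qn} to two independent uniform vertices; second, a covering argument based on the convergence of $Q_n$ to the Brownian map.

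For the first stage, observe that, conditionally on the unpointed quadrangulation, $\partial_n$ is uniform on $V(Q_n)$ whereas $\rho_n$ is the tail of a uniform oriented edge, hence is size-biased by degree. Since every vertex has degree at least $1$, and since $\#E(Q_n)=2n$ and $\#V(Q_n)=n+2$, the Radon--Nikodym derivative of the uniform law with respect to the degree-biased law satisfies
\begin{equation*}
\frac{1/\#V(Q_n)}{\deg(\cdot)/(2\#E(Q_n))} \;\leq\; \frac{2\#E(Q_n)}{\#V(Q_n)} \;\leq\; 4 .
\end{equation*}
Consequently, if $(u_n, v_n)$ are two vertices of $Q_n$ which, conditionally on $Q_n$, are independent and uniform over $V(Q_n)$ (and independent of the edge weights), then for every $\ve>0$,
\begin{equation}
\label{eq-plan-uniform-pair}
\P\bigl(|\dfpp^{Q_n}(u_n,v_n) - \bc_p\,\dgr^{Q_n}(u_n,v_n)| > \ve\, n^{1/4}\bigr) \;\leq\; 4\,\P\bigl(|\dfpp^{Q_n}(\rho_n,\partial_n) - \bc_p\,\dgr^{Q_n}(\rho_n,\partial_n)| > \ve\, n^{1/4}\bigr) \ulim{n}{\infty} 0 .
\end{equation}

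For the second stage, fix $\ve>0$ and a small $\eta>0$, and sample $v_1^n,\ldots,v_K^n$ independently uniform over $V(Q_n)$ and independent of the edge weights. Using the convergence $(V(Q_n), n^{-1/4}\dgr^{Q_n}) \to (\bmap,D^*)$ of \cite{leGall2013uniqueness,miermont2013brownian} together with the fact that the limit mass measure has full support, one deduces (as in \cite[Lemma 20]{fpp}) that if $K = K(\eta, \ve)$ is chosen large enough, then for all $n$ large,
\begin{equation*}
\P\Bigl( \max_{x\in V(Q_n)}\min_{1\leq i\leq K}\dgr^{Q_n}(x,v_i^n) \leq \eta\, n^{1/4} \Bigr) \geq 1 - \ve/3 .
\end{equation*}
On the other hand, \eqref{eq-plan-uniform-pair} together with a union bound over the (bounded number of) pairs $(v_i^n,v_j^n)$ yields
$\max_{i,j}|\dfpp^{Q_n}(v_i^n,v_j^n) - \bc_p\,\dgr^{Q_n}(v_i^n,v_j^n)| \leq \eta\,n^{1/4}$ with probability at least $1-\ve/3$ for $n$ large. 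Combining both events, for any $x,y\in V(Q_n)$ one picks $i,j$ with $\dgr^{Q_n}(x,v_i^n)\vee\dgr^{Q_n}(y,v_j^n)\leq\eta n^{1/4}$, and two triangle inequalities (using $\dfpp^{Q_n}\leq\vk\,\dgr^{Q_n}$) give
\begin{equation*}
|\dfpp^{Q_n}(x,y)-\bc_p\,\dgr^{Q_n}(x,y)|\;\leq\; \bigl(1+2(\vk+\bc_p)\bigr)\,\eta\, n^{1/4} ,
\end{equation*}
which is smaller than $\ve n^{1/4}$ once $\eta$ is chosen small enough.

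All the substantive work is already contained in Proposition \ref{Prop_P21 distances root - uniform point, Qn}, so the argument above is essentially formal. The main obstacle, should I need to write it out in full, would be the covering estimate of stage (ii): it is intuitively clear from Gromov--Hausdorff convergence to the Brownian map plus full support of the mass measure, but transferring the statement from the limit to the discrete with a uniform-in-$n$ probability bound requires some care, and I would simply adapt the corresponding argument from \cite{fpp}.
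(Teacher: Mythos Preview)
Your proposal is correct and follows essentially the same two-step approach as the paper: first pass from the pair $(\rho_n,\partial_n)$ to two independent uniform vertices, then use a covering argument based on convergence to the Brownian map. Your Radon--Nikodym bound in the first step is exactly the paper's re-rooting-invariance-plus-averaging argument written in different language (the paper sums over oriented edges and uses that every vertex is a tail at least once, which is the same $\leq 4$ factor), and your covering step is the same as the paper's, which invokes Gromov--Hausdorff--Prokhorov convergence from \cite{gall2017browniandiskssnake} rather than \cite{fpp}.
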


\begin{proof}
The proof follows the same pattern as that of \cite[Theorem 1]{fpp}, and we refer to \cite{fpp} for more details. 
We first claim that \propref{Prop_P21 distances root - uniform point, Qn} remains valid if  $\rho_n$ is replaced by a uniformly distributed vertex of $Q_n$. 
In other words, if $\partial'_n$ is uniformly distributed on $V(Q_n)$ conditionally on $Q^\bullet_n$, 
$$\P\l( | \dfpp^{Q_n}(\partial_n, \partial'_n) - \bc_p \dgr^{Q_n}(\partial_n, \partial'_n) | > \ve n^{1/4} \r) \ulim n \infty 0.$$

Let us explain this. The law of $Q_n$ is invariant under uniform re-rooting, so that the statement of \propref{Prop_P21 distances root - uniform point, Qn} still holds if we replace $\rho_n$ by the tail of an oriented edge chosen uniformly on $Q_n$. 
Let $\overrightarrow{E}(Q_n)$ be the set of all oriented edges of $Q_n$ (with cardinality $4n$), and, for every $e\in \overrightarrow{E}(Q_n)$, write $t(e)$ for the tail of $e$. Then 
\propref{Prop_P21 distances root - uniform point, Qn} and the invariance under uniform re-rooting give
\begin{equation*}
\frac{1}{4n} \E\l[ \sum_{e \in \overrightarrow{E}(Q_n)} \ind{| \dfpp^{Q_n}(\partial_n, t(e)) - \bc_p \dgr^{Q_n}(\partial_n, t(e)) | > \ve n^{1/4}} \r] \ulim n \infty 0 .
\end{equation*}
Every vertex of $Q_n$ appears at least once as the tail $t(e)$ of an oriented edge $e$, and thus it also follows that 
\begin{equation*}
 \E\l[ \frac{1}{n+2}\,\sum_{v \in V(Q_n)} \ind{| \dfpp^{Q_n}(\partial_n, v) - \bc_p \dgr^{Q_n}(\partial_n, v) | > \ve n^{1/4}} \r] \ulim n \infty 0 .
\end{equation*}
This proves our claim.

Let $\alpha>0$. We know that $V(Q_n)$ equipped with the graph distance rescaled by $\pfrac{9}{8n}^{1/4}$ and with the uniform probability measure converges in distribution in the Gromov-Hausdorff-Prokhorov topology towards the Brownian map equipped with its volume measure (see \cite[Theorem 7]{gall2017browniandiskssnake}). Since the Brownian map is a compact metric space, it follows that for every $\ve>0$, we can fix $N$ large enough (not depending on $n$) such that, if $(\partial_n^i)_{1 \leq i \leq N}$ are \iid uniformly distributed random vertices of $Q_n$, then the $\dgr^{Q_n}$-balls of radius $\ve n^{1/4}$ centered at the vertices $\partial_n^i$, $1\leq i\leq N$, cover $Q_n$ with probability at least $1-\alpha$ (see the end of Appendix A1 in \cite{gall2017browniandiskssnake} for a detailed
justification).

The preceding assertion ensures that, on an event of probability at least $1-\alpha$,  the $\dgr^{Q_n}$-distance (respectively the $\dfpp^{Q_n}$-distance) between any pair of points is well approximated by the distance between a certain pair of vertices in $(\partial^i_n)_{1\leq i \leq N}$, up to a difference bounded by $2\ve n^{1/4}$ (\resp by $2\kappa\ve n^{1/4}$).
On the other hand, the first part of the proof shows that, for $n$ large enough, we have $| \dfpp^{Q_n}(\partial^i_n, \partial^j_n) - \bc_p \dgr^{Q_n}(\partial^i_n, \partial^j_n) | \leq \ve n^{1/4}$ for all $1 \leq i,j \leq N$ with probability at least $1-\alpha$. We conclude that, except on a set of probability at most $2\alpha$, $\bc_p \dgr^{Q_n}$ and $\dfpp^{Q_n}$ differ by at most $(1+4\vk) \ve n^{1/4}$. This completes the proof.
\end{proof}

The next result is very similar to \cite[Theorem 2]{fpp}. Stating the result for hulls instead of balls is a minor improvement that could  also be achieved in the framework of \cite{fpp}. 
Balls and hulls with respect to the first-passage percolation distance are defined in the same way as for the graph distance: For every $r\in (0,\infty)$, the \fpp-ball $B_{Q_\infty}^\fpp(r)$ is the union of all faces of $Q_\infty$ that are incident to a vertex at \fpp-distance strictly less than $r$ from the root vertex of $Q_\infty$, and the \fpp-hull $B^{\bullet,\fpp}_{Q_\infty}(r)$ is the union of $B_{Q_\infty}^\fpp(r)$ and of the finite connected components of its complement.

\begin{theorem}
\label{Th_T2 hulls in the UIPQ are close for dfpp and dgr}
Let $\ve \in (0,1)$. We have
\begin{equation}
\label{Eq_distances are close in hulls of Qinfty}
\lim_{r \to \infty} \P\l( \sup_{x,y \in V(\Hull_{Q_\infty}(r))} \l| \dfpp^{Q_\infty}(x,y) - \bc_p \dgr^{Q_\infty}(x,y) \r| > \ve r \r) = 0 .
\end{equation}
Consequently,
\begin{align*}
\P\l( B_{Q_\infty}((1-\ve)r / \bc_p) \subset B^{\mathrm{fpp}}_{Q_\infty}(r) \subset B_{Q_\infty}((1+\ve)r / \bc_p) \r) \ulim r \infty 1 , \\
\P\l( \Hull_{Q_\infty}((1-\ve)r / \bc_p) \subset B^{\bullet,\mathrm{fpp}}_{Q_\infty}(r) \subset \Hull_{Q_\infty}((1+\ve)r / \bc_p) \r) \ulim r \infty 1 .
\end{align*}
\end{theorem}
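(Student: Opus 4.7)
The plan is to reduce the main estimate \eqref{Eq_distances are close in hulls of Qinfty} to the finite-volume result Theorem \ref{Th_T1 controle distances fpp et gr dans quad finies} through the coupling provided by Proposition \ref{Prop_TBP 9}, and then to deduce the two inclusions as corollaries by applying \eqref{Eq_distances are close in hulls of Qinfty} at radii of the form $(1\pm \ve)r/\bc_p$ together with the elementary sandwich $\dgr \leq \dfpp \leq \vk \dgr$.

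For \eqref{Eq_distances are close in hulls of Qinfty} itself, the first ingredient I would establish is an \emph{a priori} diameter bound: for every $\ve > 0$ there exists $M = M(\ve) > 0$ such that, with probability at least $1 - \ve/10$, every vertex of $V(\Hull_{Q_\infty}(r))$ lies at graph distance at most $Mr$ from $\rho$ in $Q_\infty$. Equipped with this bound, I would apply Proposition \ref{Prop_TBP 9} with a small error parameter to obtain $\chi > 0$, set $n = \ceil{(M'r/\chi)^4}$ with $M' = (2\vk+2)M$, and couple $Q^\bullet_n$ with $Q_\infty$ (together with their edge weights) so that $\Hull_{Q^\bullet_n}(M'r) = \Hull_{Q_\infty}(M'r)$ with probability at least $1 - \ve/10$. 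On the intersection of these two good events, any $x,y \in V(\Hull_{Q_\infty}(r))$ satisfy $\dgr^{Q_\infty}(x,y) \leq 2Mr$ by the triangle inequality, so any graph or first-passage percolation geodesic between $x$ and $y$ in $Q_\infty$ uses at most $2\vk Mr$ edges and stays inside $B_{Q_\infty}((2\vk+1)Mr) \subset \Hull_{Q_\infty}(M'r)$. Since the latter hull is common to $Q_\infty$ and $Q^\bullet_n$, and a symmetric argument shows that $Q^\bullet_n$-geodesics also remain inside it (any shortcut through the exterior of the hull in $Q^\bullet_n$ would already exceed the length $2Mr$ of a path kept inside the hull), we obtain $\dgr^{Q_\infty}(x,y) = \dgr^{Q^\bullet_n}(x,y)$ and $\dfpp^{Q_\infty}(x,y) = \dfpp^{Q^\bullet_n}(x,y)$. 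Applying Theorem \ref{Th_T1 controle distances fpp et gr dans quad finies} to $Q_n$ with parameter $\ve\chi/M'$ produces $|\dfpp^{Q_n}(x,y) - \bc_p \dgr^{Q_n}(x,y)| \leq \ve r$ uniformly in $x,y \in V(Q_n)$ with high probability, and this combines with the previous distance identities to yield \eqref{Eq_distances are close in hulls of Qinfty}.

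The hard part is the diameter bound. Its difficulty lies in the fact that $\Hull_{Q_\infty}(r) \setminus B_{Q_\infty}(r)$ consists of vertices trapped inside finite holes of the ball $B_{Q_\infty}(r)$, whose $\dgr^{Q_\infty}$-distance to $\rho$ need not be a priori bounded by a constant times $r$. I would establish it through the skeleton decomposition: by Proposition \ref{Prop_law_perimeter_UIPQ} the perimeters $H_s$ for $s \leq r$ are of order $r^2$ with exponential upper tails, while the truncated quadrangulations distributed according to the Boltzmann measures $\vG_{c_e+1}$ that fill the slots have diameter of order $\sqrt{c_e+1}$ with high probability; a union bound over the vertices of the skeleton forest up to generation $r$ then yields a diameter of order $r$ for $\Hull_{Q_\infty}(r)$. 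An alternative route would transfer the tightness of $n^{-1/4}\,\mathrm{diam}(Q_n)$, a consequence of the convergence of $Q_n$ to the Brownian map, directly through Proposition \ref{Prop_TBP 9}, avoiding any further combinatorial input. Once the diameter bound is in place, the remaining work is essentially a careful bookkeeping of the error parameters.
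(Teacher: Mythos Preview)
Your proposal is correct and follows essentially the same strategy as the paper: obtain a diameter bound $\Hull_{Q_\infty}(r)\subset B_{Q_\infty}(Mr)$, observe that distances between points of this hull are then determined by a ball (or hull) of radius $O(r)$, couple with $Q^\bullet_n$ via Proposition \ref{Prop_TBP 9}, and invoke Theorem \ref{Th_T1 controle distances fpp et gr dans quad finies}.

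The only substantive difference concerns the diameter bound, which the paper isolates as Lemma \ref{Lemma_hulls are contained in larger balls whp}. Rather than either of your two suggested routes, the paper proves it through the continuum: in the Brownian plane $\PP$ one has $\mathcal{B}^\bullet_\PP(1)\subset \mathcal{B}_\PP(K)$ with high probability for some $K$, by scale invariance this gives $\mathcal{B}^\bullet_\PP(\vb)\subset \mathcal{B}_\PP(K\vb)$ for all $\vb$, and the local Gromov--Hausdorff convergence of $Q_\infty$ to the Brownian plane then yields $\Hull_{Q_\infty}(r)\subset B_{Q_\infty}(K'r)$; the analogous statement for $Q^\bullet_n$ goes through the Brownian map via the coupling of \cite[Theorem 1]{curienLeGall2014brownian}. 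Your route (a) through the skeleton decomposition would also work but requires controlling the diameters of Boltzmann slot fillings uniformly, which is more combinatorial. Your route (b) as stated is a bit delicate: transferring the tightness of $n^{-1/4}\mathrm{diam}(Q_n)$ through Proposition \ref{Prop_TBP 9} gives a bound on $\dgr^{Q_n}$-distances inside the common hull, but you then need to argue that these coincide with $\dgr^{Q_\infty}$-distances, which is precisely part of what you are trying to establish; this circularity can be broken, but it needs care. The continuum argument sidesteps this entirely and is arguably the cleanest.
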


We will need the following lemma, where we use the same notation $Q^\bullet_n$ as in the preceding sections.
We make the convention that, if $r\geq \dgr^{Q_n^\bullet}(\rho_n,\partial_n)-1$, 
then $\Hull_{Q^\bullet_n}(r) = Q^\bullet_n$.

\begin{lemma}
\label{Lemma_hulls are contained in larger balls whp}
For every $\ve>0$, we can choose $K'>1$ s.t. for every $\vb>0$, for all $n$ large enough, we have with probability greater than $1-\ve$,
\begin{equation*}
\Hull_{Q^\bullet_n}(\vb n^{1/4}) \subset B_{Q_n}(K' \vb n^{1/4}),
\end{equation*}
and for all $r$ large enough, with probability greater than $1-\ve$,
\begin{equation*}
\Hull_{Q_\infty}(r) \subset B_{Q_\infty}(K' r) .
\end{equation*}
\end{lemma}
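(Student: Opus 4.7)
The plan is first to establish the $Q_\infty$ assertion and then to deduce the $Q^\bullet_n$ statement by combining the coupling of Proposition \ref{Prop_TBP 9} with the tightness of $n^{-1/4} \mathrm{diam}(Q_n)$ inherited from the scaling limit results of \cite{leGall2013uniqueness, miermont2013brownian}.

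For the \UIPQ, since every vertex of $\partial \Hull_{Q_\infty}(r)$ is at graph distance exactly $r$ from $\rho$, it is enough to bound the intrinsic diameter of $\Hull_{Q_\infty}(r)$ (which dominates $\dgr^{Q_\infty}$ for vertices in the hull) by $(K'-1) r$ with high probability. To this end, I would work with the truncated hull $\Htr_{Q_\infty}(r+c)$ for some absolute constant $c$ chosen so that $\Hull_{Q_\infty}(r) \subset \Htr_{Q_\infty}(r+c)$, and its skeleton decomposition. By \propref{Prop_law_perimeter_UIPQ}, all perimeters $H_j$ for $j \leq r+c$ are bounded by $A r^2$ with probability at least $1-\eta$ for a suitable $A = A(\eta)$, giving a total of $O(r^3)$ skeleton edges. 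Conditionally on the skeleton, the slots are independent Boltzmann truncated quadrangulations with perimeter $c_e + 1$. Any vertex $v \in \Hull_{Q_\infty}(r)$ lies either on some $\partial_{r'} Q_\infty$ (in which case $\dgr^{Q_\infty}(\rho, v) \leq r$) or in the interior of a slot at some level $r' \leq r$, at intrinsic distance at most the diameter of that slot from a vertex of $\partial_{r'-1} Q_\infty \cup \partial_{r'} Q_\infty$. Using the tail asymptotics \eqref{Eq_asymptotics_vkp} of $\vk_p$ and the known tightness of $p^{-1/2}\,\mathrm{diam}$ for Boltzmann truncated quadrangulations of perimeter $p$ (a consequence of the convergence of such maps toward Brownian disks), together with a union bound over the $O(r^3)$ slots combined with the heavy-tail asymptotics of $\vt$, I would show that with probability at least $1-\eta$ every slot has diameter bounded by $K_0 r$ for some $K_0 = K_0(\eta)$. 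Coupled with paths of length $\leq r$ along left-most geodesics from $\partial_{r'} Q_\infty$ to $\rho$, this gives intrinsic diameter $\leq (K_0+1)r$ with probability $\geq 1-2\eta$, so that taking $\eta = \ve/2$ and $K' = K_0 + 1$ yields the \UIPQ assertion.

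For the finite statement, let $\chi > 0$ be given by \propref{Prop_TBP 9} applied with parameter $\ve/3$, so that for $n$ large we may couple $Q^\bullet_n$ and $Q_\infty$ with $\Hull_{Q^\bullet_n}(\chi n^{1/4}) = \Hull_{Q_\infty}(\chi n^{1/4})$ with probability at least $1-\ve/3$. On this event, for every $s \leq \chi n^{1/4}$ the rooted maps $\Hull_{Q^\bullet_n}(s)$ and $\Hull_{Q_\infty}(s)$ agree and carry the same intrinsic graph distance. For $\vb \leq \chi$, the \UIPQ bound then gives $\Hull_{Q^\bullet_n}(\vb n^{1/4}) \subset B_{Q_n}((K_0+1)\vb n^{1/4})$ with total probability at least $1-2\ve/3$. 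For $\vb > \chi$, the tightness of $n^{-1/4}\mathrm{diam}(Q_n)$ provides a constant $K_1$ with $\mathrm{diam}(Q_n) \leq K_1 n^{1/4}$ with probability at least $1-\ve/3$, and hence $\Hull_{Q^\bullet_n}(\vb n^{1/4}) \subset V(Q_n) \subset B_{Q_n}(K_1 n^{1/4}) \subset B_{Q_n}(K'\vb n^{1/4})$ whenever $K' \geq K_1/\chi$. Setting $K' = \max(K_0 + 1, K_1/\chi)$ handles both cases simultaneously. The main obstacle is controlling the maximum slot diameter in terms of $r$: this requires combining tail estimates for diameters of Boltzmann truncated quadrangulations with the heavy-tail asymptotics of $\vt$, taking care that the union bound over $O(r^3)$ slots is consistent with the uniform bound on their perimeters provided by \propref{Prop_law_perimeter_UIPQ}.
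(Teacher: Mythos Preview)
Your approach is genuinely different from the paper's and could in principle be completed, but there is a real gap in the union-bound step for the \UIPQ.

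You invoke only \emph{tightness} of $p^{-1/2}\,\mathrm{diam}$ for Boltzmann truncated quadrangulations of perimeter $p$. Tightness gives, for each $\eta>0$, a constant $C(\eta)$ with $\P(\mathrm{diam}>C(\eta)\sqrt{p})<\eta$ uniformly in $p$. But you then need to control the \emph{maximum} diameter over $O(r^3)$ slots, and a union bound with a fixed $\eta$ over a diverging number of terms yields nothing. To carry your argument through you would need a quantitative tail, roughly $\P(\mathrm{diam}>D)\lesssim p^3 D^{-6}$ for $D\gg\sqrt p$ (from $\P(N>M)\asymp p^3 M^{-3/2}$ for the face count together with $\mathrm{diam}\asymp N^{1/4}$), and then a bound of order $r^6$ on $\sum_e (c_e+1)^3$ over the whole skeleton. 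The latter is delicate since $\vt$ has infinite third moment: the sum is governed by the largest offspring, itself of order $r^2$. None of this is impossible, but it goes well beyond ``tightness plus union bound'', and the student-identified ``main obstacle'' is not resolved by the ingredients you list.

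The paper avoids this combinatorics entirely by arguing in the scaling limits. In the Brownian plane $(\PP,D_\infty)$ one has $\sup\{D_\infty(x_0,x):x\in\mathcal{B}^\bullet_\PP(1)\}<\infty$ a.s., so one may pick $K$ with $\P(\mathcal{B}^\bullet_\PP(1)\subset\mathcal{B}_\PP(K))\geq 1-\ve/4$; by the exact scale invariance of the Brownian plane the same $K$ works for \emph{every} radius $\vb$. This is then transferred to the Brownian map via the Brownian-plane/Brownian-map coupling of \cite[Theorem~1]{curienLeGall2014brownian} (for small $\vb$) together with the boundedness of the Brownian map (for large $\vb$), and finally pulled back to $Q^\bullet_n$ and to $Q_\infty$ via their bipointed, resp.\ local, Gromov--Hausdorff convergences \cite{gall2017browniandiskssnake,curienLeGall2014brownian}. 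Scale invariance in the limit does the job that your slot-by-slot estimates would have to do.

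Your reduction of the $Q^\bullet_n$ case to the $Q_\infty$ case via Proposition~\ref{Prop_TBP 9} plus diameter tightness is correct as stated.
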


\begin{proof}[Proof of \lemref{Lemma_hulls are contained in larger balls whp}]

Let us begin with an observation that will be useful later in the proof.  Fix $\ve>0$. 
Let $(\PP, D_\infty)$ stand for the Brownian plane of \cite{curienLeGall2014brownian}. Recall that
the Brownian plane comes with a distinguished point, which we denote by $x_0$. 
We write $\mathcal{B}_{\PP}(\vb)$ for the closed ball of radius $\beta$ centered at $x_0$ in $\PP$. For every $\vb>0$, the hull of radius $\beta$ in $(\PP, D_\infty)$, denoted by
$\mathcal{B}^\bullet_\PP(\beta)$, is the complement of the unbounded connected
component of the complement of $\mathcal{B}_{\PP}(\vb)$. Then,
\begin{equation*}
\sup \{ D_\infty(x_0, x) : x \in \mathcal{B}^\bullet_\PP(1) \}<\infty\hbox{ a.s.}
\end{equation*}
and thus we can find $K>1$ such that the latter supremum is smaller than $K$ with probability at least $1-\ve/4$.
By the scaling invariance of the Brownian plane, it follows that for every $\vb>0$, 
\begin{equation}
\label{Hull/Ball}
\P(\mathcal{B}^\bullet_\PP(\vb) \subset \mathcal{B}_\PP(K\vb))\geq1-\ve/4.
\end{equation}

Consider then the Brownian map $(\bmap, D^*)$, which also comes with a distinguished point $x_*$
(in the construction of the Brownian motion from the CRT indexed by Brownian labels, $x_*$ is the point with
minimal label). We write $\mathcal{B}_\bmap(\vb)$ for the closed ball of radius $\beta$ centered at $x_*$. We let $\partial$ be another distinguished point uniformly distributed 
over $\bmap$, and, if $0<\beta<D^*(x_*,\partial)$, we define $\mathcal{B}^\bullet_{\bmap}(\vb)$ as the complement of the connected
component of the complement of the ball $\mathcal{B}_\bmap(\vb)$ that contains $\partial$. If $\beta\geq D^*(x_*,\partial)$, we take $\mathcal{B}^\bullet_{\bmap}(\vb)=\bmap$.
Using the coupling between the Brownian map $(\bmap, D^*)$ and the Brownian plane found in \cite[Theorem 1]{curienLeGall2014brownian}, one gets
from \eqref{Hull/Ball} that
there exists $\vd>0$ such that 
\begin{equation}
\label{Eq_hull dans boules version bmap}
\P\l(\mathcal{B}^\bullet_{\bmap}(\vb) \subset \mathcal{B}_\bmap(K\vb)\r) \geq 1-\ve/2,
\end{equation}
for every $0<\vb<\vd$. Let us briefly justify this. We note that  \cite[Theorem 1]{curienLeGall2014brownian} allows us to couple $\PP$ and $\bmap$ so that there exists
$\alpha_0>0$ such that, with high probability, we have $\mathcal{B}_\bmap(\alpha)=\mathcal{B}_\PP(\alpha)$ for every $\alpha\in(0,\alpha_0]$. Then, if $K\beta<\alpha\leq \alpha_0$,
the property  $\mathcal{B}^\bullet_\PP(\vb) \subset \mathcal{B}_\PP(K\vb)=\mathcal{B}_\bmap(K\vb)$ also implies that 
$\mathcal{B}^\bullet_{\bmap}(\vb)=\mathcal{B}^\bullet_\PP(\vb) $, provided
that $\partial$ does not belong to $\mathcal{B}_\bmap(\alpha)$, which holds with high probability if $\alpha$ has been taken small enough.

In fact, taking the constant $K$ larger if necessary, we may even assume that the bound in \eqref{Eq_hull dans boules version bmap} holds
for {\em every} $\vb>0$. Indeed, we just have to take $K$ so large that $\P(\mathcal{B}_\bmap(K\vd)=\bmap)\geq 1-\ve/2$. 

In order to deduce the first assertion of the lemma from the preceding considerations, 
we use the convergence  of $(V(Q_n^\bullet),(8/9)^{1/4}\dgr^{Q_n^\bullet})$ towards the Brownian map
in the bipointed Gromov-Hausdorff topology (see \cite[Theorem 7]{gall2017browniandiskssnake}). Note $V(Q_n^\bullet)$ is viewed as a bipointed space with distinguished points $\rho_n$ and $\partial_n$
(in this order) and similarly $\bmap$ is a bipointed space with distinguished points $x_*$ and $\partial$ --- at this point we note that \cite[Theorem 7]{gall2017browniandiskssnake}
considers a seemingly different choice of distinguished points in the Brownian map, but the re-rooting invariance properties of \cite[Section 8]{LeGall2010Geodesics}
show that this makes no difference. It follows from the preceding convergence of bipointed spaces that, for any choice of $0<\beta<\beta'<\gamma'<\gamma$, 
\begin{equation}
\label{GH/Hull}
\liminf_{n\to\infty} \P\Big( \Hull_{Q^\bullet_n}((8/9)^{1/4} \vb n^{1/4})\subset B_{Q_n}((8/9)^{1/4}\gamma n^{1/4})\Big)
\geq \P\l(\mathcal{B}^\bullet_{\bmap}(\vb')\subset \mathcal{B}_\bmap(\gamma')\r).
\end{equation}
The derivation of \eqref{GH/Hull} is a simple exercise on the Gromov-Hausdorff convergence and we omit the details.

The first assertion of the lemma now follows from \eqref{Eq_hull dans boules version bmap} and \eqref{GH/Hull}: just take $K'>K$ to
obtain the desired statement for $n$ large enough. The second assertion of the lemma can be derived by
similar arguments using now the fact that the Brownian plane is the scaling limit of the \UIPQ in the
local Gromov-Hausdorff sense \cite[Theorem 2]{curienLeGall2014brownian}.
\end{proof}

\begin{proof}[Proof of \thref{Th_T2 hulls in the UIPQ are close for dfpp and dgr}]

Let us focus on the first statement (the second one follows easily). Let $\vd>0$. 

By \lemref{Lemma_hulls are contained in larger balls whp} applied to $Q_\infty$, we can find $K'>1$ such that, for $r$ large enough, $\Hull_{Q_\infty}(r) \subset B_{Q_\infty}({K' r})$ with probability at least $1-\vd/4$. An elementary argument allows one to find a large enough constant $C>1$ such that, for every $r\geq 1$, the $\dgr^{Q_\infty}$ and $\dfpp^{Q_\infty}$-distances between vertices of $B_{Q_\infty}({K' r})$ are determined by $B_{Q_\infty}({C K'r})$ and the weights on the edges of $B_{Q_\infty}({C K' r})$. In particular, outside of an event of probability smaller than $\vd/4$, the event whose probability is considered in \eqref{Eq_distances are close in hulls of Qinfty} can be expressed in terms of the ball $B_{Q_\infty}({CK'r})$ (and weights in this ball). 
Similarly it follows from \lemref{Lemma_hulls are contained in larger balls whp} that for any $\vb>0$, for $n$ large enough, the $\dgr^{Q_n}$-distance and the $\dfpp^{Q_n}$-distance between two vertices of $\Hull_{Q^\bullet_n}(\vb n^{1/4})$ are determined by $B_{Q_n}( CK'\vb n^{1/4})$, except on a set of probability smaller than $\vd/4$.

On the other hand, by \propref{Prop_TBP 9}, we can find $\chi>0$ such that for all $n$ large, we can couple $Q^\bullet_n$ and $Q_\infty$ in such a way that $B_{Q_n}(\chi n^{1/4}) = B_{Q_\infty}(\chi n^{1/4})$ except on a set of probability at most $\vd/4$. 

Let $\ve>0$. For $n$ large we have
\begin{align*}
&\P\l( \sup_{x,y \in V( \Hull_{Q_\infty}(\frac{\chi n^{1/4}}{CK'} ) )}  | \dfpp^{Q_\infty}(x,y)-\bc_p \dgr^{Q_\infty}(x,y)| > \ve n^{1/4} \r) \\
&\leq \frac{3\vd}{4} + \P\l( \sup_{x,y \in V(Q^\bullet_n)}  | \dfpp^{Q^\bullet_n}(x,y)-\bc_p \dgr^{Q^\bullet_n}(x,y)| > \ve n^{1/4} \r) \\
&\leq \vd .
\end{align*}
The second inequality follows from \thref{Th_T1 controle distances fpp et gr dans quad finies}. For the first one, we observe that both the $\dfpp^{Q_\infty}$-distance and the $\dgr^{Q_\infty}$-distance on $V( \Hull_{Q_\infty}(\frac{\chi n^{1/4}}{CK'}) )$ only depend on the ball $B_{Q_\infty}(\chi n^{1/4})$ (except on a set of probability at most $\vd/4$ in each case) and we know that we can couple $Q_\infty$ and $Q_n$ so that the balls $B_{Q_\infty}(\chi n^{1/4})$ and $B_{Q_n}(\chi n^{1/4})$ are equal except on a set of probability less than $\vd/4$. This completes the proof.
\end{proof}

\section{Technical lemmas for distances in the general map}
\label{technical-general}

We now proceed to prove that \Tuttebij is asymptotically an isometry. In order to do so, we first prove a handful of lemmas that control the distance in the map obtained
from a quadrangulation $Q$ via \Tuttebij  in terms of the graph distance in $Q$. The key object is an analog of \lmgs, which we call downward paths, and which we define in \secref{Section_Downward_paths}.

Let us briefly recall the definition of \Tuttebij (see \figref{Fig_Tutte_definition}). Let $Q$ be a quadrangulation with $n$ faces, and color its vertices in black and white so that adjacent vertices have a different color and the root vertex is white. In every face of $Q$, draw an edge between the two white corners of this face. Then erase the edges of $Q$ and all black vertices. We denote by $\Ti Q$ the map with $n$ edges obtained in this way.

The preceding construction of a (general) planar map from a quadrangulation can also be applied to the \UIPQ, and yields an infinite
(random) planar map called the \UIPM for Uniform Infinite Planar Map. Indeed, it was observed in \cite{menard2014percolation} that the \UIPM is
the local limit of uniformly distributed (general) planar maps with $n$ edges.

We can extend Tutte's correspondence to truncated hulls of even radius: The white vertices are those whose graph distance from the root vertex is even, then we draw diagonals
between the two white corners of any quadrangular inner face, and we also keep the edges of the external boundary (indeed this external boundary was made of diagonals
in the construction of the truncated hull). By definition, the (new) root edge is the diagonal drawn in the face to the left of the original root edge,
and the root vertex remains the same.  See \figref{Fig_Tutte_definition_hulls} for an example. Similarly, we can extend Tutte's correspondence to quadrangulations of a cylinder of even height, in such a way that we keep edges 
of both the top and the bottom boundary. The root edge then remains the same.

Finally, Tutte's correspondence is also extended in an obvious manner to the \LHPQ, in such a way that all edges of the boundary of the \LHPQ remain present
in the resulting infinite map (the latter also contains all edges of the form $((i,j),(i+1,j))$ for even values of $j\leq 0$).

\begin{figure}
\begin{center}
\includegraphics[width=\textwidth]{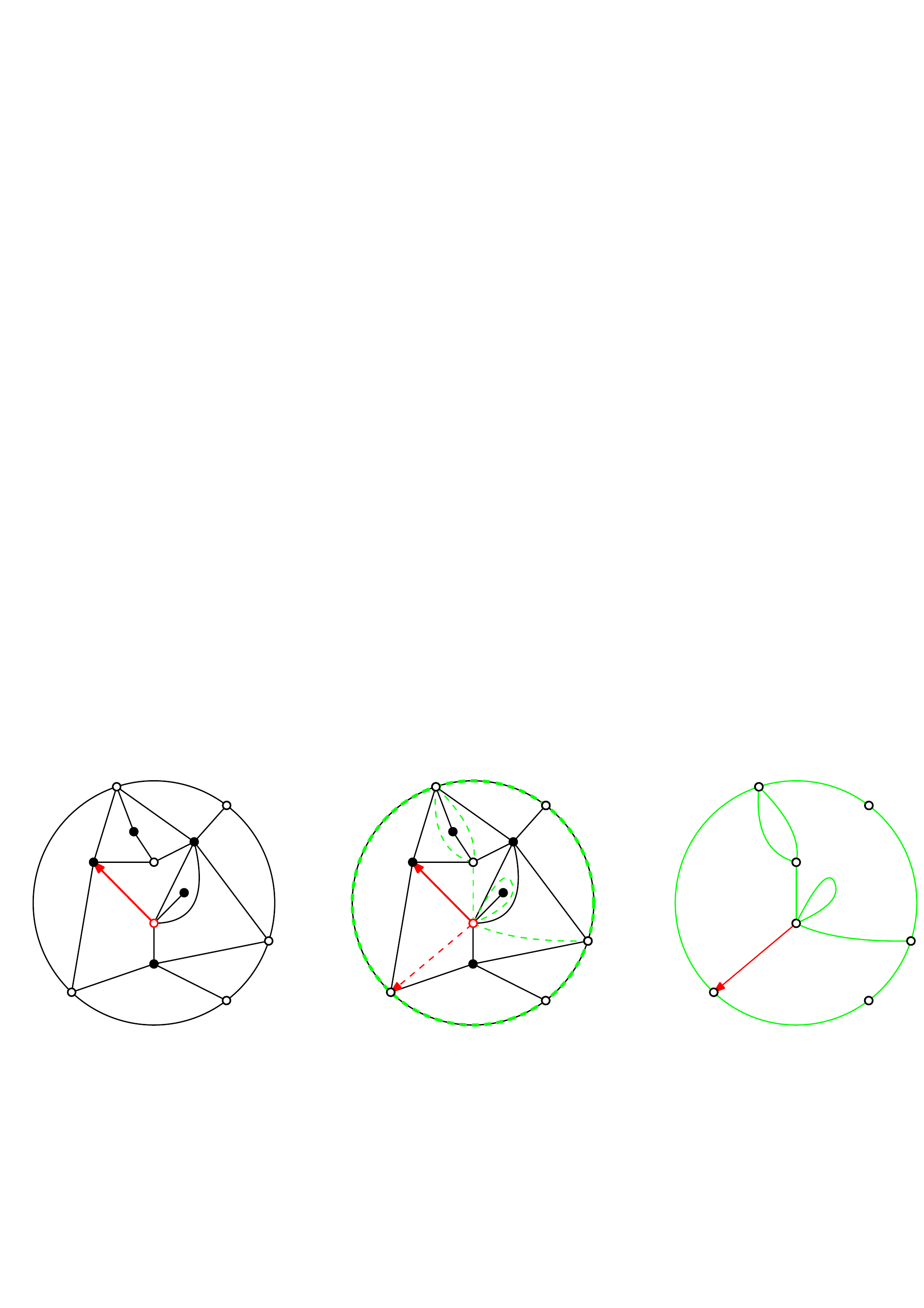}
\end{center}
\caption{\Tuttebij applied to a truncated hull of even radius, here of radius 2. In every face of $Q$ of degree 4 but the external one, draw a ``diagonal'' between the two white corners. 
Then erase the edges of the original map and all black vertices, keeping however the edges of the boundary. 
The map we obtain is rooted at the edge drawn in the face of $Q$ on the left of the root edge of $Q$, oriented in such a way that its tail vertex is the root vertex of $Q$.
}
\label{Fig_Tutte_definition_hulls}
\end{figure}

\subsection{Downward paths}
\label{Section_Downward_paths}

In this section, we define certain special paths called downward paths, in the image of a quadrangulation of the cylinder under
Tutte's correspondence. These special paths will be used to derive upper bounds for the distances in the \UIPM.

Let $R>0$ be an integer, let $Q$ be a quadrangulation of the cylinder of height $2R$ with top boundary length $p$, and let $u_0$ be a vertex of its top boundary. 
 We write $(\ur_i)_{0 \leq i < p}$ for the vertices of the top boundary in clockwise order, and extend this numbering to $i \in \Z$ by periodicity (recall that the top face is drawn as the infinite face). 
 Recall that for every $i \in \Z$, the edge $\{ \ur_i, \ur_{i+1} \}$ is also an edge of $\Ti{Q}$.
 
 Recall the skeleton decomposition from \secref{Sec_Preliminaries}: $Q$ is encoded by a forest $(\tau_i)_{0\leq i <p}$, whose vertices are identified with the edges of $\partial_r q$ for $0 \leq r \leq 2R$, and a collection of truncated quadrangulations. We extend the numbering of the forest to $\Z$ by periodicity, and shift the indices so that for all $i\in\Z$, the left-most vertex of the root of $\tau_i$ is $\ur_i$.

We say that the vertex $u_i$ is good if the slot associated with the edge $(u_i,u_{i+1})$ is filled in by the truncated quadrangulation represented in the right side of \figref{Fig_Downward paths exponentielle} (in particular the edge $(u_i,u_{i+1})$ must have exactly one child in the skeleton), and bad otherwise. Assume that at least one of the $\ur_j$'s is labeled good, and let $\ur_i$, with $-p \leq i \leq -1$, be the first good vertex visited when exploring the top boundary in counterclockwise order starting from $\ur_{-1}$. We define the downward path $\DP(\ur_0, 2R-2)$ from $\ur_0$ to $\partial_{2R-2} Q$ as follows. We first move along $\partial_{2R} Q$
 in counterclockwise order from $\ur_0$ to $\ur_i$. Then, we follow the unique edge of $\Ti{Q}$ from $\ur_i$ to $\partial_{2R-2} q$ inside the slot associated with  $(u_i,u_{i+1})$. See \figref{Fig_Downward paths exponentielle} for an illustration.

\begin{figure}[h]
\includegraphics[width=\textwidth]{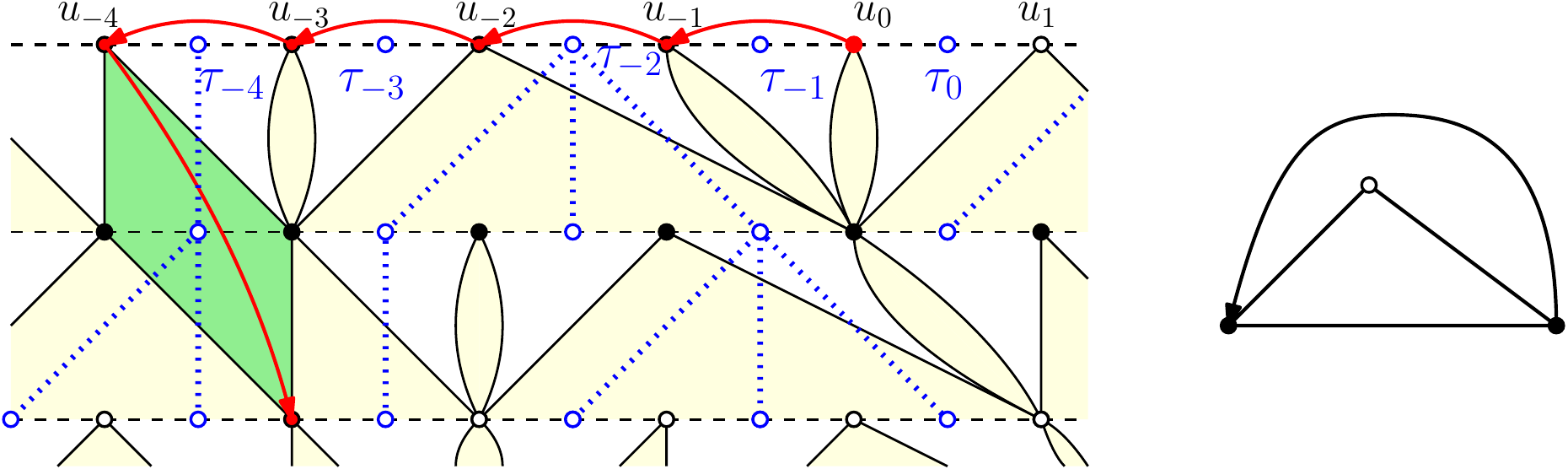}
\caption{Left, a part of the annulus $\CC(2R-2, 2R)$ with slots in pale yellow, the skeleton in dotted blue lines, and the special slot in green. We have not drawn the edges and vertices inside the other slots. The downward path (in red) visits $u_0, u_{-1},u_{-2},\ldots$ until it meets a ``good'' vertex (here $u_{-4})$, whose corresponding slot is filled by the truncated quadrangulation on the right side.}
\label{Fig_Downward paths exponentielle}
\end{figure}

This path can be extended by induction to a path in $\Ti{Q}$ from $\ur_0$ to $\partial_{2s} Q$ for every $0\leq s < 2R$, provided we can find good vertices at height $2k$ for all $s<k\leq R$. If not, the downward path is not defined, but we still define its length to be $+\infty$.

We can extend this definition to downward paths in $\Ti{\LL}$ where $\LL$ is the \LHPQ with truncated boundary. There will \as be good vertices at all heights, thus downward paths 
are always well defined.

The following lemma provides an upper bound on the length of downward paths in annuli of the \UIPQ
(recall that these annuli are quadrangulations of the cylinder). Roughly speaking, this upper bound shows that the graph distance 
(in $\Ti{Q_\infty}$) between a vertex of $\partial_{2s} Q_\infty$ and the cycle $\partial_{2r} Q_\infty$ is not larger than a constant times $s-r$, with high 
probability uniformly in $r<s<R$. 

\begin{lemma}
\label{Lemma_control_length_DP}
We can find $\va>0$ such that the following holds. 
For every $\ve, \vd>0$ and every integer $R>0$, let $A_R(\vd, \ve)$ be the event where for all $\vd \frac{R}{\ln R} \leq r < s \leq R$ such that $s-r \geq \ve r$, for every $v \in \partial_{2s} Q_\infty$, the downward path from $v$ to $\partial_{2r} Q_\infty$ is well defined and
 has length smaller than $\va(s-r)$.  Then
\begin{equation*}
\P(A_R(\vd, \ve)) \ulim R \infty 1 .
\end{equation*}
\end{lemma}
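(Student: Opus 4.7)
The plan is to show that, on each layer between $r+1$ and $s$, the downward path spends only a constant expected number of edges walking counterclockwise along $\partial_{2k}$ before finding a good vertex and descending one step. Set $\pi = \vt(1)\,\vG_2(S^*) \in (0,1)$, where $S^*$ is the truncated quadrangulation depicted on the right of Figure~\ref{Fig_Downward paths exponentielle}; this is the probability, in the i.i.d.\ BGW model, that any given edge of $\partial_{2k}$ is good, and it is a fixed positive constant independent of $k$ and $R$. For a fixed vertex $v\in \partial_{2s}Q_\infty$, denote by $G_k$ the number of counterclockwise edges of $\partial_{2k}$ traversed before the downward path reaches a good vertex at layer $k$; the total length of the downward path from $v$ to $\partial_{2r}Q_\infty$ then equals $\sum_{k=r+1}^{s}(G_k+1)$.

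First I will work in the i.i.d.\ Bienaym\'e--Galton--Watson setting: a bi-infinite forest of i.i.d.\ BGW trees with offspring distribution $\vt$ whose slots are filled with independent Boltzmann truncated quadrangulations. Since goodness depends only on the number of children of the edge and on the slot filling at that edge, the goodness indicators at different edges are mutually independent in this model, and the $G_k$ are i.i.d.\ geometric of parameter $\pi$. A standard Chernoff bound then gives, for $\alpha > 0$ large enough,
\begin{equation*}
\P\!\left(\sum_{k=r+1}^{s}(G_k+1) > \alpha(s-r)\right)\leq \exp\!\left(-c(s-r)\right),
\end{equation*}
with some $c = c(\alpha,\pi)>0$. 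To transfer this to the UIPQ, I will work on the event $\mathcal{B}_a$ that $H_{2r}$ and $H_{2s}$ both lie in the range $[\floor{a(2r)^2}+1, \floor{(2r)^2/a}]$, which by Proposition~\ref{Prop_law_perimeter_UIPQ} holds with probability at least $1-\eta$ for any prescribed $\eta>0$ provided $a = a(\eta)$ is chosen small, uniformly in $r$. On $\mathcal{B}_a$, Proposition~\ref{Prop_P5} applied to the skeleton $\FF_{2r,2s}$ gives an absolute continuity relation with a truncated i.i.d.\ BGW forest, with density bounded by $C_a$; since conditionally on the skeleton the slots have the same i.i.d.\ Boltzmann law in both models, the comparison extends to any event depending on both skeleton and slots. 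Hence, for any fixed $v\in\partial_{2s}Q_\infty$,
\begin{equation*}
\P\!\left(\text{length of downward path from }v\text{ to }\partial_{2r}Q_\infty > \alpha(s-r),\ \mathcal{B}_a\right)\leq C_a\,e^{-c(s-r)}.
\end{equation*}

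To conclude I will apply two union bounds. For fixed $(r,s)$, the number of candidate starting points on $\partial_{2s}Q_\infty$ is $H_{2s}\leq 4R^2/a$ on $\mathcal{B}_a$, so the probability that some downward path from a vertex of $\partial_{2s}Q_\infty$ exceeds $\alpha(s-r)$ is at most $(4R^2/a)\,C_a\,e^{-c(s-r)}$. Summing over the $O(R^2)$ admissible pairs $(r,s)$, and using $s-r\geq \ve r\geq \ve\vd R/\ln R$, yields
\begin{equation*}
\P\!\left(A_R(\vd,\ve)^c\right)\leq \eta + C'_a\, R^4\exp\!\left(-c\ve\vd\,\frac{R}{\ln R}\right),
\end{equation*}
which tends to $\eta$ as $R\to\infty$, and then to $0$ as $\eta\to 0$; this gives the lemma with $\va = \alpha$.

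The main obstacle is the independence step: in the UIPQ the goodness indicators along $\partial_{2k}$ are only approximately independent because of the global conditioning on the perimeters, which is precisely why the argument is routed through the density bound of Proposition~\ref{Prop_P5} rather than being carried out directly on the UIPQ. A secondary concern, that the counterclockwise walk at some layer $k$ might wrap around $\partial_{2k}$ without meeting a good vertex, is harmlessly absorbed in the Chernoff bound since on $\mathcal{B}_a$ the perimeter $H_{2k}$ is of order $r^2$, far larger than any plausible bound $\alpha(s-r)\leq \alpha R$ on the path length when $r\geq \vd R/\ln R$.
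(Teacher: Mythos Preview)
Your overall strategy --- model each layer's horizontal walk by an i.i.d.\ geometric variable, apply a Chernoff bound, then transfer from an i.i.d.\ forest to the UIPQ --- matches the paper's. The gap is in the transfer step.

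You invoke Proposition~\ref{Prop_P5} on the annulus $\CC(2r,2s)$ under the event $\mathcal{B}_a=\{H_{2r},H_{2s}\in[\floor{a(2r)^2}+1,\floor{(2r)^2/a}]\}$, asserting that $\P(\mathcal{B}_a)\geq 1-\eta$ for $a=a(\eta)$ chosen once and for all, ``uniformly in $r$''. This is false unless $s/r$ is bounded. For the pairs $(r,s)$ in your range one has $s/r$ as large as $\vd^{-1}\ln R$, and then $H_{2s}$ is typically of order $s^2$, which for fixed $a$ exceeds $(2r)^2/a$ with probability tending to $1$, not to $0$. So $\P(\mathcal{B}_a)$ is \emph{not} close to $1$ for such pairs, and the conclusion $\P(A_R(\vd,\ve)^c)\leq \eta + C'_a R^4 e^{-c\ve\vd R/\ln R}$ does not follow. (There is a second, structural issue: even granting $\mathcal{B}_a$, Proposition~\ref{Prop_P5} compares with a \emph{finite} cyclic forest of $N_r^{(a)}$ trees, not a bi-infinite one, so the $G_k$ are not genuinely geometric there; the wrap-around possibility must be quantified, not just asserted to be ``harmlessly absorbed''.)

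The paper handles exactly these two points differently. It works once with the full annulus $\CC(2\ceil{\vd R/\ln R},2R)$ and a \emph{finite} i.i.d.\ forest of $p$ trees with $R<p\leq R^3$, introduces the stopping time $\zeta=\min\{i:Y_i\notin(R,R^3]\}$, and shows via the filtration $\FF_i$ that the increments $L_u(i+1)-L_u(i)$ agree with i.i.d.\ geometric variables except on events of probability $\leq a^{Y_i}\leq a^R$; this absorbs the wrap-around. For the transfer it uses the \emph{exact} density of Proposition~\ref{Prop_law_annulus_UIPQ_cond_top}, computing $\vp_{2\ceil{\vd R/\ln R}}(p')/\vp_{2R}(p)$ directly and bounding it by a quantity of order $(\ln R)^3$, which is polylogarithmic and hence beaten by the stretched-exponential $e^{-A\ve\vd R/\ln R}$. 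Replacing your appeal to Proposition~\ref{Prop_P5} by this route (or, equivalently, allowing $a$ to shrink like $(\ln R)^{-2}$ and tracking the dependence of $C_a$ on $a$) would repair the argument.
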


\begin{proof}
We fix an integer $R>0$. 
Let us first consider a forest $(\wt \tau_1,\wt \tau_2, ... , \wt \tau_p)$ of $p$ independent \BGW trees with offspring distribution $\vt$, where $R<p\leq R^3$. 
We truncate this forest at generation $2R$ (we remove all vertices whose height is greater than $2R$).
We can view this forest as the skeleton of a quadrangulation $\wt Q$ of the cylinder whose height 
is the maximal generation in the truncated forest (in the skeleton decomposition of $\wt Q$, slots are filled in independently according to the same distribution as in the \UIPQ).  We again say that a vertex $u$ of the skeleton is good if it has a unique child and the slot corresponding to $u$ is filled as shown in the right side of \figref{Fig_Downward paths exponentielle}. 
We let $a>0$ be the product of $\vt(1)$ with the probability that a slot with lower boundary of size $1$ is filled in as just explained. Informally, $a$ represents the probability that a vertex is good.

Let $T$ stand for the smallest $i \geq 0$ such that generation $2i$ has no good vertex. For every $i\geq 0$, let $Y_i$ be the number of vertices at generation $2i$, and 
\begin{equation*}
\zeta = \min \{i : Y_i \leq R \text{ or } Y_i > R^3 \} .
\end{equation*}
Note that, on the event $\{\zeta>R\}$, the height of the truncated forest is $2R$ and $\wt Q$ is a quadrangulation of the cylinder of height $2R$.
 For every $i \geq 0$, let $\FF_i$ be the $\vs$-field generated by the trees truncated at generation $2i$ and the labels good or bad up to generation $2i-2$ (the labels at generation $2i$ are not $\FF_i$-measurable).

Fix a vertex $u$ at generation $0$ in the forest. For every $1\leq i\leq T$, we can construct the downward path from $u$, or rather from the vertex $v$ of $\wt Q$ which is the left end of the edge associated with $u$, to generation $2i$ of the forest (more precisely to the cycle whose edges form 
generation $2i$ of the forest), and we define $L_u(i)$ as the length of this downward path. By convention, $L_u(0)=0$.

Let us observe the following key fact: we can construct a sequence $G_0, G_1, ...$ of \iid geometric random variables with parameter $a$ such that, for every $0\leq i\leq R-1$,
\begin{equation}
\label{Eq_geom_bound_step_DP}
\ind{T \geq i} \P\l( L_u(i+1)-L_u(i) \neq G_i+2 \ | \ \FF_i \r) \leq \ind{T \geq i} a^{Y_i} 
\end{equation}
(note that $\{ T\geq i \} \in \FF_i$). 
This bound holds because $\FF_i$ gives no information on whether vertices at generation $2i$ are good or not: if these vertices are enumerated in clockwise order starting from a random vertex measurable \wrt $\FF_i$ the index of the first good one will follow a geometric distribution ``truncated'' at $Y_i$.

Let us now bound, for $1 \leq k\leq R$,
\begin{align*}
&\P\l( T \geq k , \zeta \geq k \ , \ L_u(k) \neq \sum_{i=0}^{k-1} (G_i+2) \r) \\
\leq & \ \P\l( T \geq k , \zeta \geq k \ , \ L_u(k-1) \neq \sum_{i=0}^{k-2} (G_i+2) \r) + \P\l( T \geq k , \zeta \geq k \ , \ L_u(k)-L_u(k-1) \neq G_{k-1}+2 \r) \\
\leq & \ \P\l( T \geq k-1 , \zeta \geq k-1 \ , \ L_u(k-1) \neq \sum_{i=0}^{k-2} (G_i+2) \r) + \E\l[ \ind{ \zeta \geq k } a^{Y_{k-1}} \r] ,
\end{align*}
using \eqref{Eq_geom_bound_step_DP} and the property that $\{ \zeta \geq k \}$ is $\FF_{k-1}$-measurable. By induction, we get
\begin{equation}
\label{Eq_borne DP 1}
\P\l( T \geq k , \zeta \geq k \ , \ L_u(k) \neq \sum_{i=0}^{k-1} (G_i+2) \r) \leq \sum_{i=0}^{k-1} \E\l[ \ind{ \zeta > i } a^{Y_{i}} \r] \leq ka^R .
\end{equation}
We can similarly bound
\begin{align*}
\P\l( \zeta \geq k, T<k \r) &= \sum_{i=0}^{k-1} \P(T=i, \zeta\geq k) \\
 &\leq \sum_{i=0}^{k-1} \P(T=i, \zeta> i)\\
 &= \sum_{i=0}^{k-1} \E\l[ \ind{ T>i-1 } \ind{ \zeta > i }a^{Y_i} \r]
\end{align*}
where the last equality is obtained by conditioning with respect to $\FF_i$ (on the event $\{T>i-1\}$, we have $\P(T=i \ | \ \FF_i) = a^{Y_i}$). It follows that
\begin{equation}
\label{Eq_borne DP 2}
\P\l( \zeta \geq k, T<k \r) \leq ka^R .
\end{equation}
On the other hand, by elementary large deviations estimates, there exist $\va, A>0$ such that for every $k\geq 0$,
\begin{equation}
\label{Eq_borne DP 3}
\P \l( \sum_{i=0}^{k-1} (G_i+2) > \va k\r) \leq e^{-Ak} .
\end{equation}
By combining \eqref{Eq_borne DP 1} and \eqref{Eq_borne DP 3}, we arrive at
$$\P\l( T \geq k , \zeta \geq k \ ,\ L_u(k)>\alpha k\r) \leq ka^R + e^{-Ak}.$$
We apply this to $k=R-r$ for all values of $r$ such that $r\geq \vd \frac{R}{\ln R}$
and $R-r>\ve r$, to get
\begin{align*}
&\P\l( T \geq R -\ceil{\vd\frac{R}{\ln R}}, \zeta \geq R-\ceil{\vd\frac{R}{\ln R}}\ ,\ L_u(R-r)>\alpha (R-r)\hbox{ for some }
r\hbox{ s.t. } \vd \frac{R}{\ln R}\leq r<\frac{R}{1+\ve}\r)\\
&\quad \leq R^2a^R + R\,e^{-A\vd \frac{R}{\ln R}}.
\end{align*}
We can then consider the union over all vertices $u$ at generation $0$ of the events appearing in the last display.
Since generation $0$ has at most $R^3$ vertices, the probability of this union is trivially bounded by
$R^3(R^2a^R + R\,e^{-A\vd R/\ln R})$.

Fix an integer $s$ such that $ \frac{R}{\ln R} \leq r < s \leq R$.
For every vertex $u$ at generation $2(R-s)$ in the forest and every $R-s\leq k\leq R$, we use the same notation $L_u(k)$ for the length of the downward path in $\wt Q$ from $u$
(or rather from the vertex $v$ of $\wt Q$ which is the left end of the edge associated with $u$) to $\partial_{2R-2k} \wt Q$ (corresponding to generation $2k$ in the forest coding $\wt Q$), provided this downward path exists. The same argument we used in the case $s=R$ shows that the probability of the event where $T\wedge \zeta \geq R -\ceil{\vd\frac{R}{\ln R}}$
 and there exists a vertex $u$ at generation $2(R-s)$ such that $L_u(s-r)>\alpha (s-r)$
for some $r$ such that $\vd \frac{R}{\ln R} \leq r < s $ and $s-r \geq \ve r$ is bounded above by 
$$R^3(R^2a^R + R\,e^{-A\vd\ve R/\ln R}).$$

We then sum this estimate over possible values of $s$. To simplify notation, set 
$$\RR(R,\vd,\ve) = \l\{ (r,s)\in\N\times\N \ : \ \vd \frac{R}{\ln R} \leq r < s \leq R , \  s-r \geq \ve r \r\},$$
and also write $\mathcal{D}$ for the event where $ L_u(R-r) < \va(s-r)$ for every $(r,s) \in \RR(R,\vd,\ve)$ and every vertex $u$ at generation 
$2(R-s)$ in the forest. Then using also \eqref{Eq_borne DP 2} we get
\begin{equation}
\label{Eq_proba_de_BB_cas_iid}
\P\l( \l\{ \zeta \geq R-\vd \frac{R}{\ln R} \r\}  \cap \mathcal{D}^c \r) \leq Ra^R + R^6 a^R +  R^5 e^{-A \ve \vd R/\ln R}.
\end{equation}

Consider now the \UIPQ $Q_\infty$ and fix $\eta>0$. Using the tail estimates in \propref{Prop_law_perimeter_UIPQ}, we can easily
verify  that for $R$ large enough, the event
\begin{equation*}
\EE \eqdef \bigcap_{\ceil{\vd \frac{R}{\ln R}} \leq r \leq R}  \l\{ R < H_{2r} \leq R^3 \r\} .
\end{equation*}
has probability at least $1-\eta/2$. Let $A_R(\vd,\ve)$ be the event considered in the statement of the lemma: $A_R(\vd,\ve)$ is
the event where, for every $(r,s) \in \RR(R,\vd,\ve)$, for every vertex $v$ of $\partial_{2s} Q_\infty$ the downward path
from $v$ to $\partial_{2r}Q_\infty$ (exists and) has length smaller than $\va(s-r)$. 
We observe that the event
\begin{equation*}
\EE \cap A_R(\vd,\ve)^c
\end{equation*}
is a function of the skeleton of the annulus $\CC(2\ceil{\vd \frac{R}{\ln R}},2R)$ (and the quadrangulations filling in the slots). The point is now that 
the event in the left-hand side of \eqref{Eq_proba_de_BB_cas_iid} is the same function of the forest $(\wt \tau_1,\wt \tau_2, ... , \wt \tau_p)$ of independent trees
truncated at height $2R-2\ceil{\vd\frac{R}{\ln R}}$ (and 
of the quadrangulations used to construct $\wt Q$ from its skeleton). This means that we can use the relations between the law of the skeleton of the annulus and
that of a forest of independent trees to compare $\P(\EE \cap A_R(\vd,\ve)^c)$ with the probability in \eqref{Eq_proba_de_BB_cas_iid}. More precisely,
\propref{Prop_law_annulus_UIPQ_cond_top} gives for every $R < p' \leq R^3$,
\begin{align}
\label{Eq_Proba_DP_bien_defs_et_controles_sans_perim}
&\P\Big( \EE \cap A_R(\vd,\ve)^c \cap \l\{H_{2\ceil{\vd \frac{R}{\ln R}}} = p' \r\} \Big| \ H_{2R} = p \Big)\nonumber \\
&\quad= \frac{\vp_{2\ceil{\vd\frac{R}{\ln R}}}(p')}{\vp_{2R}(p)} \P\l( \l\{ \zeta > R-\vd \frac{R}{\ln R} \r\} \cap \mathcal{D}^c \cap \l\{Y_{R-\ceil{\vd \frac{R}{\ln R}}} = p' \r\} \r) ,
\end{align}
where we recall that $Y_i$ is the number of vertices at generation $2i$ in the forest $(\wt \tau_1,\wt \tau_2, ... , \wt \tau_p)$.
Using the explicit formula \eqref{Eq_phirp}, we find a constant $C>0$ such that for every sufficiently large $R$ and $p'\leq R^3$,
\begin{align*}
\vp_{2\ceil{\vd\frac{R}{\ln R}}}(p') &\leq C R^3 \l( \vd \frac{R}{\ln R} \r)^{-3} \leq C\vd^{-3} (\ln R)^3 .
\end{align*}
On the other hand, \eqref{Eq_phirp} and \propref{Prop_law_perimeter_UIPQ} give for $p' > 0$
$$
\frac{\P(H_{2R} = p)}{\vp_{2R}(p)} = \frac{32}{3\vk_1} \frac{3+2R}{((3+2R)^2-1)^2} \vk_{p} (2\pi_{2R})^{p} \l( \frac{64}{3}{p} \frac{3+2R}{((3+2R)^2-1)^2} \pi_{2R}^{{p}-1} \r)^{-1}
= \frac{2^{p}\vk_{p}}{{p}(2\vk_1)} \pi_{2R} 
\leq C'
$$
for a suitable constant $C'$ independent of ${p}>0$. Multiplying  \eqref{Eq_Proba_DP_bien_defs_et_controles_sans_perim} by $\P(H_{2R} = {p})$ and summing over all 
choices of $R<p,{p'} \leq R^3$ (using \eqref{Eq_proba_de_BB_cas_iid}), we get
\begin{equation*}
\P\Big( \EE \cap A_R(\vd,\ve)^c \Big) \leq \l(C\vd^{-3} (\ln R)^3\r) \l(C' R^3\r) \l( Ra^R + R^6 a^R +  R^5 e^{-A \ve \vd R/\ln R} \r) ,
\end{equation*}
which is smaller than $\eta/2$ for $R$ large. We already saw that the probability of $\EE$ is larger than $1-\eta/2$ for $R$ large enough, so we get that for all $\eta>0$, for $R$ large enough $\P( A_R(\vd,\ve)) \geq 1-\eta$. This completes the proof.
\end{proof}

\subsection{Coalescence in the UIPM}

Downward paths do not coalesce as nicely as \lmgs, but we can still get an ersatz of \propref{Prop_P17 coalescence primal}. 
Let $R>0$ an integer. Choose a vertex $u_0^{(R)}$ uniformly distributed over $\partial_{2R} Q_\infty$, write $(u_j^{(R)})_{0\leq j\leq H_{2R}-1}$ for the vertices of $\partial_{2R} Q_\infty$ 
enumerated in clockwise order, and extend the definition of $u_j^{(R)}$ to all $j\in\Z$ by periodicity.

\begin{corollary}[Coalescence of downward paths]
\label{Cor_P17 coalescence Tutte}
For every $\ve>0$ and $\eta\in(0,1)$, we can find a constant $A>1$ such that, for every large enough integer $R$, the following holds with probability at least $1-\ve$: any vertex of $\partial_{2R} Q_\infty$ is connected to one of the vertices $u_{\floor{k R^2 / A}}^{(R)}$, $0 \leq k \leq \floor{AH_{2R}/R^2}$, by a path in $\Ti{\CC(2R-2 \ceil{\eta R}, 2R)}$ of length at most $\eta R$.
\end{corollary}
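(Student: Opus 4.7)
My plan is to adapt the proof of \propref{Prop_P17 coalescence primal}, replacing left-most geodesics by downward paths and using \lemref{Lemma_control_length_DP} to control the downward path lengths. First I would fix $\vg = \eta/(2\va+2)$, where $\va$ comes from \lemref{Lemma_control_length_DP}. That lemma guarantees that, with probability tending to $1$ as $R \to \infty$, every downward path from a vertex of $\partial_{2R} Q_\infty$ to a cycle $\partial_{2R - 2s}$ with $1 \le s \le \ceil{\vg R}$ is well defined and has length at most $\va s \le \eta R/2$.

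The combinatorial step closely mirrors \propref{Prop_P17 coalescence primal}. The sample points $u^{(R)}_{\lfloor kR^2/A\rfloor}$ partition $\partial_{2R}$ into intervals, and I would call an interval \emph{bad} if at least two trees of the skeleton of $\CC(2R - 2\ceil{\vg R}, 2R)$ rooted in it reach height $2\ceil{\vg R}$. By \propref{Prop_P5} I can compare to an i.i.d.\ \BGW forest with offspring law $\vt$: the same bound $C(A\vg)^{-2}$ per interval used in the proof of Prop~\ref{Prop_P17 coalescence primal}, together with a union bound over the $O(A)$ intervals, shows that for $A$ large enough the probability of having any bad interval is at most $\ve/2$. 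On the complementary event, the standard observation on coalescence of left-most geodesics recalled at the end of Section~\ref{Sec_Preliminaries} yields, for every $v$ in an interval $I_k$, that the left-most geodesic from $v$ coalesces with the left-most geodesic from one of the endpoints $u$ of $I_k$ at some vertex $w$ at level $2R - 2s$, with $s \le \ceil{\vg R}$.

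The remaining task is to convert this coalescence into a path in $\Ti{\CC(2R - 2\ceil{\eta R}, 2R)}$ of length at most $\eta R$ from $v$ to $u$. I would do so by concatenating the downward path from $v$ to $\partial_{2R - 2s}$, a short horizontal detour along $\partial_{2R - 2s}$ (whose edges belong to $\Ti{\CC(\cdots)}$ because $2R - 2s$ is even), and the reverse of the downward path from $u$. The two ``vertical'' pieces have total length at most $2\va s \le \eta R$ by the first step. The main obstacle lies in bounding the length of the horizontal detour: although the coalescence region between the two left-most geodesics pinches to the vertex $w$ at level $2R - 2s$, the downward paths in $\Ti{\CC(\cdots)}$ need not end at $w$. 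I would control the distance on $\partial_{2R - 2s}$ between their endpoints and $w$ by a careful analysis of how downward paths interact with the skeleton at the boundary of the coalescence region, possibly strengthened by enforcing --- at the cost of a larger $A$ --- the existence of at least one ``good vertex'' (in the sense of Section~\ref{Section_Downward_paths}) within each sub-interval of $\partial_{2R}$, which is a standard BGW estimate transferred via \propref{Prop_P5}. Once this is established, the total path length is $O(\vg R) \le \eta R$, as required.
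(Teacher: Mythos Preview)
Your overall structure is right, and up through the use of \propref{Prop_P17 coalescence primal} to get coalescence of left-most geodesics on good intervals, your argument matches the paper's. The gap is exactly where you say it is: the ``horizontal detour'' at level $2R-2s$. Both of your downward paths from $v$ and from the endpoint $u$ move \emph{counterclockwise} along each cycle until they find a good vertex, then drop. There is no a priori reason why their hitting points on $\partial_{2R-2s}$ should be close to the coalescence vertex $w$, or close to each other; the horizontal displacement accumulated at each level is a sum of geometric variables, not controlled by the location of $w$. Your suggested fix --- forcing a good vertex in every sub-interval of $\partial_{2R}$ --- only constrains the first horizontal step at level $2R$, not the steps at lower levels, so it does not bound the detour.

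The paper sidesteps this entirely by introducing a \emph{right downward path}: starting with a clockwise step along the cycle, then moving clockwise until a good vertex is found, then dropping. The key geometric observation is that a (left) downward path cannot cross a left-most geodesic ``from left to right'', and symmetrically a right downward path cannot cross one ``from right to left''. Hence, on the event that the left-most geodesics from $v$ and $w$ coalesce before reaching $\partial_{2s}$, the left downward path from $w$ and the right downward path from $v$ are both trapped in the region $\RR_1$ bounded by the two geodesics and the arc of $\partial_{2R}$, and by planarity they must intersect before exiting. Concatenating them at that intersection gives a path in $\Ti{\CC(2s,2R)}$ whose length is bounded by the sum of the two downward path lengths --- no horizontal detour is needed. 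The length bound from \lemref{Lemma_control_length_DP} then applies directly.
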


\begin{proof} Consider two integers $0<s<R$. In the same way as we defined the downward path from a vertex $v$ of $\partial_{2R} Q_\infty$ to $\partial_{2s} Q_\infty$,
we can define the dual notion of the right downward path from $v$ to $\partial_{2s} Q_\infty$: If $v=u^{(R)}_j$, the first step goes from $v$ to $u^{(R)}_{j+1}$, then we
move clockwise along $\partial_{2R} Q_\infty$ until we meet a good vertex which allows us to go in one step from $\partial_{2R} Q_\infty$ to $\partial_{2(R-2)} Q_\infty$,
and we continue by induction. As  in the case of downward paths, the existence of the right downward path requires the existence of at least one good vertex
on every cycle $\partial_{2j} Q_\infty$, $s<j\leq R$.

Let $v,w$ be two distinct vertices of $\partial_{2R} Q_\infty$ and let $0<s<R$. Assume that the downward paths (and right downward paths) from
$v$ and $w$ to $\partial_{2s} Q_\infty$ are well defined, and furthermore that the \lmgs from $v$ and from $w$ coalesce (strictly) before reaching $\partial_{2s} Q_\infty$. Write 
$\mathcal{L}$ for the union of these two \lmgs up to their coalescence time, and $\CC_1$, \resp $\CC_2$, for the path from $v$ to $w$ along $\partial_{2R} Q_\infty$ in clockwise order,
\resp in counterclockwise order. Let $\RR_1$, \resp $\RR_2$, be the (closed) bounded region delimited by the closed path which is the union of $\mathcal{L}$ and $\CC_1$, \resp 
the union of $\mathcal{L}$ and $\CC_2$. Then either $\RR_1$ or $\RR_2$ does not intersect $\partial_{2s} Q_\infty$. Consider the first case for definiteness, so that
$\RR_1$ is contained in the annulus between $\partial_{2R} Q_\infty$ and $\partial_{2s} Q_\infty$. Then our
construction shows that the downward path starting from $w$ can only exit $\RR_1$ after hitting the \lmg started from $v$ (informally a downward path cannot cross a
\lmg ``from left to right'' --- for this property to hold, it is important that we started our downward paths with a ``horizontal'' step). Similarly, the right downward  path started from $v$ can only exit
$\RR_1$ after hitting the \lmg started from $w$. A planarity argument now shows that the downward path started from $w$ and the right downward  path started from $v$
intersect before exiting $\RR_1$, and therefore before hitting $\partial_{2s} Q_\infty$. Consequently, $v$ and $w$ are connected by a path in $\Ti{\CC(2R-2 \ceil{\eta R}, 2R)}$ 
whose length is bounded by the sum of the lengths of the downward path from $w$ to $\partial_{2s} Q_\infty$ and the right downward path from $v$ to $\partial_{2s} Q_\infty$

Let $\va$ be as given in \lemref{Lemma_control_length_DP}. Without loss of generality we can assume $\va \geq 1$. We apply the preceding
considerations with $s=s(R)=R-\ceil{\eta R/(3\va)}$. Using also  \lemref{Lemma_control_length_DP}, we get that, if $R$ is large enough, the following
holds with probability at least $1-\ve/2$: Whenever $v$ and $w$ are two vertices of $\partial_{2R} Q_\infty$ such that 
the \lmgs from $v$ and from $w$ coalesce before reaching $\partial_{2s} Q_\infty$, $v$ and $w$ are connected by a path of $\Ti{\CC(2s, 2R)}$
of length at most $2\alpha (R-s)\leq \eta R$.

\begin{figure}[h]
\begin{center}
\includegraphics[width=0.8\textwidth]{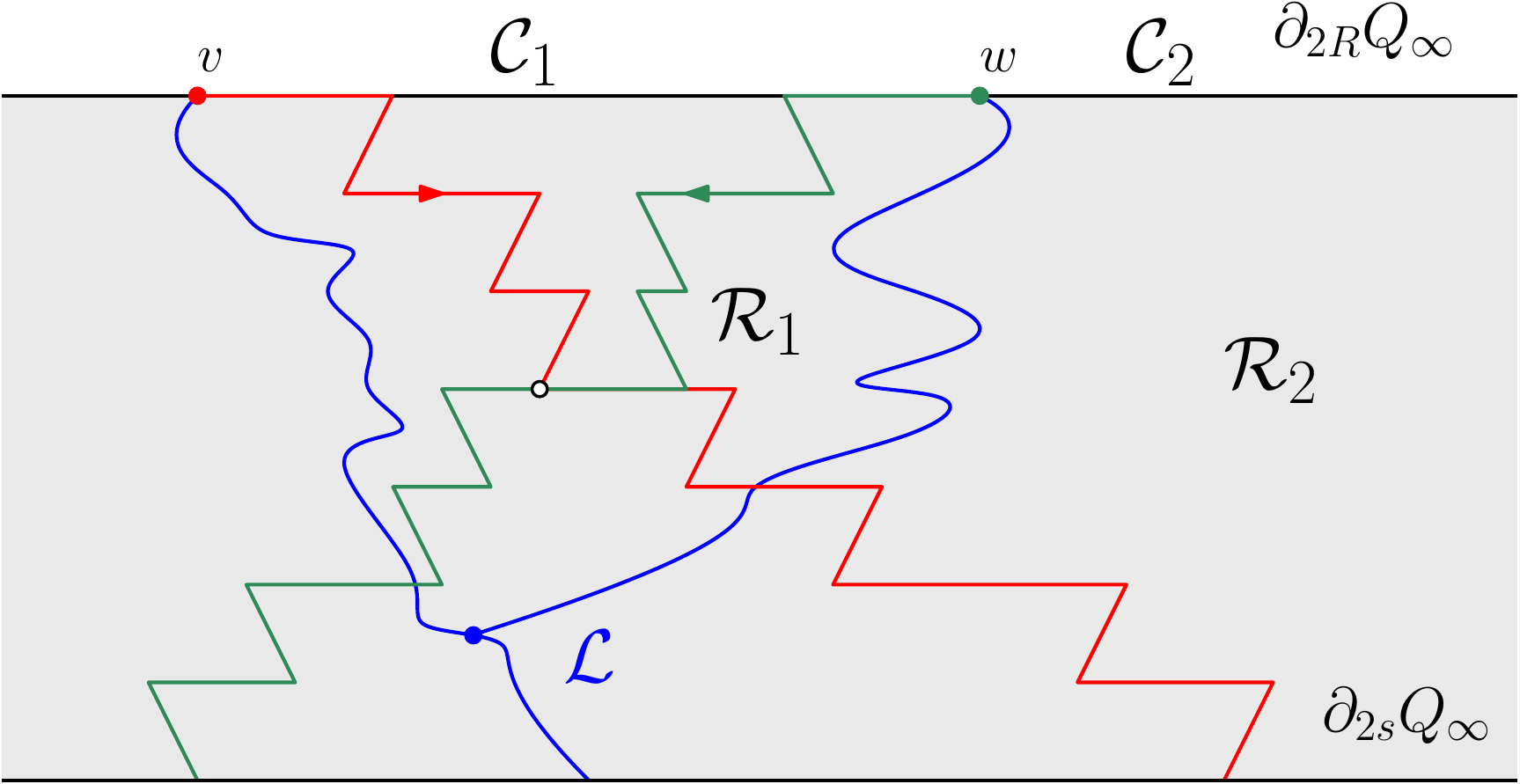}
\end{center}
\caption{In blue, the \lmgs (in $Q_\infty$) started from $v, w$ two vertices of $\partial_{2R} Q_\infty$, that coalesce before reaching $\partial_{2s} Q_\infty$. The downward path in $\Ti{Q_\infty}$ started at $w$ (in green) cannot cross the \lmg in $Q_\infty$ from $w$, and the right downward path in $\Ti{Q_\infty}$ started at $v$ (in red) cannot cross the \lmg in $Q_\infty$ from $v$: by planarity, they must cross (and thus intersect) before reaching $\partial_{2s} Q_\infty$. We get a path in $\Ti{Q_\infty}$ between $v$ and $w$ by first following the right downward path from $v$ up to the intersection (white point), then the (reverse) downward path from $w$.}
\label{Fig_right_DP}
\end{figure}

On the other hand, \propref{Prop_P17 coalescence primal} yields $A>0$ such that with probability at least $1-\ve/2$, 
any \lmgs starting from a vertex of $\partial_{2R} Q_\infty$ coalesces before reaching $\partial_{2s(R)} Q$ with one of the \lmgs started from $\ur_{\floor{kR^2/A}}$, $0\leq k \leq \floor{A H_{2R}/(2R)^2}$. 
By combining this with the preceding paragraph, we get that, with probability at least $1-\ve$, any vertex of $\partial_{2R} Q_\infty$ is connected to  one of these vertices $\ur_{\floor{kR^2/A}}$ by a path of $\Ti{\CC(2s, 2R)}$
of length at most $\eta R$.
This completes the proof.
\end{proof}

\subsection{Two technical lemmas}
\label{Sec_locality_distances}

We prove an estimate on the maximum degree of an inner face in the image of a large truncated hull of $Q_\infty$ by Tutte's correspondence. 
We then obtain a bound on the first-passage percolation distance in $\Ti{Q_\infty}$ between a vertex of $\partial_{2n} Q_\infty$ and the root vertex, for $n$ large enough.

\begin{lemma}
\label{Lemma_bound degree hulls of UIPQ}
For every integer $r\geq 1$, let $\vD^\circ\l(\Ti{ \Htr_{Q_\infty}(2r) } \r) $
denote the maximal degree of internal faces of $\Ti{ \Htr_{Q_\infty}(2r) }$.
\begin{equation*}
\P\l(\vD^\circ\l(\Ti{ \Htr_{Q_\infty}(2r) } \r) > 5\ln r \r) \ulim r \infty 0 .
\end{equation*}
\end{lemma}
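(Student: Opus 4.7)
To begin, I would identify the inner faces of $\Ti{\Htr_{Q_\infty}(2r)}$ with the \emph{black vertices} (those at odd graph distance from $\rho$) of the truncated hull: under the extended Tutte bijection, the diagonals drawn in the quadrangular faces incident to such a vertex $b$ close up into a single face of $\Ti{\Htr_{Q_\infty}(2r)}$ whose degree equals the degree of $b$ in the hull, and in particular is bounded by $\deg_{Q_\infty}(b)$ (all neighbors of an interior black vertex are automatically in $\Htr_{Q_\infty}(2r)$ since the external boundary is white). The lemma then reduces to the estimate
\begin{equation*}
\P\l(\max_{v\in V(\Htr_{Q_\infty}(2r))}\deg_{Q_\infty}(v) > 5\ln r\r)\ulim{r}{\infty}0.
\end{equation*}

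My plan for controlling this maximum is to transfer the problem to a uniform finite quadrangulation via \propref{Prop_TBP 9}. Setting $n=n(r):=\lceil(3r/\chi)^4\rceil$ so that $\chi n^{1/4}\geq 2r+1$, that proposition couples $Q_\infty$ with $Q^\bullet_n$ so that $\Hull_{Q_\infty}(\chi n^{1/4})=\Hull_{Q^\bullet_n}(\chi n^{1/4})$ with high probability. On this coupling event, any vertex $v\in V(\Htr_{Q_\infty}(2r))$ has all of its $Q_\infty$-neighbors (at graph distance at most $2r+1$ from $\rho$) inside the common hull, so $\deg_{Q_\infty}(v)=\deg_{Q^\bullet_n}(v)$. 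Since $\dgr^{Q^\bullet_n}(\rho_n,\partial_n)$ is of order $n^{1/4}\gg 2r$ with high probability, the distinguished vertex $\partial_n$ lies outside the hull; using that $n=\Theta(r^4)$, the bound reduces to showing $\max_{v\neq\partial_n}\deg_{Q^\bullet_n}(v)\leq \tfrac{5}{4}\ln n+O(1)$ with high probability.

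The last estimate is a classical property of uniform random quadrangulations, and is where I would invest most of the work. Via the Cori-Vauquelin-Schaeffer bijection recalled in Section~\ref{Sec_Distance between two uniformly sampled points in finite quadrangulations}, non-$\partial_n$ vertices of $Q^\bullet_n$ correspond to tree-vertices of the uniform labeled plane tree $(T_n,Z^n)$, and the degree in $Q^\bullet_n$ of such a $v$ decomposes as its degree in $T_n$ (a geometric-like variable with exponential tails) plus a contribution coming from corners at label $Z^n_v+1$ whose first subsequent corner of label $Z^n_v$ along the contour is incident to $v$. A careful analysis of the label increments along the contour of $T_n$ should yield an exponential tail estimate $\P(\deg_{Q^\bullet_n}(v)\geq k)\leq Ce^{-ck}$ uniform in $v\neq\partial_n$ and in $n$, with a constant $c$ that can be made as large as desired; a union bound over the $n+1$ tree-vertices would then close the argument.

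The hard part is obtaining the exponential tail above with a sufficiently large constant $c$ (specifically $c>4/5$, so that $(n+1)\cdot e^{-c\cdot(5/4)\ln n}\ulim{n}{\infty}0$). A perhaps cleaner alternative, more in line with the rest of the paper, would be to argue directly from the skeleton decomposition of $\Htr_{Q_\infty}(2r)$: the degree of a black vertex $b\in\partial_{2k+1}$ splits into a geometric contribution of parameter $\vt(0)$ (counting the consecutive zero-offspring edges of $\partial_{2k+2}$ whose downward triangle has apex at $b$) plus contributions from a bounded number of Boltzmann truncated quadrangulations filling slots adjacent to $b$; the latter admit exponential degree tails via the singularity analysis underlying \eqref{Eq_asymptotics_vkp}, and combined with the crude polynomial bound on $\max_k H_{2k}$ from \propref{Prop_law_perimeter_UIPQ}, a union bound over all black vertices in the hull gives the desired threshold $5\ln r$.
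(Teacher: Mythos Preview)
Your reduction via \propref{Prop_TBP 9} to a degree bound in a uniform finite quadrangulation is exactly the paper's approach. The paper, however, dispatches what you call ``the hard part'' in one line: since inner faces of $M_n=\Ti{Q_n}$ correspond precisely to black vertices of $Q_n$ with matching degrees, the quantity $\max_{b\text{ black}}\deg_{Q_n}(b)$ equals the maximum face degree $\vD(M_n)$ of a uniform planar map with $n$ edges, and Gao's result \cite[Theorem~3]{gao2000distribution} (combined with self-duality of $M_n$) already gives $\P(\vD(M_n)>\ln n)\to0$. With $n=\floor{Ar^4}$ this yields $\vD(M_{\floor{Ar^4}})\leq \ln(Ar^4)\leq 5\ln r$ for large $r$, and the proof is complete.

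Your proposed re-derivation via the \CVS or the skeleton decomposition is therefore unnecessary. It is also not quite right as stated: the claim that the exponential tail rate ``$c$ can be made as large as desired'' is false --- the degree of a typical vertex of $Q_n$ (equivalently, of a typical face of $M_n$) has a fixed limiting law with a fixed exponential decay rate, and you would actually need to check that this specific rate exceeds $4/5$ for your union bound to close. Gao's theorem settles this (indeed it gives the sharper threshold $\ln n$ rather than $\tfrac{5}{4}\ln n$), so the cleanest fix is simply to cite it.
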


\begin{proof}
For any map $M$, let $\vD(M)$ be the maximal degree of a face of $M$. 
Let $M_n$ be a uniform rooted map with $n$ edges. It follows from \cite[Theorem 3]{gao2000distribution} that
\begin{equation}
\label{Eq_bound on degree finite map}
\P(\vD(M_n) > \ln n) \ulim n \infty 0 .
\end{equation}
This result is actually stated for the maximal degree of a vertex of $M_n$ in \cite{gao2000distribution}, but  \eqref{Eq_bound on degree finite map} then follows by self-duality of $M_n$, see \cite[Lemma 3.2]{bettinelli2014scaling}.

By \propref{Prop_TBP 9} we can fix an integer $A>0$ large enough such that, for every $r$ large enough, we can couple $Q^\bullet_{\floor{Ar^4}}$ and $Q_\infty$ so that $\Hull_{Q^\bullet_{\floor{Ar^4}}}(2r) = \Hull_{Q_{\infty}}(2r)$ except on a set of probability at most $\ve/2$. Note that this equality of hulls also implies $\Htr_{Q^\bullet_{\floor{Ar^4}}}(2r) = \Htr_{Q_{\infty}}(2r)$ (the truncated hull is determined by the ``standard'' hull). Thus on the latter event,
\begin{equation*}
\vD^\circ\l( \Ti{\Htr_{Q_{\infty}}(2r)} \r) = \vD^\circ\l( \Ti{\Htr_{Q^\bullet_{\floor{Ar^4}}}(2r)} \r) \leq \vD\l( \Ti{Q_{\floor{Ar^4}}} \r) .
\end{equation*}
To get the last inequality, we observe that the degree of an internal face of $\TT(\Htr_{Q^\bullet_{\floor{Ar^4}}}(2r))$ is exactly the degree of the black vertex of $\Htr_{Q^\bullet_{\floor{Ar^4}}}(2r)$ that is contained in this face, and this vertex has the same degree in $\Htr_{Q^\bullet_{\floor{Ar^4}}}(2r)$ and in $Q_{\floor{Ar^4}}$. See \figref{Fig_Tutte_definition_hulls}.

We know that $\Ti{Q_{\floor{Ar^4}}}$ is distributed as $M_{\floor{Ar^4}}$. 
To complete the proof, we note that \eqref{Eq_bound on degree finite map} ensures that for $r$ large enough, $\vD\l( M_{\floor{Ar^4}} \r) \leq 5 \ln r$ with probability at least $1-\ve/2$.
\end{proof}

\begin{lemma}
\label{Lemma_for PT20 upper bound distance root small hull}
For every $\ve>0$, there exists $A>0$ such that for all $n$ large enough,
\begin{equation*}
\P\l( \max_{v \in \partial_{2n} Q_\infty} \dfpp^{\Ti{Q_\infty}}(\rho,v) \leq 2An \r) \geq 1- \ve.
\end{equation*}
\end{lemma}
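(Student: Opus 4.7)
The plan is to bound the fpp-distance from $\rho$ to any $v\in\partial_{2n}Q_\infty$ by concatenating two paths in $\Ti{Q_\infty}$: a downward path from $v$ down to some vertex of $\partial_{2m}Q_\infty$ (where $m=\floor{n/\ln n}$), followed by a ``short'' path from that vertex to $\rho$ inside $\Ti{\Htr_{Q_\infty}(2m)}$. The first piece is controlled by \lemref{Lemma_control_length_DP} and the second by \lemref{Lemma_bound degree hulls of UIPQ}. The fpp-weight of any path is bounded by $\vk$ times its number of edges.

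For the first piece, I apply \lemref{Lemma_control_length_DP} with $R=n$ and fixed parameters (say $\vd=\ve'=1/2$). For $n$ large, the pair $(r,s)=(m,n)$ satisfies both $m\geq \vd n/\ln n$ and $n-m\geq \ve' m$, so on the event $A_n(\vd,\ve')$ --- of probability at least $1-\ve/2$ for large $n$ --- every $v\in\partial_{2n}Q_\infty$ admits a downward path in $\Ti{\CC(2m,2n)}$ reaching some vertex of $\partial_{2m}Q_\infty$ with length less than $\va(n-m)<\va n$, contributing at most $\vk\va n$ to the fpp-weight.

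For the second piece, I fix any $w\in\partial_{2m}Q_\infty$. Since $\dgr^{Q_\infty}(\rho,w)=2m$, there is a geodesic $\rho=v_0,b_0,v_1,b_1,\ldots,b_{m-1},v_m=w$ in $Q_\infty$ alternating white and black vertices, and this geodesic stays inside the region bounded by $\partial_{2m}Q_\infty$. Each $b_i$ is thus an interior (black) vertex of $\Htr_{Q_\infty}(2m)$, and the internal face $f_i$ of $\Ti{\Htr_{Q_\infty}(2m)}$ associated with $b_i$ has degree $\deg_{Q_\infty}(b_i)$. Since $v_i$ and $v_{i+1}$ are both corners of $f_i$, they are joined by a path of length at most $\deg(b_i)$ along $\partial f_i$ in $\Ti{\Htr_{Q_\infty}(2m)}$. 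By \lemref{Lemma_bound degree hulls of UIPQ}, except on an event of probability at most $\ve/2$ (for $n$ large), every internal face of $\Ti{\Htr_{Q_\infty}(2m)}$ has degree at most $5\ln m$. Concatenating these arc-paths, I obtain a path in $\Ti{Q_\infty}$ from $\rho$ to $w$ of length at most $5m\ln m\leq 5n$, with fpp-weight at most $5\vk n$.

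Combining both steps, with probability at least $1-\ve$, every $v\in\partial_{2n}Q_\infty$ satisfies $\dfpp^{\Ti{Q_\infty}}(\rho,v)\leq \vk(\va+5)n$, so $A=\vk(\va+5)/2$ works. The main obstacle is the calibration of the intermediate scale $m$: the face-degree bound $5\ln m$ forces $m\ln m=O(n)$, while \lemref{Lemma_control_length_DP} only applies when $m\gtrsim n/\ln n$. The choice $m=\floor{n/\ln n}$ barely satisfies both constraints, and it is precisely here that \lemref{Lemma_bound degree hulls of UIPQ} --- which in turn relies on Gao-Wormald's bound on the maximal face degree of uniform planar maps transported to the \UIPM via \propref{Prop_TBP 9} --- is essential.
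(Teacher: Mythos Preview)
Your proof is correct and follows essentially the same approach as the paper's: split the journey at an intermediate scale $m\sim n/\ln n$, use \lemref{Lemma_control_length_DP} for the downward path from $\partial_{2n}Q_\infty$ to $\partial_{2m}Q_\infty$, and use \lemref{Lemma_bound degree hulls of UIPQ} to control the remaining piece from $\partial_{2m}Q_\infty$ to $\rho$. The only cosmetic differences are that the paper writes the second step as the general inequality $\dgr^{\Ti{H_n}}(x,y)\leq \vD^\circ(\Ti{H_n})\,\dgr^{H_n}(x,y)$ (rather than explicitly walking along face boundaries as you do) and keeps a free parameter $\eta$ in the intermediate scale $\floor{\eta n/\ln n}$ (where you take $\eta=1$); neither affects the argument.
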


\begin{proof}
Since weights are bounded, it is enough to prove this statement with $\dfpp^{\Ti{Q_\infty}}$ replaced by $\dgr^{\Ti{Q_\infty}}$.
Let $\eta \in (0,1)$. 
To simplify notation, in the remaining part of the proof, we write $H_n = \Htr_{Q_\infty} \l(2 \floor{\eta \frac{n}{\ln n}} \r)$ 
so that $\partial H_n = \partial_{2 \floor{\eta \frac{n}{\ln n}}} Q_\infty$.

By \lemref{Lemma_control_length_DP}, if $n$ is large enough, we have with probability at least $1-\ve/2$
\begin{equation*}
\dgr^{\Ti{Q_\infty}} \l(v, \partial H_n \r) \leq \va n,
\end{equation*}
for every $v \in \partial_{2n} Q_\infty$.
On the other hand, by \lemref{Lemma_bound degree hulls of UIPQ} the bound $\vD^\circ\l( \Ti{H_n} \r) \leq 5 \ln n$ holds with probability at least $1-\ve/2$ when is large. 
On this event, the simple bound 
$$\dgr^{\Ti{H_n}}(x,y) \leq \vD^\circ\l( \Ti{H_n} \r) \dgr^{H_n}(x,y),$$
valid for all $x,y \in V\l(\Ti{H_n} \r)$, ensures that for all $v \in \partial H_n$, $\dgr^{\Ti{Q_\infty}}(\rho, v) \leq 10\eta n$.
The statement of the lemma follows by combining these observations. \end{proof}

\subsection{Continuity properties of the Tutte correspondence}

Let us use the notation $M_n=\Ti{Q_n}$ for the image of $Q_n$ under Tutte's bijection.
Note that $M_n$ is uniformly distributed over (rooted) planar maps with $n$
edges. For every $r>0$, we write $\mathcal{B}_{M_n}(r)$ for the (closed) metric ball 
of radius $r$ centered at the root vertex $\rho_n$ in $V(M_n)$. We may view this ball (\resp its complement) 
as a graph by keeping only those edges incident to a face of the ball (\resp of its complement).

\begin{proposition}
\label{Cor_L28 locality}
For every $\ve, \eta>0$, there exists $\vd>0$ s.t. for all $n$ large enough,
\begin{equation*}
\P\l( \sup_{x \in V(M_n) , \ \dgr^{Q_n}(\rho_n,x) \leq \vd n^{1/4}} \dgr^{M_n}(\rho_n,x) \leq \ve n^{1/4} \r) \geq 1-\eta .
\end{equation*}
\end{proposition}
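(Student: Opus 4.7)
The plan is to couple $Q_n$ with the \UIPQ via Proposition \ref{Prop_TBP 9}, reduce the claim to a bound on $\dgr^{\TT(Q_\infty)}$-distances, and then invoke Lemma \ref{Lemma_for PT20 upper bound distance root small hull} together with an auxiliary routing argument to handle interior vertices. First, given $\ve,\eta>0$, Proposition \ref{Prop_TBP 9} supplies $\chi>0$ and a coupling of $Q_n^\bullet$ with $Q_\infty$ such that, with probability at least $1-\eta/2$, $\Hull_{Q_n^\bullet}(\chi n^{1/4})=\Hull_{Q_\infty}(\chi n^{1/4})$. On this event the truncated hulls coincide, and so do their images under \Tuttebij, giving a common sub-map of $M_n$ and $\TT(Q_\infty)$. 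Consequently, for any $\vd<\chi$ and any $x$ with $\dgr^{Q_n}(\rho_n,x)\le\vd n^{1/4}$,
$$\dgr^{M_n}(\rho_n,x)\,\le\,\dgr^{\TT(\Htr_{Q_\infty}(\chi n^{1/4}))}(\rho,x),$$
so it suffices to prove that, for every $\ve,\eta'>0$, there exists $\vd>0$ such that for $r=n^{1/4}$ large,
$$\P\l(\sup_{x\in V(\TT(Q_\infty)):\,\dgr^{Q_\infty}(\rho,x)\le\vd r}\dgr^{\TT(Q_\infty)}(\rho,x)\le\ve r\r)\,\ge\,1-\eta'.$$

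The next step is to apply Lemma \ref{Lemma_for PT20 upper bound distance root small hull} at scale $\lceil\vd r/2\rceil$: for a suitable constant $A$, with high probability $\max_{v\in\partial_{\vd r}Q_\infty}\dgr^{\TT(Q_\infty)}(\rho,v)\le A\vd r$, and the path witnessing this bound remains inside $\TT(\Htr_{Q_\infty}(\vd r))$, since it is built by concatenating a downward path (Lemma \ref{Lemma_control_length_DP}) with a short crude-degree segment (Lemma \ref{Lemma_bound degree hulls of UIPQ}). Choosing $\vd\le\ve/(2A)$ handles the case when $x$ lies on $\partial_{\vd r}Q_\infty$. It remains to treat interior vertices $x$ with $\dgr^{Q_\infty}(\rho,x)=2m\le\vd r$ that do not lie on the maximal diagonal cycle at their level, for which the plan is to exhibit a $\TT(Q_\infty)$-path from $x$ to some vertex $y\in\partial_{2m'}Q_\infty$ with $m'$ close to $m$, of length $o(r)$, so that the boundary bound of Lemma \ref{Lemma_for PT20 upper bound distance root small hull} applied at scale $m'$ can then be used to finish.

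The main obstacle is the construction of this short path from an interior vertex to the nearest outer diagonal cycle: the downward-path technology of Lemma \ref{Lemma_control_length_DP} is designed for vertices on a boundary cycle $\partial_{2R}Q_\infty$ and must be adapted. The approach I would follow is to observe, via the skeleton decomposition described in Section \ref{Sec_Preliminaries}, that each interior vertex $x$ at level $2m$ is contained in a slot whose boundary contains a segment of $\partial_{2m}Q_\infty$ (or of $\partial_{2m+2}Q_\infty$). The graph distance from $x$ to this boundary segment within the slot is then controlled by the depth of the slot, which can be bounded uniformly over the hull of radius $\vd r$ by combining the offspring distribution tails from Proposition \ref{Prop_law_perimeter_UIPQ} with the Boltzmann weights governing the slot contents. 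Applying the crude degree estimate of Lemma \ref{Lemma_bound degree hulls of UIPQ} along such a slot geodesic yields a $\TT(Q_\infty)$-path of polylogarithmic length, which is negligible compared to $\ve r$. Making these slot-depth estimates precise, and in particular uniform over all interior vertices of $V(\TT(Q_\infty))$ inside $\Htr_{Q_\infty}(\vd r)$, is the principal technical step of the proof and its only delicate point.
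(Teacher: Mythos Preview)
Your reduction to the \UIPQ via Proposition \ref{Prop_TBP 9} and your treatment of vertices on the boundary cycle via Lemma \ref{Lemma_for PT20 upper bound distance root small hull} are correct, and both ingredients appear in the paper's argument. The divergence, and the genuine gap, is the interior-vertex step.

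The paper does not attempt to route interior vertices to a boundary cycle. Instead, having established (as you do) that for some small $\mu>0$ the whole cycle $\partial_{2\floor{\mu n^{1/4}}}Q_n^\bullet$ lies inside the $\dgr^{M_n}$-ball of small radius $\tfrac{\beta}{2}n^{1/4}$ about $\rho_n$, the paper observes that this cycle is a cycle \emph{of $M_n$} (its edges are Tutte diagonals) and therefore separates $M_n$ into two regions. The decisive new input is the Gromov--Hausdorff convergence of $(V(M_n),(9/8n)^{1/4}\dgr^{M_n})$ to the Brownian map \cite{bettinelli2014scaling}, combined with a hull-in-ball estimate for the Brownian map obtained from \eqref{Hull/Ball} and the coupling of \cite{curienLeGall2014brownian}: with high probability, at most one connected component of the complement of $\mathcal{B}_{M_n}(\beta n^{1/4})$ meets the complement of $\mathcal{B}_{M_n}(K\beta n^{1/4})$. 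Since $\partial_n$ is far from $\rho_n$, that component must be the one on the exterior side of the cycle; hence the interior side --- i.e.\ the full truncated hull $\Htr_{Q_n^\bullet}(2\floor{\mu n^{1/4}})$, which contains every white $x$ with $\dgr^{Q_n}(\rho_n,x)\le 2\mu n^{1/4}$ --- is contained in $\mathcal{B}_{M_n}(K\beta n^{1/4})=\mathcal{B}_{M_n}(\ve n^{1/4})$. No individual interior vertex is ever examined.

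Your slot-routing plan, as stated, does not give the claimed bound. The assertion that the path from $x$ to its slot boundary has ``polylogarithmic length'' in $\TT(Q_\infty)$ is not correct: a slot is filled by a Boltzmann truncated quadrangulation whose $\dgr^{Q_\infty}$-diameter has heavy polynomial tails, and Lemma \ref{Lemma_hulls are contained in larger balls whp} only guarantees that the worst such diameter inside the hull of radius $\vd r$ is $O(\vd r)$, not polylogarithmic in $r$. Converting a $Q_\infty$-path of that length to a $\TT(Q_\infty)$-path via the face-degree bound of Lemma \ref{Lemma_bound degree hulls of UIPQ} introduces an additional factor $\log r$, producing a bound of order $\vd r\log r$ rather than $\ve r$. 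More generally, the only universal comparison available here is $\dgr^{M_n}(\rho_n,x)\le C(\log n)\,\dgr^{Q_n}(\rho_n,x)$ with high probability, which is too weak by a logarithm; the paper's appeal to the scaling limit of $M_n$ itself is precisely what bypasses this loss.
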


\begin{proof} 
We again use the convergence in distribution to the Brownian map $(\bmap,D^*)$. It follows from \eqref{Eq_hull dans boules version bmap}
that we can choose a constant $K>2$ large enough so that, for every $\ve>0$, the probability of the event where
at least two connected components of the complement of the ball $\mathcal{B}_\bmap(\ve)$ intersect the complement of $\mathcal{B}_\bmap(K\ve/2)$
is bounded above by $\eta/4$. 
From \cite[Corollary 1.2]{bettinelli2014scaling}, we know that the random compact
metric spaces $(V(M_n),(\frac{9}{8n})^{1/4}\dgr^{M_n})$ converge in distribution in the Gromov-Hausdorff sense
to the Brownian map. Although this is not stated in \cite{bettinelli2014scaling}, it follows from the proof that
this convergence also holds in the pointed Gromov-Hausdorff sense, if $M_n$
is pointed at the root vertex $\rho_n$ (and $\bmap$ is pointed at $x_*$). From this pointed convergence,
and the properties of the Brownian map stated above, we can now deduce that, for every $\ve>0$,
for all sufficiently large $n$, the probability that at least two components of the complement of 
the ball $\mathcal{B}_{M_n}(\ve n^{1/4})$ intersect the complement of the ball $\mathcal{B}_{M_n}(K\ve n^{1/4})$
is bounded above by $\eta/2$. 

Let us fix $\ve>0$ and set $\beta=\ve/K$. We can assume that $\ve$ is so small that $\P(\dgr^{Q_n}(\rho_n,\partial_n)>4K\beta n^{1/4})>1-\eta/4$. 
Using the coupling between $Q_n^\bullet$ and the \UIPQ $Q_\infty$ in Proposition \ref{Prop_TBP 9}, we get
from Lemma \ref{Lemma_for PT20 upper bound distance root small hull} that there exists $\mu>0$ such that
$$\max_{v\in \partial_{2\floor{\mu n^{1/4}}}Q_n^\bullet} \dgr^{M_n}(\rho_n,v)<\frac{\beta}{2} n^{1/4}$$
with probability at least $1-\eta/4$. Recall that the edges of the cycle  $\partial_{2\floor{\mu n^{1/4}}}Q^\bullet_n$
are also edges of $M_n$. Argue on the event where both the bound in the last display holds and $\dgr^{Q_n}(\rho_n,\partial_n)>4K\beta n^{1/4}$. Then, except 
on an event of probability at most $\eta/2$, at most one of the two components bounded by the cycle
$\partial_{2\floor{\mu n^{1/4}}}Q^\bullet_n$ can intersect the complement of the ball  $\mathcal{B}_{M_n}(K\beta n^{1/4})$, and this must be the component that
contains the distinguished vertex $\partial_n$. We conclude that, except on a set of probability at most $\eta$, the
truncated hull $\Htr_{Q^\bullet_n}(2\floor{\mu n^{1/4}})$ does not intersect the complement of the ball
$\mathcal{B}_{M_n}(K\beta n^{1/4})$, and in particular the bound 
$$\sup_{x \in V(M_n) , \ \dgr^{Q_n}(\rho_n,x) \leq 2\floor{\mu n^{1/4}}}\dgr^{M_n}(\rho_n,x) \leq K\beta n^{1/4}=\ve n^{1/4}$$
holds with probability at least $1-\eta$. 
This completes the proof. \end{proof}

\section{Main results for general maps}
\label{Sec_Main results for generic maps}

\subsection{Subadditivity in the \textsc{LHPQ}}
\label{SecT_Subadditivity in the LHPQ}

Recall from the beginning of Section \ref{technical-general} that we can apply Tutte's bijection to the \LHPQ $\LL$, and that
$\Lhm$ denotes the resulting infinite map. 
We observe that every edge of the form $((i,-2r), (i+1, -2r))$ for $r \geq 0$ and $i \in \Z$ appear in $\Lhm$ because vertices of the type $(i,-2r)$ are white, and every edge of the preceding form is a diagonal of some quadrangle. It follows that we can define slices of $\Lhm$ for even coordinates in a way similar to the case of the \LHPQ: for even $j \leq j' \leq 0$, $\Lhm_j^{j'}$ is the part of $\Lhm$ contained in $\R\times[j,j']$. Furthermore, disjoint slices are independent.

We write $\dfpp^\Lhm(x,y)$, for $x,y \in V(\Lhm)$,  for the first-passage percolation distance on $V(\Lhm)$ (recall that
weights belong to $[1,\kappa]$).

\begin{proposition}
\label{Prop_PT18 - ergodic}
There exists a constant $\bc_T \in [\frac{1}{2}, \infty)$ such that
\begin{equation}
\label{Eq_P18_ergo}
(2r)^{-1} \dfpp^\Lhm(\rho, \partial_{-2r} \LL) \overset{\as}{\ulim{r}{\infty}} \bc_T .
\end{equation}
\end{proposition}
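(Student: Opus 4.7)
The plan is to mimic the proof of \propref{Prop_P18 - ergodic}, restricted to even heights since the vertices of $\Lhm$ are the white vertices of $\LL$ and thus lie at even $y$-coordinates. I will apply Liggett's version of Kingman's subadditive ergodic theorem \cite[Theorem 1.10]{liggett1985improved} to the array
\[
X_{r,s} := \dfpp^{\Lhm_{-2s}^{-2r}}(x_r, \partial_{-2s}\LL), \qquad 0 \leq r < s,
\]
with $x_0 = \rho$ and, for $r \geq 1$, $x_r$ the leftmost vertex of $\partial_{-2r}\LL \cap V(\Lhm)$ achieving the minimum of $\dfpp^{\Lhm_{-2r}^0}(\rho,\cdot)$. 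The triangle inequality gives $X_{0,s} \leq X_{0,r} + X_{r,s}$. Slices of $\Lhm$ at disjoint vertical ranges are independent (inherited from the \iid BGW forest defining the skeleton of $\LL$), so $X_{r,s}$ is independent of the pair $(x_r, X_{0,r})$; and horizontal translation invariance of $\LL$ ensures that, conditionally on $x_r$, the law of $X_{r,s}$ equals that of $X_{0,s-r}$. These are exactly the independence and stationarity assumptions of Liggett's theorem.

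To produce the finite upper bound on expectations (both for Liggett's integrability hypothesis and for $\bc_T<\infty$), I will use the downward paths of \secref{Section_Downward_paths}. A downward path in $\Lhm$ from $\rho$ to $\partial_{-2r}\LL$ is defined almost surely since good vertices exist on every cycle $\partial_{-2k}\LL$, and it decomposes into $r$ \iid macro-steps, each going from $\partial_{-2k}\LL$ down to $\partial_{-2(k+1)}\LL$ via a horizontal segment of geometric length (parameter $a>0$, the probability that a vertex is good) followed by one descending diagonal. This gives $\E[\dgr^\Lhm(\rho,\partial_{-2r}\LL)]\leq(1+1/a)r$ and hence $\E[X_{0,r}]\leq \vk(1+1/a)r$, so Liggett's theorem applies and yields a finite constant $\bc_T\geq 0$ such that \eqref{Eq_P18_ergo} holds almost surely.

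For the lower bound $\bc_T\geq 1/2$ I will use a direct geometric argument. Every edge of $\LL$ changes the $y$-coordinate by $\pm 1$, so the four corners of any face of $\LL$ have $y$-coordinates forming a cyclic pattern of four $\pm 1$ steps summing to zero; a short case analysis shows that the two white corners (those of even $y$) always lie at $y$-coordinates differing by $0$ or $2$. Since horizontal edges at even levels are the only other edges of $\Lhm$ and do not change $y$, every edge of $\Lhm$ changes $y$ by at most $2$. Hence any $\Lhm$-path from $\rho$ to $\partial_{-2r}\LL$ uses at least $r$ edges, and since weights are $\geq 1$ we obtain $\dfpp^\Lhm(\rho,\partial_{-2r}\LL)\geq r$, giving $\bc_T\geq 1/2$.

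The main technical hurdle is the careful verification of Liggett's hypotheses, in particular the stationarity of the sequence $(X_{m,m+k})_{m\geq 0}$ under a vertical shift by $-2$. This is not deep: it follows from the \iid structure of the BGW forest defining the skeleton of $\LL$ together with horizontal translation invariance. Once these inputs are assembled, the rest of the argument --- subadditivity, integrability via downward paths, and the parity-based lower bound --- is short and essentially parallels the corresponding steps in the proof of \propref{Prop_P18 - ergodic}.
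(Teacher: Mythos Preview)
Your proposal is correct and follows essentially the same approach as the paper: subadditive ergodic theorem restricted to even heights, integrability via downward paths (the paper checks only $\E[\dgr^\Lhm(\rho,\partial_{-2}\LL)]<\infty$, which suffices), and the lower bound $\bc_T\geq 1/2$.

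One small caveat on your lower-bound argument: your claim that ``every edge of $\LL$ changes the $y$-coordinate by $\pm 1$'' is not literally true of the planar embedding, since vertices inside the slots (coming from the Boltzmann truncated quadrangulations) do not sit at integer $y$-coordinates. What you really mean is that the graph distance in $\LL$ to the top boundary changes by $\pm 1$ along each edge, which is the bipartite structure of the quadrangulation. The paper phrases this more cleanly as $\dgr^\Lhm(x,y)\geq \tfrac{1}{2}\dgr^\LL(x,y)$, using the fact that the two endpoints of any edge of $\Lhm$ lie in a common face of $\LL$ and hence are at $\dgr^\LL$-distance at most $2$; combined with $\dgr^\LL(\rho,\partial_{-2r}\LL)=2r$ this gives the bound directly. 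Your argument is the same content, just unpack the inequality with the correct notion of ``height''.
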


\begin{proof}
The proof uses the same subadditivity argument as that of Proposition \ref{Prop_P18 - ergodic}, with the minor difference
that we restrict our attention to even heights. 

We first note that, for $x,y \in V(\Lhm)$, $\dfpp^\Lhm(x,y) \geq \dgr^\Lhm(x,y) \geq \frac 1 2 \dgr^\LL(x,y)$, so that the limit $\bc_T$ in \eqref{Eq_P18_ergo}, if it exists, has to be greater than 
or equal to $1/2$. 
 The only new part is that we have to check that $\E [ \dgr^\Lhm(\rho, \partial_{-2} \LL)] < \infty$.

As we already noticed in \secref{Section_Downward_paths}, the downward path started from the root
is well defined in the \LHPQ, and provides an upper bound on $\dgr^\Lhm(\rho, \partial_{-2} \LL)$. 
The number of steps of this downward path before it reaches a vertex of $\partial_{-2} \LL$ is distributed as $G+2$, where $G$ is a geometric random variable, thus has a finite expectation, giving the desired result.
\end{proof}

\subsection{Distance through a thin annulus}
\label{SecT_Distance through a thin annulus}

We use the notation $M_\infty=\Ti{Q_\infty}$, so that $M_\infty$ may be called the uniform infinite planar map or \UIPM.

\begin{proposition}
\label{Prop_PT19 thin annuli}
Let $\ve \in (0,1)$ and $\vd >0$. For every $\eta>0$ small enough, for all sufficiently large $n$, the property 
\begin{equation}
\label{Eq_Tcontrol of distances through a thin annulus}
2(1-\ve) \bc_T \eta n \leq  \dfpp^{M_\infty}(v , \partial_{2(n-\floor{\eta n})} Q_\infty)  \leq 2(1+\ve) \bc_T \eta n,
\end{equation}
holds for every $v \in \partial_{2n} Q_\infty$, with probability at least $1-\vd$.
\end{proposition}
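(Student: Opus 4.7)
The plan is to adapt the proof of \propref{Prop_P19 thin annuli} to the general map $M_\infty$, keeping the same three-block skeleton: (i) construct a localized region that traps \fpp-shortest paths, (ii) compare the restricted \fpp-distance with its analogue in $\Lhm$, and (iii) extend from a bounded number of sample points to all of $\partial_{2n}Q_\infty$. First I would set up the geometry exactly as in the quadrangulation case: fix $a\in(0,1)$ small enough that both $H_{2n}$ and $H_{2(n-\floor{\eta n})}$ lie in $[an^2,n^2/a]$ with probability at least $1-\vd/4$, pick $u^{(n)}_0$ uniformly on $\partial_{2n}Q_\infty$, and for each $j$ of the form $i\floor{an^2/8}$ with $0\leq i<N:=\lceil 9a^{-2}\rceil$ form the region $\GG^{(n)}_j$ bounded by the \lmgs of $Q_\infty$ started at $u^{(n)}_{j\pm\floor{an^2/4}}$. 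The absolute-continuity bound of \propref{Prop_P5} transfers from the \LHPQ both the non-coalescence of those two \lmgs before they reach $\partial_{2(n-\floor{\eta n})}Q_\infty$ (for $\eta$ small), and, via \propref{Prop_P15}, the lower bound $\dgr^{Q_\infty}(u^{(n)}_k,\partial^l\GG^{(n)}_j)\geq C\eta n$ for every $|k-j|\leq an^2/16$, where $C$ can be chosen arbitrarily large by taking $\eta$ small enough.

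The second block is where the general map appears, and is the main novelty. Since every edge of $M_\infty$ is a diagonal in a face of $Q_\infty$ and thus corresponds to a path of length two in $Q_\infty$, one has $\dgr^{Q_\infty}\leq 2\,\dgr^{M_\infty}$ on pairs of white vertices. Combined with the previous lower bound, this forces any path of $M_\infty$ from $u^{(n)}_k$ to the complement of $\GG^{(n)}_j$ to have $\dfpp^{M_\infty}$-weight at least $C\eta n/2$. On the other hand, \lemref{Lemma_control_length_DP} produces a downward path in $M_\infty$ from $u^{(n)}_k$ down to $\partial_{2(n-\floor{\eta n})}Q_\infty$ of length at most $\va\,\eta n$, hence of \fpp-weight at most $\va\vk\,\eta n$. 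Choosing $C>4\va\vk$ guarantees that no $\dfpp^{M_\infty}$-shortest path from $u^{(n)}_k$ to the inner cycle exits $\GG^{(n)}_j$ through its lateral boundary, so this distance is entirely determined by the data inside $\GG^{(n)}_j$.

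Given this localization, the distance is a fixed deterministic function of the skeleton of the slice of $\CC(2(n-\floor{\eta n}),2n)$ contained in $\GG^{(n)}_j$, of the truncated quadrangulations filling its slots, and of the edge weights. The very same function, applied to an independent \BGW forest with offspring distribution $\vt$ and independent Boltzmann fillings, computes $\dfpp^\Lhm(\rho,\partial_{-2\floor{\eta n}}\LL)$. Using \propref{Prop_P5} to compare these two distributions with a uniformly bounded density, and \propref{Prop_PT18 - ergodic} to evaluate the \LHPQ quantity, one obtains
\begin{equation*}
\dfpp^{M_\infty}\bigl(u^{(n)}_k,\partial_{2(n-\floor{\eta n})}Q_\infty\bigr)\in\bigl[2(1-\ve/2)\bc_T\eta n,\,2(1+\ve/2)\bc_T\eta n\bigr]
\end{equation*}
for each fixed sample $u^{(n)}_k$, with probability tending to $1$ as $n\to\infty$.

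Finally, to pass from a bounded collection of samples to every $v\in\partial_{2n}Q_\infty$, I would apply \corref{Cor_P17 coalescence Tutte} with an auxiliary parameter $\eta'\ll\eta$ chosen so that $\vk\eta'<\ve\bc_T\eta$: with high probability every $v$ on the top boundary is joined to one of the $O(1)$ samples $u^{(n)}_{\floor{kn^2/A}}$ by a path of $\Ti{\CC(2n-2\ceil{\eta' n},2n)}$ of length at most $\eta' n$, of \fpp-weight at most $\vk\eta' n$, so a triangle inequality yields \eqref{Eq_Tcontrol of distances through a thin annulus} uniformly in $v$. The step from restricted to unrestricted \fpp-distance is handled as in the end of the proof of \propref{Prop_P19 thin annuli}. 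The hard part of this plan is the second block: the existence of diagonals in $M_\infty$ could a priori create short cuts across the lateral boundary of $\GG^{(n)}_j$, and it is exactly the bound $\dgr^{M_\infty}\geq \dgr^{Q_\infty}/2$ combined with the linear-in-$\eta n$ control of downward paths provided by \lemref{Lemma_control_length_DP} that rules this out once $C$ is chosen large enough to absorb the constant $\va$.
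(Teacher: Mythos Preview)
Your proposal is correct and follows essentially the same approach as the paper: the same three-block structure, the same localization via $\dgr^{Q_\infty}\leq 2\,\dgr^{M_\infty}$ combined with \lemref{Lemma_control_length_DP}, the same density argument via \propref{Prop_P5} and \propref{Prop_PT18 - ergodic}, and the same use of \corref{Cor_P17 coalescence Tutte} for uniformity over the boundary. Your handling of the constants (choosing $C>4\va\vk$ and introducing an auxiliary $\eta'$ in the last step) is slightly more explicit than the paper's, which simply says ``taking $\eta$ smaller if necessary,'' but the substance is identical.
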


\begin{proof}
The proof is patterned after that of \propref{Prop_P19 thin annuli} using Proposition \ref{Prop_PT18 - ergodic} instead of Proposition \ref{Prop_P18 - ergodic}. The following minor adaptations are required. 

Let $u^{(n)}_0$ be a uniformly distributed vertex of $\partial_{2n} Q_\infty$. 
Then \lemref{Lemma_control_length_DP} ensures that we have with high probability,
\begin{equation*}
\dfpp^{M_\infty} \l(u^{(n)}_0, \partial_{2(n-\floor{\eta n})} Q_\infty\r) \leq 2\va\vk\floor{\eta n} .
\end{equation*}
Let $\GG^{(n)}_0$ be defined as in the proof of \propref{Prop_P19 thin annuli} (with $n$ replaced by $2n$). 
As in the latter proof, we know with high probability that  the length of the minimal path (in $Q_\infty$)  between $u^{(n)}_0$ and the lateral boundary of $\GG^{(n)}_0$ that stays in $\wt{\mathcal{H}}^{\mathrm{tr}}_{Q_\infty}(2n)$ 
 is 
bounded below by $cn$ with some constant $c>0$. Trivially, two vertices of ${\mathcal{H}}^{\mathrm{tr}}_{Q_\infty}(2n)$  that are linked by an edge of 
$\Ti{Q_\infty}$ are also connected by a $Q_\infty$-path of length two in $\wt{\mathcal{H}}^{\mathrm{tr}}_{Q_\infty}(2n)$. Taking $\eta$ smaller if necessary, it follows that the $\dfpp^{M_\infty}$-shortest path between $u^{(n)}_0$ and $\partial_{2(n-\floor{\eta n})} Q_\infty$ that stays in ${\mathcal{H}}^{\mathrm{tr}}_{Q_\infty}(2n)$ does not leave $\GG^{(n)}_0$ on an event of high probability. We can then use the same density arguments as in the proof of \propref{Prop_P19 thin annuli}.

In the last step of the proof, we need to verify that it suffices to obtain
\eqref{Eq_Tcontrol of distances through a thin annulus} for a bounded number of vertices $v$ of $\partial_{2n} Q_\infty$. The argument is the same as
in the proof of \propref{Prop_P19 thin annuli}, but we use
\corref{Cor_P17 coalescence Tutte} in place of \propref{Prop_P17 coalescence primal}.
\end{proof}

\subsection{Distance from the boundary of a hull to its center}
\label{SecT_Distance from the boundary of a hull to its center}

The next proposition is analogous to \propref{Prop_P20 distances root - boundary of the hull, UIPQ}.

\begin{proposition}
\label{Prop_PT20 distances root - boundary of the hull, UIPQ}
For every $\ve \in (0,1)$,
$$ \P\l( 2(\bc_T - \ve) n \leq \dfpp^{M_\infty}(\rho, v) \leq 2(\bc_T + \ve) n  \  \text{  for every } v \text{ in } \partial_{2n} Q_\infty \r) \ulim{n}{\infty} 1  . $$
\end{proposition}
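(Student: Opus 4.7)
The strategy is to follow the proof of \propref{Prop_P20 distances root - boundary of the hull, UIPQ}, replacing the tools developed there for the quadrangulation by their analogues for $M_\infty$ established in \secref{SecT_Distance through a thin annulus} and \secref{technical-general}. Fix $\ve>0$ and call the annulus $\CC(2m,2n)$ \emph{good} if the two-sided bound
\[
2(1-\ve/2)\bc_T(n-m) \ \leq\ \dfpp^{M_\infty}(v,\partial_{2m}Q_\infty)\ \leq\ 2(1+\ve/2)\bc_T(n-m)
\]
holds for every $v\in\partial_{2n}Q_\infty$, and bad otherwise. By \propref{Prop_PT19 thin annuli} we can fix $\eta>0$ so small that $\CC(2(n-\floor{\eta n}),2n)$ is good with probability at least $1-\vd$, for any prescribed $\vd>0$. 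Setting $n_0=n$ and $n_{k+1}=n_k-\floor{\eta n_k}$, choose a fixed number of iterations $q$ so that $n_q\leq (\ve/C)n$, where $C$ is a constant depending on $\bc_T,\vk,\va$ and on the constant $A$ coming from \lemref{Lemma_for PT20 upper bound distance root small hull}, to be picked large enough below. For $\vd$ sufficiently small in terms of $\ve$ and $q$, Markov's inequality guarantees that the number of bad layers among $\CC(2n_{k+1},2n_k)$, $0\leq k<q$, is at most $\ve/(C\vk\va|\ln(1-\eta)|)$ outside an event of probability at most $\ve$.

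The lower bound then follows from the same telescoping argument as in \propref{Prop_P20 distances root - boundary of the hull, UIPQ}: any path in $M_\infty$ from $v\in\partial_{2n}Q_\infty$ to $\rho$ must cross every annulus, so its $\dfpp^{M_\infty}$-weight is bounded below by the sum of the good-annulus contributions. Discarding the bad annuli and the innermost hull $\Htr_{Q_\infty}(2n_q)$ costs only $O(\ve n)$, and one concludes $\dfpp^{M_\infty}(v,\rho)\geq 2\bc_T n(1-\ve)$ on the good event (after adjusting $\ve$ by a fixed multiplicative factor).

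For the upper bound, one constructs a path from $v$ to $\rho$ in three pieces. Through each good annulus we take a $\dfpp^{M_\infty}$-shortest path, whose weight is at most $2(1+\ve/2)\bc_T(n_k-n_{k+1})$. Through each bad annulus we use a downward path: applying \lemref{Lemma_control_length_DP} with $R=n$ and sufficiently small auxiliary parameters (this is fine because $n_{k+1}\geq n_q\gtrsim \ve n\gg n/\ln n$, so the lower-radius restriction is harmless, and because $n_k-n_{k+1}\geq \eta n_{k+1}/2$ ensures the ratio condition), we obtain a path of length at most $\va(n_k-n_{k+1})$ in $M_\infty$, hence first-passage-percolation weight at most $\vk\va(n_k-n_{k+1})$. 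Finally, \lemref{Lemma_for PT20 upper bound distance root small hull} provides a path from any vertex of $\partial_{2n_q}Q_\infty$ to $\rho$ of $\dfpp^{M_\infty}$-weight at most $2An_q\leq 2A\ve n/C$. Summing, one obtains $\dfpp^{M_\infty}(\rho,v)\leq 2\bc_T n(1+\ve)$ once $C$ is chosen so large that each of the two error terms $\vk\va\cdot(\text{number of bad})\cdot \eta n$ and $2An_q$ is smaller than $\bc_T\ve n/3$.

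The main obstacle, compared to the quadrangulation case, lies in the upper bound: one cannot simply use the \lmgs of $Q_\infty$ (which live in $Q_\infty$, not $M_\infty$), and a single deterministic bound on the distance from $\partial_{2n_q}Q_\infty$ to $\rho$ in $M_\infty$ is unavailable because \lemref{Lemma_control_length_DP} is not uniform all the way down to radius zero. The fix is precisely the combination of downward paths for the bulk of the crossings and the dedicated estimate of \lemref{Lemma_for PT20 upper bound distance root small hull} for the last $O(\ve n)$ radii; verifying that a single choice of parameters in \lemref{Lemma_control_length_DP} is simultaneously valid for all $q$ downward-path bounds is routine but is the only place requiring care.
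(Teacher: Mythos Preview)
Your proof is correct and follows essentially the same approach as the paper: the annulus decomposition of \propref{Prop_P20 distances root - boundary of the hull, UIPQ}, with \propref{Prop_PT19 thin annuli} for good annuli, downward paths via \lemref{Lemma_control_length_DP} for bad annuli, and \lemref{Lemma_for PT20 upper bound distance root small hull} for the innermost hull. Your explicit identification of why \lmgs are unavailable and why the two separate lemmas are needed for the upper bound is exactly the content the paper's brief proof leaves implicit.
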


\begin{proof}
The proof is very similar to that of \propref{Prop_P20 distances root - boundary of the hull, UIPQ}. Consider the annuli $\CC(2n_{k+1}, 2n_k)$ for every $0 \leq k < q$, where $n_0 = n$ and $n_{k+1} = n_k - \floor{\eta n_k}$, and $q$ is as defined in \propref{Prop_P20 distances root - boundary of the hull, UIPQ}. 
By \propref{Prop_PT19 thin annuli}, we can find $\eta>0$ such that ``most'' of these annuli will satisfy the analog of \eqref{Eq_Tcontrol of distances through a thin annulus}, except 
possibly on a set of probability at most $\ve$. 
We then use \lemref{Lemma_control_length_DP} to bound the $\dfpp^{M_\infty}$-distance through the annuli where \eqref{Eq_Tcontrol of distances through a thin annulus} does not hold, and \lemref{Lemma_for PT20 upper bound distance root small hull} to control the $\dfpp^{M_\infty}$-distance between $\rho$ and $\partial_{2n_q} Q_\infty$.
\end{proof}

\subsection{Distance between two uniformly sampled points in finite maps}
\label{SecT_Distance between two uniformly sampled points in finite maps}

Recall that $M_n=\Ti{Q_n}$ in such a way that $V(M_n)$ can be viewed as the subset of $V(Q_n)$ consisting
of the ``white'' vertices. Also recall that $\#V(Q_n)=n+2$.
We observe that
$$\frac{\#V(M_n)}{\#V(Q_n)} \ulim n \infty \frac{1}{2}$$
in probability (see e.g.
the proof of Proposition 3.1 in \cite{bettinelli2014scaling}).
Then, since conditionally on $Q_n$ the distinguished vertex $\partial_n$ 
is uniformly distributed over $V(Q_n)$, we have also
$$\P(\partial_n\in V(M_n))\ulim n \infty \frac{1}{2}.$$

We also observe that the result of Proposition \ref{Cor_L28 locality} remains valid if we
replace the root vertex $\rho_n$ by $\partial_n$. More precisely, for every $\ve,\eta>0$, we can find
$\beta>0$ such that, for all $n$ large enough, we have
\begin{equation}
\label{conti-Tutte}
\P\l( \sup_{x \in V(M_n) , \ \dgr^{Q_n}(\partial_n,x) \leq \beta n^{1/4}} \dgr^{M_n}(\partial_n,x) \leq \ve n^{1/4}
\Bigg|\, \partial_n\in V(M_n)\r) \geq 1-\eta .
\end{equation}
This follows from the invariance of $Q_n$ under uniform re-rooting, by an argument very similar to the one used in
the proof of Theorem \ref{Th_T1 controle distances fpp et gr dans quad finies}.

\begin{proposition}
\label{Prop_PT21 distances root - uniform point, Qn}
For every $\vg \in (0,1)$,
\begin{equation*}
\P\l( | \dfpp^{M_n}(\rho_n, \partial_n) - \bc_T \dgr^{Q_n}(\rho_n, \partial_n) | > \vg n^{1/4} \,\Big|\, \partial_n\in V(M_n)\r) \ulim n \infty 0 .
\end{equation*}
\end{proposition}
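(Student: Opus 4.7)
The plan is to mimic the proof of Proposition \ref{Prop_P21 distances root - uniform point, Qn}, replacing the role of Proposition \ref{Prop_P20 distances root - boundary of the hull, UIPQ} by its Tutte-analogue, Proposition \ref{Prop_PT20 distances root - boundary of the hull, UIPQ}, and using the continuity estimate \eqref{conti-Tutte} to pass from the boundary of the relevant hull to the distinguished vertex $\partial_n$. Throughout I will work on the event $\{\partial_n\in V(M_n)\}$, whose probability tends to $1/2$, so conditioning on it has only a bounded effect on all probability estimates.

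\textbf{Step 1 (Slicing and coupling).} I reuse the events $H_{n,\vd}^{j,l}$ and $H_{n,\vd}$ from Lemma \ref{Lemma_technical slicing}: given $\ve>0$, there exists $\vd\in(0,1)$ such that $\P(H_{n,\vd})\geq 1-\ve$ for large $n$. For each admissible pair $(j,l)$, set $r_j:=\lfloor \va_j n^{1/4}/2\rfloor$ so that $2r_j$ is even and $2r_j\leq \va_j n^{1/4}<\va'_j n^{1/4}\leq \dgr^{Q_n}(\rho_n,\partial_n)$, hence the truncated hull $\Htr_{Q_n^\bullet}(2r_j)$ is well defined on $H_{n,\vd}^{j,l}$. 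By the standard relationship between $\Htr_Q$ and $\Hull_Q$ and by Proposition \ref{Prop_TBP 8}, this truncated hull is a deterministic function of the pruned labeled tree $\PP((T_n,\eta_n),\lfloor \vb_l\sqrt n\rfloor)$ and $\epsilon_n$; the analogous statement holds in $Q_\infty$. The density comparison that gave \eqref{Eq_density for hulls in finite and infinite case} therefore applies verbatim to $\Htr_{Q_n^\bullet}(2r_j)$, with a bounded density $C$ depending only on $\vd$.

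\textbf{Step 2 (Transfer via Tutte's correspondence).} Because Tutte's bijection on a truncated hull of \emph{even} radius is an intrinsic operation, applying it commutes with the coupling of Step~1: under that coupling, $\Ti{\Htr_{Q_n^\bullet}(2r_j)}$ equals the corresponding submap of $M_\infty$ obtained from $\Htr_{Q_\infty}(2r_j)$. Proposition \ref{Prop_PT20 distances root - boundary of the hull, UIPQ} applied to $M_\infty$ at radius $2r_j$ ensures that, outside an event of probability $o(1)$ as $n\to\infty$,
\begin{equation*}
\bigl|\dfpp^{M_\infty}(\rho,v)-2\bc_T r_j\bigr|\leq \ve\, r_j
\qquad\text{for every }v\in\partial_{2r_j}Q_\infty.
\end{equation*}
Via the bounded density, this bound transfers to $M_n$ on $H_{n,\vd}^{j,l}$ with the same $o(1)$ error: the $\dfpp^{M_n}$-distance from $\rho_n$ to any vertex of $\partial_{2r_j}Q_n^\bullet$ is close to $\bc_T\va_j n^{1/4}$. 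As in the proof of Proposition \ref{Prop_P19 thin annuli}, the comparison between intrinsic and ambient first-passage percolation distance is harmless because we only need the minimum over the hull boundary.

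\textbf{Step 3 (Bridging to $\partial_n$).} On $H_{n,\vd}^{j,l}$, the $\dgr^{Q_n}$-distance from $\partial_n$ to $\partial_{2r_j}Q_n^\bullet$ is bounded by $\dgr^{Q_n}(\rho_n,\partial_n)-2r_j\leq 2\vd^2 n^{1/4}$ (consider a $Q_n$-geodesic from $\partial_n$ to $\rho_n$ and let $v^\star$ be the first point where it hits the hull boundary). The extension \eqref{conti-Tutte} of Proposition \ref{Cor_L28 locality} to the distinguished vertex $\partial_n$ (which follows from uniform re-rooting invariance, exactly as in the first step of the proof of Theorem \ref{Th_T1 controle distances fpp et gr dans quad finies}) then yields, for $\vd$ small enough and $n$ large, that $\dgr^{M_n}(\partial_n,v^\star)\leq\ve n^{1/4}$ with probability at least $1-\ve$ conditionally on $\{\partial_n\in V(M_n)\}$. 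Since weights are bounded above by $\vk$, this gives $\dfpp^{M_n}(\partial_n,v^\star)\leq\vk\ve n^{1/4}$.

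\textbf{Step 4 (Conclusion).} Combining Steps 2 and 3 by the triangle inequality, on the intersection of all events of high probability considered so far, and using $|\va_j n^{1/4}-\dgr^{Q_n}(\rho_n,\partial_n)|\leq 2\vd^2 n^{1/4}+O(1)$ on $H_{n,\vd}^{j,l}$, I obtain
\begin{equation*}
\bigl|\dfpp^{M_n}(\rho_n,\partial_n)-\bc_T\,\dgr^{Q_n}(\rho_n,\partial_n)\bigr|\leq C(\vd+\ve)n^{1/4}
\end{equation*}
for an explicit constant $C$ depending only on $\vk$ and $\bc_T$. Since $\vd$ and $\ve$ are arbitrary, this proves the proposition (the conditioning on $\{\partial_n\in V(M_n)\}$ inflates probabilities by at most a factor $2+o(1)$).

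\textbf{Main obstacle.} The delicate point is Step 3: the bound on $\dgr^{M_n}(\partial_n,v^\star)$ does not follow from any direct graph-theoretic control of $\dgr^{M_n}$ by $\dgr^{Q_n}$, since Tutte's bijection is not a quasi-isometry edge-by-edge. It relies instead on the global continuity estimate \eqref{conti-Tutte}, itself a consequence of the pointed Gromov--Hausdorff convergence of $(V(M_n),(9/(8n))^{1/4}\dgr^{M_n})$ to the Brownian map and of the hull/ball comparison \eqref{Eq_hull dans boules version bmap} transferred from the Brownian plane. A secondary but necessary care is to keep all hull radii even so that Tutte's correspondence may be applied intrinsically; the slack $\vd^2 n^{1/4}$ between $\va_j$, $\va'_j$, $\va''_j$ leaves ample room to round to even integers without disturbing any of the estimates above.
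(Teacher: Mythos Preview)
Your proof is correct and follows essentially the same approach as the paper's: slice via the events $H_{n,\vd}^{j,l}$ of Lemma \ref{Lemma_technical slicing}, use the density bound \eqref{Eq_density for hulls in finite and infinite case} together with Proposition \ref{Prop_PT20 distances root - boundary of the hull, UIPQ} to control $\dfpp^{M_n}$-distances from $\rho_n$ to the boundary of the even-radius truncated hull, and then bridge from that boundary to $\partial_n$ via the continuity estimate \eqref{conti-Tutte}. Your identification of the ``main obstacle'' (that Step~3 genuinely requires \eqref{conti-Tutte} rather than any edge-by-edge comparison) and the need to keep hull radii even are exactly the points the paper emphasizes.
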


\begin{proof}

The proof is based on the same ingredients as that of \propref{Prop_P21 distances root - uniform point, Qn}, but
we use Proposition \ref{Prop_PT20 distances root - boundary of the hull, UIPQ}, or rather \eqref{conti-Tutte},  instead of Proposition \ref{Prop_P20 distances root - boundary of the hull, UIPQ}. 
We argue on the event where $ \partial_n\in V(M_n)$. Let $\vg>0$, $\eta>0$ and choose $\beta>0$ so
that \eqref{conti-Tutte} holds with $\ve=\vg$.

Fix $\vd >0$ small enough so that $3\vd^2<\beta$ and the event $H_{n,\vd}$
of \lemref{Lemma_technical slicing} has probability at least $1-\eta$ when $n$ is large. For integers $j,l\geq 1$,  let $H_{n,\vd}^{j,l}$ be defined as in \lemref{Lemma_technical slicing}. 
If $n$ is large, the $\dgr^{Q_n}$-distance between $\partial_n$ and $\partial_{2\floor{\va_j n^{1/4}/2}} Q^\bullet_n$ is smaller than $\beta n^{1/4}$ on $H_{n,\vd}^{j,l}$.
Thus, on the intersection of  $H_{n,\vd}^{j,l}$ with the event 
considered in \eqref{conti-Tutte} (with $\ve=\vg$),
the $\dgr^{M_n}$-distance between $\partial_n$ and $\partial_{2\floor{\va_j n^{1/4}/2}} Q^\bullet_n$ is smaller that $\vg n^{1/4}$. 

On the other hand, from \propref{Prop_PT20 distances root - boundary of the hull, UIPQ}
and using \eqref{Eq_density for hulls in finite and infinite case} as in the proof of Proposition \ref{Prop_P21 distances root - uniform point, Qn}, 
we get that, outside of a set of probability going to $0$ as $n \to \infty$, the $\dfpp^{M_n}$-distance between any vertex of $\partial_{2\floor{\va_j n^{1/4}/2}} Q^\bullet_n$ and $\rho_n$ is close to $\bc_T \va_j n^{1/4}$, up to an error term bounded by $\gamma n^{1/4}$.

Finally, on the intersection of $H_{n,\vd}$ with $\{\partial_n\in V(M_n)\}$  and with the event considered
in \eqref{conti-Tutte}, we have
\begin{equation*}
 | d(\rho_n, \partial_n) - \bc_T \dgr^{Q^\bullet_n}(\rho_n, \partial_n) | \leq (1+\kappa)\vg n^{1/4},
\end{equation*}
except on a set of probability tending to $0$ as $n \to \infty$.
Using \lemref{Lemma_technical slicing} we obtain that the latter intersection has probability larger than $\P(\partial_n\in V(M_n))-2\eta$ for all $n$ large enough. This completes the proof. \end{proof}

\subsection{Distances between any pair of points of finite maps}
\label{SecT_Distances between any pair of points of finite maps (Theorem 1)}

The next statement gives both Theorem \ref{Th2} and the part of Theorem \ref{Th1} concerning general planar maps.

\begin{theorem}
\label{dist-fpp-gr}
For every $\ve>0$, we have
\begin{equation*}
\P\l( \sup_{x,y \in V(M_n)} \l| \dfpp^{M_n}(x,y) - \bc_T \dgr^{Q_n}(x,y) \r| > \ve n^{1/4} \r) \ulim{n}{\infty} 0
\end{equation*}
If all weights are equal to $1$ (that is, $\dfpp^{M_n}=\dgr^{M_n}$), we have $\bc_T=1$.
\end{theorem}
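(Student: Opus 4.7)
The argument closely parallels that of Theorem \ref{Th_T1 controle distances fpp et gr dans quad finies}. The first task is to upgrade Proposition \ref{Prop_PT21 distances root - uniform point, Qn} from the pair $(\rho_n, \partial_n)$ to an arbitrary pair $(\partial_n, \partial'_n)$ of independent vertices uniformly distributed on $V(M_n)$ conditionally on $Q_n$. Since $M_n$ is uniform over rooted planar maps with $n$ edges, it is invariant under uniform re-rooting at an oriented edge of $M_n$; through Tutte's bijection this corresponds to re-rooting $Q_n$ at an oriented edge whose tail is white (so that the $2$-coloring, and hence the underlying map $M_n$, is preserved). Averaging Proposition \ref{Prop_PT21 distances root - uniform point, Qn} over such re-rootings and applying a Markov-type argument analogous to the one in the proof of Theorem \ref{Th_T1 controle distances fpp et gr dans quad finies} then yields
\begin{equation*}
\P\Big(\big|\dfpp^{M_n}(\partial_n, \partial'_n) - \bc_T\,\dgr^{Q_n}(\partial_n, \partial'_n)\big| > \ve n^{1/4}\Big) \ulim n \infty 0.
\end{equation*}

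Second, I would apply the covering argument. By \cite[Corollary 1.2]{bettinelli2014scaling}, $(V(M_n), (9/8n)^{1/4}\,\dgr^{M_n})$ converges in distribution to the Brownian map $(\bmap, D^*)$ in the Gromov-Hausdorff-Prokhorov topology, so by compactness of $(\bmap, D^*)$, for any $\alpha > 0$ one can fix an integer $N$ (independent of $n$) such that, with probability at least $1 - \alpha$, the $\dgr^{M_n}$-balls of radius $\ve n^{1/4}$ around $N$ independent uniform vertices $\partial_n^1, \ldots, \partial_n^N$ of $V(M_n)$ cover $V(M_n)$. Every edge of $M_n$ is a diagonal joining two white corners of some face of $Q_n$, and hence corresponds to a $2$-step path in $Q_n$ through the black corner, giving $\dgr^{Q_n}(u, v) \leq 2\,\dgr^{M_n}(u, v)$ for all $u, v \in V(M_n)$. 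Combined with $\dfpp^{M_n} \leq \kappa\,\dgr^{M_n}$, a triangle inequality then transfers the approximation from the $N^2$ sampled pairs to all pairs $(x, y) \in V(M_n)^2$, at the cost of an extra error of order $(4\bc_T + 2\kappa)\,\ve n^{1/4}$, which proves the first assertion.

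For the second statement, suppose all weights equal $1$, so that $\dfpp^{M_n} = \dgr^{M_n}$. The first assertion yields, for every $\ve > 0$,
\begin{equation*}
\sup_{x, y \in V(M_n)} \big|\dgr^{M_n}(x, y) - \bc_T\,\dgr^{Q_n}(x, y)\big| \leq \ve n^{1/4}
\end{equation*}
with probability tending to $1$. After rescaling by $(9/8n)^{1/4}$, the metric spaces $(V(M_n), (9/8n)^{1/4}\,\dgr^{M_n})$ and $(V(M_n), \bc_T (9/8n)^{1/4}\,\dgr^{Q_n})$ differ by $o(1)$ in the Gromov-Hausdorff sense. The former converges in distribution to $(\bmap, D^*)$ by \cite{bettinelli2014scaling}; the latter converges to $(\bmap, \bc_T D^*)$, since $(V(Q_n), (9/8n)^{1/4}\,\dgr^{Q_n}) \to (\bmap, D^*)$ by \cite{leGall2013uniqueness, miermont2013brownian} and $V(M_n)$ is $1$-dense in $V(Q_n)$ for $\dgr^{Q_n}$ (any black vertex of $Q_n$ is adjacent to a white one), which is negligible after rescaling. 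Equating the two limits gives $(\bmap, D^*) = (\bmap, \bc_T D^*)$ in distribution, and the non-degeneracy of the Brownian map forces $\bc_T = 1$.

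I expect the main technical obstacle to be the first step. The set $V(M_n)$ depends on the $2$-coloring of $Q_n$, which in turn depends on the root, so re-rooting $Q_n$ at an arbitrary oriented edge could swap the two color classes and change the Tutte image entirely. Restricting the re-rooting to oriented edges with white tails (equivalently, to oriented edges of $M_n$) resolves this, but one must carefully handle the degree-biasing that arises from re-rooting over edges rather than directly over vertices, as well as the slight discrepancy between $\#V(M_n)$ and $n/2$.
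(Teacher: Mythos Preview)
Your overall strategy mirrors the paper's, but there is a genuine gap in the covering step. You invoke Gromov--Hausdorff--Prokhorov convergence of $(V(M_n),(9/8n)^{1/4}\dgr^{M_n})$ to the Brownian map from \cite{bettinelli2014scaling}, but that reference only establishes Gromov--Hausdorff convergence, not convergence of the uniform measures. Without the Prokhorov part, you cannot conclude that $N$ independent uniform vertices of $V(M_n)$ form an $\ve n^{1/4}$-net with high probability; Gromov--Hausdorff convergence alone says nothing about how uniform points spread out in the limit. The paper confronts exactly this issue and works around it via Lemma~\ref{covering-lemma}, whose proof goes back to the Ambj\o rn--Budd bijection and the explicit construction in \cite[Section~4--5]{bettinelli2014scaling} to show directly that the contour-type sequence $(\wt v^n_i)$ visits $V(M^\bullet_n)$ at an asymptotically linear rate, so that uniform vertices do land in each short interval of the contour with high probability.

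Your first step (re-rooting via oriented edges of $M_n$) and your argument for $\bc_T=1$ are essentially the same as the paper's, and the degree-biasing concern you raise is handled exactly as in the proof of Theorem~\ref{Th_T1 controle distances fpp et gr dans quad finies}. The only substantive missing ingredient is the covering lemma; once you replace the unjustified GHP citation by a direct argument along the lines of Lemma~\ref{covering-lemma}, your proof goes through.
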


Before we prove Theorem \ref{dist-fpp-gr}, we state and prove 
a lemma.

\begin{lemma}
\label{covering-lemma}
Let $\eta\in(0,1)$, and, for every $n\geq1$, conditionally on $M_n$, let $\partial_n^1,\partial_n^2,\ldots$ be independent 
random vertices uniformly distributed over $V(M_n)$. Then, for every $\ve>0$, we can find an integer $N\geq 1$
such that, for every sufficiently large $n$, we have
$$\P\l(\max_{v\in V(M_n)}\,\l(\min_{1\leq \ell\leq N} \dgr^{M_n}(\partial^\ell_n,v)\r) <\ve\,n^{1/4}\r) >1-\eta.$$
\end{lemma}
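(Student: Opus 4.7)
The plan is to transfer a covering property from the Brownian map down to $M_n$ via the scaling-limit result of \cite{bettinelli2014scaling}, following exactly the same template as the argument used at the end of the proof of Theorem \ref{Th_T1 controle distances fpp et gr dans quad finies}.

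The first step is to invoke the convergence
\[
\l(V(M_n),\l(\tfrac{9}{8n}\r)^{1/4}\dgr^{M_n},\mu_n\r) \ulimd{n}{\infty} (\bmap,D^*,\mu)
\]
in the Gromov-Hausdorff-Prokhorov topology, where $\mu_n$ is the uniform probability measure on $V(M_n)$ and $\mu$ is the volume measure on $\bmap$. The Brownian map being almost surely compact and $\mu$ having full support, I would then fix, for a given $\ve'>0$, an integer $N$ large enough so that, if $(\xi_\ell)_{1\leq \ell\leq N}$ are iid $\mu$-distributed conditionally on $\bmap$, the balls $\{y\in\bmap : D^*(\xi_\ell,y)<\ve'\}$, $1\leq \ell\leq N$, cover $\bmap$ with probability at least $1-\eta/2$. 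This is a standard compactness argument: cover $\bmap$ by finitely many balls of radius $\ve'/2$; the probability that a given such ball contains none of the $\xi_\ell$'s decays geometrically in $N$, and a union bound over the finite subcover completes the step.

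The second step is to upgrade the Gromov-Hausdorff-Prokhorov convergence above to a joint convergence with $N$ iid marked points. Conditionally on the underlying spaces, sample $(\partial_n^1,\ldots,\partial_n^N)$ with law $\mu_n^{\otimes N}$ and $(\xi_1,\ldots,\xi_N)$ with law $\mu^{\otimes N}$. Since iid sampling from a probability measure is a continuous operation with respect to the Prokhorov topology, the convergence persists at the level of $N$-pointed metric spaces in the corresponding Gromov-Hausdorff sense. By Skorokhod's representation theorem, I may assume that the convergence is almost sure. Choosing $\ve'=\ve(8/9)^{1/4}/2$ in the first step, the event that the balls of radius $\ve'$ around the $\xi_\ell$'s cover $\bmap$ then implies, for all large enough $n$, that the balls (in $\dgr^{M_n}$) of radius $\ve n^{1/4}$ around the $\partial^\ell_n$'s cover $V(M_n)$, which is the desired conclusion.

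There is no real obstacle: the only point to check carefully is that we are working with Gromov-Hausdorff-Prokhorov convergence (rather than bare Gromov-Hausdorff) so that independent sampling from $\mu_n$ passes to independent sampling from $\mu$ in the limit. This ingredient is exactly what \cite{bettinelli2014scaling} provides for $M_n$, and the whole argument is otherwise a verbatim copy of the analogous step carried out for $Q_n$ in the proof of Theorem \ref{Th_T1 controle distances fpp et gr dans quad finies} (with the detailed justification of the covering property from Gromov-Hausdorff-Prokhorov convergence given in Appendix A1 of \cite{gall2017browniandiskssnake}).
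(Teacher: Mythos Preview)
Your argument is essentially the one the paper would have liked to use, but there is a genuine gap: you assert that \cite{bettinelli2014scaling} yields convergence of $(V(M_n),(\tfrac{9}{8n})^{1/4}\dgr^{M_n},\mu_n)$ to the Brownian map in the Gromov--Hausdorff--\emph{Prokhorov} sense, and your entire proof hinges on this in order to pass from i.i.d.\ $\mu_n$-samples to i.i.d.\ $\mu$-samples in the limit. In fact \cite{bettinelli2014scaling} only establishes the Gromov--Hausdorff convergence (without the measure part), as the paper explicitly notes at the start of its own proof of the lemma. So the step ``This ingredient is exactly what \cite{bettinelli2014scaling} provides for $M_n$'' is incorrect, and the analogy with the $Q_n$ case breaks down precisely here: for $Q_n$ the GHP convergence is available from \cite{gall2017browniandiskssnake}, but for $M_n$ it is not in the cited literature.

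Because of this, the paper gives a direct argument that bypasses the need for GHP convergence. It works instead with the contour sequence $(\wt v^n_i)_{0\leq i\leq 2n}$ of \cite{bettinelli2014scaling} (coming from the Ambj\o rn--Budd bijection and the associated labeled tree), uses the uniform convergence of the rescaled pseudo-metric $\wt D_n$ to $D^*$ to partition $\{0,\ldots,2n\}$ into $A$ intervals of small $\dgr^{M_n}$-diameter, and then shows that with high probability each such interval contains some $v^n_i\in V(M_n)$ equal to one of the sampled $\partial_n^\ell$, at bounded distance (via the degree bound $\Delta(M^\bullet_n)$) from the corresponding $\wt v^n_i$. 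The count of $V(M_n)$-vertices along the contour is controlled by the asymptotic $n^{-1}\mathcal{N}^{(n)}_{\floor{2nt}}\to t/2$ from \cite[Section~5]{bettinelli2014scaling}. In short, your high-level strategy is right, but the black box you invoke is not available; the paper's proof supplies the missing work by hand.
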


\begin{proof} We first note that the statement would follow if we knew the convergence in the
Gromov-Hausdorff-Prokhorov sense of $(V(M_n),(9/8n)^{1/4}\dgr^{M_n})$ equipped with the
uniform probability measure to the Brownian map --- cf. the analogous statement for $Q_n$
used in the proof of Theorem \ref{Th_T1 controle distances fpp et gr dans quad finies}. 
Unfortunately, \cite{bettinelli2014scaling} does not give the Gromov-Hausdorff-Prokhorov convergence,
and so we will provide a direct proof, which still relies much on the arguments of \cite{bettinelli2014scaling}.
We start by observing that \cite[Proposition 3.1]{bettinelli2014scaling} allows us to replace $M_n$ by a random
{\em pointed} planar map $M_n^\bullet$ which is uniformly distributed over pointed 
planar maps with $n$ edges (this replacement needs to be justified because, in contrast with the
case of quadrangulations, forgetting the distinguished vertex of $M_n^\bullet$ does not give a
map distributed as $M_n$). Then, as in \cite[Section 4]{bettinelli2014scaling}, we can construct a finite sequence
$\wt v^n_0,\wt v^n_1,\ldots,\wt v^n_{2n}$ such that every vertex $v$ of $M_n^\bullet$ appears at least once in this sequence,
and, if we set $\wt D_n(i,j)=\dgr^{M_n^\bullet}(\wt v^n_i,\wt v^n_i)$ for $i,j\in\{0,1,\ldots,2n\}$ and interpolate linearly
to get a function $\wt D(s,t)$ defined on $[0,2n]^2$, we have
$$\l( \l(\frac{9}{8n}\r)^{1/4} \wt D_n(2ns,2nt)\r)_{0\leq s,t\leq 1} \ulim{n}{\infty} (D^*(s,t))_{0\leq s,t\leq 1},$$
in distribution in the space of continuous functions on $[0,1]^2$. Here $D^*(s,t)$ is the random
pseudo-metric on $[0,1]^2$ that defines the Brownian map. Since $D^*$ vanishes on the diagonal, we can fix an integer
$A\geq1$ such that, writing $\vd=1/A$, the property
$$D^*(s,s')< \frac{\ve}{4}\;,\qquad \forall s,s'\in [(k-1)\vd,k\vd],\;\forall k\in\{1,\ldots,A\}$$
holds with probability greater than $1-\eta/2$. Using the preceding convergence, it follows that, for $n$ large enough,
the property
$$\dgr^{M_n^\bullet}(\wt v^n_i,\wt v^n_j)< \frac{\ve}{2}\,n^{1/4}\;,\qquad \forall i,j\in [2n(k-1)\vd,2nk\vd]\cap\Z,\;\forall k\in\{1,\ldots,A\}$$
also holds with probability greater than $1-\eta/2$. We claim that we can find an integer $N$ large enough so that,
for every $n$ large enough,
with probability greater than $1-\eta/2$, there exists for each $k\in\{1,\ldots,A\}$ an
index $\ell\in\{1,\ldots,N\}$ and an integer $i\in [2n(k-1)\vd,2nk\vd]$ such that
$$\dgr^{M_n^\bullet}(\partial_n^\ell,\wt v^n_i)\leq \frac{\ve}{2}\,n^{1/4}.$$
If we combine the claim with the preceding considerations, we get that, with probability at least $1-\eta$,
any vertex of $M_n^\bullet$ is at distance smaller than $\ve n^{1/4}$ from one of the vertices $\partial_n^\ell$,
$\ell\in\{1,\ldots,N\}$, which was the desired result.

It remains to prove our claim. To this end, we need more information about the sequence $\wt v^n_i$
(we refer to \cite{bettinelli2014scaling} for more details). Via the Ambj\o rn-Budd bijection, the pointed planar map $M^\bullet_n$
is associated with a (uniformly distributed) pointed quadrangulation $Q^\bullet_n$ with $n$ faces,
in such a way that $V(M^\bullet_n)$ is identified to a subset of $V(Q^\bullet_n)$, and in
particular $\#V(M^\bullet_n) \leq \#V(Q^\bullet_n)=n+2$. In the \CVS, $V(Q^\bullet_n)$
corresponds to a labeled tree $T_n$, and the contour sequence $v^n_0,v^n_1,\ldots,v^n_{2n}$ of the tree $T_n$
(defined as in the proof of \lemref{Lemma_technical slicing})
can also be viewed as a sequence of vertices of $Q^\bullet_n$. Then, for every 
$i\in\{1,\ldots,2n\}$, $v^n_i$ and $\wt v^n_i$ are linked by an edge of $Q^\bullet_n$
(see \cite{bettinelli2014scaling}). Moreover,
in the case when $v^n_i\in V(M^\bullet_n) $, one has
\begin{equation}
\label{bound-dgr-v}
\dgr^{M^\bullet_n}(v^n_i,\wt v^n_i) \leq \Delta(M^\bullet_n).
\end{equation}
This bound follows directly from the construction of the Ambj\o rn-Budd bijection
(the point is that any edge of $Q^\bullet_n$ is contained in a face of $M^\bullet_n$).
Recalling
\eqref{Eq_bound on degree finite map}, and using again \cite[Proposition 3.1]{bettinelli2014scaling}, we know that
we have $\Delta(M^\bullet_n)<\frac{\ve}{2} n^{1/4}$ with probability greater than $1-\eta/8$. For every 
integer $p\in\{0,1,\ldots,n\}$, let $\mathcal{N}^{(n)}_p$ be the number of distinct vertices $v^n_i$ with $i\in\{0,1,\ldots,p\}$
that belong to 
$V(M^\bullet_n)$. Then, from the 
end of \cite[Section 5]{bettinelli2014scaling}, we have for every $t\in[0,1]$,
$$\frac{1}{n}\,\mathcal{N}^{(n)}_{\floor{2nt}} \ulim{n}{\infty}  \frac{t}{2}$$
in probability. It follows that, for $n$ large enough, we have
\begin{equation}
\label{event-good}
\#\{v^n_i: i\in[2n(k-1)\vd,2nk\vd] \hbox{ and }v^n_i\in V(M^\bullet_n)\}\geq \mathcal{N}^{(n)}_{\floor{2nk\vd}}  - \mathcal{N}^{(n)}_{\lceil 2n(k-1)\vd\rceil}\geq \frac{\vd}{4}\,n\;,\  \forall k\in\{0,1,\ldots,A\},
\end{equation}
with probability at least $1-\eta/8$. We can choose $N$ large enough so that, on the event \eqref{event-good}, the conditional 
probability given $M_n^\bullet$ that each set $\{v^n_i : i\in[2n(k-1)\vd,2nk\vd] \hbox{ and }v^n_i\in M^\bullet_n\}$, for $1\leq k\leq A$, contains at least one of the vertices $\partial_n^\ell$,
$1\leq \ell\leq N$, is greater than $1-\eta/4$. Summarizing the preceding considerations and using \eqref{bound-dgr-v}, we get that, with probability at least
$1-\eta/2$, for every $k\in\{1,\ldots,A\}$, there exist an index $\ell\in\{1,\ldots,N\}$ and an integer $i\in [2n(k-1)\vd,2nk\vd] $ such that
$\partial_n^\ell=v^n_i$ and $\dgr^{M^\bullet_n}(\partial_n^\ell,\wt v^n_i)= \dgr^{M^\bullet_n}(v^n_i,\wt v^n_i)<\frac{\ve}{2} n^{1/4}$. This completes
the proof of the claim and of the lemma.
\end{proof}

\begin{proof}[Proof of Theorem \ref{dist-fpp-gr}]
By the same re-rooting invariance argument as in the proof
of Theorem \ref{Th_T1 controle distances fpp et gr dans quad finies}, the statement of
Proposition \ref{Prop_PT21 distances root - uniform point, Qn} remains valid if
the pair $(\rho_n,\partial_n)$ is replaced by $(\partial'_n,\partial''_n)$,
where, conditionally on $Q_n$, $\partial'_n$
and $\partial''_n$ are independent and uniformly distributed over $V(Q_n)$: more precisely, we have, for every $\ve>0$,
\begin{equation*}
\P\l( | \dfpp^{M_n}(\partial'_n, \partial''_n) - \bc_T \dgr^{Q_n}(\partial'_n, \partial''_n) | > \ve n^{1/4} \,\Big|\, \partial'_n\in V(M_n),
\partial''_n\in V(M_n)\r) \ulim n \infty 0 .
\end{equation*}
Let us fix $\ve>0$ and $\eta>0$. Thanks to Lemma \ref{covering-lemma}, we can fix an integer $N$ large enough 
so that, if $\partial^1_n,\partial^2_n,\ldots,\partial^N_n$ are independent and uniformly distributed over $V(M_n)$, then, with probability
at least $1-\eta$,
the metric balls of radius $\ve n^{1/4}$ in $(V(M_n),\dgr^{M_n})$ centered at $\partial^1_n,\ldots,\partial^N_n$ cover $V(M_n)$.
Let us call $\mathcal{H}_n$ the event where this covering property holds.

On the other hand, consider the event
$$\mathcal{K}_n:=\{ | \dfpp^{M_n}(\partial^i_n, \partial^j_n) - \bc_T \dgr^{Q_n}(\partial^i_n, \partial^j_n) | \leq \ve n^{1/4},\;
\forall i,j\in\{1,\ldots,N\}\}.$$
By the first observation of the proof, we have also $\P(\mathcal{K}_n)\geq 1-\eta$ for $n$ large enough. 

For $n$ large, the event $\mathcal{H}_n\cap\mathcal{K}_n$
has probability at least $1-2\eta$. Let us argue on this event in the remaining part of the proof. Let $x,y\in V(M_n)$, we can find $i,j\in\{1,\ldots,N\}$
such that $\dgr^{M_n}(\partial^i_n,x)\leq \ve n^{1/4}$ and $\dgr^{M_n}(\partial^j_n,y)\leq \ve n^{1/4}$. Note that this
implies $\dgr^{Q_n}(\partial^i_n,x)\leq 2\ve n^{1/4}$ and $\dgr^{Q_n}(\partial^j_n,y)\leq 2\ve n^{1/4}$. It follows that
we have
$$|\dfpp^{M_n}(x,y) -\dfpp^{M_n}(\partial^i_n,\partial^j_n)|\leq \dfpp^{M_n}(\partial^i_n,x)+ \dfpp^{M_n}(\partial^j_n,y)\leq 2\kappa \ve n^{1/4}$$
and 
$$|\dgr^{Q_n}(x,y) - \dgr^{Q_n}(\partial^i_n,\partial^j_n)| \leq 4\ve n^{1/4}.$$
Hence, from the definition of $\mathcal{K}_n$,
$$| \dfpp^{M_n}(x,y) - \bc_T \dgr^{Q_n}(x,y)|\leq (1+4\bc_T+2\kappa)\ve \,n^{1/4}.$$
This completes the proof of the first assertion.

As for the second one, we observe that the first assertion, together with the known convergence 
of rescaled quadrangulations to the Brownian map, implies that
$$\l( V(M_n), \pfrac{9}{8n}^{1/4} \dfpp^{M_n} \r) \ulimd n \infty (\bmap, \bc_T D^*) $$
in distribution in the Gromov-Hausdorff sense. In the case where all weights are equal to $1$,
comparing this convergence with \cite[Corollary 1.2]{bettinelli2014scaling} gives $c_T=1$. 
\end{proof}

\subsection{Distances in the UIPM}
\label{SecT_Hulls in the UIPM are close}

Recall that $M_\infty = \Ti{Q_\infty}$ is the \UIPM. 

\begin{theorem}
\label{Th_T4 hulls in the UIPQ are close for dfpp and dgr}
Let $\ve \in (0,1)$. We have
\begin{equation}
\label{Eq_distances are close in hulls of Qinfty2}
\lim_{r \to \infty} \P\l( \sup_{x,y \in V(M_\infty),\,\dgr^{M_\infty}(\rho,x)\vee\dgr^{M_\infty}(\rho,y)\leq r} \l| \dfpp^{M_\infty}(x,y) - \bc_T \dgr^{M_\infty}(x,y) \r| > \ve r \r) = 0,
\end{equation}
and 
\begin{equation}
\label{Tutte-UIPM}
\lim_{r \to \infty} \P\l( \sup_{x,y \in V(M_\infty),\,\dgr^{M_\infty}(\rho,x)\vee\dgr^{M_\infty}(\rho,y)\leq r} \l| \dgr^{M_\infty}(x,y) - \dgr^{Q_\infty}(x,y) \r| > \ve r \r) = 0.
\end{equation}

\end{theorem}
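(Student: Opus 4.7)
The plan is to follow the same two-step scheme as in \thref{Th_T2 hulls in the UIPQ are close for dfpp and dgr}: first prove a locality statement showing that, with high probability, the distances between vertices in $\mathcal{B}_{M_\infty}(r)$ are determined by a truncated hull $\Htr_{Q_\infty}(Cr)$ of the underlying \UIPQ for some constant $C=C(\kappa)$; then use the coupling of \propref{Prop_TBP 9} to transfer \thref{dist-fpp-gr} from the finite setting to the \UIPM. Both \eqref{Eq_distances are close in hulls of Qinfty2} and \eqref{Tutte-UIPM} will be handled at once: \thref{dist-fpp-gr} with the prescribed weight distribution gives the former, while the same theorem applied with unit weights (so that $\dfpp^{M_n}=\dgr^{M_n}$ and necessarily $\bc_T=1$) gives the latter.

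For the locality step the central observation is that $V(M_\infty)$ consists of the white vertices of $Q_\infty$ and that each edge of $M_\infty$ is a diagonal of a quadrangular face of $Q_\infty$, so that $\dgr^{Q_\infty}(u,v)\leq 2\,\dgr^{M_\infty}(u,v)$ for all $u,v\in V(M_\infty)$. Assume $x,y$ satisfy $\dgr^{M_\infty}(\rho,x)\vee\dgr^{M_\infty}(\rho,y)\leq r$. Then $\dfpp^{M_\infty}(x,y)\leq 2\kappa r$, so any $\dfpp^{M_\infty}$-geodesic from $x$ to $y$ uses at most $2\kappa r$ edges (weights are bounded below by $1$), and every vertex it visits lies within $M_\infty$-distance $(1+2\kappa)r$, hence within $Q_\infty$-distance $2(1+2\kappa)r$, from $\rho$. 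The same type of bound controls $\dgr^{M_\infty}$-geodesics, and (using $\dgr^{Q_\infty}(x,y)\leq 4r$) also $\dgr^{Q_\infty}$-geodesics between $x$ and $y$. Choosing $C=C(\kappa)$ large enough forces every geodesic of any of these three types to stay strictly inside $\Htr_{Q_\infty}(Cr)$, so the three distances are intrinsic to this truncated hull together with the inherited edge weights. For any prescribed $\ve>0$, \propref{Prop_TBP 9} then provides $\chi>0$ and, for $n$ large, a coupling of $Q^\bullet_n$ and $Q_\infty$ with $\Hull_{Q^\bullet_n}(\chi n^{1/4})=\Hull_{Q_\infty}(\chi n^{1/4})$ with probability at least $1-\ve$; this equality also forces equality of the corresponding truncated hulls. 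Taking $n=n(r)$ of order $r^4$ so that $\chi n^{1/4}\geq Cr+3$ and sharing the \iid edge weights on the common edges, one obtains that for every pair $x,y\in \mathcal{B}_{M_\infty}(r)$ the three quantities $\dfpp^{M_\infty}(x,y)$, $\dgr^{M_\infty}(x,y)$ and $\dgr^{Q_\infty}(x,y)$ coincide with $\dfpp^{M_n}(x,y)$, $\dgr^{M_n}(x,y)$ and $\dgr^{Q_n}(x,y)$ respectively, outside an event of probability $o(1)$ as $r\to\infty$.

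To conclude, I would apply \thref{dist-fpp-gr} twice: with the given weights to obtain $\sup_{x,y\in V(M_n)}|\dfpp^{M_n}(x,y)-\bc_T\dgr^{Q_n}(x,y)|=o(n^{1/4})$ in probability, and with unit weights to get $\sup_{x,y\in V(M_n)}|\dgr^{M_n}(x,y)-\dgr^{Q_n}(x,y)|=o(n^{1/4})$. A triangle-inequality combination of the two yields $|\dfpp^{M_n}(x,y)-\bc_T\dgr^{M_n}(x,y)|=o(n^{1/4})=o(r)$ uniformly in the relevant pairs; together with the coupling this gives \eqref{Eq_distances are close in hulls of Qinfty2}, while the second estimate alone, transferred through the coupling, gives \eqref{Tutte-UIPM}. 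The main obstacle I anticipate is the locality step: one must verify that \emph{geodesics} (not merely distance upper bounds) actually remain strictly inside $\Htr_{Q_\infty}(Cr)$, which is what forces the mild inflation factor $C=C(\kappa)$ and requires treating the three distances $\dfpp^{M_\infty}$, $\dgr^{M_\infty}$ and $\dgr^{Q_\infty}$ separately. Once this is in place, the rest of the argument is a direct rerun of the one used for \thref{Th_T2 hulls in the UIPQ are close for dfpp and dgr}.
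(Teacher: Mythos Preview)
Your proposal is correct and follows essentially the same approach as the paper's (sketched) proof: use the deterministic bound $\dgr^{Q_\infty}\leq 2\,\dgr^{M_\infty}$ to get locality of all three distances inside a hull of radius $Kr$, couple with a large finite quadrangulation via \propref{Prop_TBP 9}, and conclude from \thref{dist-fpp-gr}. Your explicit mention of applying \thref{dist-fpp-gr} twice (once with the given weights and once with unit weights) and combining via the triangle inequality to pass from $\dgr^{Q_n}$ to $\dgr^{M_n}$ in \eqref{Eq_distances are close in hulls of Qinfty2} spells out a step the paper leaves implicit.
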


We only sketch the proof, as it is very similar to that of Theorem \ref{Th_T2 hulls in the UIPQ are close for dfpp and dgr}.
Since $\dgr^{M_\infty}(x,y)\geq \frac{1}{2} \dgr^{Q_\infty}(x,y)$ for every $x,y\in V(M_\infty)$,
the condition $\dgr^{M_\infty}(\rho,x)\leq r$ implies $\dgr^{Q_\infty}(\rho,x)\leq 2r$. By the same argument, 
we can find a constant $K$ large enough so that, for every $r\geq 1$ and for every $x,y\in V(M_\infty)$ such that 
$\dgr^{M_\infty}(\rho,x)\leq r$ and $\dgr^{M_\infty}(\rho,y)\leq r$, the quantities $\dfpp^{M_\infty}(x,y)$, $\dgr^{M_\infty}(x,y)$
and $ \dgr^{Q_\infty}(x,y) $ are determined by the hull $B^\bullet_{Q_\infty}(Kr)$ (and of course weights on edges in the case
of $\dfpp^{M_\infty}$). We then use Proposition \ref{Prop_TBP 9}
that allows us to find a large constant $C$ such that the hulls $\Hull_{Q^\bullet_{\floor{C(Kr)^4}}}(Kr)$ and $\Hull_{{Q_\infty}}(Kr)$
are equal with probability close to $1$. We conclude by using Theorem \ref{dist-fpp-gr}.

Theorem \ref{Th3} stated in the introduction follows from Theorem \ref{Th_T2 hulls in the UIPQ are close for dfpp and dgr} and
Theorem \ref{Th_T4 hulls in the UIPQ are close for dfpp and dgr}.

\bibliographystyle{plain}
\bibliography{bibliographie_fpp.bib}

\end{document}